\date{\today}
\newtheorem{teo}{Theorem}[section]
\newtheorem{prop}[teo]{Proposition}
\newtheorem{cor}[teo]{Corollary}
\newtheorem{defin}[teo]{Definition}
\newtheorem{rem}[teo]{Remark}
\newtheorem{ej}[teo]{Example}
\newtheorem{apart}[teo]{ }
\newcommand{\ot}{\otimes}
\newcommand{\co}{\circ}
\title[Cohomological Obstructions and Weak Crossed Products over  Weak Hopf Algebras]
{Cohomological Obstructions and weak crossed products over  Weak Hopf Algebras}
\author[Ram\'on Gonz\'alez Rodr\'{\i}guez]{Ram\'on Gonz\'alez Rodr\'{\i}guez}
\author[Ana Bel\'en Rodr\'{\i}guez Raposo]{Ana Bel\'en Rodr\'{\i}guez Raposo}
\address[Ram\'on Gonz\'{a}lez Rodr\'{\i}guez]{Universidade de Vigo. Departamento de Matem\'{a}tica Aplicada II,
 E.E. Telecomunicación, Campus Universitario Lagoas-Marcosende, E-36280
Vigo, Spain.} \email[R. Gonz\'{a}lez]{rgon@dma.uvigo.es}
\address[Ana Bel\'en Rodr\'{\i}guez Raposo]{Universidade de
Santiago de Compostela. Departamento de Did\'acticas Aplicadas,   Facultade de Ciencias da Educación, Campus Norte, E-15771 Santiago de Compostela, Spain.}
\email[M.B. Rodr\'{\i}guez]{anabelen.rodriguez.raposo@usc.es}
\begin{document}

\begin{abstract} Let $H$ be a cocommutative weak Hopf algebra and let $(B, \varphi_{B})$ a weak left $H$-module algebra. In this paper, for a twisted convolution invertible morphism $\sigma:H\ot H\rightarrow B$ we  define  its  obstruction $\theta_{\sigma}$ as a degree three Sweedler 3-cocycle with values in the center of $B$.  We obtain that the class of this obstruction vanish in third Sweedler cohomology group $\mathcal{H}^3_{\varphi_{Z(B)}}(H, Z(B))$ if, and only if, there exists a twisted convolution invertible  2-cocycle $\alpha:H\ot H\to B$ such that $H\ot B$ can be endowed with a weak crossed product structure with $\alpha$ keeping a cohomological-like relation with $\sigma$. Then, as a consequence,  the class of the obstruction of $\sigma$ vanish if, and only if, there exists a cleft extension of $B$ by $H$.
\end{abstract}

\maketitle

{\sc Keywords:} Weak Hopf algebra, Sweedler cohomology,
weak crossed products, cleft extension, obstruction.

{\sc MSC2020: 18M05, 16T05, 20J06.}

\section*{Introduction}

Crossed products of a Hopf algebra by an algebra have been widely studied 
in relation to extensions of algebras, generalizing classical results of semidirect products and extensions of groups, together with a generalization of group cohomology to the Hopf algebra setting. In \cite{Moss} Sweedler defines the so-called Sweedler's cohomology for a cocommutative Hopf algebra $H$ and a commutative $H$-module algebra $B$. In this paper he also shows that any cleft $H$-extension of algebras $B\hookrightarrow A$ (that is, roughly speaking, a split extension) can be realized as a crossed 
product given in terms of the action of $H$ on $B$ and a 2-cocycle $\sigma:H\ot H\to B$. Moreover, cleft extensions of $B$ are classified by the second cohomology group $\mathcal{H}^2(H,B)$. Several generalizations of these results were carried out by Doi and Takeuchi \cite{doi3}, Blattner, Cohen and Montgomery \cite{bcm} and Blattner and Montgomery \cite{blat-susan} by dropping out the conditions of cocommutativity and commutativity, 
and the associativity of the action $\varphi_B:H\ot B\to B$ and thus, the 
use of Sweedler's cohomology. However some of its formalism is preserved: 
for an arbitraty Hopf algebra $H$ and an arbitrary algebra $B$, a crossed 
product is given in terms of a measuring $\varphi_B:H\ot B\to B$ and a formal 2-cocycle $\sigma:H\ot H\to B$ that must also satisfy the twisted condition needed to substitute the associativity of $\varphi_B$. Moreover, two such crossed products are equivalent if the cocycles satisfy a cohomological-like equivalence. This last result was interpreted in an actual cohomological setting by Doi in \cite{doi}, where he shows that cleft extensions of an algebra $B$ by a cocommutative Hopf algebra $H$ with the same action are classified by $\mathcal{H}^2(H,Z(B))$, where $Z(B)$ denotes the center of $B$. All these results can be interpreted in a symmetric monoidal category with base object $K$ (see for example \cite{nm1} and \cite{me1} for cleft extensions in a monoidal setting). 

The next objective became to decide when an algebra $B$ admits a cleft extension by $H$. Following some classical results of obstructions to extensions of groups (see, for example, \cite{MacLane-hom}), Schauenburg finds 
in \cite{SCH} a relation between the third Sweedler's cohomology group $\mathcal{H}^3(H, Z(B))$ and cleft extensions, where $Z(B)$ denotes the center of $B$. For a measuring $\varphi_B$ and a twisted morphism $\sigma$, he generalizes the notion of obstruction as Sweedler  three cocycle $\theta_{\sigma}$ on $H$ with values on the center of $B$ and shows that the class $[\theta_{\sigma}]\in \mathcal{H}^3(H, Z(B))$ vanish if, and only if, $\varphi_B$ and $\sigma$ give rise to a crossed product on $H\ot B$ and, at last, to a cleft extension. 

With the apparition of weak Hopf algebras as generalizations of groupoid algebras  (see \cite{GK}) all the theory of cleft extensions, Sweedler's cohomology  and crossed products needed a deep review. Recall that the main point of a  algebra-coalgebra $H$ to be weak is that its unit does not need to be coassociative, nor its counit associative. These apparently innocent generalizations conceptually imply the existence of two objects, different from the base object $K$ in the ground monoidal category when $H$ is actually weak, that somehow will play the role of $K$. From a practical 
point of view, this lack of (co)associativity of the (co)unit forces to a 
change in the definition of regular morphisms, and thus to a change in the tackling of cleft extensions, cohomological interpretations of crossed products and a rethinking of cohomology and crossed products themselves. For the cocommutative case these problems were successfully solved in \cite{nmra5} and  \cite{nmra6}, where the authors explore the meaning of cleft extension and weak crossed product for a cocommutative weak Hopf algebra $H$ weakly acting on an algebra $B$, and define Sweedler's cohomology in weak contexts. In order to achieve these objectives, they consider the unit in $Reg(H,B)$ as $\varphi_B\circ (H\ot\eta_B)$ (and thus, regular morphisms depend on $\varphi_B$ and we denote the set by $Reg_{\varphi_B}(H,B)$), where $\varphi_B$ is the weak action of $H$ on $B$, and $\eta_B$ is the unit of $B$. Moreover, to study weak crossed products they consider a 
preunit instead of a unit, so they obtain an algebra as a subobject of $H\ot B$, whose product is given in terms of $\varphi_B$ and a twisted formal 2-cocycle $\sigma:H\ot H\to B$. In such terms, they are able to define 
a cohomology theory for a cocommutative weak Hopf algebra $H$ and a commutative $H$-module algebra $B$. Moreover, they identify cleft $H$-extensions of a weak $H$-module algebra $B$ with products with convolution invertible 2-cocycle and classify them by $\mathcal{H}^2_{\varphi_Z(B)}(H, Z(B))$, this is, the second cohomology group. The relation of weak crossed products and cleft extensions for the non-cocommutative case was also studied in \cite{Gu2-Val} by  Guccione, Guccione and  Valqui. 

So once we have the proper concepts of cleft extensions, weak crossed products and Sweedler's cohomology for the weak setting, we just need to find out the role of obstructions in relation to cleft  extensions and their 
cohomological meaning, and these are the main objectives of the present paper. In order to attain such objectives, we first make a wide review of weak crossed products, and we find that we just need a measuring $\varphi_B:H\ot B\to B$ together with a twisted morphism $\sigma:H\ot H\to B$ that does not need to be convolution invertible but a formal 2-cocycle to define a weak crossed product on $H\ot B$. Moreover we obtain necessary and 
sufficient conditions for weak crossed products to be equivalent that, in 
particular, are given in terms of morphisms in $Reg_{\varphi_B}(H, B)$. We finally use these results in the particular case of a cocommutative weak Hopf algebra $H$ and a weak $H$-module algebra $(B, \varphi_B)$. We consider a twisted convolution invertible morphism $\sigma:H\ot H\to B$ and define its cohomological obstruction $\theta_{\sigma}$ through the center 
of $B$. We obtain that this obstruction vanish in $\mathcal{H}^3_{\varphi_{Z(B)}}(H, Z(B))$ if, and only if, there exists a twisted convolution invertible 2-cocycle $\alpha:H\ot H\to B$ such that $H\ot B$ can be endowed 
with a weak crossed product structure with $\alpha$ keeping a cohomological-like relation with $\sigma$. This result means, in terms of cleft extensions, that if $(B, \varphi_B)$ is a weak $H$-module algebra with $\sigma:H\ot H\to B$ twisted and convolution invertible then its obstruction vanish if, and only if, there exists a cleft extension of $B$ by $H$.

Throughout this paper $\sf C$ denotes a strict symmetric monoidal category with tensor product $\ot$, unit object $K$ and symmetry isomorphism $c$. There is no loss of generality in assuming that $\sf C$ is strict because every monoidal category is monoidally equivalent to a strict one. Then, we may work as if the constrains were all identities. We also assume that in $\sf C$ every idempotent morphism splits, i.e., for any morphism $q:M\rightarrow M$ such that $q\co q=q$ there exists an object $N$, called the image of $q$, and morphisms $i:N\rightarrow M$, $p:M\rightarrow N$ such that $q=i\co p$ and $p\co i=id_N$ ($id_{N}$ denotes the identity morphism for $N$). The morphisms $p$ and $i$ will be called a factorization of $q$. Note that $N$, $p$ and $i$ are unique up to isomorphism. The categories satisfying this property constitute a broad class that includes, among others, the categories with epi-monic decomposition for morphisms and categories with (co)equalizers.  Finally, given
objects $M$, $N$, $P$ and a morphism $f:N\rightarrow P$, we write
$M\ot f$ for $id_{M}\ot f$ and $f\ot M$ for $f\ot id_{M}$.

An algebra in $\sf C$ is a triple $A=(A, \eta_{A},
\mu_{A})$, where $A$ is an object in $\sf C$ and
 $\eta_{A}:K\rightarrow A$ (unit), $\mu_{A}:A\otimes A
\rightarrow A$ (product) are morphisms in $\sf C$ such that
$\mu_{A}\circ (A\otimes \eta_{A})=id_{A}=\mu_{A}\circ
(\eta_{A}\otimes A)$, $\mu_{A}\circ (A\otimes \mu_{A})=\mu_{A}\circ
(\mu_{A}\otimes A)$. We say that an algebra $A$ is commutative if $\mu_{A}=\mu_{A}\co c_{A,A}$.

Given two algebras $A= (A, \eta_{A}, \mu_{A})$
and $B=(B, \eta_{B}, \mu_{B})$, a morphism $f:A\rightarrow B$ in $\sf C$  is an algebra
morphism if $\mu_{B}\circ (f\otimes f)=f\circ \mu_{A}$ and  $ f\circ
\eta_{A}= \eta_{B}$. 

Also, if  $A$, $B$ are algebras in $\sf C$, the object
$A\otimes B$ is  an algebra in $\sf C$, where
$\eta_{A\otimes B}=\eta_{A}\otimes \eta_{B}$ and $\mu_{A\otimes
B}=(\mu_{A}\otimes \mu_{B})\circ (A\otimes c_{B,A}\otimes B).$ Note that, if $A$ and $B$ are commutative algebras, so is $A\ot B$.

A coalgebra  in $\sf C$ is a triple ${D} = (D,
\varepsilon_{D}, \delta_{D})$, where $D$ is an object in $\sf C$ and $\varepsilon_{D}: D\rightarrow K$ (counit),
$\delta_{D}:D\rightarrow D\otimes D$ (coproduct) are morphisms in
$\sf C$ such that $(\varepsilon_{D}\otimes D)\circ
\delta_{D}= id_{D}=(D\otimes \varepsilon_{D})\circ \delta_{D}$,
$(\delta_{D}\otimes D)\circ \delta_{D}=
 (D\otimes \delta_{D})\circ \delta_{D}.$ We say that a coalgebra $D$ is cocommutative if $\delta_{D}=c_{D,D}\co \delta_{D}$.

 If ${D} = (D, \varepsilon_{D},
 \delta_{D})$ and
$E = (E, \varepsilon_{E}, \delta_{E})$ are coalgebras,
a morphism  $f:D\rightarrow E$ in $\sf C$  is a coalgebra morphism if $(f\otimes f)\circ
\delta_{D} =\delta_{E}\circ f$ and $\varepsilon_{E}\circ f
=\varepsilon_{D}.$ 

If  $D$, $E$ are coalgebras in $\sf C$, the tensor product $D\otimes E$ is a
coalgebra in $\sf C$, where $\varepsilon_{D\otimes
E}=\varepsilon_{D}\otimes \varepsilon_{E}$ and $\delta_{D\otimes
E}=(D\otimes c_{D,E}\otimes E)\circ( \delta_{D}\otimes \delta_{E}).$ Note that, if $D$ and $E$ are cocommutative coalgebras, so is $D\ot E$.

Finally, if $A$ is an algebra, $C$ is a coalgebra and $f: C\rightarrow
A$, $g:C\rightarrow A$ are morphisms in $\sf C$, we define the
convolution product by $f\ast g=\mu_{A}\circ
(f\otimes g)\circ \delta_{C}$.

\section{Generalities on measurings and crossed products in a weak setting}

In  this section we resume some basic facts
about the general theory of weak crossed products in  ${\sf C}$,
introduced in \cite{mra-preunit}, particularized for measurings over a weak Hopf
algebra $H$. Firstly, we recall the notion of weak Hopf algebra, introduced in \cite{GK}, and sumarize some basic properties of these algebraic objects in a  monoidal setting. 

\begin{defin}
\label{wha} {\rm A weak bialgebra $H$  is an object in {\sf C} with an algebra structure $(H, \eta_{H},\mu_{H})$ and a coalgebra structure $(H, \varepsilon_{H},\delta_{H})$ such that the following axioms hold:
\begin{itemize}
\item[(a1)] $\delta_{H}\circ \mu_{H}=(\mu_{H}\otimes \mu_{H})\circ
\delta_{H\otimes H},$
\item[(a2)]$\varepsilon_{H}\circ \mu_{H}\circ
(\mu_{H}\otimes H)=(\varepsilon_{H}\otimes \varepsilon_{H})\circ
(\mu_{H}\otimes \mu_{H})\circ (H\otimes \delta_{H}\otimes H)$
\item[ ]$=(\varepsilon_{H}\otimes \varepsilon_{H})\circ
(\mu_{H}\otimes \mu_{H})\circ (H\otimes
(c_{H,H}\circ\delta_{H})\otimes H),$
\item[(a3)]$(\delta_{H}\otimes H)\circ \delta_{H}\circ
\eta_{H}=(H\otimes \mu_{H}\otimes H)\circ (\delta_{H}\otimes
\delta_{H})\circ (\eta_{H}\otimes \eta_{H})$ 
\item[ ]$=(H\otimes(\mu_{H}\circ c_{H,H})\otimes H)\circ (\delta_{H}\otimes
\delta_{H})\circ (\eta_{H}\otimes \eta_{H}).$
\end{itemize}

Moreover, if there exists a morphism $\lambda_{H}:H\rightarrow H$
in {\sf C} (called the antipode of $H$) satisfying
\begin{itemize}
\item[(a4)] $id_{H}\ast \lambda_{H}=((\varepsilon_{H}\circ
\mu_{H})\otimes H)\circ (H\otimes c_{H,H})\circ ((\delta_{H}\circ
\eta_{H})\otimes H),$
\item[(a5)] $\lambda_{H}\ast
id_{H}=(H\otimes(\varepsilon_{H}\circ \mu_{H}))\circ (c_{H,H}\otimes
H)\circ (H\otimes (\delta_{H}\circ \eta_{H})),$
\item[(a6)]$\lambda_{H}\ast id_{H}\ast
\lambda_{H}=\lambda_{H},$
\end{itemize}
we will say that the weak bialgebra is a weak Hopf algebra.

We say that $H$ is commutative, if it is commutative as algebra and we say that $H$ is cocommuative if it is cocommutative as  coalgebra.
}
\end{defin}

\begin{apart}
{\rm 
Let $H$ be a weak biagebra. For $n\geq 1$, we denote by $H^{\ot n}$ the $n$-fold tensor power $H\ot \cdots \ot H$. By $H^{\ot 0}$ we denote the unit object of ${\sf C}$, i.e., $H^{\ot 0}=K$. If $n\geq 2$,  $m_{H}^{\ot n}$ denotes the morphism $m_{H}^{\ot n}:H^{\ot n}\rightarrow H$ defined  by $m_{H}^{\ot 2}=\mu_{H}$ and by $m_{H}^{\ot 3}=m_{H}^{\ot 2}\co (H\ot \mu_{H}),\cdots, m_{H}^{\ot k}=m_{H}^{\ot (k-1)}\co (H^{k-2}\ot \mu_{H})$  for $k> 2$.  On the other hand,  with $\delta_{H^{\ot n}}$ we denote the coproduct defined in
the coalgebra $H^{\ot n}$. Then by the coassociativity of $\delta_{H}$ and the naturality of $c$, for $k=1,\cdots, n-1,$ 
$$ \delta_{H^{\ot n}}=\delta_{H^{\ot (n-k)}\ot H^{\ot k}}
$$
holds. By \cite[Proposition 2.10]{nmra5} we have that 
$$
\delta_{H}\co m_{H}^{\ot n}=(m_{H}^{\ot n}\ot m_{H}^{\ot n})\co \delta_{H^{\ot n}}.
$$

Finally, note that, if $H$ is cocommutative, then so is $H^{\ot n}$.
}
\end{apart}

\begin{apart}
{\rm 
For any weak bialgebra, if we define the morphisms $\Pi_{H}^{L}$ (target), $\Pi_{H}^{R}$
(source), $\overline{\Pi}_{H}^{L}$ and $\overline{\Pi}_{H}^{R}$ by
$$\Pi_{H}^{L}=((\varepsilon_{H}\circ \mu_{H})\otimes
H)\circ (H\otimes c_{H,H})\circ ((\delta_{H}\circ \eta_{H})\otimes
H),$$
$$\Pi_{H}^{R}=(H\otimes(\varepsilon_{H}\circ
\mu_{H}))\circ (c_{H,H}\otimes H)\circ (H\otimes (\delta_{H}\circ
\eta_{H})),$$
$$\overline{\Pi}_{H}^{L}=(H\otimes
(\varepsilon_{H}\circ \mu_{H}))\circ ((\delta_{H}\circ
\eta_{H})\otimes H),$$
$$\overline{\Pi}_{H}^{R}=((\varepsilon_{H}\circ \mu_{H})\otimes
H)\circ(H\otimes (\delta_{H}\circ \eta_{H})),$$
it is straightforward to show  that they are
idempotent and   the equalities
\begin{equation}
\label{ts1} \Pi_{H}^{L}\circ
\overline{\Pi}_{H}^{L}=\Pi_{H}^{L},\;\;\;\; \Pi_{H}^{L}\circ
\overline{\Pi}_{H}^{R}=\overline{\Pi}_{H}^{R},\;\;\;\;
\Pi_{H}^{R}\circ
\overline{\Pi}_{H}^{L}=\overline{\Pi}_{H}^{L},\;\;\;\;
\Pi_{H}^{R}\circ \overline{\Pi}_{H}^{R}=\Pi_{H}^{R},
\end{equation}
\begin{equation}
\label{ts2}
\overline{\Pi}_{H}^{L}\circ
\Pi_{H}^{L}=\overline{\Pi}_{H}^{L},\;\;\;\;
\overline{\Pi}_{H}^{L}\circ \Pi_{H}^{R}=\Pi_{H}^{R},\;\;\;\;
\overline{\Pi}_{H}^{R}\circ \Pi_{H}^{L}=\Pi_{H}^{L},\;\;\;\;
\overline{\Pi}_{H}^{R}\circ \Pi_{H}^{R}=\overline{\Pi}_{H}^{R}, 
\end{equation}
hold.

On the other hand, denote by $H_L$ the image of the target morphism $\Pi_{H}^{L}$ and let $p_{H}^{L}:H\rightarrow H_L$, $i_{H}^{L}:H_L\rightarrow H$ be the morphisms such that $i_{H}^{L}\co p_{H}^{L}=\Pi_{H}^{L}$ and $p_{H}^{L}\co i_{H}^{L}=id_{H_L}$. Then $$(H_L, \eta_{H_L}=p_{H}^{L}\co \eta_H, \mu_{H_L}=p_{H}^{L}\co \mu_H\co (i_{H}^{L}\ot i_{H}^{L}))$$ is an algebra and $$(H_L, \varepsilon_{H_L}=\varepsilon_{H}\co i_{H}^{L}, \delta_{H_L}=(p_{H}^{L}\ot p_{H}^{L})\co \delta_H\co i_{H}^{L})$$ is a coalgebra. The morphisms $\eta_{H_L}$, $\mu_{H_L}$, $\varepsilon_{H_L}$ and $\delta_{H_L}$ are the unique morphisms satisfying
\begin{equation}
\label{res1}
i_{H}^{L}\co \eta_{H_L}=\eta_{H}, \;\;\; i_{H}^{L}\co \mu_{H_L}=\mu_{H}\co (i_{H}^{L}\ot i_{H}^{L}),
\end{equation}
\begin{equation}
\label{res2}
\varepsilon_{H_L}\co p_{H}^{L}=\varepsilon_{H}, \;\;\; \delta_{H_L}\co p_{H}^{L}=(p_{H}^{L}\ot p_{H}^{L})\co \delta_{H}
\end{equation}
respectively.

For the morphisms target and source we have the following identities:
\begin{equation}
\label{mupi}\Pi^{L}_{H}\co \mu_{H}\co  (H\ot \Pi^{L}_{H})=\Pi^{L}_{H}\co \mu_{H},\;\;\;\Pi^{R}_{H}\co \mu_{H}\co ( \Pi^{R}_{H}\ot
H)=\Pi^{R}_{H}\co \mu_{H},
\end{equation}
\begin{equation}
\label{mupibar} \overline{\Pi}^{L}_{H}\co \mu_{H}\co (H\ot \overline{\Pi}^{L}_{H})=\overline{\Pi}^{L}_{H}\co \mu_{H},\;\;\;\overline{\Pi}^{R}_{H}\co \mu_{H}\co (\overline{\Pi}^{R}_{H}\ot H)=\overline{\Pi}^{R}_{H}\co \mu_{H},
\end{equation}
\begin{equation}
\label{d-pi} (H\ot \Pi^{L}_{H})\circ \delta_{H}\circ
\Pi^{L}_{H}=\delta_{H}\circ \Pi^{L}_{H},\;\;\;( \Pi^{R}_{H}\ot
H)\circ \delta_{H}\circ \Pi^{R}_{H}=\delta_{H}\circ \Pi^{R}_{H},
\end{equation}
\begin{equation}
\label{d-pibar} (H\ot \overline{\Pi}^{R}_{H})\circ \delta_{H}\circ
\overline{\Pi}^{R}_{H}=\delta_{H}\circ \overline{\Pi}^{R}_{H},\;\;\;(\overline{\Pi}^{L}_{H}\ot
H)\circ \delta_{H}\circ \overline{\Pi}^{L}_{H}=\delta_{H}\circ \overline{\Pi}^{L}_{H},
\end{equation}
\begin{equation}
\label{d-pi1} \mu_{H}\co (H\ot \Pi_{H}^{L})=((\varepsilon_{H}\co
\mu_{H})\ot H)\co (H\ot c_{H,H})\co (\delta_{H}\ot H),
\end{equation}
\begin{equation}
\label{d-pi2} (H\ot \Pi^{L}_{H})\co \delta_{H}= (\mu_{H}\ot
H)\co (H\ot c_{H,H})\co ((\delta_{H}\co \eta_{H})\ot H),
\end{equation}
\begin{equation}
\label{d-pi3} \mu_{H}\co (\Pi_{H}^{R} \ot H)=(H\ot
(\varepsilon_{H}\co \mu_{H}))\co (c_{H,H}\ot H)\co (H\ot \delta_{H})
\end{equation}
\begin{equation}
\label{d-pi4} (\Pi_{H}^{R} \ot H)\co \delta_{H}=(H\ot
\mu_{H})\co (c_{H,H}\ot H)\co (H\ot (\delta_{H}\co \eta_{H}))
\end{equation}
\begin{equation}
\label{d-pi11} \mu_{H}\co (\overline{\Pi}_{H}^{R}\ot
H)=((\varepsilon_{H}\co \mu_{H})\ot H)\co (H\ot \delta_{H}),
\end{equation}
\begin{equation}
\label{d-pi21} \mu_{H}\co (H\ot \overline{\Pi}_{H}^{L})=(H\ot
(\varepsilon_{H}\co \mu_{H}))\co (\delta_{H}\ot H),
\end{equation}
\begin{equation}
\label{d-pi31} (\overline{\Pi}_{H}^{L}\ot H)\co \delta_{H}=(H\ot
\mu_{H})\co ((\delta_{H}\co \eta_{H})\ot H),
\end{equation}
\begin{equation}
\label{d-pi41} (H\ot \overline{\Pi}_{H}^{R})\co
\delta_{H}=(\mu_{H}\ot H)\co (H\ot (\delta_{H}\co \eta_{H})),
\end{equation}
\begin{equation}
\label{edpi} 
\delta_H\co \eta_H=(\Pi_{H}^{R}\ot H)\co \delta_H\co \eta_H=(H\ot\Pi_{H}^{L})\co \delta_H\co \eta_H=
(H\ot \overline{\Pi}_{H}^{R})\co \delta_H\co \eta_H
\end{equation}
$$=(\overline{\Pi}_{H}^{L}\ot H)\co \delta_H\co \eta_H,$$
\begin{equation}
\label{empi} 
\varepsilon_H\co \mu_H=\varepsilon_H\co \mu_H\co(\Pi_{H}^{R}\ot H)=\varepsilon_H\co \mu_H\co (H\ot\Pi_{H}^{L})=
\varepsilon_H\co \mu_H\co (\overline{\Pi}_{H}^{R}\ot H)
\end{equation}
$$=\varepsilon_H\co \mu_H\co (H\ot \overline{\Pi}_{H}^{L}).$$

If $H$ is a weak Hopf algebra in {\sf C}, the antipode
$\lambda_{H}$ is unique, antimultiplicative, anticomultiplicative
and leaves the unit  and the counit invariant, i.e.:
\begin{equation}
\label{anti} \lambda_{H}\circ \mu_{H}=\mu_{H}\circ
(\lambda_{H}\otimes \lambda_{H})\circ
c_{H,H},\;\;\;\;\delta_{H}\circ \lambda_{H}=c_{H,H}\circ
(\lambda_{H}\otimes \lambda_{H})\circ \delta_{H},
\end{equation}
\begin{equation}
\label{iuc} \lambda_{H}\circ
\eta_{H}=\eta_{H},\;\;\;\;\varepsilon_{H}\circ
\lambda_{H}=\varepsilon_{H}.
\end{equation}

Also, it is straightforward to show that $\Pi_{H}^{L}$, $\Pi_{H}^{R}$ satisfy the equalities
\begin{equation}
\label{id} \Pi_{H}^{L}=id_{H}\ast
\lambda_{H},\;\;\Pi_{H}^{R}=\lambda_{H}\ast id_{H},\;\;\Pi_{H}^{L}\ast id_H=id_H=id_{H}\ast \Pi_{H}^{R},
\end{equation}
$$\Pi_{H}^{R}\ast \lambda_H=\lambda_H=\lambda_{H}\ast \Pi_{H}^{L}, $$
\begin{equation}
\label{id1} \Pi_{H}^{L}\ast \Pi_{H}^{L}=\Pi_{H}^{L}, \;\;\;\; \Pi_{H}^{R}\ast \Pi_{H}^{R}=\Pi_{H}^{R}
\end{equation}
and 
\begin{equation}
\label{pi-lamdda} 
\Pi_{H}^{L}=\lambda_H\co\overline{\Pi}_{H}^{L}=\overline{\Pi}_{H}^{R}\co \lambda_H,\;\;\;\; 
\Pi_{H}^{R}=\overline{\Pi}_{H}^{L}\co \lambda_H =\lambda_H\co\overline{\Pi}_{H}^{R}.
\end{equation}

Finally we also have
\begin{equation}\label{eq-id-bpi}
	\mu_H\circ c_{H,H}\circ (H\ot \bar{\Pi}_H^L)\circ \delta_H = id_H = \mu_H\circ c_{H,H}\circ ( \bar{\Pi}_H^R\ot H)\circ \delta_H.
\end{equation}

}

\end{apart}

Now we recall the notions of measuring, left weak $H$-module algebra,  and left $H$-module algebra.

\begin{defin}
\label{def}
{\rm Let $H$ be a weak Hopf algebra and let $B$ be an algebra. We say that the morphism $\varphi_{B}:H\ot B\rightarrow B$ is a measuring if 
\begin{itemize}
\item[(b1)] $\varphi_{B}\co (H\ot \mu_{B})=\mu_{B}\co (\varphi_{B}\ot \varphi_{B})\co (H\ot c_{H,B}\ot B)\co
(\delta_{H}\ot B\ot B).$
\end{itemize}
Set $u_1^{\varphi_B} = \varphi_B\circ (H\ot \eta_B).$ If $\varphi_{B}$ is a measuring satisfying 
\begin{itemize}
	\item[(b2)] $\varphi_{B}\co (\eta_{H}\ot B)=id_{B}$,
	\item[(b3)] $u_{1}^{\varphi_{B}}\co \mu_{H}=\varphi_{B}\co (H\ot
	u_{1}^{\varphi_{B}}),$
\end{itemize}
we will say that $(B,\varphi_{B})$ is a left weak $H$-module algebra.
If we replace (b3) by
$$\varphi_{B}\co (\mu_{H}\ot B)=\varphi_{B}\co (H\ot
	\varphi_{B})$$
we will say that $(B, \varphi_{B})$ is a left $H$-module algebra.
}
\end{defin}

If $(B,\varphi_{B})$ is a left weak  $H$-module algebra the following equivalent conditions are satisfied: 
\begin{equation}
	\label{B1}
	\varphi_{B}\co (\Pi^{L}_{H}\ot B)=\mu_{B}\co (u_{1}^{\varphi_{B}}\ot B),
\end{equation}
\begin{equation}
	\label{B2}
	\varphi_{B}\co (\overline{\Pi}^{L}_{H}\ot B)=\mu_{B}\co c_{B,B}\co
	(u_{1}^{\varphi_{B}}\ot B),
\end{equation}
\begin{equation}
	\label{B3}
	u_{1}^{\varphi_{B}}\co \Pi^{L}_{H}=u_{1}^{\varphi_{B}},
\end{equation}
\begin{equation}
	\label{B4}
	u_{1}^{\varphi_{B}}\co \overline{\Pi}^{L}_{H}=
	u_{1}^{\varphi_{B}},
\end{equation}
\begin{equation}
	\label{B5}
	u_{1}^{\varphi_{B}}\co \mu_{H}=u_{1}^{\varphi_{B}}\co \mu_{H}\co (H\ot \overline{\Pi}^{L}_{H}),
\end{equation}
\begin{equation}
	\label{B6}
	u_{1}^{\varphi_{B}}\co \mu_{H}=u_{1}^{\varphi_{B}}\co \mu_{H}\co (H\ot \Pi^{L}_{H}).
\end{equation}

Let $\varphi_{B}$ be a measuring and $n\geq
1$. With  $\varphi^{\ot n}_{B}$ we will denote the morphism
$$\varphi^{\ot n}_{B}:H^{\ot n}\ot B\rightarrow B$$
defined as $\varphi^{\ot 1}_{B}=\varphi_{B}$ and
$\varphi^{\ot n}_{B}=\varphi_{B}\co (H\ot \varphi^{\ot (n-1)}_{B})$.  

Note that, by (b1) of  Definition \ref{def}, proceeding by induction, it is easy to show that 
\begin{equation}
	\label{vp-n}
	\varphi^{\ot n}_{B}\co (H^{\ot n}\ot \mu_{B})=\mu_{B}\co
	(\varphi^{\ot n}_{B}\ot \varphi^{\ot n}_{B})\co (H^{\ot n}\ot c_{H^{\ot n},B}\ot B)\co
	(\delta_{H^{\ot n}}\ot B\ot B)
\end{equation}
holds.

For $n\geq 2$ we define the morphism 
\begin{equation}\label{def-un}
	u_{n}^{\varphi_{B}} = \varphi_{B}\co (m_{H}^{\ot n}\ot \eta_{B})=\varphi_{B}^{\ot (n-1)}\co (H^{\ot (n-1)}\ot u_{1}^{\varphi_{B}}).
\end{equation} 
Note that  
\begin{equation}
	\label{u}
	u_{n}^{\varphi_{B}}=u_{1}^{\varphi_{B}}\co m_{H}^{\ot n}
\end{equation}
and, by \cite[Proposition 2.11]{nmra5}, we have 
\begin{equation}
	\label{idem-n} 
	u_{n}^{\varphi_{B}}\ast u_{n}^{\varphi_{B}}=u_{n}^{\varphi_{B}}.
\end{equation}

\begin{defin}
{\rm  Let $H$ be a weak Hopf algebra and let $B$ be an algebra. For a measuring $\varphi_{B}$ and any morphism $\sigma:H\ot H\rightarrow B$, we define the morphisms
$$P_{\varphi_{B}}:H\ot B\rightarrow B\ot H, \;\;\;F_{\sigma}:H\ot H\rightarrow B\ot
H, \;\;\;G_{\sigma}:H\ot H\rightarrow H\ot
B, $$ by
\begin{equation}
\label{psiBH} P_{\varphi_{B}}=(\varphi_{B}\ot H)\co (H\ot c_{H,B})\co
(\delta_{H}\ot B),
\end{equation}
\begin{equation}
\label{sigmaBH} F_{\sigma}=(\sigma\ot \mu_{H})\co \delta_{H^{\ot 2}}
\end{equation}
and 
\begin{equation}
\label{sigmaHB} G_{\sigma}=(\mu_{H}\ot \sigma)\co \delta_{H^{\ot 2}}.
\end{equation}
}
\end{defin}

By \cite[Proposition 3.3]{nmra5} and some easy computations we have the following result.

\begin{prop}
\label{psi-prop} Let $H$ be a weak Hopf algebra and
let $\varphi_{B}:H\ot B\rightarrow B$ be a measuring. The morphism
$P_{\varphi_{B}}$ defined in (\ref{psiBH}) satisfies
\begin{equation}\label{wmeas-wcp}
(\mu_B\ot H)\co (B\ot P_{\varphi_{B}})\co (P_{\varphi_{B}}\ot B) =
P_{\varphi_{B}}\co (H\ot \mu_B).
\end{equation} 
The morphisms $\nabla_{B\ot H}^{\varphi_{B}}:B\ot H\rightarrow B\ot H$ and $\nabla_{H\ot B}^{\varphi_{B}}:H\ot B\rightarrow H\ot B$ defined by 
$$
\nabla_{B\ot H}^{\varphi_{B}} = (\mu_B\ot H)\co (B\ot P_{\varphi_{B}})\co (B\ot H\ot
\eta_B) \
$$
and 
$$
\nabla_{H\ot B}^{\varphi_{B}} = (H\ot \mu_B)\co (((H\ot \varphi_{A})\co (\delta_{H}\ot \eta_{A}))\ot A)
$$
 are idempotent. Also, we have the following identities:
\begin{equation}
\label{eta-psi} P_{\varphi_{B}}\co (H\ot \eta_{B})= (u_{1}^{\varphi_{B}}\ot H)\co \delta_{H}, \;\; 
\end{equation}
\begin{equation}
\label{e-psi} (B\ot \varepsilon_{H})\co P_{\varphi_{B}} = \varphi_{B},
\end{equation}
\begin{equation}
\label{nabla-psi} \nabla_{B\ot H}^{\varphi_{B}}\co P_{\varphi_{B}} = P_{\varphi_{B}},
\end{equation}
\begin{equation}
\label{nabla-nabla} \nabla_{B\ot H}^{\varphi_{B}}= (\mu_{B}\ot H)\co (B\ot ((u_{1}^{\varphi_{B}}\ot H)\co \delta_{H})), 
\end{equation}
\begin{equation}
\label{nabla-nabla2} \nabla_{H\ot B}^{\varphi_{B}} = (H\ot \mu_B)\co (((H\ot u_{1}^{\varphi_{B}})\co \delta_{H})\ot B), 
\end{equation}
\begin{equation}
\label{nabla-eta}  \nabla_{B\ot H}^{\varphi_{B}}\co (\eta_{B}\ot H)=(u_{1}^{\varphi_{B}}\ot H)\co \delta_{H},
\end{equation}
\begin{equation}
\label{nabla-eta1}  \nabla_{H\ot B}^{\varphi_{B}}\co (H\ot\eta_{B})=(H\ot u_{1}^{\varphi_{B}})\co \delta_{H},
\end{equation}
\begin{equation}
\label{left-module} (\mu_{B}\ot H)\co (B\ot \nabla_{B\ot H}^{\varphi_{B}})=(B\ot \nabla_{B\ot H}^{\varphi_{B}})\co (\mu_{B}\ot H),
\end{equation}
\begin{equation}
\label{right-module} (H\ot \mu_{B})\co (\nabla_{H\ot B}^{\varphi_{B}}\ot B)=(\nabla_{H\ot B}^{\varphi_{B}}\ot B)\co (H\ot \mu_{B}),
\end{equation}
\begin{equation}
\label{nabla-varep} (B\ot \varepsilon_{H})\co \nabla_{B\otimes
H}^{\varphi_{B}}=\mu_{B}\co (B\ot u_{1}^{\varphi_{B}}),
\end{equation}
\begin{equation}
\label{nabla-varep1} (\varepsilon_{H}\ot B)\co \nabla_{H\otimes
B}^{\varphi_{B}}=\mu_{B}\co (u_{1}^{\varphi_{B}}\ot B),
\end{equation}
\begin{equation}
\label{nabla-delta} (B\ot \delta_{H})\co \nabla_{B\ot
H}^{\varphi_{B}}=(\nabla_{B\ot H}^{\varphi_{B}}\ot H)\co (B\ot \delta_{H}),
\end{equation}
\begin{equation}
\label{nabla-delta1} (\delta_{H}\ot B)\co \nabla_{H\ot
B}^{\varphi_{B}}=(H\ot \nabla_{H\ot B}^{\varphi_{B}})\co (\delta_{H}\ot B),
\end{equation}
\begin{equation}
\label{nabla-fi} \mu_{B}\co (u_{1}^{\varphi_{B}}\ot \varphi_{B})\co (\delta_{H}\ot
B)=\varphi_{B},
\end{equation}
\begin{equation}
\label{nabla-fiAH} (\mu_{B}\ot H)\co (u_{1}^{\varphi_{B}}\ot P_{\varphi_{B}})\co
(\delta_{H}\ot B)=P_{\varphi_{B}},
\end{equation}
\begin{equation}
\label{eta-psi-varep} (B\ot \varepsilon_{H})\co P_{\varphi_{B}}\co
(H\ot \eta_{B})=u_{1}^{\varphi_{B}},
\end{equation}
\begin{equation}
\label{epc} (\mu_{B}\ot H)\co (u_{1}^{\varphi_{B}}\ot c_{H,B})\co (\delta_{H}\ot B)=(\mu_{B}\ot H)\co (H\ot c_{B,B})\co ((P_{\varphi_{B}}\co (H\ot \eta_{B}))\ot B).
\end{equation}
\end{prop}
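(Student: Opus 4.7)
The statement is a list of identities, so my plan is to invoke \cite[Proposition 3.3]{nmra5} for the entwining-type compatibility (\ref{wmeas-wcp}) and the idempotency of $\nabla_{B\ot H}^{\varphi_{B}}$ and $\nabla_{H\ot B}^{\varphi_{B}}$, and to derive the remaining items using only the definition (\ref{psiBH}), the measuring axiom (b1) of Definition \ref{def}, coassociativity and counitality of $\delta_{H}$, naturality of $c$, and unitality and associativity of $\mu_{B}$.

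I would start with (\ref{eta-psi}) and (\ref{e-psi}), since they feed into almost everything else. For (\ref{eta-psi}) I compose (\ref{psiBH}) on the right with $H\ot \eta_{B}$ and use naturality of $c$ to move $\eta_{B}$ past $c_{H,B}$; this turns $P_{\varphi_{B}}\co (H\ot \eta_{B})$ into $(u_{1}^{\varphi_{B}}\ot H)\co \delta_{H}$. For (\ref{e-psi}) I compose on the left with $B\ot \varepsilon_{H}$ and collapse $(H\ot \varepsilon_{H})\co \delta_{H}=id_{H}$ by counitality. Substituting (\ref{eta-psi}) and the analogous identity for the $H\ot B$ side into the definitions of the idempotents produces (\ref{nabla-nabla}) and (\ref{nabla-nabla2}); then (\ref{nabla-eta}), (\ref{nabla-eta1}) and (\ref{eta-psi-varep}) follow by specializing and using the unit axiom of $B$.

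The equation (\ref{nabla-psi}) is a direct application of (\ref{wmeas-wcp}): expanding $\nabla_{B\ot H}^{\varphi_{B}}\co P_{\varphi_{B}}$ as $(\mu_{B}\ot H)\co (B\ot P_{\varphi_{B}})\co (P_{\varphi_{B}}\ot B)\co (H\ot \eta_{B})$ and applying (\ref{wmeas-wcp}) gives $P_{\varphi_{B}}\co (H\ot \mu_{B})\co (H\ot B\ot \eta_{B})=P_{\varphi_{B}}$ by unitality. The bimodule-like equations (\ref{left-module}) and (\ref{right-module}) follow from (\ref{nabla-nabla}), (\ref{nabla-nabla2}) and associativity of $\mu_{B}$. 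Post-composing (\ref{nabla-nabla}), (\ref{nabla-nabla2}) with $B\ot \varepsilon_{H}$ or $\varepsilon_{H}\ot B$ and using counitality produces (\ref{nabla-varep}) and (\ref{nabla-varep1}), while post-composing with $B\ot \delta_{H}$ or $\delta_{H}\ot B$ and invoking coassociativity produces (\ref{nabla-delta}) and (\ref{nabla-delta1}).

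The remaining equations rest on the measuring condition (b1). Identity (\ref{nabla-fi}) is essentially (b1) composed on the right with $H\ot \eta_{B}\ot B$ and simplified by unitality of $B$. Identity (\ref{nabla-fiAH}) is the $H$-tensored analogue obtained by combining (\ref{nabla-fi}) with (\ref{psiBH}) and coassociativity. Finally, (\ref{epc}) is a bookkeeping computation comparing two rearrangements of $(\mu_{B}\ot H)\co (u_{1}^{\varphi_{B}}\ot c_{H,B})\co (\delta_{H}\ot B)$ via naturality of $c$ and (\ref{eta-psi}). The main obstacle throughout is not conceptual but notational: carefully tracking tensor positions, symmetries and coproducts, particularly in (\ref{nabla-psi}), (\ref{nabla-fiAH}) and (\ref{epc}), where (\ref{wmeas-wcp}), the measuring axiom and coassociativity must be applied in combination.
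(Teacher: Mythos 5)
Your proposal is correct and follows essentially the same route as the paper, which simply invokes \cite[Proposition 3.3]{nmra5} for the structural facts (the compatibility (\ref{wmeas-wcp}) and the idempotency of the two $\nabla$'s) and leaves the remaining identities as easy computations from (\ref{psiBH}), (b1) of Definition \ref{def}, (co)associativity, (co)unitality and the naturality of $c$. Your sketch fills in those computations in the expected way, and the derivations you outline (e.g.\ (\ref{nabla-psi}) from (\ref{wmeas-wcp}), (\ref{nabla-fi}) from (b1) composed with $H\ot\eta_B\ot B$) all check out.
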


On the other hand, by a similar proof to the one used in \cite[Proposition 3.4]{nmra5} we have the following proposition.
\begin{prop}
\label{sigma-prop1} Let $H$ be a  weak Hopf algebra, let 
$\varphi_{B}:H\ot B\rightarrow B$ be a measuring and let $\sigma:H\ot
H\rightarrow B$  be a morphism. The morphisms $F_{\sigma}$ and $G_{\sigma}$ 
defined in  (\ref{sigmaBH}) and  (\ref{sigmaHB})  satisfy the 
identities:
\begin{equation}
\label{delta-sigmaHB} (B\ot \delta_{H})\co
F_{\sigma}=(F_{\sigma}\ot \mu_{H})\co \delta_{H^{\ot 2}}, 
\end{equation}
\begin{equation}
\label{delta-sigmaHB1} (\delta_{H}\ot B)\co
G_{\sigma}=(\mu_{H}\ot G_{\sigma})\co \delta_{H^{\ot 2}}.
\end{equation}
\end{prop}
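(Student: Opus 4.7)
The plan is to prove both identities by a direct computation, using only the weak-bialgebra compatibility of product and coproduct (axiom (a1)) together with the coassociativity of the coproduct on $H^{\ot 2}$. Both identities are structurally symmetric, so establishing the first one and sketching the sign-symmetric variant for the second should be enough.

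For the first identity, I would unfold the left-hand side as
\[
(B\ot \delta_{H})\co F_{\sigma} \;=\; (\sigma \ot (\delta_{H}\co \mu_{H}))\co \delta_{H^{\ot 2}} ,
\]
and then invoke (a1) of Definition \ref{wha}, which gives $\delta_{H}\co \mu_{H} = (\mu_{H}\ot \mu_{H})\co \delta_{H^{\ot 2}}$, to rewrite this as
\[
(\sigma \ot \mu_{H}\ot \mu_{H})\co (H^{\ot 2}\ot \delta_{H^{\ot 2}})\co \delta_{H^{\ot 2}} .
\]
Here I use that the coproduct $\delta_{H^{\ot 2}}$ is the one on the coalgebra $H^{\ot 2}$, and that by definition it interacts with $\sigma \ot \mu_H$ exactly as in the statement of the proposition. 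The key move is then to apply coassociativity of $\delta_{H^{\ot 2}}$ (which is part of the coalgebra structure on $H^{\ot 2}$, recalled in the paragraph after Definition \ref{wha}) to replace $(H^{\ot 2}\ot \delta_{H^{\ot 2}})\co \delta_{H^{\ot 2}}$ by $(\delta_{H^{\ot 2}}\ot H^{\ot 2})\co \delta_{H^{\ot 2}}$. Reassociating the tensor factors yields
\[
((\sigma\ot \mu_{H})\co \delta_{H^{\ot 2}}\ot \mu_{H})\co \delta_{H^{\ot 2}} = (F_{\sigma}\ot \mu_{H})\co \delta_{H^{\ot 2}} ,
\]
which is the right-hand side of (\ref{delta-sigmaHB}).

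For the second identity the argument is formally the same, played on the other side: starting from $(\delta_{H}\ot B)\co G_{\sigma}= ((\delta_{H}\co \mu_{H})\ot \sigma)\co \delta_{H^{\ot 2}}$, apply (a1) and then coassociativity of $\delta_{H^{\ot 2}}$ (this time in the form $(\delta_{H^{\ot 2}}\ot H^{\ot 2})\co \delta_{H^{\ot 2}}=(H^{\ot 2}\ot \delta_{H^{\ot 2}})\co \delta_{H^{\ot 2}}$) to recover $(\mu_{H}\ot G_{\sigma})\co \delta_{H^{\ot 2}}$. There is no real obstacle: since $\sigma$ plays no structural role beyond being a morphism of the right type, no commutativity, cocommutativity or convolution-invertibility hypotheses enter; the whole content is the multiplicativity of $\delta_H$ plus coassociativity. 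The only point requiring mild care is the bookkeeping of tensor slots when moving between $\delta_{H^{\ot 2}}$ and $(\delta_H\ot \delta_H)$ together with the symmetry $c$, but this is purely notational and already encoded in the identity $\delta_{H^{\ot n}}=\delta_{H^{\ot(n-k)}\ot H^{\ot k}}$ recalled earlier.
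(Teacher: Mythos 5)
Your proof is correct and is exactly the direct computation the paper omits (it only cites \cite[Proposition 3.4]{nmra5}): unfold $(B\ot\delta_H)\co F_\sigma$, apply axiom (a1) to $\delta_H\co\mu_H$, and regroup via coassociativity of $\delta_{H^{\ot 2}}$, with the symmetric argument for $G_\sigma$. No hypotheses beyond the weak-bialgebra structure are needed, as you correctly observe.
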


Moreover, we also have the proposition:

\begin{prop}
\label{sigmaprop} Let $H$ be a  weak Hopf algebra, let 
$\varphi_{B}:H\ot B\rightarrow B$ be a measuring. The  equality 
\begin{equation}
\label{psiHB-1} 
\mu_{B}\co (B\ot u_{1}^{\varphi_{B}})\co P_{\varphi_{B}}=\varphi_{B}, 
\end{equation} 
holds.

Let $\sigma:H\ot H\rightarrow B$  be a morphism  and $u_2^{\varphi_B}$ be the morphism defined in (\ref{def-un}).  If $\sigma\ast u_{2}^{\varphi_{B}}=\sigma$, the equality 
\begin{equation}
\label{sigmaHB-1} 
\mu_{B}\co (B\ot u_{1}^{\varphi_{B}})\co F_{\sigma}=\sigma, 
\end{equation}
holds and, as a consequence, we have the following identities:
\begin{equation}
\label{sigmaHB-2} 
\nabla_{B\ot H}^{\varphi_{B}}\co F_{\sigma}=F_{\sigma}, 
\end{equation}
\begin{equation}
\label{sigmaHB-3} 
(B\ot \varepsilon_{H})\co F_{\sigma}=\sigma. 
\end{equation}

Moreover, if $\sigma:H\ot H\rightarrow B$  is a morphism satisfying (\ref{sigmaHB-2}) and (\ref{sigmaHB-3}), we have that 
$\sigma\ast u_{2}^{\varphi_{B}}=\sigma$.

\end{prop}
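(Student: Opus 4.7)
The plan is to establish the four identities in turn, each reducing to the measuring axiom together with a bit of coassociativity bookkeeping.

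For (\ref{psiHB-1}), I would expand $P_{\varphi_{B}}$ by its definition (\ref{psiBH}) and observe that
\begin{equation*}
\mu_{B}\co (B\ot u_{1}^{\varphi_{B}})\co P_{\varphi_{B}} = \mu_{B}\co (\varphi_{B}\ot \varphi_{B})\co (H\ot c_{H,B}\ot B)\co (\delta_{H}\ot B\ot \eta_{B}).
\end{equation*}
By the measuring axiom (b1) of Definition \ref{def} the right-hand side equals $\varphi_{B}\co (H\ot \mu_{B})\co (H\ot B\ot \eta_{B}) = \varphi_{B}$, since $\eta_{B}$ is the unit for $\mu_{B}$.

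The core of the remaining arguments is the identity
\begin{equation*}
\sigma\ast u_{2}^{\varphi_{B}} = \mu_{B}\co (B\ot u_{1}^{\varphi_{B}})\co F_{\sigma},
\end{equation*}
which follows by unfolding the convolution, rewriting $u_{2}^{\varphi_{B}}$ as $u_{1}^{\varphi_{B}}\co \mu_{H}$ via (\ref{u}), and collecting terms into the definition (\ref{sigmaBH}) of $F_{\sigma}$. With this identity in hand, (\ref{sigmaHB-1}) is immediate from the hypothesis $\sigma\ast u_{2}^{\varphi_{B}} = \sigma$.

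To derive (\ref{sigmaHB-2}), I would insert the explicit formula (\ref{nabla-nabla}) for $\nabla_{B\ot H}^{\varphi_{B}}$ and post-compose with $F_{\sigma}$. The second-level coproduct that $\nabla_{B\ot H}^{\varphi_{B}}$ introduces on the factor $\mu_{H}\co (h_{(2)}\ot k_{(2)})$ can then be moved, via the comultiplicativity of $\mu_{H}$ (axiom (a1)) and coassociativity of $\delta_{H}$, onto the factors appearing inside $\sigma$. The resulting composite factors as $\sigma\ast u_{2}^{\varphi_{B}}$ applied to the first tensor component, tensored with $\mu_{H}$ on the second, and the hypothesis collapses this back to $F_{\sigma}$. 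Identity (\ref{sigmaHB-3}) then follows as a corollary: apply $(B\ot \varepsilon_{H})$ to both sides of (\ref{sigmaHB-2}) and invoke (\ref{nabla-varep}) together with (\ref{sigmaHB-1}) to conclude that $(B\ot \varepsilon_{H})\co F_{\sigma} = \mu_{B}\co (B\ot u_{1}^{\varphi_{B}})\co F_{\sigma} = \sigma$.

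For the converse, assume (\ref{sigmaHB-2}) and (\ref{sigmaHB-3}) hold. Combining the key identity above with (\ref{nabla-varep}) in the form $\mu_{B}\co (B\ot u_{1}^{\varphi_{B}}) = (B\ot \varepsilon_{H})\co \nabla_{B\ot H}^{\varphi_{B}}$ yields
\begin{equation*}
\sigma\ast u_{2}^{\varphi_{B}} = (B\ot \varepsilon_{H})\co \nabla_{B\ot H}^{\varphi_{B}} \co F_{\sigma} = (B\ot \varepsilon_{H})\co F_{\sigma} = \sigma,
\end{equation*}
where the middle step uses (\ref{sigmaHB-2}) and the last step uses (\ref{sigmaHB-3}). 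The main obstacle throughout is the coassociativity bookkeeping in the derivation of (\ref{sigmaHB-2}); the remaining identities follow essentially from the measuring axiom, the definitions, and the properties already collected in Proposition \ref{psi-prop}.
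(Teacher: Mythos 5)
Your proposal is correct and follows essentially the same route as the paper: the key identity $\mu_{B}\co (B\ot u_{1}^{\varphi_{B}})\co F_{\sigma}=\sigma\ast u_{2}^{\varphi_{B}}$, the use of (\ref{delta-sigmaHB}) (which is exactly your (a1)-plus-coassociativity bookkeeping) for (\ref{sigmaHB-2}), and the chain through (\ref{nabla-varep}) for (\ref{sigmaHB-3}) and the converse all match the paper's argument. The only cosmetic difference is that for (\ref{psiHB-1}) you compute directly from the measuring axiom (b1), whereas the paper chains (\ref{nabla-varep}), (\ref{nabla-psi}) and (\ref{e-psi}); both are valid.
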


\begin{proof} 
Note that, (\ref{psiHB-1}) holds because 
$$\mu_{B}\co (B\ot u_{1}^{\varphi_{B}})\co P_{\varphi_{B}}\stackrel{{\scriptsize \blue  (\ref{nabla-varep})}}{=}(B\ot \varepsilon_{H})\co \nabla_{B\otimes
H}^{\varphi_{B}}\co P_{\varphi_{B}}\stackrel{{\scriptsize\blue  (\ref{nabla-psi})}}{=}(B\ot \varepsilon_{H})\co P_{\varphi_{B}}\stackrel{{\scriptsize\blue  (\ref{e-psi})}}{=}\varphi_{B}.$$

Trivially, 
\begin{equation}
\label{eqnew}
\mu_{B}\co (B\ot u_{1}^{\varphi_{B}})\co F_{\sigma}=\sigma\ast u_{2}^{\varphi_{B}}
\end{equation}
 and then we obtain (\ref{sigmaHB-1}). On the other hand, 
$$\nabla_{B\ot H}^{\varphi_{B}}\co F_{\sigma}\stackrel{{\scriptsize \blue  (\ref{delta-sigmaHB})}}{=}((\mu_{B}\co (B\ot u_{1}^{\varphi_{B}})\co F_{\sigma})\ot \mu_{H})\co \delta_{H^{\ot 2}}\stackrel{{\scriptsize\blue  (\ref{sigmaHB-1})}}{=}F_{\sigma}.$$

Therefore, 
$$(B\ot \varepsilon_{H})\co F_{\sigma}\stackrel{{\scriptsize \blue  (\ref{sigmaHB-2})}}{=}(B\ot \varepsilon_{H})\co \nabla_{B\ot H}^{\varphi_{B}}\co F_{\sigma}\stackrel{{\scriptsize \blue  (\ref{nabla-varep})}}{=}\mu_{B}\co (B\ot u_{1}^{\varphi_{B}})\co F_{\sigma}\stackrel{{\scriptsize\blue  (\ref{sigmaHB-1})}}{=}\sigma.$$

Finally, let $\sigma:H\ot H\rightarrow B$ be a morphism satisfying (\ref{sigmaHB-2}) and (\ref{sigmaHB-3}). Then, 
$$\sigma\stackrel{{\scriptsize \blue  (\ref{sigmaHB-3})}}{=}(B\ot \varepsilon_{H})\co F_{\sigma} \stackrel{{\scriptsize \blue  (\ref{sigmaHB-2})}}{=}(B\ot \varepsilon_{H})\co \nabla_{B\ot H}^{\varphi_{B}}\co F_{\sigma} \stackrel{{\scriptsize \blue  (\ref{nabla-varep})}}{=}\mu_{B}\co (B\ot u_{1}^{\varphi_{B}})\co F_{\sigma}\stackrel{{\scriptsize \blue  (\ref{eqnew})}}{=}\sigma\ast u_{2}^{\varphi_{B}}.$$
\end{proof}

\begin{apart}
{\rm Let $H$ be a  weak Hopf algebra, let  $\varphi_{B}:H\ot B\rightarrow B$ be a measuring and let $\sigma:H\ot H\rightarrow B$  be a morphism. In a similar way to what was proven in the previous proposition, we can ensure that the equality
\begin{equation}
\label{eqnew1}
\mu_{B}\co (u_{1}^{\varphi_{B}}\ot B)\co G_{\sigma}=u_{2}^{\varphi_{B}}\ast \sigma
\end{equation}
holds. As a consequence we can obtain the following result:
}
\end{apart}

\begin{prop}
\label{sigmaprop1} Let $H$ be a  weak Hopf algebra, let  $\varphi_{B}:H\ot B\rightarrow B$ be a measuring and let $\sigma:H\ot H\rightarrow B$  be a morphism. Let $u_{2}^{\varphi_{B}}$ be the morphism defined in (\ref{def-un}).  If $u_{2}^{\varphi_{B}}\ast \sigma=\sigma$, the equality 
\begin{equation}
\label{sigmaHB-11} 
\mu_{B}\co (u_{1}^{\varphi_{B}}\ot B)\co G_{\sigma}=\sigma, 
\end{equation}
holds and, as a consequence, we have the following identities:
\begin{equation}
\label{sigmaHB-21} 
\nabla_{H\ot B}^{\varphi_{B}}\co G_{\sigma}=G_{\sigma}, 
\end{equation}
\begin{equation}
\label{sigmaHB-31} 
(\varepsilon_{H}\ot B)\co G_{\sigma}=\sigma. 
\end{equation}

Moreover, if $\sigma:H\ot H\rightarrow B$  is a morphism satisfying (\ref{sigmaHB-21}) and (\ref{sigmaHB-31}), we have that 
$u_{2}^{\varphi_{B}}\ast \sigma=\sigma$.
\end{prop}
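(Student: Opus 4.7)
The plan is to mirror the proof of Proposition \ref{sigmaprop} with the roles of the two tensor factors swapped, relying on the already-established identity (\ref{eqnew1}) as the analogue of (\ref{eqnew}), and replacing (\ref{delta-sigmaHB}), (\ref{nabla-nabla}), (\ref{nabla-varep}), (\ref{nabla-psi}) by their right-handed counterparts (\ref{delta-sigmaHB1}), (\ref{nabla-nabla2}), (\ref{nabla-varep1}) and the tautology $\nabla_{H\ot B}^{\varphi_B}\co G_\sigma=G_\sigma$ that will fall out of the computation. Identity (\ref{sigmaHB-11}) is then immediate from (\ref{eqnew1}) combined with the hypothesis $u_2^{\varphi_B}\ast \sigma=\sigma$, so no real work is required for that first step.

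Next, to obtain (\ref{sigmaHB-21}) I would expand $\nabla_{H\ot B}^{\varphi_B}\co G_\sigma$ using (\ref{nabla-nabla2}) and then apply (\ref{delta-sigmaHB1}) to rewrite the result as
\[
\nabla_{H\ot B}^{\varphi_{B}}\co G_\sigma=(\mu_H\ot (\mu_B\co (u_1^{\varphi_B}\ot B)\co G_\sigma))\co \delta_{H^{\ot 2}},
\]
at which point (\ref{sigmaHB-11}) collapses the right factor back to $\sigma$, recovering exactly $G_\sigma=(\mu_H\ot \sigma)\co \delta_{H^{\ot 2}}$. For (\ref{sigmaHB-31}) I would compose $(\varepsilon_H\ot B)$ with (\ref{sigmaHB-21}) and then use (\ref{nabla-varep1}) to convert $(\varepsilon_H\ot B)\co \nabla_{H\ot B}^{\varphi_B}$ into $\mu_B\co (u_1^{\varphi_B}\ot B)$, invoking (\ref{sigmaHB-11}) once more to identify the outcome with $\sigma$.

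For the converse, the plan is to read the chain of equalities backwards exactly as in Proposition \ref{sigmaprop}: starting from $\sigma$, rewrite it as $(\varepsilon_H\ot B)\co G_\sigma$ by (\ref{sigmaHB-31}), insert (\ref{sigmaHB-21}) to bring in $\nabla_{H\ot B}^{\varphi_B}$, apply (\ref{nabla-varep1}) to replace $(\varepsilon_H\ot B)\co \nabla_{H\ot B}^{\varphi_B}$ by $\mu_B\co (u_1^{\varphi_B}\ot B)$, and finally invoke (\ref{eqnew1}) to conclude $\sigma=u_2^{\varphi_B}\ast \sigma$. The main obstacle, if any, is simply checking that the ``left-handed'' identities needed in the argument indeed have the symmetric form of those used for $F_\sigma$; since the excerpt already lists (\ref{delta-sigmaHB1}), (\ref{nabla-nabla2}), (\ref{nabla-varep1}) and provides (\ref{eqnew1}) explicitly, the symmetry is cost-free and the proof is essentially a transcription of the previous one with the tensor factors reversed.
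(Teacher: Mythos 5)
Your proposal is correct and follows exactly the route the paper intends: the paper explicitly derives (\ref{eqnew1}) and then states that Proposition \ref{sigmaprop1} follows "in a similar way" to Proposition \ref{sigmaprop}, which is precisely the left/right-swapped transcription you carry out using (\ref{delta-sigmaHB1}), (\ref{nabla-nabla2}) and (\ref{nabla-varep1}). All the individual steps check out, including the displayed identity for $\nabla_{H\ot B}^{\varphi_{B}}\co G_{\sigma}$ and the backwards reading for the converse.
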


\begin{rem}
{\rm 
By the previous propositions, \cite[Propositions 2.7 and 2.8]{Gu2-Val} and (\ref{ts1}), (\ref{ts2}), if $\sigma:H\ot H\rightarrow B$ satisfies that  $\sigma\ast u_{2}^{\varphi_{B}}=\sigma$, we have 
\begin{equation}
\label{sigpir}
\sigma\co (\mu_{H}\ot H)\co (H\ot \Pi_{H}^{R}\ot H)=\sigma\co (H\ot \mu_{H})\co (H\ot \Pi_{H}^{R}\ot H),
\end{equation}
\begin{equation}
\label{sigbpil}
\sigma\co (\mu_{H}\ot H)\co (H\ot \bar{\Pi}_{H}^{L}\ot H)=\sigma\co (H\ot \mu_{H})\co (H\ot \bar{\Pi}_{H}^{L}\ot H)
\end{equation}
and, if $u_{2}^{\varphi_{B}}\ast \sigma=\sigma$,  the equalities
\begin{equation}
\label{sigpil}
\sigma\co (\mu_{H}\ot H)\co (H\ot \Pi_{H}^{L}\ot H)=\sigma\co (H\ot \mu_{H})\co (H\ot \Pi_{H}^{L}\ot H),
\end{equation}
\begin{equation}
\label{sigbpir}
\sigma\co (\mu_{H}\ot H)\co (H\ot \bar{\Pi}_{H}^{R}\ot H)=\sigma\co (H\ot \mu_{H})\co (H\ot \bar{\Pi}_{H}^{R}\ot H)
\end{equation}
hold.
}
\end{rem}

\begin{apart}
{\rm 
Let $H$ be a weak Hopf algebra and let $B$ be an algebra. By the previous results, if $\varphi_{B}:H\ot B\rightarrow B$ is a measuring, and  $\sigma:H\ot H\rightarrow B$  is a morphism such that $\sigma\ast u_{2}^{\varphi_{B}}=\sigma$, we have a quadruple $${\Bbb B}_{H}=(B,H,\psi_{H}^{B}=P_{\varphi_{B}}, \sigma_{H}^{B}=F_{\sigma})$$ as the ones introduced in \cite{mra-preunit} to define the notion of weak crossed product. For the quadruple ${\mathbb B}_{H}$ there exists a product in $B\ot H$ defined by 
$$
\mu_{B\ot_{\varphi_{B}}^{\sigma}H}=(\mu_B\ot H)\co (\mu_B\ot F_{\sigma})\co (B\ot
P_{\varphi_{B}}\ot H)
$$
and let $\mu_{A\times_{\varphi_{B}}^{\sigma} H}$ be the product
$$ \mu_{B\times_{\varphi_{B}}^{\sigma} H} = p_{B\ot H}^{\varphi_{B}}\co\mu_{B\ot H}\co
(i_{B\ot H}^{\varphi_{B}}\ot i_{B\ot H}^{\varphi_{B}}),
$$
where $B\times_{\varphi_{B}}^{\sigma}  H$,
$i_{B\ot H}^{\varphi_{B}}:B\times_{\varphi_{B}}^{\sigma}  H\rightarrow B\ot H$ and $p_{B\ot H}^{\varphi_{B}}:B\ot H\rightarrow B\times_{\varphi_{B}}^{\sigma}  H$ denote the image, the injection, and the
projection associated to the factorization of $\nabla_{B\ot H}^{\varphi_{B}}$.

Following \cite{mra-preunit} we say that ${\Bbb B}_{H}$
satisfies the twisted condition if
\begin{equation}\label{twcp}
(\mu_B\ot H)\co (B\ot P_{\varphi_{B}})\co (F_{\sigma}\ot B) =
(\mu_B\ot H)\co (B\ot F_{\sigma})\co (P_{\varphi_{B}}\ot H)\co
(H\ot P_{\varphi_{B}})
\end{equation}
and   the  cocycle condition holds if
\begin{equation}\label{cwcp}
(\mu_B\ot H)\co (B\ot F_{\sigma}) \co (F_{\sigma}\ot H) =
(\mu_B\ot H)\co (B\ot F_{\sigma})\co (P_{\varphi_{B}}\ot H)\co
(H\ot F_{\sigma}).
\end{equation}

Note that, if ${\Bbb B}_{H}$
satisfies the twisted condition, by \cite[Proposition 3.4]{mra-preunit}, 
 and (\ref{sigmaHB-2}) we obtain
\begin{equation}\label{c11}
(\mu_B\otimes H)\circ (B\otimes F_{\sigma})\circ
(P_{\varphi_{B}}\otimes H)\circ (H\otimes \nabla_{B\otimes H}^{\varphi_{B}}) =
 (\mu_B\otimes H)\circ (B\otimes
F_{\sigma})\circ (P_{\varphi_{B}}\otimes H),
\end{equation}
\begin{equation}\label{aw1}
 (\mu_B\otimes H)\circ
(A\otimes F_{\sigma})\circ (\nabla_{B\otimes H}^{\varphi_{B}}\otimes H) =
(\mu_B\otimes H)\circ (B\otimes F_{\sigma}).
\end{equation}

Then, composing with $B\ot \varepsilon_{H}$ in (\ref{aw1}), and applying (\ref{sigmaHB-3}) we have 
\begin{equation}
\label{sigmaHB-4} 
\mu_{B}\co (B\ot \sigma)\co ( \nabla_{B\ot H}^{\varphi_{B}}\ot H)=\mu_{B}\co (B\ot \sigma). 
\end{equation}

Therefore, if ${\Bbb B}_{H}$
satisfies the twisted condition, the following equality
\begin{equation}
\label{sigmaHB-5} 
\mu_{B}\co (B\ot \sigma)\co ( (\nabla_{B\ot H}^{\varphi_{B}}\co (\eta_{B}\ot H))\ot H)= \sigma. 
\end{equation}
holds.
}
\end{apart}

\begin{teo}
\label{teo-twis-cocy}
Let $H$ be a  weak Hopf algebra, let 
$\varphi_{B}:H\ot B\rightarrow B$ be a measuring, and let $\sigma:H\ot
H\rightarrow B$  be a morphism such that $\sigma\ast u_{2}^{\varphi_{B}}=\sigma$. Let ${\Bbb B}_{H}$ be the associated quadruple. Then, the following assertions hold. 

\begin{itemize}
\item[(i)] The quadruple ${\Bbb
B}_{H}$ satisfies the twisted condition (\ref{twcp}) iff  $\sigma$ satisfies the twisted condition 
\begin{equation}
\label{t-sigma} \mu_{B}\co (B\ot \sigma)\co (P_{\varphi_{B}}\ot H)\co (H\ot P_{\varphi_{B}})=\mu_{B}\co (B\ot \varphi_{B})\co
(F_{\sigma}\ot B).
\end{equation}
\item[(ii)] The quadruple ${\Bbb
B}_{H}$ satisfies the cocycle condition (\ref{cwcp}) iff  $\sigma$ satisfies the cocycle condition 
\begin{equation}
\label{c-sigma} \mu_{B}\co (B\ot \sigma)\co (P_{\varphi_{B}}\ot H)\co (H\ot F_{\sigma})=\mu_{B}\co (B\ot \sigma)\co (F_{\sigma}\ot B).
\end{equation} 
\end{itemize}
\end{teo}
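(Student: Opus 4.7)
The plan is to prove (i) and (ii) in parallel via the same structural observation: each side of the quadruple condition (\ref{twcp}) or (\ref{cwcp}) factors as the corresponding side of the $\sigma$-condition tensored with an $H$-valued ``residue'' built from coproducts and $\mu_H$, with the two residues coinciding. The equivalence then splits into an easy forward direction via $B\ot\varepsilon_H$ and a converse that is immediate from the factorization.

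For part (i), denote by $L$ and $R$ the two sides of (\ref{twcp}) and by $A=\mu_B\co (B\ot \varphi_B)\co (F_\sigma\ot B)$ and $C=\mu_B\co (B\ot \sigma)\co (P_{\varphi_B}\ot H)\co (H\ot P_{\varphi_B})$ the two sides of (\ref{t-sigma}). The forward direction is obtained by composing (\ref{twcp}) with $B\ot \varepsilon_H$: identity (\ref{e-psi}) replaces $(B\ot\varepsilon_H)\co P_{\varphi_B}$ by $\varphi_B$ on the left, recovering $A$, while (\ref{sigmaHB-3}) replaces $(B\ot\varepsilon_H)\co F_\sigma$ by $\sigma$ on the right, recovering $C$. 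For the converse I would establish the factorizations
\[
L=(A\ot \mu_H)\co \Theta,\qquad R=(C\ot \mu_H)\co \Theta,
\]
with the common morphism $\Theta=(H\ot H\ot c_{H\ot H, B})\co (\delta_{H^{\ot 2}}\ot B)$. The $L$-identity follows from expanding $P_{\varphi_B}$ via (\ref{psiBH}), pulling $\delta_H$ through $F_\sigma$ via (\ref{delta-sigmaHB}), and using axiom (a1) to rewrite $\delta_H\co \mu_H$. The $R$-identity follows from expanding both $P_{\varphi_B}$'s via (\ref{psiBH}) and applying (a1) together with coassociativity to align with the same $\Theta$. Granted the factorizations, (\ref{t-sigma}) is precisely $A=C$, and $L=R$ follows at once.

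Part (ii) proceeds in the same way, with the role of $P_{\varphi_B}$ in (i) now played by a second copy of $F_\sigma$. The forward implication is again the counit composition, invoking (\ref{sigmaHB-3}) on each of the two occurrences of $F_\sigma$ in (\ref{cwcp}) to recover the two sides of (\ref{c-sigma}). For the converse I factor both sides of (\ref{cwcp}) as $(A'\ot m_H^{\ot 3})\co \Psi$ and $(C'\ot m_H^{\ot 3})\co \Psi$, where $A'$ and $C'$ denote the two sides of (\ref{c-sigma}) and $\Psi$ is built from $\delta_H\ot \delta_H\ot \delta_H$ followed by a shuffle segregating the three first-coproduct components from the second ones. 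Verification reuses (\ref{delta-sigmaHB}), (a1), and coassociativity exactly as in (i), but applied twice because both instances of $F_\sigma$ carry an internal $\mu_H$ that must be expanded via (a1). The main obstacle lies precisely in this bookkeeping: the iterated expansion of $\delta_H\co \mu_H$ after the inner $F_\sigma$ produces triple-coproduct expressions that must be carefully gathered to match the single common $\Psi$; however, no new ingredient beyond the identities already established in the earlier propositions is required.
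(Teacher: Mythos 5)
Your argument is correct and is essentially the route the paper takes (via its citation of \cite[Theorems 3.12 and 3.13]{nmra5}): the forward implications by composing with $B\ot\varepsilon_{H}$ and invoking (\ref{e-psi}) and (\ref{sigmaHB-3}) (the latter available because $\sigma\ast u_{2}^{\varphi_{B}}=\sigma$), and the converses by showing that both sides of (\ref{twcp}), resp.\ (\ref{cwcp}), factor through a common comultiplication-plus-residue morphism using (\ref{delta-sigmaHB}), axiom (a1) and coassociativity, so that each side is recovered from its $(B\ot\varepsilon_{H})$-component. The only cosmetic point is that the second factor on the right-hand side of (\ref{c-sigma}) must be read as $F_{\sigma}\ot H$ (a typo in the display), which your type-checking of the two sides implicitly assumes.
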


\begin{proof} The proof follows as in \cite[Theorems 3.12 and 3.13]{nmra5} because the cocommutativity condition for $H$ is not necessary and  (\ref{sigmaHB-3}) holds.

\end{proof}

If the twisted and the cocycle conditions hold, the product
$\mu_{B\ot_{\varphi_{B}}^{\sigma}H}$ is associative and normalized with respect to
$\nabla_{B\otimes H}^{\varphi_{B}}$, i.e., 
\begin{equation}
\label{norz}
\nabla_{B\otimes H}^{\varphi_{B}}\co \mu_{B\ot_{\varphi_{B}}^{\sigma}H}=\mu_{B\ot_{\varphi_{B}}^{\sigma}H}=\mu_{B\ot_{\varphi_{B}}^{\sigma}H}\co (\nabla_{B\otimes H}^{\varphi_{B}}\ot \nabla_{B\otimes H}^{\varphi_{B}}))
\end{equation}
and  we have
\begin{equation}
\label{otra-prop} \mu_{B\ot_{\varphi_{B}}^{\sigma}H}\co (\nabla_{B\otimes H}^{\varphi_{B}}\ot B\ot
H)=\mu_{B\ot_{\varphi_{B}}^{\sigma}H}= \mu_{B\ot_{\varphi_{B}}^{\sigma}H}\circ (B\otimes H\otimes
\nabla_{B\otimes H}^{\varphi_{B}}).
\end{equation}

Then, $\mu_{B\times_{\varphi_{B}}^{\sigma}H}$ is associative as
well (see \cite[Proposition 3.7]{mra-preunit}). Hence we define:

\begin{defin}\label{wcp-def}
{\rm If ${\Bbb B}_{H}$  satisfies
(\ref{twcp}) and (\ref{cwcp}) we say that $(B\ot H,
\mu_{B\ot_{\varphi_{B}}^{\sigma}H})$ is a weak crossed product.}
\end{defin}

The next natural question that arises is if it is possible to endow
$\mu_{B\times_{\varphi_{B}}^{\sigma} H}$ with a unit, and hence with an algebra structure. As we
recall in \cite{mra-preunit}, we need to use the notion
of preunit to obtain this unit. In our setting, if $\mu_{B\otimes_{\varphi_{B}}^{\sigma} H}$ is an associative product, by  \cite[Remark 2.4]{mra-preunit}, $\nu:K\rightarrow B\ot H$ is a preunit if 
\begin{equation}
\label{p-unit}
\mu_{B\ot_{\varphi_{B}}^{\sigma}H}\circ (B\otimes H\otimes \nu)=\mu_{B\ot_{\varphi_{B}}^{\sigma}H}\circ (\nu\otimes B\otimes H),\;\;\; \nu =
\mu_{B\ot_{\varphi_{B}}^{\sigma}H}\circ (\nu\otimes \nu).
\end{equation}

By \cite[Corollary 3.12]{mra-preunit}, we know that, if $\nu$ is a preunit for $(B\ot H,
\mu_{B\ot_{\varphi_{B}}^{\sigma}H})$,  the object $B\times_{\varphi_{B}}^{\sigma} H$ is an algebra with  product $\mu_{B\times_{\varphi_{B}}^{\sigma} H}$ and unit $\eta_{B\times_{\varphi_{B}}^{\sigma} H}=p_{B\otimes H}^{\varphi_{B}}\circ\nu$. 

The folllowing proposition is a tool to establish the conditions under which the morphism $\nu=\nabla_{B\otimes H}^{\varphi_{B}}\co (\eta_{B}\ot \eta_{H})$ is a preunit for $\mu_{B\ot_{\varphi_{B}}^{\sigma}H}$.

\begin{prop}
\label{tech-lem}
Let $H$ be a  weak Hopf algebra, let 
$\varphi_{B}:H\ot B\rightarrow B$ be a measuring, and let $\sigma:H\ot
H\rightarrow B$  be a morphism such that $\sigma\ast u_{2}^{\varphi_{B}}=\sigma$. Then, the following equalities hold. 
\begin{equation}
\label{sigma-eta-1} \sigma\co (\eta_{H}\ot H)=\sigma\co c_{H,H}\co(H\ot \overline{\Pi}_{H}^{L})\co \delta_{H}, 
\end{equation}
\begin{equation}
\label{sigma-eta-2} \sigma\co (H\ot \eta_{H})=\sigma\co (H\ot \Pi_{H}^{R})\co \delta_{H}.
\end{equation} 
\end{prop}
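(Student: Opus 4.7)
The key observation is identity (\ref{sigmaHB-3}), which by Proposition \ref{sigmaprop} holds under the hypothesis $\sigma\ast u_2^{\varphi_B}=\sigma$, yielding $(B\ot\varepsilon_H)\co F_\sigma=\sigma$. Unfolding $F_\sigma=(\sigma\ot\mu_H)\co\delta_{H\ot H}$, I would rewrite this as
$$
\sigma = (\sigma\ot(\varepsilon_H\co\mu_H))\co\delta_{H\ot H}.
$$
My plan is to specialise this identity by composing on the right with $H\ot\eta_H$ to obtain (\ref{sigma-eta-2}) and with $\eta_H\ot H$ to obtain (\ref{sigma-eta-1}), then to recognise the resulting expressions through the defining formulas of $\Pi_H^R$ and $\overline{\Pi}_H^L$.

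For (\ref{sigma-eta-2}), after composing with $H\ot\eta_H$ and expanding
$$\delta_{H\ot H}\co(H\ot\eta_H)=(H\ot c_{H,H}\ot H)\co(\delta_H\ot(\delta_H\co\eta_H)),$$
I notice that the block acting on the second tensor factor is exactly $(H\ot(\varepsilon_H\co\mu_H))\co(c_{H,H}\ot H)\co(H\ot(\delta_H\co\eta_H))$, which is the defining formula of $\Pi_H^R$. Hence the right-hand side collapses to $\sigma\co(H\ot\Pi_H^R)\co\delta_H$, as required.

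For (\ref{sigma-eta-1}), I would compose with $\eta_H\ot H$ instead and use the analogous expansion $\delta_{H\ot H}\co(\eta_H\ot H)=(H\ot c_{H,H}\ot H)\co((\delta_H\co\eta_H)\ot\delta_H)$. The block $(H\ot(\varepsilon_H\co\mu_H))\co((\delta_H\co\eta_H)\ot H)$, which is the defining formula of $\overline{\Pi}_H^L$, now acts on $k_{(2)}$ but lands in the left slot of $\sigma$; extracting an outer symmetry $c_{H,H}$ rearranges the result into $\sigma\co c_{H,H}\co(H\ot\overline{\Pi}_H^L)\co\delta_H$.

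The whole argument is essentially bookkeeping with tensor-factor placement; I do not anticipate a genuine obstacle. The only subtle point is spotting the correct outer $c_{H,H}$ in the first identity, which arises because the defining formula of $\overline{\Pi}_H^L$ places $\delta_H\co\eta_H$ in the opposite tensor slot from $\Pi_H^R$.
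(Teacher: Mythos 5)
Your argument is correct, and it is a genuinely more direct route than the one the paper takes. The paper's proof is essentially a citation: it derives (\ref{sigma-eta-1}) from the balancedness identity (\ref{sigbpil}) combined with (\ref{eq-id-bpi}), and (\ref{sigma-eta-2}) from (\ref{sigpir}) combined with $id_{H}=id_{H}\ast \Pi_{H}^{R}$ from (\ref{id}); the identities (\ref{sigpir}) and (\ref{sigbpil}) are themselves imported from Guccione--Guccione--Valqui. You instead exploit the hypothesis only through (\ref{sigmaHB-3}) of Proposition \ref{sigmaprop}, rewriting it as $\sigma=(\sigma\ot (\varepsilon_{H}\co \mu_{H}))\co \delta_{H\ot H}$ and specialising to $H\ot \eta_{H}$ and $\eta_{H}\ot H$; after expanding $\delta_{H\ot H}$ the resulting composites match $\sigma\co (H\ot \Pi_{H}^{R})\co \delta_{H}$ and $\sigma\co c_{H,H}\co (H\ot \overline{\Pi}_{H}^{L})\co \delta_{H}$ against the defining formulas of $\Pi_{H}^{R}$ and $\overline{\Pi}_{H}^{L}$. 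I checked both specialisations and they do close up as you claim. What your route buys is a self-contained computation needing only Proposition \ref{sigmaprop} and the raw definitions of the idempotents; what the paper's route buys is brevity here plus reusable intermediate identities. One point to make explicit in a full write-up: the ``block'' $(H\ot(\varepsilon_{H}\co\mu_{H}))\co(c_{H,H}\ot H)\co(H\ot(\delta_{H}\co\eta_{H}))$ does not act on a single tensor factor in isolation --- the $\varepsilon_{H}\co\mu_{H}$ leg entangles the second output of $\delta_{H}$ with the comultiplied unit --- so identifying it with $\Pi_{H}^{R}$ inside the argument of $\sigma$ requires sliding a morphism into $K$ across a tensor factor, which is legitimate by the interchange law and the naturality of $c$, but is a step rather than a mere relabelling; the same remark applies to extracting the outer $c_{H,H}$ in (\ref{sigma-eta-1}).
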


\begin{proof}  The first equality follows from equalities (\ref{sigbpil}) and (\ref{eq-id-bpi}), and the second one is consequence of (\ref{sigpir}) and (\ref{id}).
\end{proof}

\begin{defin}
{\rm  Let $H$ be a  weak Hopf algebra, let 
$\varphi_{B}:H\ot B\rightarrow B$ be a measuring, and let $\sigma:H\ot
H\rightarrow B$  be a morphism.  We say that $\sigma$
satisfies the normal condition if
\begin{equation}
\label{normal-sigma} \sigma\co (\eta_{H}\ot H)=\sigma\co (H\ot
\eta_{H})=u_{1}^{\varphi_{B}}.
\end{equation}

Therefore, if  $\sigma\ast u_{2}^{\varphi_{B}}=\sigma$, by Proposition \ref{tech-lem}, $\sigma$ is normal if and only if
$$\sigma\co c_{H,H}\co(H\ot \overline{\Pi}_{H}^{L})\co \delta_{H}=\sigma\co (H\ot \Pi_{H}^{R})\co \delta_{H}=u_{1}^{\varphi_{B}}.
$$
}

\end{defin}

\begin{teo}
\label{norma-sigma-prop3} Let $H$ be a  weak Hopf algebra, let 
$\varphi_{B}:H\ot B\rightarrow B$ be a measuring such that 
\begin{equation}
\label{nvar-eta}
\nabla_{B\ot H}^{\varphi_{B}}\co (B\ot \eta_{H})=P_{\varphi_{B}}\co (\eta_{H}\ot B),
\end{equation} 
and let $\sigma:H\ot
H\rightarrow B$  be a morphism such that $\sigma\ast u_{2}^{\varphi_{B}}=\sigma$. Let  ${\Bbb B}_{H}$ be the associated  quadruple and assume that ${\Bbb
B}_{H}$ satisfies the twisted and the cocycle conditions
(\ref{twcp}) and (\ref{cwcp}). Then, $\nu=\nabla_{B\otimes H}^{\varphi_{B}}\co (\eta_{B}\ot \eta_{H})$ is a preunit for the weak crossed
product associated to ${\Bbb B}_{H}$ if and only if
\begin{equation}
\label{sigma-preunit1} F_{\sigma}\co (\eta_{H}\ot
H)=F_{\sigma}\co (H\ot \eta_{H})=\nabla_{B\otimes H}^{\varphi_{B}}\co (\eta_{B}\ot
H).
\end{equation}

\end{teo}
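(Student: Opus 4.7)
The plan is to reduce each preunit axiom in (\ref{p-unit}) to a morphism identity in which only $F_\sigma\co(\eta_H\ot H)$ and $F_\sigma\co(H\ot \eta_H)$ remain as nontrivial factors. I would do this by expanding the product $\mu_{B\ot_{\varphi_B}^\sigma H}=(\mu_B\ot H)\co(\mu_B\ot F_\sigma)\co(B\ot P_{\varphi_B}\ot H)$ and then simplifying with the normalization identities (\ref{norz})--(\ref{otra-prop}), the compatibility (\ref{nvar-eta}), and the identities collected in Propositions \ref{psi-prop}--\ref{sigmaprop1}.

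For the sufficiency direction, assume $F_\sigma\co(\eta_H\ot H)=F_\sigma\co(H\ot \eta_H)=\nabla_{B\ot H}^{\varphi_B}\co(\eta_B\ot H)$. Evaluating either identity at $\eta_H$ yields $F_\sigma\co(\eta_H\ot \eta_H)=\nu$. For the idempotency axiom $\nu=\mu_{B\ot_{\varphi_B}^\sigma H}\co(\nu\ot \nu)$, I would replace $\nu\ot\nu$ by $\eta_B\ot\eta_H\ot\eta_B\ot\eta_H$ via (\ref{norz}), use (\ref{nvar-eta}) to rewrite the inner $P_{\varphi_B}\co(\eta_H\ot\eta_B)$ as $\nu$, substitute $F_\sigma\co(\eta_H\ot\eta_H)=\nu$, and finish using (\ref{otra-prop}). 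For the balancing axiom, I would expand both sides of $\mu_{B\ot_{\varphi_B}^\sigma H}\co(\nu\ot B\ot H)=\mu_{B\ot_{\varphi_B}^\sigma H}\co(B\ot H\ot \nu)$: after absorbing the $\eta_B$ factor of $\nu$ and moving the $\eta_H$ component through $P_{\varphi_B}$ by (\ref{nvar-eta}), the left side contains $F_\sigma\co(\eta_H\ot H)$ as a factor, while the right side symmetrically contains $F_\sigma\co(H\ot \eta_H)$; the hypotheses convert both expressions into the same normalized morphism.

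For the necessity direction, assume $\nu$ is a preunit. Precomposing the balancing axiom $\mu_{B\ot_{\varphi_B}^\sigma H}\co(\nu\ot B\ot H)=\mu_{B\ot_{\varphi_B}^\sigma H}\co(B\ot H\ot \nu)$ with $\eta_B\ot H$ in the free $B\ot H$ slot and simplifying the expanded compositions with (\ref{psiHB-1})--(\ref{eta-psi-varep}), (\ref{sigmaHB-2}) and (\ref{nvar-eta}) isolates $F_\sigma\co(\eta_H\ot H)$ on one side and $\nabla_{B\ot H}^{\varphi_B}\co(\eta_B\ot H)$ on the other. An analogous specialization, now swapping the role of the two $B\ot H$ slots and invoking (\ref{right-module}) together with the right-handed computation (\ref{eqnew1}) and Proposition \ref{sigmaprop1}, extracts the identity for $F_\sigma\co(H\ot \eta_H)$.

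The main obstacle is bookkeeping: $\mu_{B\ot_{\varphi_B}^\sigma H}$ interleaves a $P_{\varphi_B}$ between two copies of $\mu_B$ and an $F_\sigma$, so each preunit equation unfolds into a long composition, and the identities (\ref{norz}), (\ref{otra-prop}), (\ref{sigmaHB-2})--(\ref{sigmaHB-5}), and (\ref{nvar-eta}) must be applied in a carefully controlled order so that everything collapses to $F_\sigma\co(\eta_H\ot H)$ and $F_\sigma\co(H\ot \eta_H)$ respectively on the two sides.
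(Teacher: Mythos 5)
Your sufficiency direction is sound and essentially mirrors what the paper does, but your necessity direction has a genuine gap. The computation you describe — precomposing the balancing axiom $\mu_{B\ot_{\varphi_{B}}^{\sigma}H}\co (\nu\ot B\ot H)=\mu_{B\ot_{\varphi_{B}}^{\sigma}H}\co (B\ot H\ot \nu)$ with $\eta_{B}\ot H$ and simplifying via (\ref{nvar-eta}), (\ref{aw1}), (\ref{c11}), (\ref{eta-psi}) and (\ref{sigmaHB-2}) — reduces the left side to $F_{\sigma}\co (\eta_{H}\ot H)$ and the right side to $F_{\sigma}\co (H\ot \eta_{H})$. It does \emph{not} produce $\nabla_{B\ot H}^{\varphi_{B}}\co (\eta_{B}\ot H)$ on either side. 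So from the balancing axiom you only obtain $F_{\sigma}\co (\eta_{H}\ot H)=F_{\sigma}\co (H\ot \eta_{H})$, and the idempotency axiom $\nu=\mu_{B\ot_{\varphi_{B}}^{\sigma}H}\co(\nu\ot\nu)$ only adds the pointwise condition $F_{\sigma}\co(\eta_{H}\ot\eta_{H})=\nu$. Neither of these identifies the common value of the two restrictions of $F_{\sigma}$ with $\nabla_{B\otimes H}^{\varphi_{B}}\co (\eta_{B}\ot H)$, which is the actual content of (\ref{sigma-preunit1}).

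The missing ingredient is the identification of one-sided multiplication by the preunit with the idempotent $\nabla_{B\otimes H}^{\varphi_{B}}$, i.e., $\mu_{B\ot_{\varphi_{B}}^{\sigma}H}\co (B\ot H\ot \nu)=\nabla_{B\otimes H}^{\varphi_{B}}$. This is exactly what the paper imports wholesale by invoking \cite[Theorem 3.11]{mra-preunit}, which characterizes preunits of the (associative, normalized) weak crossed product by the three conditions (\ref{pre1-wcp})--(\ref{pre3-wcp}); the paper's proof then merely checks that (\ref{pre3-wcp}) holds automatically under (\ref{nvar-eta}) and that the left sides of (\ref{pre1-wcp}) and (\ref{pre2-wcp}) collapse to $F_{\sigma}\co (H\ot \eta_{H})$ and $F_{\sigma}\co (\eta_{H}\ot H)$. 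Proving that identification from scratch is not mere bookkeeping: it uses the associativity of $\mu_{B\ot_{\varphi_{B}}^{\sigma}H}$ (to see that left/right multiplication by $\nu$ are idempotent endomorphisms commuting with $\nabla_{B\otimes H}^{\varphi_{B}}$) together with the fact that $p_{B\ot H}^{\varphi_{B}}\co\nu$ is a genuine unit on $B\times_{\varphi_{B}}^{\sigma}H$ (\cite[Corollary 3.12]{mra-preunit}), from which $\mu_{B\ot_{\varphi_{B}}^{\sigma}H}\co (B\ot H\ot \nu)\co\nabla_{B\otimes H}^{\varphi_{B}}=\nabla_{B\otimes H}^{\varphi_{B}}$ and hence the desired equality follow. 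None of the identities you list ((\ref{psiHB-1})--(\ref{eta-psi-varep}), (\ref{right-module}), (\ref{eqnew1}), Proposition \ref{sigmaprop1}) supplies this step, so as written your necessity argument cannot close.
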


\begin{proof}  By \cite[Theorem 3.11]{mra-preunit} a morphism $\upsilon$ is a preunit for the associated weak crossed if and only if 
\begin{equation}\label{pre1-wcp}
 (\mu_B\otimes H)\circ (B\otimes F_{\sigma})\circ (P_{\varphi_{B}}\otimes H)\circ (H\otimes \upsilon) =
 \nabla_{B\otimes H}^{\varphi_{B}}\circ (\eta_B\otimes H),
 \end{equation}
\begin{equation}\label{pre2-wcp}
 (\mu_B\otimes H)\circ (B\otimes F_{\sigma})\circ (\upsilon\otimes H) = \nabla_{B\otimes H}^{\varphi_{B}}\circ (\eta_B\otimes H)
\end{equation}
and 
\begin{equation}\label{pre3-wcp}
(\mu_B\otimes H)\circ (B\otimes P_{\varphi_{B}})\circ (\upsilon\otimes B) =
(\mu_{B}\ot H)\co (B\ot \upsilon) 
\end{equation}
hold.  Then the theorem follows because, on the one hand
\begin{itemize}
\item[ ]$\hspace{0.38cm} (\mu_B\otimes H)\circ (B\otimes P_{\varphi_{B}})\circ (\nu\otimes B) $
\item [ ]$=  (\mu_B\otimes H)\circ (B\otimes P_{\varphi_{B}})\circ ((P_{\varphi_{B}}\circ (\eta_{H}\ot \eta_{B}))\otimes B)  $ {\scriptsize ({\blue  by the unit properties})}
\item [ ]$=P_{\varphi_{B}}\circ (\eta_{H}\ot B)$ {\scriptsize ({\blue by (\ref{wmeas-wcp}) and the unit properties})}
\item [ ]$=\nabla_{B\ot H}^{\varphi_{B}}\co (B\ot \eta_{H})   $ {\scriptsize ({\blue by (\ref{nvar-eta})})}
\item [ ]$=(\mu_{B}\ot H)\co (B\ot \nu)$ {\scriptsize ({\blue  by the unit properties})}.
\end{itemize} 
and, on the other hand, by the unit properties and (\ref{twcp}), we have 
$$ (\mu_B\otimes H)\circ (B\otimes F_{\sigma})\circ (P_{\varphi_{B}}\otimes H)\circ (H\otimes \nu) =F_{\sigma}\co (H\ot \eta_{H}).$$

Finally, by (\ref{aw1}),  the equality 
$$(\mu_B\otimes H)\circ (B\otimes P_{\varphi_{B}})\circ (\nu\otimes B) =F_{\sigma}\co (\eta_{H}\ot H)$$
holds. 
\end{proof} 

\begin{rem}
{\rm Note that if $\upsilon$ is a preunit for the weak crossed
product associated to ${\Bbb B}_{H}$, by  \cite[Theorem 3.11]{mra-preunit}, the equality (\ref{pre3-wcp}) holds for $\upsilon$. Then, 
\begin{equation}\label{pre4-wcp}
\nabla_{B\otimes H}^{\varphi_{B}}\co \upsilon=\upsilon
\end{equation}
holds. Therefore the preunit of a weak crossed product, if it exists, is unique because if $(B\ot H,
\mu_{B\ot_{\varphi_{B}}^{\sigma}H})$ admits two preunits $\upsilon_{1}$, $\upsilon_{2}$, we have $\eta_{B\times_{\varphi_{B}}^{\sigma} H}=p_{B\otimes H}^{\varphi_{B}}\circ \upsilon_{1}=p_{B\otimes H}^{\varphi_{B}}\circ \upsilon_{2}$ and then
$$\upsilon_{1}=\nabla_{B\otimes H}^{\varphi_{B}}\co \upsilon_{1}=\nabla_{B\otimes H}^{\varphi_{B}}\co \upsilon_{2}=\upsilon_{2}.$$
}
\end{rem}

As a consequence of Theorem \ref{norma-sigma-prop3}, and by Proposition \ref{tech-lem} we have: 

\begin{cor}
\label{norma-sigma-prop4} Let $H$ be a  weak Hopf algebra, let 
$\varphi_{B}:H\ot B\rightarrow B$ be a measuring, let $\sigma:H\ot
H\rightarrow B$  be a morphism and let  ${\Bbb B}_{H}$ be the associated  quadruple such that  the assumptions of Theorem \ref{norma-sigma-prop3} hold. Then, $\nu=\nabla_{B\otimes H}^{\varphi_{B}}\co (\eta_{B}\ot \eta_{H})$ is a preunit for the weak crossed
product associated to ${\Bbb B}_{H}$ if and only if $\sigma$
satisfies the normal condition (\ref{normal-sigma}).
\end{cor}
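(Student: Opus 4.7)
The plan is to combine Theorem \ref{norma-sigma-prop3} with Proposition \ref{tech-lem}. By the former, $\nu$ is a preunit if and only if condition (\ref{sigma-preunit1}) holds, so the task reduces to showing that (\ref{sigma-preunit1}) is equivalent to the normal condition (\ref{normal-sigma}).

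For the direction ``$\nu$ preunit $\Rightarrow$ $\sigma$ normal'', I would compose each of the three expressions in (\ref{sigma-preunit1}) with $B\ot \varepsilon_{H}$. On the two left-hand sides this produces $\sigma\co (\eta_{H}\ot H)$ and $\sigma\co (H\ot \eta_{H})$ by (\ref{sigmaHB-3}); on the right-hand side it produces $\mu_{B}\co (B\ot u_{1}^{\varphi_{B}})\co (\eta_{B}\ot H)=u_{1}^{\varphi_{B}}$ by (\ref{nabla-varep}) and the unit axioms of $B$. This is exactly the normal condition (\ref{normal-sigma}).

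For the converse, assume $\sigma$ is normal. Expand $F_{\sigma}\co (\eta_{H}\ot H)=(\sigma\ot \mu_{H})\co \delta_{H\ot H}\co (\eta_{H}\ot H)$ and apply (\ref{d-pi31}), which in Sweedler form rewrites $1_{(1)}\ot 1_{(2)}h$ as $\overline{\Pi}_{H}^{L}(h_{(1)})\ot h_{(2)}$. Together with the coassociativity of $\delta_{H}$, this recasts the expression as
\begin{equation*}
F_{\sigma}\co (\eta_{H}\ot H) = ((\sigma\co c_{H,H}\co (H\ot \overline{\Pi}_{H}^{L})\co \delta_{H})\ot H)\co \delta_{H}.
\end{equation*}
A symmetric computation, using (\ref{d-pi4}) in place of (\ref{d-pi31}), gives
\begin{equation*}
F_{\sigma}\co (H\ot \eta_{H}) = ((\sigma\co (H\ot \Pi_{H}^{R})\co \delta_{H})\ot H)\co \delta_{H}.
\end{equation*}
Now Proposition \ref{tech-lem} identifies the inner morphisms with $\sigma\co (\eta_{H}\ot H)$ and $\sigma\co (H\ot \eta_{H})$ respectively; by normality both equal $u_{1}^{\varphi_{B}}$, so each right-hand side becomes $(u_{1}^{\varphi_{B}}\ot H)\co \delta_{H}$, which is $\nabla_{B\ot H}^{\varphi_{B}}\co (\eta_{B}\ot H)$ by (\ref{nabla-eta}). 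This establishes (\ref{sigma-preunit1}).

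The main technical obstacle is the tensor-rearrangement step in the converse direction, where (\ref{d-pi31}) (respectively (\ref{d-pi4})) together with coassociativity of $\delta_{H}$ must be applied to absorb the weak-unit pair $(1_{(1)},1_{(2)})$ arising from $\delta_{H}\co \eta_{H}$; once this rearrangement is in place, invoking Proposition \ref{tech-lem} and (\ref{nabla-eta}) completes the argument.
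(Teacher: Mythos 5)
Your proof is correct and follows essentially the same route as the paper: the paper's own proof rests on exactly the two identities you derive for $F_{\sigma}\co (\eta_{H}\ot H)$ and $F_{\sigma}\co (H\ot \eta_{H})$ (obtained from (\ref{d-pi31}), (\ref{d-pi4}) and naturality of $c$), combined with Proposition \ref{tech-lem} and (\ref{nabla-eta}). Your additional explicit treatment of the forward direction via composition with $B\ot\varepsilon_{H}$ is a correct unpacking of what the paper leaves implicit.
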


\begin{proof}
Considering (\ref{nabla-eta}), the proof follows from  the equalities 
$$ F_{\sigma}\co (\eta_{H}\ot H)=((\sigma\co c_{H,H}\co(H\ot \overline{\Pi}_{H}^{L})\co \delta_{H})\ot H)\co \delta_{H},
$$
and
$$
F_{\sigma}\co (H\ot \eta_{H})=((\sigma\co (H\ot \Pi_{H}^{R})\co \delta_{H})\ot H)\co \delta_{H},
$$
 which hold by  (\ref{d-pi31}), (\ref{d-pi4}) and the naturality of $c$.
 \end{proof}

Therefore, as a consequence of the previous results,  we obtain the complete characterization of weak crossed products associated to a measuring. 

\begin{cor}
\label{crossed-product1}
Let $H$ be a  weak Hopf algebra, let 
$\varphi_{B}:H\ot B\rightarrow B$ be a measuring, let $\sigma:H\ot
H\rightarrow B$  be a morphism and let  ${\Bbb B}_{H}$ be the associated  quadruple such that  the assumptions of Theorem \ref{norma-sigma-prop3} hold. Then the
following statements are equivalent:
\begin{itemize}
\item[(i)] The product $\mu_{B\ot_{\varphi_{B}}^{\sigma}H}$ is associative with preunit
$\nu=\nabla_{B\otimes H}^{\varphi_{B}}\co (\eta_{B}\ot \eta_{H})$ and normalized
with respect to $\nabla_{B\otimes H}^{\varphi_{B}}.$
\item[(ii)] The morphism $\sigma$ satisfies the twisted condition (\ref{t-sigma}), the cocycle condition (\ref{c-sigma}) and  the normal condition (\ref{normal-sigma}).
\end{itemize}
\end{cor}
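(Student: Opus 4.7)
The proof is essentially a synthesis of the results established earlier in the section, so the plan is to assemble them rather than to produce substantial new computations.

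For the implication (ii) $\Rightarrow$ (i), I would start by applying Theorem \ref{teo-twis-cocy}: the assumption that $\sigma$ satisfies the twisted condition (\ref{t-sigma}) and the cocycle condition (\ref{c-sigma}) translates directly into the fact that the quadruple ${\Bbb B}_{H}$ satisfies the twisted condition (\ref{twcp}) and the cocycle condition (\ref{cwcp}), i.e., $(B\ot H,\mu_{B\ot_{\varphi_{B}}^{\sigma}H})$ is a weak crossed product in the sense of Definition \ref{wcp-def}. Then, invoking equations (\ref{norz}) and (\ref{otra-prop}) together with \cite[Proposition 3.7]{mra-preunit}, we obtain that $\mu_{B\ot_{\varphi_{B}}^{\sigma}H}$ is associative and normalized with respect to $\nabla_{B\otimes H}^{\varphi_{B}}$. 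Finally, since the hypotheses of Theorem \ref{norma-sigma-prop3} are in force, the normal condition (\ref{normal-sigma}) on $\sigma$ combined with Corollary \ref{norma-sigma-prop4} yields that $\nu=\nabla_{B\otimes H}^{\varphi_{B}}\co(\eta_{B}\ot\eta_{H})$ is a preunit.

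For the converse (i) $\Rightarrow$ (ii), I would run the same argument backwards. Once we know that $\mu_{B\ot_{\varphi_{B}}^{\sigma}H}$ is associative and normalized with respect to $\nabla_{B\otimes H}^{\varphi_{B}}$, the general theory of weak crossed products developed in \cite{mra-preunit} (specifically, the converse part of the characterization of associative normalized products in terms of the twisted and cocycle conditions on the quadruple) forces ${\Bbb B}_{H}$ to satisfy (\ref{twcp}) and (\ref{cwcp}). Applying Theorem \ref{teo-twis-cocy} in the other direction, $\sigma$ then satisfies (\ref{t-sigma}) and (\ref{c-sigma}). Since moreover $\nu$ is a preunit by assumption, Corollary \ref{norma-sigma-prop4} gives the normal condition (\ref{normal-sigma}) for $\sigma$.

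The step I expect to be the most delicate, although still formal, is the backwards use of Theorem \ref{teo-twis-cocy} and the associativity-to-twisted/cocycle direction in the (i) $\Rightarrow$ (ii) implication: one must be sure that the ``if and only if'' in Theorem \ref{teo-twis-cocy} is genuinely symmetric in our setting, and that the identities (\ref{sigmaHB-2}) and (\ref{sigmaHB-3}) (which required $\sigma\ast u_{2}^{\varphi_{B}}=\sigma$) are available throughout, so that the manipulations relating ${\Bbb B}_{H}$ and $\sigma$ close up cleanly. Once these points are recorded, the proof is just the concatenation described above; no additional computation with the axioms of the weak Hopf algebra is needed.
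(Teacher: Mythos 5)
Your proposal is correct and follows essentially the same route as the paper, which gives no explicit proof but presents the corollary precisely as the concatenation of Theorem \ref{teo-twis-cocy}, the associativity/normalization characterization from \cite{mra-preunit} (an ``if and only if'' there, so the backward direction you flag is indeed available), and Corollary \ref{norma-sigma-prop4}. The point you worried about is covered by the standing hypothesis $\sigma\ast u_{2}^{\varphi_{B}}=\sigma$, which guarantees (\ref{sigmaHB-2}) and (\ref{sigmaHB-3}) throughout.
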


\begin{rem}
\label{miracle}
{\rm Let $H$ be a  weak Hopf algebra. If $(B,\varphi_{B})$ is a left weak $H$-module algebra the equality (\ref{nvar-eta}) holds because:
\begin{itemize}
\item[ ]$\hspace{0.38cm} \nabla_{B\ot H}^{\varphi_{B}}\co (B\ot \eta_{H})$
\item [ ]$=  (\mu_{B}\ot H)\co (B\ot ((u_{1}^{\varphi_{B}}\ot H)\co \delta_{H}\co \eta_{H})) $ {\scriptsize ({\blue  by (\ref{nabla-nabla})})}
\item [ ]$= ((\varphi_{B}\co c_{B,H})\ot H)\co (B\ot ((\overline{\Pi}_{H}^{L}\ot H)\co \delta_{H}\co \eta_{H}))  $  {\scriptsize ({\blue by (\ref{B2})})}
\item [ ]$= ((\varphi_{B}\co c_{B,H})\ot H)\co (B\ot (\delta_{H}\co \eta_{H}))$ {\scriptsize  ({\blue by (\ref{edpi})})}
\item [ ]$=P_{\varphi_{B}}\co (\eta_{H}\ot B)$ {\scriptsize  ({\blue by naturality of $c$}).}
\end{itemize} 

Then, if we work with a left weak $H$-module algebra $(B,\varphi_{B})$, Corollary \ref{crossed-product1} holds without assuming (\ref{nvar-eta}).
}
\end{rem}

\section{Equivalent weak crossed products}

The general theory of equivalent weak crossed products was presented in \cite{equivalent}. In this section we remember the criterion obtained in \cite{equivalent} that characterises the equivalence between two weak crossed products and we give the translation of this criterion to the particular setting of weak crossed products induced by measurings. 

We shall start by introducing the notion of equivalence of weak crossed products induced by measurings.

\begin{defin}
\label{def-equiv}
{\rm  Let $H$ be a  weak Hopf algebra, let 
$\varphi_{B}, \;\phi_{B}:H\ot B\rightarrow B$ be  measurings, and let $\sigma, \;\tau:H\ot
H\rightarrow B$  be  morphisms such that $\sigma\ast u_{2}^{\varphi_{B}}=\sigma$, $\tau\ast u_{2}^{\phi_{B}}=\tau$. Assume that $\sigma$, $\tau$ satisfy the twisted
condition (\ref{t-sigma}) and the 2-cocycle condition
(\ref{c-sigma}), and suppose that $\nu$ is a preunit for $\mu_{B\ot_{\varphi_{B}}^{\sigma}H}$, and $u$ is a preunit for $\mu_{B\ot_{\phi_{B}}^{\tau}H}$. We say that $(B\ot H, \mu_{B\ot_{\varphi_{B}}^{\sigma}H})$ and $(B\ot H, \mu_{B\ot_{\phi_{B}}^{\tau}H})$ are equivalent weak crossed products if there is an isomorphism 
$$\varUpsilon :B\times_{\varphi_{B}}^{\sigma}H\rightarrow B\times_{\phi_{B}}^{\tau}H$$ of  algebras, left $B$-modules and right $H$-comodules,  where  the left actions are defined by $\varphi_{B\times_{\varphi_{B}}^{\sigma}H}= p_{B\otimes H}^{\varphi_{B}}\co (\mu_{B}\ot H)\co (B\ot i_{B\otimes H}^{\varphi_{B}})$, $\varphi_{B\times_{\phi_{B}}^{\tau}H}= p_{B\otimes H}^{\phi_{B}}\co (\mu_{B}\ot H)\co (B\ot i_{B\otimes H}^{\phi_{B}})$, and the right coactions are $\rho_{B\times_{\varphi_{B}}^{\sigma}H}=(p_{B\otimes H}^{\varphi_{B}}\ot H)\co (B\ot \delta_{H})\co i_{B\otimes H}^{\varphi_{B}}$, $\rho_{B\times_{\phi_{B}}^{\tau}H}=(p_{B\otimes H}^{\phi_{B}}\ot H)\co (B\ot \delta_{H})\co i_{B\otimes H}^{\phi_{B}}$.}
\end{defin}

In  our setting the general criterion \cite[Theorem 1.7]{equivalent}  that characterizes equivalent weak crossed products  admits the following formulation:

\begin{teo}
\label{Teo22}
Let $H$ be a  weak Hopf algebra, let 
$\varphi_{B}, \;\phi_{B}:H\ot B\rightarrow B$ be  measurings and let $\sigma, \;\tau:H\ot
H\rightarrow B$  be  morphisms such that $\sigma\ast u_{2}^{\varphi_{B}}=\sigma$, $\tau\ast u_{2}^{\phi_{B}}=\tau$. Assume that $\sigma$, $\tau$ satisfy the twisted
condition (\ref{t-sigma}), the 2-cocycle condition
(\ref{c-sigma}) and suppose that $\nu$ is a preunit for $\mu_{B\ot_{\varphi_{B}}^{\sigma}H}$ and $u$ is a preunit for $\mu_{B\ot_{\phi_{B}}^{\tau}H}$. Let $(B, H, P_{\varphi_{B}}, F_{\sigma})$ be the quadruple associated to $\varphi_{B}$, and $\sigma$ and let $(B, H, P_{\phi_{B}}, F_{\tau})$ be the one associated to $\phi_{B}$ and $\tau$.The following assertions are equivalent:
\begin{itemize}
\item[(i)] The weak crossed products  $(B\ot H, \mu_{B\ot_{\varphi_{B}}^{\sigma}H})$ and $(B\ot H, \mu_{B\ot_{\phi_{B}}^{\tau}H})$ are equivalent.

\item[(ii)] There exist two morphisms $T, S:B\ot H\rightarrow B\ot H,$
of left $B$-modules for the trivial action $\varphi_{B\ot H}=\mu_{B}\ot H$, and right $H$-modules for the  trivial coaction $\rho_{B\ot H}=B\ot \delta_{H}$,  satisfying the conditions 
\begin{equation}
\label{preserv-preunit}
T\co \nu=u,
\end{equation}
\begin{equation}
\label{preserv-product}
T\co \mu_{B\ot_{\varphi_{B}}^{\sigma}H}=\mu_{B\ot_{\phi_{B}}^{\tau}H}\co (T\ot T),
\end{equation}
\begin{equation}
\label{preserv-idemp}
S\co T=\nabla_{B\otimes H}^{\varphi_{B}},\;\; T\co S=\nabla_{B\otimes H}^{\phi_{B}}.
\end{equation}
\item[(iii)] There exist two morphisms  $\theta,\gamma:H\rightarrow B\ot H$  of right $H$-modules for the  trivial coaction satisfying the conditions 
\begin{equation}
\label{gamma-theta-idemp}
\theta=\nabla_{B\otimes H}^{\varphi_{B}}\co \theta,
\end{equation}
\begin{equation}
\label{gamma-theta-special}
(\mu_{B}\ot H)\co (B\ot \theta)\co \gamma=\nabla_{B\otimes H}^{\varphi_{B}}\co (\eta_{B}\ot H),
\end{equation}
\begin{equation}
\label{gamma-theta-psi}
P_{\phi_{B}}=(\mu_{B}\ot H)\co (\mu_{B}\ot \gamma)\co (B\ot P_{\varphi_{B}})\co (\theta\ot B),
\end{equation}
\begin{equation}
\label{gamma-theta-sigma}
F_{\tau}=(\mu_{B}\ot H)\co (B\ot \gamma)\co \mu_{B\ot_{\varphi_{B}}^{\sigma}H}\co (\theta\ot \theta) ,
\end{equation}
\begin{equation}
\label{gamma-theta-preunit}
u=(\mu_{B}\ot H)\co (B\ot \gamma)\co \nu.
\end{equation}

\end{itemize}

\end{teo}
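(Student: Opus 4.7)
The plan is two-fold. For the equivalence (i) $\iff$ (ii), I would directly specialize the general criterion \cite[Theorem 1.7]{equivalent} to our quadruples $(B, H, P_{\varphi_B}, F_\sigma)$ and $(B, H, P_{\phi_B}, F_\tau)$, observing that the trivial left $B$-action $\mu_B\ot H$ and trivial right $H$-coaction $B\ot \delta_H$ used in Definition \ref{def-equiv} are exactly those appearing in the general criterion. Thus (i) $\iff$ (ii) reduces to checking that the specialization is compatible with the algebra, left $B$-module and right $H$-comodule structures entering the definition of $\varUpsilon$, which it is by construction.

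For (ii) $\iff$ (iii), the key observation is that any morphism $T : B \ot H \to B \ot H$ which is both left $B$-linear for $\mu_B \ot H$ and right $H$-colinear for $B \ot \delta_H$ is completely determined by its restriction $\theta := T \co (\eta_B \ot H)$, via the formula $T = (\mu_B \ot H) \co (B \ot \theta)$, and such $\theta$ is automatically right $H$-colinear. Conversely, any right $H$-colinear $\theta : H \to B \ot H$ produces such a $T$ by the same formula, and by the unit axioms for $\mu_B$ these two constructions are mutually inverse. Analogously, $S$ corresponds bijectively to $\gamma := S \co (\eta_B \ot H)$. The plan is therefore to transcribe each of the three conditions in (ii) through this bijection into the three conditions of (iii).

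More precisely: condition (\ref{preserv-idemp}) restricted to $\eta_B \ot H$ yields both (\ref{gamma-theta-idemp}) (reflecting that $\theta$ lands in the image of $\nabla_{B\otimes H}^{\varphi_{B}}$) and (\ref{gamma-theta-special}); condition (\ref{preserv-preunit}), using $\nu = \nabla_{B\otimes H}^{\varphi_{B}} \co (\eta_B \ot \eta_H)$ together with the left $B$-linearity of $T$ and the uniqueness relation (\ref{pre4-wcp}), translates into (\ref{gamma-theta-preunit}); finally, condition (\ref{preserv-product}), evaluated on elements of the form $\eta_B \ot H \ot \eta_B \ot H$ and unfolded via $\mu_{B\ot_{\varphi_B}^\sigma H} = (\mu_B\ot H)\co (\mu_B\ot F_\sigma)\co (B\ot P_{\varphi_B}\ot H)$, decomposes into its ``action part'' (\ref{gamma-theta-psi}) and its ``cocycle part'' (\ref{gamma-theta-sigma}) once one separates the contributions of $P_{\varphi_B}, P_{\phi_B}$ from those of $F_\sigma, F_\tau$ by applying $B\ot \varepsilon_H$ or evaluating on suitable distinguished inputs, invoking the identities (\ref{eta-psi}), (\ref{e-psi}) and (\ref{sigmaHB-3}).

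The main obstacle will be this last step: verifying that (\ref{preserv-product}) is equivalent to the conjunction of (\ref{gamma-theta-psi}) and (\ref{gamma-theta-sigma}). The forward direction requires pre-composing with $\eta_B \ot H \ot \eta_B \ot H$ and carefully isolating the measuring part from the cocycle part; the reverse direction requires reconstructing the full product-preservation identity from those two restrictions, using the left $B$-bilinearity of both sides, the identities collected in Proposition \ref{psi-prop}, and the twisted condition (\ref{twcp}), which is precisely what binds $P_{\phi_B}$ and $F_\tau$ into a coherent associative product. All remaining implications reduce to routine manipulations involving the idempotents $\nabla_{B\otimes H}^{\varphi_{B}}$, $\nabla_{B\otimes H}^{\phi_B}$ and the preunits $\nu, u$.
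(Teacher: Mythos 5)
Your approach is essentially the paper's: the proof given there is simply to invoke the general criterion of \cite[Theorem 1.7]{equivalent} for the quadruples $(B,H,P_{\varphi_B},F_\sigma)$ and $(B,H,P_{\phi_B},F_\tau)$, with the only point singled out being the verification of the right $H$-comodule condition for $S$, $T$, $\theta$ and $\gamma$; your sketch of (ii)$\Leftrightarrow$(iii) just re-derives the corresponding part of that general theorem via the (correct) bijection between left $B$-linear, right $H$-colinear endomorphisms of $B\ot H$ and right $H$-colinear morphisms $H\rightarrow B\ot H$. One bookkeeping point: to match the formulas as stated in (iii) you need $\theta=S\co(\eta_B\ot H)$ and $\gamma=T\co(\eta_B\ot H)$, not the other way around --- e.g.\ (\ref{gamma-theta-preunit}) reads $u=(\mu_B\ot H)\co(B\ot\gamma)\co\nu=T\co\nu$, which is (\ref{preserv-preunit}) only with $\gamma$ attached to $T$, and likewise (\ref{gamma-theta-psi}) transports $P_{\varphi_B}$ to $P_{\phi_B}$ by pre-composing with the $S$-side and post-composing with the $T$-side. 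With that relabelling your translation of the three conditions goes through as you describe.
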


\begin{proof} The proof of this theorem is the one developed in \cite[Theorem 1.7]{equivalent}. It is not difficult to check the right $H$- comodule condition for the morphisms $S$, $T$, $\theta$ and $\gamma$.  We leave the details of the proof to the reader.
\end{proof}

\begin{prop}
\label{lemma1.16}
Let $H$ be a  weak Hopf algebra, let 
$\varphi_{B}, \;\phi_{B}:H\ot B\rightarrow B$ be  measurings and let $\sigma, \;\tau:H\ot
H\rightarrow B$  be  morphisms such that $\sigma\ast u_{2}^{\varphi_{B}}=\sigma$, $\tau\ast u_{2}^{\phi_{B}}=\tau$. Assume that $\sigma$, $\tau$ satisfy the twisted
condition (\ref{t-sigma}), the 2-cocycle condition
(\ref{c-sigma}) and suppose that $\nabla_{B\otimes H}^{\varphi_{B}}\co (\eta_{B}\ot \eta_{H})$ is a preunit for $\mu_{B\ot_{\varphi_{B}}^{\sigma}H}$ and $\nabla_{B\otimes H}^{\phi_{B}}\co (\eta_{B}\ot \eta_{H})$ is a preunit for $\mu_{B\ot_{\phi_{B}}^{\tau}H}$. If    $(B\ot H, \mu_{B\ot_{\varphi_{B}}^{\sigma}H})$ and  $(B\ot H, \mu_{B\ot_{\phi_{B}}^{\tau}H})$ are equivalent weak crossed products, there exists  morphisms $T, S:B\ot H\rightarrow B\ot H$ of left $B$-modules for the trivial action and right $H$-comodules for the trivial coaction such that  
\begin{equation}
\label{uT}
\nabla_{B\otimes H}^{\phi_{B}}\co (\eta_{B}\ot \eta_{H})=T\co (\eta_{B}\ot \eta_{H}), \;\;\; \nabla_{B\otimes H}^{\varphi_{B}}\co (\eta_{B}\ot \eta_{H})=S\co (\eta_{B}\ot \eta_{H}).
\end{equation}
\end{prop}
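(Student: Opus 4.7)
The approach is to invoke the equivalence (i)$\Leftrightarrow$(ii) of Theorem \ref{Teo22}. Since $(B\ot H, \mu_{B\ot_{\varphi_{B}}^{\sigma}H})$ and $(B\ot H, \mu_{B\ot_{\phi_{B}}^{\tau}H})$ are equivalent, I extract morphisms $T,S\colon B\ot H\to B\ot H$ of left $B$-modules for the trivial action and right $H$-comodules for the trivial coaction, satisfying (\ref{preserv-preunit}), (\ref{preserv-product}) and (\ref{preserv-idemp}). By the preunit hypothesis of the present proposition, $\nu=\nabla_{B\ot H}^{\varphi_{B}}\co(\eta_{B}\ot\eta_{H})$ and $u=\nabla_{B\ot H}^{\phi_{B}}\co(\eta_{B}\ot\eta_{H})$.

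The key auxiliary fact I need is the normalization $T\co \nabla_{B\ot H}^{\varphi_{B}}=T$ and $S\co \nabla_{B\ot H}^{\phi_{B}}=S$. These hold because the morphisms $T$ and $S$ provided by the proof of Theorem \ref{Teo22} (following \cite[Theorem 1.7]{equivalent}) are canonically built from the underlying isomorphism $\varUpsilon\colon B\times_{\varphi_{B}}^{\sigma}H\to B\times_{\phi_{B}}^{\tau}H$ as $T=i_{B\ot H}^{\phi_{B}}\co\varUpsilon\co p_{B\ot H}^{\varphi_{B}}$ and $S=i_{B\ot H}^{\varphi_{B}}\co\varUpsilon^{-1}\co p_{B\ot H}^{\phi_{B}}$; the normalization then follows from $p_{B\ot H}^{\varphi_{B}}\co i_{B\ot H}^{\varphi_{B}}=id$ and its analogue for $\phi_{B}$.

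With this in hand, the first identity in (\ref{uT}) follows from the chain $T\co(\eta_{B}\ot\eta_{H})=T\co\nabla_{B\ot H}^{\varphi_{B}}\co(\eta_{B}\ot\eta_{H})=T\co\nu=u=\nabla_{B\ot H}^{\phi_{B}}\co(\eta_{B}\ot\eta_{H})$, where we successively apply the normalization, the definition of $\nu$, the hypothesis (\ref{preserv-preunit}) and the definition of $u$. For the second identity, first observe that $S\co u=S\co T\co\nu=\nabla_{B\ot H}^{\varphi_{B}}\co\nu=\nu$ by (\ref{preserv-idemp}) and the idempotence of $\nabla_{B\ot H}^{\varphi_{B}}$. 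The same kind of argument then yields $S\co(\eta_{B}\ot\eta_{H})=S\co\nabla_{B\ot H}^{\phi_{B}}\co(\eta_{B}\ot\eta_{H})=S\co u=\nu=\nabla_{B\ot H}^{\varphi_{B}}\co(\eta_{B}\ot\eta_{H})$.

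The only substantive point is the normalization step; the bare relations (\ref{preserv-preunit})-(\ref{preserv-idemp}) only yield $T\co\nabla_{B\ot H}^{\varphi_{B}}=\nabla_{B\ot H}^{\phi_{B}}\co T$ (from $T=T\co S\co T$), and the stronger equality $T\co\nabla_{B\ot H}^{\varphi_{B}}=T$ really uses the explicit factorization of $T$ through $\varUpsilon$ supplied by the proof of Theorem \ref{Teo22}. Once that is acknowledged, the rest of the proof is purely bookkeeping with idempotents and preunits.
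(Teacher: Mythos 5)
Your proposal is correct and follows essentially the same route as the paper: both extract $T$ and $S$ from Theorem \ref{Teo22} via the explicit factorizations $T=i_{B\ot H}^{\phi_{B}}\co\varUpsilon\co p_{B\ot H}^{\varphi_{B}}$, $S=i_{B\ot H}^{\varphi_{B}}\co\varUpsilon^{-1}\co p_{B\ot H}^{\phi_{B}}$, and your normalization $T\co\nabla_{B\ot H}^{\varphi_{B}}=T$ is exactly the paper's relation $T\co S\co T=T$ combined with $S\co T=\nabla_{B\ot H}^{\varphi_{B}}$. The derivation of $S\co u=\nu$ and the subsequent bookkeeping also match the paper's argument.
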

\begin{proof}  If $(B\ot H, \mu_{B\ot_{\varphi_{B}}^{\sigma}H})$ and $(B\ot H, \mu_{B\ot_{\phi_{B}}^{\tau}H})$ are equivalent weak crossed products, there exists an  isomorphism of  algebras, left $B$-modules and right $H$-comodules
$\varUpsilon:B\times_{\varphi_{B}}^{\sigma}H\rightarrow B\times_{\phi_{B}}^{\tau}H$.  By (i)$\Rightarrow$ (ii) of the previous theorem there exists two morphisms  of left $B$-modules and right $H$-comodules $T, S:B\ot H\rightarrow B\ot H,$   defined by 
$$
T=i_{B\ot H}^{\phi_{B}}\co \varUpsilon\co p_{B\ot H}^{\varphi_{B}},\;\; S=i_{B\ot H}^{\varphi_{B}}\co \varUpsilon^{-1}\co p_{B\ot H}^{\phi_{B}} 
$$
and satisfying the conditions  (\ref{preserv-idemp}),
\begin{equation}
\label{TST}
T\co S\co T=T
\end{equation}
and
\begin{equation}
\label{STS}
S\co T\co S=S. 
\end{equation}

Then, 
$$\nabla_{B\otimes H}^{\phi_{B}}\co (\eta_{B}\ot \eta_{H})\stackrel{{\scriptsize \blue  (\ref{preserv-preunit})}}{=}T\co \nabla_{B\otimes H}^{\varphi_{B}}\co (\eta_{B}\ot \eta_{H})\stackrel{{\scriptsize \blue  (\ref{preserv-idemp})}}{=}T\co S\co T\co (\eta_{B}\ot \eta_{H})\stackrel{{\scriptsize \blue  (\ref{TST})}}{=} T\co (\eta_{B}\ot \eta_{H}).$$

On the other hand, if $\nu$ and $u$ are the preunits of $(B\ot H, \mu_{B\ot_{\varphi_{B}}^{\sigma}H})$ and $(B\ot H, \mu_{B\ot_{\phi_{B}}^{\tau}H})$, by  (\ref{preserv-preunit}) we have that $S\circ T\circ \nu=S\circ u$. Then, by (\ref{preserv-idemp})  we have that $\nabla_{B\ot H}^{\varphi_{B}}\circ \nu=S\circ u$ and applying (\ref{pre4-wcp}) we obtain that 
\begin{equation}
\label{preserv-preunit1}
S\circ u= \nu
\end{equation}
holds.  Therefore, in our particular case, we have 
$$\nabla_{B\otimes H}^{\varphi_{B}}\co (\eta_{B}\ot \eta_{H})\stackrel{{\scriptsize \blue  (\ref{preserv-preunit1})}}{=}S\co \nabla_{B\otimes H}^{\phi_{B}}\co (\eta_{B}\ot \eta_{H})\stackrel{{\scriptsize \blue  (\ref{preserv-idemp})}}{=}S\co T\co S\co (\eta_{B}\ot \eta_{H})\stackrel{{\scriptsize \blue  (\ref{STS})}}{=} S\co (\eta_{B}\ot \eta_{H}).$$

\end{proof}

\begin{teo}
\label{equiv-hh}
Let $H$ be a  weak Hopf algebra, let 
$\varphi_{B}, \;\phi_{B}:H\ot B\rightarrow B$ be  measurings, and let $\sigma, \;\tau:H\ot
H\rightarrow B$  be  morphisms such that $\sigma\ast u_{2}^{\varphi_{B}}=\sigma$, $\tau\ast u_{2}^{\phi_{B}}=\tau$. Assume that $\sigma$, $\tau$ satisfy the twisted
condition (\ref{t-sigma}), the 2-cocycle condition
(\ref{c-sigma}) and suppose that $\nu$ is a preunit for $\mu_{B\ot_{\varphi_{B}}^{\sigma}H}$, and $u$ is a preunit for $\mu_{B\ot_{\phi_{B}}^{\tau}H}$. 
The following assertions are equivalent:
\begin{itemize}
\item[(i)] The weak crossed products  $(B\ot H, \mu_{B\ot_{\varphi_{B}}^{\sigma}H})$ and $(B\ot H, \mu_{B\ot_{\phi_{B}}^{\tau}H})$ are equivalent.
\item[(ii)] There exists two morphisms $h,h^{-1}:H\rightarrow B$ such that
\begin{equation}
\label{h1} h^{-1}\ast h=u_{1}^{\varphi_{B}},
\end{equation}
\begin{equation}
\label{h2}
h\ast h^{-1}\ast h=h, \;\;\;  h^{-1}\ast h\ast h^{-1}=h^{-1},
\end{equation}
\begin{equation}
\label{h4} \phi_{B}=\mu_{B}\co (\mu_{B}\ot h^{-1})\co (h\ot P_{\varphi_{B}})\co (\delta_{H}\ot B), 
\end{equation}
\begin{equation}
\label{h5}
\tau=\mu_{B}\co (B\ot h^{-1})\co \mu_{B\ot_{\varphi_{B}}^{\sigma}H}\co (((h\ot H)\co \delta_{H})\ot 
((h\ot H)\co \delta_{H})).
\end{equation}
\begin{equation}
\label{h3}
u=((\mu_{B}\co (B\ot h^{-1}))\ot H)\co (B\ot \delta_{H})\co \nu, 
\end{equation}
\end{itemize}
\end{teo}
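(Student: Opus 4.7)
The plan is to derive the theorem as a direct translation of Theorem~\ref{Teo22}(i)$\Leftrightarrow$(iii): the pair $(h,h^{-1})$ will correspond to $(\theta,\gamma)$ via contraction of the second tensor factor with $\varepsilon_H$, and conversely. The key remark is that any right $H$-comodule morphism $\theta\colon H\to B\ot H$ for the trivial coaction $B\ot \delta_H$ is uniquely of the form $\theta=(h\ot H)\co \delta_H$ with $h=(B\ot \varepsilon_H)\co \theta$: composing the colinearity identity $(\theta\ot H)\co \delta_H=(B\ot \delta_H)\co \theta$ with $B\ot \varepsilon_H\ot H$ and using counity yields the claim. The analogous statement holds for $\gamma$ and $h^{-1}$.

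For (ii)$\Rightarrow$(i), define $\theta=(h\ot H)\co \delta_H$ and $\gamma=(h^{-1}\ot H)\co \delta_H$; right $H$-colinearity holds by coassociativity. The five conditions of Theorem~\ref{Teo22}(iii) then reduce mechanically: (\ref{gamma-theta-idemp}) becomes $h\ast u_1^{\varphi_B}=h$ via (\ref{nabla-nabla}), a consequence of (\ref{h1}) combined with the first equality of (\ref{h2}); (\ref{gamma-theta-special}) becomes $((h^{-1}\ast h)\ot H)\co \delta_H=(u_1^{\varphi_B}\ot H)\co \delta_H$, i.e.\ (\ref{h1}) together with (\ref{nabla-eta}); while (\ref{gamma-theta-psi}), (\ref{gamma-theta-sigma}) and (\ref{gamma-theta-preunit}) unfold via coassociativity directly into (\ref{h4}), (\ref{h5}) and (\ref{h3}) respectively. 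Theorem~\ref{Teo22}(iii)$\Rightarrow$(i) then yields the equivalence of weak crossed products.

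For (i)$\Rightarrow$(ii), apply Theorem~\ref{Teo22}(i)$\Rightarrow$(iii) to $\varUpsilon$ and set $h=(B\ot \varepsilon_H)\co \theta$, $h^{-1}=(B\ot \varepsilon_H)\co \gamma$, so that $\theta=(h\ot H)\co \delta_H$ and $\gamma=(h^{-1}\ot H)\co \delta_H$ by the initial remark. Composing $B\ot \varepsilon_H$ with (\ref{gamma-theta-idemp}), (\ref{gamma-theta-special}), (\ref{gamma-theta-psi}) and (\ref{gamma-theta-sigma}), and substituting the form of $\gamma$ into (\ref{gamma-theta-preunit})---using (\ref{nabla-varep}), (\ref{e-psi}) and (\ref{sigmaHB-3}) (the last one applicable because $\tau\ast u_2^{\phi_B}=\tau$)---produces in order: $h\ast u_1^{\varphi_B}=h$, (\ref{h1}), (\ref{h4}), (\ref{h5}) and (\ref{h3}); combining the first identity with (\ref{h1}) gives the first equality of (\ref{h2}).

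The main obstacle is the second equality of (\ref{h2}), $h^{-1}\ast h\ast h^{-1}=h^{-1}$, which is not directly encoded in (\ref{gamma-theta-idemp})--(\ref{gamma-theta-preunit}) because only $\theta$ carries an explicit idempotency. My plan to overcome it is by symmetry. Since the (i)$\Rightarrow$(ii) argument above established the \emph{first} equality of (\ref{h2}) without invoking the second, one may re-run it for the inverse equivalence $\varUpsilon^{-1}\colon B\times^{\tau}_{\phi_B}H\to B\times^{\sigma}_{\varphi_B}H$, which swaps the roles of $(\varphi_B,\sigma,\nu)$ and $(\phi_B,\tau,u)$. Taking the canonical choice $\theta=T\co (\eta_B\ot H)$, $\gamma=S\co (\eta_B\ot H)$ furnished by Proposition~\ref{lemma1.16}, with $T=i^{\phi_B}_{B\ot H}\co \varUpsilon\co p^{\varphi_B}_{B\ot H}$ and $S=i^{\varphi_B}_{B\ot H}\co \varUpsilon^{-1}\co p^{\phi_B}_{B\ot H}$, the swap interchanges $T\leftrightarrow S$ and hence $\theta\leftrightarrow \gamma$, so that the scalar pair derived in the swapped setup is $(h',(h')^{-1})=(h^{-1},h)$. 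The first equality of (\ref{h2}) applied to this swapped pair reads $h'\ast (h')^{-1}\ast h'=h'$, i.e.\ precisely $h^{-1}\ast h\ast h^{-1}=h^{-1}$, as required (and as a byproduct the swapped (\ref{h1}) gives $h\ast h^{-1}=u_1^{\phi_B}$).
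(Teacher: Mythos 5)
Your proposal is correct and follows essentially the same route as the paper: both directions are obtained by translating Theorem~\ref{Teo22}(iii) through the bijection $\theta\mapsto (B\ot\varepsilon_H)\co\theta$ between right $H$-comodule morphisms $H\to B\ot H$ and morphisms $H\to B$, with the individual identities matched exactly as you describe. Your symmetry argument for the second half of (\ref{h2}) is precisely the mechanism the paper uses as well — it records the swapped identities (\ref{gamma-theta-idemp-1})--(\ref{gamma-theta-preunit-1}) coming from $\varUpsilon^{-1}$ (equivalently from interchanging $T$ and $S$, hence $\theta$ and $\gamma$) and computes $h^{-1}\ast h\ast h^{-1}=h^{-1}$ directly from them.
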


\begin{proof} First we will prove that (i)$\Rightarrow$ (ii). By Theorem \ref{Teo22} there exists  two morphisms $T, S:B\ot H\rightarrow B\ot H$ of left $B$-modules for the trivial action and right $H$-comodules for trivial coaction defined as in the proof of the previous proposition and  satisfying the conditions (\ref{preserv-preunit}), (\ref{preserv-product}),  (\ref{preserv-idemp}), (\ref{TST}) and (\ref{STS}). Also,  $S$ preserves the preunit, i.e.,  (\ref{preserv-preunit1}) holds, and  $S$ is multiplicative, i.e., $S\co \mu_{B\ot_{\phi_{B}}^{\tau}H}=\mu_{B\ot_{\varphi_{B}}^{\sigma}H}\co (S\ot S)$ (\cite[(37)]{equivalent}). Also, by (ii)$\Rightarrow$(iii) of Theorem \ref{Teo22}, there exists two morphisms  $\theta, \gamma:H\rightarrow B\ot H$  of right $H$-comodules defined by 
\begin{equation}
\label{tg}
\theta=S\co (\eta_{B}\ot H), \;\;\; \gamma=T\co (\eta_{B}\ot H). 
\end{equation}

Then, 
\begin{equation}
\label{tg-1}
S=(\mu_{B}\ot H)\co (B\ot \theta), \;\;\; T= (\mu_{B}\ot H)\co (B\ot \gamma). 
\end{equation}

For $\theta$ and $\gamma$ the equalities (\ref{gamma-theta-idemp}),  (\ref{gamma-theta-special}),  (\ref{gamma-theta-psi}), (\ref{gamma-theta-sigma}), and  (\ref{gamma-theta-preunit}) hold. Moreover, 
\begin{equation}
\label{gamma-theta-idemp-1}
\gamma=\nabla_{B\otimes H}^{\phi_{B}}\co \gamma,
\end{equation}
\begin{equation}
\label{gamma-theta-special-1}
(\mu_{B}\ot H)\co (B\ot \gamma)\co \theta=\nabla_{B\otimes H}^{\phi_{B}}\co (\eta_{B}\ot H),
\end{equation}
\begin{equation}
\label{gamma-theta-psi-1}
P_{\varphi_{B}}=(\mu_{B}\ot H)\co (\mu_{B}\ot \theta)\co (B\ot P_{\phi_{B}})\co (\gamma\ot B),
\end{equation}
\begin{equation}
\label{gamma-theta-sigma-1}
F_{\sigma}=(\mu_{B}\ot H)\co (B\ot \theta)\co \mu_{B\ot_{\phi_{B}}^{\tau}H}\co (\gamma\ot \gamma) ,
\end{equation}
\begin{equation}
\label{gamma-theta-preunit-1}
\nu=(\mu_{B}\ot H)\co (B\ot \theta)\co u,
\end{equation}
also hold. Define 
\begin{equation}
\label{h-h-1}
h=(B\ot \varepsilon_{H})\co \theta,\;\; h^{-1}=(B\ot \varepsilon_{H})\co \gamma.
\end{equation}

Then, by the condition of right $H$-comodule morphism for $\theta$ and $\gamma$, we have 
\begin{equation}
\label{th-gh-1}
\theta=(h\ot H)\co \delta_{H},\;\;\;\gamma=(h^{-1}\ot H)\co \delta_{H}.
\end{equation}

The equality (\ref{h1}) holds because 
\begin{itemize}
\item[ ]$\hspace{0.38cm}h^{-1}\ast h$
\item [ ]$=(B\ot \varepsilon_{H})\co (\mu_{B}\ot H)\co (B\ot \theta)\co \gamma$ {\scriptsize ({\blue  by the comodule morphism condition for $\gamma$  and counit properties})}
\item [ ]$=(B\ot \varepsilon_{H})\co \nabla_{B\otimes H}^{\varphi_{B}}\co (\eta_{B}\ot H)$
 {\scriptsize ({\blue by (\ref{gamma-theta-special})})}
\item [ ]$=  u_{1}^{\varphi_{B}} $ {\scriptsize ({\blue by  counit properties}).}
\end{itemize} 

Also, 
\begin{itemize}
\item[ ]$\hspace{0.38cm}h\ast h^{-1}\ast h$
\item [ ]$=\mu_{B}\co (H\ot \varepsilon_{H}\ot \varphi_{B})\co (\theta\ot \eta_{B})$ {\scriptsize ({\blue by the comodule morphism condition for $\theta$ and (\ref{h1})})}
\item [ ]$=(B\ot \varepsilon_{H})\co \nabla_{B\otimes H}^{\varphi_{B}}\co \theta$
 {\scriptsize ({\blue by counit properties and naturality of $c$})}
\item [ ]$= (B\ot \varepsilon_{H})\co \theta$ {\scriptsize ({\blue by   (\ref{gamma-theta-idemp})})}
\item [ ]$=h $ {\scriptsize ({\blue by  counit properties})}
\end{itemize} 
and 
\begin{itemize}
\item[ ]$\hspace{0.38cm}h^{-1}\ast h\ast h^{-1}$
\item [ ]$=(\mu_{B}\ot \varepsilon_{H})\co (B\ot \gamma)\co P_{\varphi_{B}}\co (H\ot \eta_{B})$
{\scriptsize ({\blue by naturality of $c$})}
\item [ ]$=(\mu_{B}\ot \varepsilon_{H})\co (B\ot \gamma)\co \nabla_{B\otimes H}^{\varphi_{B}}\co (\eta_{B}\ot H)$ {\scriptsize ({\blue by  properties of $\eta_{B}$})}
\item [ ]$= (\mu_{B}\ot \varepsilon_{H})\co (B\ot \gamma)\co  (\mu_{B}\ot H)\co (B\ot \theta)\co \gamma$ {\scriptsize ({\blue by  (\ref{gamma-theta-special})})}
\item [ ]$=(\mu_{B}\ot \varepsilon_{H})\co (B\ot  (\nabla_{B\otimes H}^{\varphi_{B}}\co (\eta_{B}\ot H)))\co \gamma $ {\scriptsize ({\blue by  the associativity of $\mu_{B}$ and (\ref{gamma-theta-special-1})}).}
\item [ ]$=(B\ot \varepsilon_{H})\co \nabla_{B\otimes H}^{\varphi_{B}}\co \gamma$  {\scriptsize ({\blue by (\ref{left-module}) and  properties of $\eta_{B}$})}
\item [ ]$=h^{-1}$ {\scriptsize ({\blue by (\ref{gamma-theta-idemp-1})}).}
\end{itemize} 

The equality (\ref{h3}) follows directly from (\ref{gamma-theta-preunit}) because $\gamma$ is a morphism of right $H$-comodules.  Moreover, composing in (\ref{gamma-theta-psi}) with $B\ot  \varepsilon_{H}$, by (\ref{th-gh-1}) we prove (\ref{h4}). Finally,  (\ref{h5}) holds because 
\begin{itemize}
\item[ ]$\hspace{0.38cm}\tau $
\item [ ]$=  (B\ot \varepsilon_{H})\co F_{\tau}$ {\scriptsize ({\blue by  (\ref{sigmaHB-3}) for $\tau$})}
\item [ ]$=  (\mu_{B}\ot \varepsilon_{H})\co (B\ot \gamma)\co \mu_{B\ot_{\varphi_{B}}^{\sigma}H}\co (\theta\ot \theta)$ {\scriptsize ({\blue by  (\ref{gamma-theta-sigma})})}
\item [ ]$=  (\mu_{B}\ot h^{-1})\co \mu_{B\ot_{\varphi_{B}}^{\sigma}H}\co (((h\ot H)\co \delta_{H})\ot 
((h\ot H)\co \delta_{H}))  $
{\scriptsize ({\blue by (\ref{th-gh-1})}).}
\end{itemize} 

Conversely, to prove (ii)$\Rightarrow$(i), define 
$$\theta=(h\ot H)\co \delta_{H},\;\;\; \gamma=(h^{-1}\ot H)\co \delta_{H}.$$ 

Then $\theta$ and $\gamma$ are morphisms of right $H$-comodules, $h=(B\ot \varepsilon_{H})\co \theta$ and $h^{-1}=(B\ot \varepsilon_{H})\co \gamma.$ To prove the equivalence between $(B\ot H, \mu_{B\ot_{\varphi_{B}}^{\sigma}H})$ and $(B\ot H, \mu_{B\ot_{\phi_{B}}^{\tau}H})$, we must show that  (\ref{gamma-theta-idemp}),  (\ref{gamma-theta-special}), (\ref{gamma-theta-psi}), (\ref{gamma-theta-sigma}) and (\ref{gamma-theta-preunit}) hold. First note that,  (\ref{gamma-theta-preunit}) follows from  (\ref{h3}). Also, (\ref{gamma-theta-idemp}) holds because:
\begin{itemize}
\item[ ]$\hspace{0.38cm}\nabla_{B\otimes H}^{\varphi_{B}}\co \theta$
\item [ ]$=((h\ast u_{1}^{\varphi_{B}})\ot H)\co \delta_{H} $ {\scriptsize ({\blue by the coassociativity of $\delta_{H}$ and (\ref{nabla-nabla})})}
\item [ ]$=((h\ast h^{-1}\ast h)\ot H)\co \delta_{H}$ {\scriptsize ({\blue by (\ref{h1})})}
\item [ ]$=  \theta $ {\scriptsize ({\blue by  (\ref{h2})}).}
\end{itemize} 

On the other hand,   (\ref{gamma-theta-special})  follows by 
\begin{itemize}
\item[ ]$\hspace{0.38cm}(\mu_{B}\ot H)\co (B\ot \theta)\co \gamma $
\item [ ]$=((h^{-1}\ast h)\ot H)\co \delta_{H}$ {\scriptsize ({\blue by  coassociativity of  $\delta_{H}$})}
\item [ ]$=(u_{1}^{\varphi_{B}}\ot H)\co \delta_{H}$ {\scriptsize ({\blue by  (\ref{h1})})}
\item [ ]$=\nabla_{B\otimes H}^{\varphi_{B}}\co (\eta_{B}\ot H)$ {\scriptsize ({\blue by (\ref{nabla-eta})}).}
\end{itemize} 
and (\ref{gamma-theta-psi})  follows by 
\begin{itemize}
\item[ ]$\hspace{0.38cm}P_{\phi_{B}}$
\item [ ]$= ((\mu_{B}\circ (\mu_{B}\ot h^{-1}))\ot H)\co  (h\ot ((P_{\varphi_{B}}\ot H)\co (H\ot c_{H,B})\co (\delta_{H}\ot B)))\co  (\delta_{H}\ot B) $ {\scriptsize ({\blue by  (\ref{h4}) and }}
\item[ ]$\hspace{0.38cm}${\scriptsize {\blue coassociativity of $\delta_{H}$})}
\item [ ]$= (\mu_{B}\ot H)\co (\mu_{B}\ot \gamma)\co (B\ot P_{\varphi_{B}})\co (\theta\ot B)$ {\scriptsize ({\blue by  coassociativity of $\delta_{H}$ and the naturality of $c$}).}
\end{itemize} 

Finally, (\ref{gamma-theta-sigma}) holds because 
\begin{itemize}
\item[ ]$\hspace{0.38cm}F_{\tau}$
\item [ ]$=((\mu_{B}\co (B\ot h^{-1})\co \mu_{B\ot_{\varphi_{B}}^{\sigma}H}\co (((h\ot H)\co \delta_{H})\ot  ((h\ot H)\co \delta_{H})))\ot \mu_{H})\co \delta_{H^{\ot 2}} $ {\scriptsize ({\blue by (\ref{h5})})}

\item [ ]$= ((\mu_{B}\co (\mu_{B}\ot h^{-1}))\ot H)\co (\mu_{B}\ot ((F_{\sigma}\ot \mu_{H})\co \delta_{H^{\ot 2}}))\co (B\ot P_{\varphi_{B}}\ot H)\co (\theta\ot \theta)$ {\scriptsize ({\blue by  coassociativity}}
\item[ ]$\hspace{0.38cm}${\scriptsize {\blue of $\delta_{H}$ and the naturality of $c$})}
\item [ ]$= (\mu_{B}\ot H)\co (\mu_{B}\ot \gamma)\co (\mu_{B}\ot  F_{\sigma})\co (B\ot P_{\varphi_{B}}\ot H)\co (\theta\ot \theta) $ {\scriptsize ({\blue by (\ref{delta-sigmaHB})})}
\item [ ]$= (\mu_{B}\ot H)\co (B\ot \gamma)\co \mu_{B\ot_{\varphi_{B}}^{\sigma}H}\co (\theta\ot \theta)$ {\scriptsize ({\blue by the definition  of $\mu_{B\ot_{\varphi_{B}}^{\sigma}H}$}).}
\end{itemize} 

\end{proof}

\begin{rem}
\label{hh-1}
{\rm 
Note that, in the conditions of (ii) of Theorem \ref{equiv-hh}, composing with $H\ot \eta_{B}$ in 
(\ref{h4}), we obtain the identity
\begin{equation}
\label{h6}
h\ast h^{-1}=u_{1}^{\phi_{B}}.
\end{equation}
}
\end{rem}

\begin{defin}
{\rm 
\label{g-trans} Let $H$ be a  weak Hopf algebra and let $\varphi_{B}:H\ot B\rightarrow B$ be a measuring. We will say that the pair of morphisms $h,h^{-1}:H\rightarrow B$ is a gauge trasformation for $\varphi_{B}$ if they satisfy (\ref{h1}) and (\ref{h2}).

By the previous Theorem \ref{equiv-hh} we know that, under suitable conditions, equivalent weak crossed products are related by gauge transformations. After the next discussion, we should be able to secure that the converse is also true.
}
\end{defin}

\begin{apart}
\label{basic}
{\rm 
Let $H$ be a  weak Hopf algebra and let $\varphi_{B}:H\ot B\rightarrow B$ be a measuring. Let $(h,h^{-1})$ be a gauge transformation for $\varphi_{B}$ and let $\sigma:H\ot H\rightarrow B\ot H$ be a morphism satisfying the identity $\sigma\ast u_{2}^{\varphi_{B}}=\sigma$, the twisted condition (\ref{t-sigma}) and the 2-cocycle condition (\ref{c-sigma}). Suppose that $\nu$ is a preunit for the associated weak crosse product $\mu_{B\ot_{\varphi_{B}}^{\sigma}H}$.

Define $\theta$ and $\gamma$ as in (\ref{th-gh-1}), i.e., $\theta=(h\ot H)\co \delta_{H}$ and $\gamma=(h^{-1}\ot H)\co \delta_{H}$. Then $\theta$ and $\gamma$ are morphisms of right $H$-comodules. Also, by (\ref{nabla-nabla}), the coassociativity of $\delta_{H}$ and the condition of gauge transformation, we have that $\nabla_{B\ot H}^{\varphi_{B}}\co \theta=\theta$ and then  (\ref{gamma-theta-idemp}) holds. By similar arguments and the associativity of $\mu_{B}$ we obtain the equality 
\begin{equation}
\label{tn2}
(\mu_{B}\ot H)\co (B\ot \gamma)\co \nabla_{B\ot H}^{\varphi_{B}}=(\mu_{B}\ot H)\co (B\ot \gamma).
\end{equation}

Moreover, by the coassociativity of $\delta_{H}$ and the condition of gauge transformation we have 
$$
(\mu_{B}\ot H)\co (B\ot \theta)\co \gamma=(u^{\varphi_{B}}_1\ot H)\co \delta_{H}=\nabla_{B\ot H}\ot (\eta_{B}\ot H)
$$
and then (\ref{gamma-theta-special}) holds. As a consequence, we obtain
\begin{equation}
\label{tn4}
(\mu_{B}\ot H)\co (B\ot ((\mu_{B}\ot H)\co (B\ot \theta)\co \gamma))=\nabla_{B\ot H}^{\varphi_{B}}.
\end{equation}

Define $\varphi_{B}^{h}:H\ot B\rightarrow B$ by 
\begin{equation}
\label{vh}
\varphi_{B}^{h}=\mu_{B}\ot (\mu_{B}\ot h^{-1})\co (B\ot P_{\varphi_{B}}))\co (\theta\ot B)
\end{equation}
and $\sigma^{h}:H\ot H\rightarrow B$ by
\begin{equation}
\label{sh}
\sigma^{h}=\mu_{B}\co (B\ot h^{-1})\co \mu_{B\ot_{\varphi_{B}}^{\sigma}H}\co (\theta\ot 
\theta).
\end{equation}

Then, $\varphi_{B}^{h}$ is a measuring because 
\begin{itemize}
\item[ ]$\hspace{0.38cm}\mu_{B}\co (\varphi_{B}^{h}\ot \varphi_{B}^{h})\co (H\ot c_{H,B}\ot B)\co
(\delta_{H}\ot B\ot B) $

\item [ ]$=\mu_{B}\co (B\ot\mu_{B})\co  ((\mu_{B}\co (h\ot \varphi_{B})\co (\delta_{H}\ot B))\ot (\mu_{B}\co (u_{1}^{\varphi_{B}}\ot \varphi_{B})\co  (\delta_{H}\ot B))\ot h^{-1})\co (H\ot c_{H,B}\ot c_{H,B})$
\item[ ]$\hspace{0.38cm}\co (H\ot H\ot c_{H,B}\ot B)\co (((H\ot \delta_{H})\co \delta_{H})\ot B\ot B)  $ {\scriptsize ({\blue by naturality of $c$, coassociativity of  $\delta_{H}$, associativity}}
\item[ ]$\hspace{0.38cm}${\scriptsize {\blue  of $\mu_{B}$, and and (\ref{h1})})}

\item [ ]$=\mu_{B}\co (B\ot\mu_{B})\co (h\ot (\mu_{B}\co(\varphi_{B}\ot \varphi_{B})\co (H\ot c_{H,B}\ot B)\co
(\delta_{H}\ot B\ot B)) \ot h^{-1})\co (H\ot H\ot B\ot c_{H,B})$
\item[ ]$\hspace{0.38cm}\co (H\ot H\ot c_{H,B}\ot B)\co (((H\ot \delta_{H})\co \delta_{H})\ot B\ot B)$ {\scriptsize ({\blue by (\ref{nabla-fi}), coassociativity of  $\delta_{H}$ and associativity of $\mu_{B}$}}
\item[ ]$\hspace{0.38cm}${\scriptsize {\blue $\mu_{B}$})}

\item [ ]$=\varphi_{B}^{h}\co (H\ot \mu_{B})${\scriptsize ({\blue by and (b1) of Definition \ref{def} and naturality of  $c$}) } 
\end{itemize} 
and trivially, by the coassociativity of $\delta_{H}$ and the naturality of $c$, we have that 
$$P_{\varphi_{B}^{h}}=(\mu_{B}\ot H)\co (\mu_{B}\ot \gamma)\co (B\ot P_{\varphi_{B}})\co (\theta\ot B)$$ 

Therefore (\ref{gamma-theta-psi}) holds. Also, by the associativity of $\mu_{B}$, (\ref{nabla-nabla}) and the condition of gauge transformation
\begin{equation}
\label{gt1}
u_{1}^{\varphi_{B}^{h}}=h\ast h^{-1}
\end{equation}
holds. Then, as a consequence of the previous identity, we have that 
\begin{equation}
\label{tn5}
(\mu_{B}\ot H)\co (B\ot \gamma)\co \theta=(u_1^{{\varphi_{B}^h}}\ot H)\co \delta_{H}
\end{equation}
and  
\begin{equation}
\label{tn6}
(\mu_{B}\ot H)\co (B\ot ((\mu_{B}\ot H)\co (B\ot \gamma)\co \theta))=\nabla_{B\ot H}^{\varphi_{B}^h}.
\end{equation}
hold.

On the other hand for $F_{\sigma^{h}}$ we have 
\begin{itemize}
\item[ ]$\hspace{0.38cm}F_{\sigma^{h}} $
\item [ ]$=((\mu_{B}\co (\mu_{H}\ot h^{-1}))\ot H)\co (\mu_{B}\ot ((F_{\sigma}\ot \mu_{H})\co \delta_{H^{\ot 2}}))\co (B\ot P_{\varphi_{B}}\ot H)\co (\theta\ot \theta)$ {\scriptsize ({\blue by the naturality }}
\item[ ]$\hspace{0.38cm}${\scriptsize {\blue of  $c$ and the coassociativity of $\delta_{H}$})}

\item [ ]$=(\mu_{B}\ot H)\co (B\ot \gamma)\co \mu_{B\ot_{\varphi_{B}}^{\sigma}H}\co (\theta\ot \theta)$ {\scriptsize({\blue by (\ref{delta-sigmaHB})})} 
\end{itemize} 
and (\ref{gamma-theta-sigma}) also holds. Moreover, $ \sigma^{h}\ast u_{2}^{\varphi_{B}^{h}}=\sigma^{h} $, because 

\begin{itemize}
\item[ ]$\hspace{0.38cm} \sigma^{h}\ast u_{2}^{\varphi_{B}^{h}}$

\item [ ]$= \mu_{B}\co ((\mu_{B}\ot (\mu_{B} \ot h^{-1}))\ot (\varphi_{B}^{h}\co (H\ot \eta_{B})))\co (\mu_{B}\ot  ((F_{\sigma}\ot \mu_{H})\co \delta_{H^{\ot 2}}))\co (B\ot P_{\varphi_{B}}\ot H)\co (\theta\ot \theta)$ 
\item[ ]$\hspace{0.38cm}${\scriptsize ({\blue by the naturality of $c$, the coassociativity of  $\delta_{H}$ and the condition of morphism of right $H$-comodules for $\theta$})}

\item [ ]$=\mu_{B}\co ((\mu_{B}\ot (\mu_{B} \ot h^{-1}))\ot (\varphi_{B}^{h}\co (H\ot \eta_{B})))\co (\mu_{B}\ot  ((B\ot \delta_{H})\co F_{\sigma}))\co (B\ot P_{\varphi_{B}}\ot H)\co (\theta\ot \theta)$
\item[ ]$\hspace{0.38cm}${\scriptsize ({\blue by (\ref{delta-sigmaHB})})}

\item [ ]$= \mu_{B}\co (\mu_{B}\ot (h^{-1}\ast u_{1}^{\varphi_{B}^{h}}))\co (\mu_{B}\ot F_{\sigma})\co (B\ot P_{\varphi_{B}}\ot H)\co (\theta\ot \theta)$ {\scriptsize ({\blue by the associativity of $\mu_{B}$})}

\item [ ]$=\mu_{B}\co (\mu_{B}\ot (h^{-1}\ast h\ast h^{-1}))\co (\mu_{B}\ot F_{\sigma})\co (B\ot P_{\varphi_{B}}\ot H)\co (\theta\ot \theta)$ {\scriptsize ({\blue by (\ref{gt1})})}

\item [ ]$=\sigma^{h} $ {\scriptsize ({\blue by (\ref{h2}) and associativity of $\mu_{B}$}).}
\end{itemize}

By (i) of  Theorem \ref{teo-twis-cocy} to obtain that  $(B,H, P_{\varphi_{B}^{h}}, F_{\sigma^{h}})$ satisfies the twisted condition is enough  to prove that (\ref{t-sigma}) holds. Indeed, 

\begin{itemize}
\item[ ]$\hspace{0.38cm} \mu_{B}\co (B\ot \sigma^{h})\co (P_{\varphi_{B}^h}\ot H)\co (H\ot P_{\varphi_{B}^h})$

\item [ ]$=\mu_{B}\co (B\ot h^{-1})\co \mu_{B\ot_{\varphi_{B}}^{\sigma}H}\co (((\mu_{B}\ot H)\co (B\ot ((\mu_{B}\ot H)\co (B\ot \theta)\co \gamma)))\ot \theta)\co (\mu_{B}\ot H\ot H)$
\item[ ]$\hspace{0.38cm}\co (B\ot P_{\varphi_{B}}\ot H)\co (\theta\ot ((\mu_{B}\ot H)\co (\mu_{B}\ot \gamma)\co (B\ot P_{\varphi_{B}})\co (\theta\ot B))) $  {\scriptsize ({\blue by the associativity of $\mu_{B}$})}

\item [ ]$= \mu_{B}\co (B\ot h^{-1})\co \mu_{B\ot_{\varphi_{B}}^{\sigma}H}\co (\nabla_{B\ot H}^{\varphi_{B}}\ot \theta)\co (\mu_{B}\ot H\ot H)\co (B\ot P_{\varphi_{B}}\ot H) $
\item[ ]$\hspace{0.38cm}\co (\theta\ot ((\mu_{B}\ot H)\co (\mu_{B}\ot \gamma)\co (B\ot P_{\varphi_{B}})\co (\theta\ot B))) $ {\scriptsize ({\blue by  (\ref{tn4})})}

\item [ ]$=  \mu_{B}\co (B\ot h^{-1})\co \mu_{B\ot_{\varphi_{B}}^{\sigma}H}\co \co (\mu_{B}\ot H\ot \theta)\co (B\ot (\nabla_{B\ot H}^{\varphi_{B}}\co P_{\varphi_{B}})\ot H) $
\item[ ]$\hspace{0.38cm}\co (\theta\ot ((\mu_{B}\ot H)\co (\mu_{B}\ot \gamma)\co (B\ot P_{\varphi_{B}})\co (\theta\ot B))) $ {\scriptsize ({\blue by the associativity of $\mu_{B}$})}

\item [ ]$= \mu_{B}\co (B\ot h^{-1})\co \mu_{B\ot_{\varphi_{B}}^{\sigma}H}\co \co (\mu_{B}\ot H\ot \theta)\co (B\ot  P_{\varphi_{B}}\ot H) $
\item[ ]$\hspace{0.38cm}\co (\theta\ot ((\mu_{B}\ot H)\co (\mu_{B}\ot \gamma)\co (B\ot P_{\varphi_{B}})\co (\theta\ot B))) $  {\scriptsize ({\blue by  (\ref{nabla-psi})})}

\item [ ]$= \mu_{B}\co (\mu_{B}\ot h^{-1})\co (\mu_{B}\ot F_{\sigma})\co (B\ot ((\mu_{B}\ot H)\co (B\ot  P_{\varphi_{B}})\co (P_{\varphi_{B}}\ot B))\ot H)$
\item[ ]$\hspace{0.38cm}\co   (\theta\ot ((\mu_{B}\ot \theta)\co (\mu_{B}\ot \gamma)\co (B\ot P_{\varphi_{B}})\co (\theta\ot B)))$  {\scriptsize ({\blue by the definition of $\mu_{B\ot_{\varphi_{B}}^{\sigma}H}$})}

\item [ ]$=\mu_{B}\co (\mu_{B}\ot h^{-1})\co (\mu_{B}\ot F_{\sigma})\co (B\ot P_{\varphi_{B}}\ot H)$
\item[ ]$\hspace{0.38cm}\co (\theta\ot ((\mu_{B}\ot H)\co (B\ot ((\mu_{B}\ot H)\co (B\ot ((\mu_{B}\ot H)\co (B\ot \theta)\co \gamma))\co  P_{\varphi_{B}}))\co (\theta\ot B))$ {\scriptsize ({\blue by  (\ref{wmeas-wcp})}}
\item[ ]$\hspace{0.38cm}${\scriptsize {\blue  and the associativity of $\mu_{B}$ })}

\item [ ]$=\mu_{B}\co (\mu_{B}\ot h^{-1})\co (\mu_{B}\ot F_{\sigma})\co (B\ot P_{\varphi_{B}}\ot H)\co (\theta\ot ((\mu_{B}\ot H)\co (B\ot (\nabla_{B\ot H}^{\varphi_{B}}\co  P_{\varphi_{B}}))\co (\theta\ot B)))$ 
\item[ ]$\hspace{0.38cm}${\scriptsize ({\blue by (\ref{tn4})})}

\item [ ]$=\mu_{B}\co (\mu_{B}\ot h^{-1})\co (\mu_{B}\ot ((\mu_B\ot H)\co (B\ot F_{\sigma})\co (P_{\varphi_{B}}\ot H)\co (H\ot P_{\varphi_{B}})) \co (B\ot P_{\varphi_{B}} \ot H\ot B)\co (\theta\ot \theta \ot B)$ 
\item[ ]$\hspace{0.38cm}${\scriptsize ({\blue by (\ref{nabla-psi}), (\ref{wmeas-wcp}) and the associativity of $\mu_{B}$})}

\item [ ]$= \mu_{B}\co (\mu_{B}\ot h^{-1})\co (B\ot P_{\varphi_{B}})\co ((\mu_{B\ot_{\varphi_{B}}^{\sigma}H}\co (\theta\ot \theta))\ot B)$ {\scriptsize ({\blue by   (\ref{twcp}) })}

\item [ ]$=  \mu_{B}\co (\mu_{B}\ot h^{-1})\co (B\ot P_{\varphi_{B}})\co ((\nabla_{B\ot H}^{\varphi_{B}}\co \mu_{B\ot_{\varphi_{B}}^{\sigma}H}\co (\theta\ot \theta))\ot B)$ {\scriptsize ({\blue by   (\ref{norz})})}

\item [ ]$=\mu_{B}\co (\mu_{B}\ot h^{-1})\co ((\mu_{B}\co (B\ot (h^{-1}\ast h)))\ot P_{\varphi_{B}})\co (B\ot  \delta_{H}\ot B)\co ((\mu_{B\ot_{\varphi_{B}}^{\sigma}H}\co (\theta\ot \theta))\ot B) $ 
\item[ ]$\hspace{0.38cm}${\scriptsize ({\blue by   (\ref{nabla-nabla}) and the condition of gauge transformation})}

\item [ ]$=\mu_{B}\co (B\ot P_{\varphi_{B}^h})\co (F_{\sigma^h}\ot B)$ {\scriptsize ({\blue by the associativity of $\mu_{B}$ and the coassociativity of $\delta_{H}$}).}
\end{itemize}

Also, by (ii) of  Theorem \ref{teo-twis-cocy}, to obtain that  $(B,H, P_{\varphi_{B}^{h}}, F_{\sigma^{h}})$ satisfies the cocycle condition is enough  to prove that (\ref{c-sigma}) holds. Indeed, 

\begin{itemize}
\item[ ]$\hspace{0.38cm} \mu_{B}\co (B\ot \sigma^h)\co (P_{\varphi_{B}^h}\ot H)\co (H\ot F_{\sigma^h})$

\item [ ]$= \mu_{B}\co (B\ot h^{-1})\co \mu_{B\ot_{\varphi_{B}}^{\sigma}H}\co (\mu_{B}\ot H\ot \theta)\co (B\ot \mu_{B}\ot H\ot H)$
\item[ ]$\hspace{0.38cm}\co (B\ot B\ot ((\mu_{B}\ot H)\co (B\ot ((\mu_{B}\ot H)\co (B\ot \theta)\co \gamma))\co P_{\varphi_{B}})\ot H)\co (B\ot P_{\varphi_{B}}\ot\gamma) $
\item[ ]$\hspace{0.38cm} \co (\theta\ot (\mu_{B\ot_{\varphi_{B}}^{\sigma}H}\co (\theta\ot \theta))) $  {\scriptsize ({\blue by the associativity of $\mu_{B}$})}

\item [ ]$= \mu_{B}\co (B\ot h^{-1})\co \mu_{B\ot_{\varphi_{B}}^{\sigma}H}\co (\mu_{B}\ot H\ot \theta)\co (\mu_{B}\ot (\nabla_{B\ot H}^{\varphi_{B}}\co  P_{\varphi_{B}})\ot H)\co  (B\ot P_{\varphi_{B}}\ot\gamma)  $
\item[ ]$\hspace{0.38cm} \co (\theta\ot (\mu_{B\ot_{\varphi_{B}}^{\sigma}H}\co (\theta\ot \theta))) $ {\scriptsize ({\blue by  (\ref{tn4}) and the associativity of $\mu_{B}$})}

\item [ ]$=  \mu_{B}\co (B\ot h^{-1})\co \mu_{B\ot_{\varphi_{B}}^{\sigma}H}\co (\mu_{B}\ot H\ot \theta)\co (\mu_{B}\ot  P_{\varphi_{B}}\ot H)\co  (B\ot P_{\varphi_{B}}\ot\gamma)  $
\item[ ]$\hspace{0.38cm} \co (\theta\ot (\mu_{B\ot_{\varphi_{B}}^{\sigma}H}\co (\theta\ot \theta))) $ {\scriptsize ({\blue by by  (\ref{nabla-psi})})}

\item [ ]$=\mu_{B}\co (\mu_{B}\ot h^{-1})\co  (\mu_{B}\ot F_{\sigma})\co  (\mu_{B}\ot  P_{\varphi_{B}}\ot H)$
\item[ ]$\hspace{0.38cm}\co (\mu_{B}\ot  P_{\varphi_{B}}\ot ((\mu_{B}\ot H)\co (B\ot ((\mu_{B}\ot H)\co (B\ot \theta)\co \gamma))\co F_{\sigma}))$
\item[ ]$\hspace{0.38cm} \co (B\ot P_{\varphi_{B}} \ot P_{\varphi_{B}}\ot H)\co (\theta\ot \theta\ot \theta)$  {\scriptsize ({\blue by  (\ref{wmeas-wcp}), the definition of $\mu_{B\ot_{\varphi_{B}}^{\sigma}H}$ and the associativity of $\mu_{B}$})}

\item [ ]$=\mu_{B}\co (\mu_{B}\ot h^{-1})\co (\mu_{B}\ot ((\mu_{B}\ot H)\co (B\ot F_{\sigma})\co (P_{\varphi_{B}}\ot H)\co (H\ot (\nabla_{B\ot H}^{\varphi_{B}}\co F_{\sigma}))))$
\item[ ]$\hspace{0.38cm}\co (\mu_{B}\ot P_{\varphi_{B}}\ot H\ot H)\co (B\ot P_{\varphi_{B}} \ot P_{\varphi_{B}}\ot H)\co (\theta\ot \theta\ot \theta)  $  {\scriptsize ({\blue by (\ref{tn4}) and the associativity of $\mu_{B}$})}

\item [ ]$=\mu_{B}\co (\mu_{B}\ot h^{-1})\co (B\ot \mu_{B}\ot H)\co  (B\ot B\ot ((\mu_{B}\ot H)\co (B\ot F_{\sigma})\co (P_{\varphi_{B}}\ot H)\co (H\ot F_{\sigma})))$
\item[ ]$\hspace{0.38cm}\co (\mu_{B}\ot P_{\varphi_{B}}\ot H\ot H)\co (B\ot P_{\varphi_{B}} \ot P_{\varphi_{B}}\ot H)\co (\theta\ot \theta\ot \theta)$ {\scriptsize ({\blue by  (\ref{sigmaHB-2}) and the associativity of $\mu_{B}$ })}

\item [ ]$= \mu_{B}\co (\mu_{B}\ot h^{-1})\co (B\ot \mu_{B}\ot H)\co  (\mu_{B}\ot B\ot F_{\sigma})$
\item[ ]$\hspace{0.38cm}\co (B\ot B\ot ((\mu_{B}\ot H)\co (B\ot F_{\sigma})\co (P_{\varphi_{B}}\ot H)\co (H\ot P_{\varphi_{B}}))\ot H)\co (B\ot P_{\varphi_{B}}\ot H\ot \theta)\co (\theta\ot \theta\ot H)$ 
\item[ ]$\hspace{0.38cm}${\scriptsize ({\blue by (\ref{cwcp}) and the associativity of $\mu_{B}$})}

\item [ ]$=\mu_{B}\co (B\ot h^{-1})\co \mu_{B\ot_{\varphi_{B}}^{\sigma}H} \co ((\mu_{B\ot_{\varphi_{B}}^{\sigma}H}\co (\theta\ot \theta))\ot \theta)$  {\scriptsize ({\blue by (\ref{twcp}) and the associativity of $\mu_{B}$})}

\item [ ]$=\mu_{B}\co (B\ot h^{-1})\co \mu_{B\ot_{\varphi_{B}}^{\sigma}H} \co ((\nabla_{B\ot H}^{\varphi_{B}}\co \mu_{B\ot_{\varphi_{B}}^{\sigma}H}\co (\theta\ot \theta))\ot \theta)  $ {\scriptsize ({\blue by   (\ref{norz})})}

\item [ ]$=\mu_{B}\co (B\ot h^{-1})\co \mu_{B\ot_{\varphi_{B}}^{\sigma}H} \co (((\mu_{B}\ot H)\co (B\ot ((u_{1}^{\varphi_{B}}\ot H)\co \delta_{H}))) \co \mu_{B\ot_{\varphi_{B}}^{\sigma}H}\co (\theta\ot \theta))\ot \theta)$ 
\item[ ]$\hspace{0.38cm}${\scriptsize ({\blue by   (\ref{nabla-nabla})})}

\item [ ]$=\mu_{B}\co (B\ot h^{-1})\co \mu_{B\ot_{\varphi_{B}}^{\sigma}H} \co (((\mu_{B}\ot H)\co (B\ot ((\mu_{B}\ot H)\co (B\ot \theta)\co \gamma)) \co \mu_{B\ot_{\varphi_{B}}^{\sigma}H}\co (\theta\ot \theta))\ot \theta)   $ 
\item[ ]$\hspace{0.38cm}${\scriptsize ({\blue by   (\ref{tn4}) and the condition of gauge transformation})}

\item [ ]$= \mu_{B}\co (B\ot \sigma^h)\co (F_{\sigma^h}\ot B)$ {\scriptsize ({\blue by the associativity of $\mu_{B}$}).}
\end{itemize}

If we define $\nu^h$ by $\nu^h=(\mu_{B}\ot H)\co (B\ot \gamma)\co \nu$ we have that (\ref{gamma-theta-preunit}) holds trivially. Moreover $\nu^h$ is a preunit for $(B\ot H, \mu_{B\ot_{\varphi_{B}^h}^{\sigma^h}H})$ because (\ref{pre1-wcp}), (\ref{pre2-wcp}) and (\ref{pre3-wcp}) hold. Indeed, (\ref{pre1-wcp}) follows by
\begin{itemize}
\item[ ]$\hspace{0.38cm} \mu_{B}\co (B\ot F_{\sigma^h})\co (P_{\varphi_{B}^h}\ot H)\co (H\ot \nu^h)$

\item [ ]$=(\mu_{B}\ot H)\co (B\ot \gamma)\co \mu_{B\ot_{\varphi_{B}}^{\sigma}H}\co (\mu_{B}\ot H\ot \theta)$
\item[ ]$\hspace{0.38cm}\co (B\ot  ((\mu_{B}\ot H)\co (B\ot ((\mu_{B}\ot H)\co (B\ot \theta)\co \gamma))\co P_{\varphi_{B}})\ot H)\co (\theta \ot ((\mu_{B}\ot H)\co (B\ot \gamma)\co \nu))$  
\item[ ]$\hspace{0.38cm}${\scriptsize ({\blue by the associativity of $\mu_{B}$})}

\item [ ]$= (\mu_{B}\ot H)\co (\mu_{B}\ot \gamma)\co (\mu_{B}\ot F_{\sigma})\co (B\ot ((\mu_{B}\ot H)\co (B\ot P_{\varphi_{B}}))\ot H)\co  (B\ot  (\nabla_{B\ot H}^{\varphi_{B}}\co P_{\varphi_{B}})\ot \theta)$
\item[ ]$\hspace{0.38cm}\co (\theta \ot ((\mu_{B}\ot H)\co (B\ot \gamma)\co \nu))$  {\scriptsize ({\blue by  (\ref{tn4}), the definition of $\mu_{B\ot_{\varphi_{B}}^{\sigma}H}$ and the associativity of $\mu_{B}$})}

\item [ ]$=(\mu_{B}\ot H)\co (\mu_{B}\ot \gamma)\co (\mu_{B}\ot F_{\sigma})\co 
(B\ot ((\mu_{B}\ot H)\co (B\ot P_{\varphi_{B}})\co (P_{\varphi_{B}}\ot B))\ot \theta)$
\item[ ]$\hspace{0.38cm}\co (\theta \ot ((\mu_{B}\ot H)\co (B\ot \gamma)\co \nu))${\scriptsize ({\blue by (\ref{nabla-psi})})}

\item [ ]$= (\mu_{B}\ot H)\co (\mu_{B}\ot \gamma)\co (\mu_{B}\ot F_{\sigma})\co 
(B\ot P_{\varphi_{B}}\ot H) \co (\theta \ot ((\mu_{B}\ot H)\co (B\ot ((\mu_{B}\ot H)\co (B\ot \theta)\co \gamma))\co \nu)))$ 
\item[ ]$\hspace{0.38cm}${\scriptsize ({\blue by  (\ref{wmeas-wcp}) and the associativity of $\mu_{B}$})}

\item [ ]$=(\mu_{B}\ot H)\co (\mu_{B}\ot \gamma)\co (\mu_{B}\ot F_{\sigma})\co 
(B\ot P_{\varphi_{B}}\ot H) \co (\theta \ot (\nabla_{B\ot H}^{\varphi_{B}}\co \nu)))  $ {\scriptsize ({\blue by (\ref{tn4})})}

\item [ ]$=(\mu_{B}\ot H)\co (B \ot \gamma)\co  \nabla_{B\ot H}^{\varphi_{B}}\co \mu_{B\ot_{\varphi_{B}}^{\sigma}H}\co ((\nabla_{B\ot H}^{\varphi_{B}}\co\theta) \ot (\nabla_{B\ot H}^{\varphi_{B}}\co \nu))) $ {\scriptsize ({\blue by  (\ref{gamma-theta-idemp}) and (\ref{tn2})})}

\item [ ]$= (\mu_{B}\ot H)\co (B \ot \gamma)\co \nabla_{B\ot H}^{\varphi_{B}}\co\theta $ {\scriptsize ({\blue by the condition of unit for $p_{B\ot H}^{\varphi_{B}}\co \nu$})}

\item [ ]$= (\mu_{B}\ot H)\co (B \ot \gamma)\co \theta $ {\scriptsize ({\blue by (\ref{gamma-theta-idemp})})}

\item [ ]$= (u_{1}^{\varphi_{B}^{h}}\ot H)\co \delta_{H}$ {\scriptsize ({\blue by   (\ref{tn5})})}

\item [ ]$=\nabla_{B\ot H}^{\varphi_{B}^h} \co (\eta_{B}\ot H)$ {\scriptsize ({\blue by (\ref{tn5}) and the properties of $\eta_{B}$}).}
\end{itemize}

Also, (\ref{pre2-wcp}) holds because 

\begin{itemize}

\item[ ]$\hspace{0.38cm} \mu_{B}\co (B\ot F_{\sigma^h})\co (\nu^h\ot H)$

\item [ ]$=(\mu_{B}\ot H)\co (B\ot \gamma)\co \mu_{B\ot_{\varphi_{B}}^{\sigma}H}\co (((\mu_{B}\ot H)\co (B\ot ((\mu_{B}\ot H)\co (B\ot \theta)\co \gamma))\co \nu)\ot \theta) $  
\item[ ]$\hspace{0.38cm}${\scriptsize ({\blue by the associativity of $\mu_{B}$})}

\item [ ]$=(\mu_{B}\ot H)\co (B \ot \gamma)\co  \nabla_{B\ot H}^{\varphi_{B}}\co \mu_{B\ot_{\varphi_{B}}^{\sigma}H}\co ((\nabla_{B\ot H}^{\varphi_{B}}\co\theta) \ot (\nabla_{B\ot H}^{\varphi_{B}}\co \nu))) $ {\scriptsize ({\blue by  (\ref{gamma-theta-idemp}) and (\ref{tn2})})}

\item [ ]$= (\mu_{B}\ot H)\co (B \ot \gamma)\co \nabla_{B\ot H}^{\varphi_{B}}\co\theta $ {\scriptsize ({\blue by the condition of unit for $p_{B\ot H}^{\varphi_{B}}\co \nu$})}

\item [ ]$= (\mu_{B}\ot H)\co (B \ot \gamma)\co \theta $ {\scriptsize ({\blue by (\ref{gamma-theta-idemp})})}

\item [ ]$= (u_{1}^{\varphi_{B}^{h}}\ot H)\co \delta_{H}$ {\scriptsize ({\blue by   (\ref{tn5})})}

\item [ ]$=\nabla_{B\ot H}^{\varphi_{B}^h} \co (\eta_{B}\ot H)$ {\scriptsize ({\blue by (\ref{tn5}) and the properties of $\eta_{B}$}).}
\end{itemize}

Finally, 

\begin{itemize}

\item[ ]$\hspace{0.38cm} \mu_{B}\co (B\ot P_{\varphi_{B}^h} )\co (\nu^h\ot H)$

\item [ ]$=(\mu_{B}\ot H)\co (\mu_{B}\ot \gamma)\co  (B\ot P_{\varphi_{B}})\co (((\mu_{B}\ot H)\co (B\ot ((\mu_{B}\ot H)\co (B\ot \theta)\co \gamma))\co \nu)\ot B) $  
\item[ ]$\hspace{0.38cm}${\scriptsize ({\blue by the associativity of $\mu_{B}$})}

\item [ ]$=(\mu_{B}\ot H)\co (\mu_{B}\ot \gamma)\co  (B\ot P_{\varphi_{B}})\co ((\nabla_{B\ot H}^{\varphi_{B}} \co\nu)\ot B)$ {\scriptsize ({\blue by (\ref{tn4})})}

\item [ ]$= (\mu_{B}\ot H)\co (\mu_{B}\ot \gamma)\co  (B\ot P_{\varphi_{B}})\co (\nu\ot B)$ {\scriptsize ({\blue by (\ref{pre4-wcp})})}

\item [ ]$= (\mu_{B}\ot H)\co (\mu_{B} \ot \gamma)\co (B\ot \nu) $ {\scriptsize ({\blue by (\ref{pre3-wcp})})}

\item [ ]$=(\mu_{B}\ot H)\co (B\ot \nu^h)$ {\scriptsize ({\blue by the associativity of $\mu_{B}$}).}
\end{itemize}
and (\ref{pre3-wcp}) holds.

Therefore, as a consequence of the previous facts, we have a theorem that generalizes to the monoidal setting \cite[Theorem 5.4]{ana1}.
}
\end{apart}

\begin{teo}
\label{prin27}
Let $H$ be a  weak Hopf algebra, let  $\varphi_{B},\; \phi_{B}:H\ot B\rightarrow B$ be  measurings, and let $\sigma,\; \tau:H\ot H\rightarrow B$  be  morphisms such that $\sigma\ast u_{2}^{\varphi_{B}}=\sigma$, $\tau\ast u_{2}^{\phi_{B}}=\tau$. Assume that $\sigma$, $\tau$ satisfy the twisted  condition (\ref{t-sigma}), the 2-cocycle condition
(\ref{c-sigma}) and suppose that $\nu$ is a preunit for $\mu_{B\ot_{\varphi_{B}}^{\sigma}H}$, and $u$ is a preunit for $\mu_{B\ot_{\phi_{B}}^{\tau}H}$. The weak crossed products  $(B\ot H, \mu_{B\ot_{\varphi_{B}}^{\sigma}H})$ and $(B\ot H, \mu_{B\ot_{\phi_{B}}^{\tau}H})$ are equivalent if and only if there exists a gauge transformation $(h,h^{-1})$ for $\varphi_{B}$ such that 
$\phi_{B}=\varphi_{B}^h$, $\tau=\sigma^h$ and $u=\nu^{h}$.
\end{teo}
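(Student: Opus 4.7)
The plan is to deduce Theorem \ref{prin27} as a direct consolidation of Theorem \ref{equiv-hh} (for the ``only if'' direction) and of the explicit calculations carried out in the discussion \ref{basic} (for the ``if'' direction). Concretely, for the forward implication I would apply Theorem \ref{equiv-hh} to extract morphisms $h,h^{-1}\colon H\rightarrow B$ satisfying (\ref{h1})--(\ref{h3}); by Definition \ref{g-trans}, properties (\ref{h1}) and (\ref{h2}) say exactly that $(h,h^{-1})$ is a gauge transformation for $\varphi_{B}$. Setting $\theta=(h\ot H)\co \delta_{H}$ and $\gamma=(h^{-1}\ot H)\co \delta_{H}$, equation (\ref{h4}) rewrites as $\phi_{B}=\varphi_{B}^{h}$ (compare with (\ref{vh})), equation (\ref{h5}) as $\tau=\sigma^{h}$ (compare with (\ref{sh})), and (\ref{h3}) as $u=\nu^{h}$, once $\gamma$ is recognized inside the formula.

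For the converse, starting from a gauge transformation $(h,h^{-1})$ for $\varphi_{B}$ I set $\phi_{B}=\varphi_{B}^{h}$, $\tau=\sigma^{h}$, $u=\nu^{h}$ and define $\theta,\gamma$ as above. The computations gathered in the discussion \ref{basic} already establish: (a) $\varphi_{B}^{h}$ is a measuring with $u_{1}^{\varphi_{B}^{h}}=h\ast h^{-1}$; (b) $\sigma^{h}$ satisfies $\sigma^{h}\ast u_{2}^{\varphi_{B}^{h}}=\sigma^{h}$ together with the twisted condition (\ref{t-sigma}) and the cocycle condition (\ref{c-sigma}); (c) $\nu^{h}$ is a preunit for the associated product; and (d) the morphisms $\theta,\gamma$ are right $H$-comodule morphisms verifying (\ref{gamma-theta-idemp}), (\ref{gamma-theta-special}), (\ref{gamma-theta-psi}), (\ref{gamma-theta-sigma}) and (\ref{gamma-theta-preunit}). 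Invoking the implication (iii)$\Rightarrow$(i) of Theorem \ref{Teo22} with these $\theta,\gamma$ then produces the desired equivalence.

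The one delicate point I expect is the bookkeeping: one must check that the $\theta,\gamma$ constructed directly from the gauge transformation in the converse direction agree, under the formulas (\ref{tg}), with the ones extracted from the isomorphism $\varUpsilon$ in the forward direction, so that the two implications are genuinely inverse. This reduces to the identities $(B\ot \varepsilon_{H})\co\theta=h$ and $(B\ot \varepsilon_{H})\co\gamma=h^{-1}$ together with the right $H$-comodule condition, both of which are immediate from the coassociativity of $\delta_{H}$ and the counit property. Apart from this matching, no new computation beyond what already appears in Theorem \ref{equiv-hh} and in the discussion \ref{basic} is needed, so the proof essentially reduces to citing these two blocks and unfolding the definitions of $\varphi_{B}^{h}$, $\sigma^{h}$ and $\nu^{h}$.
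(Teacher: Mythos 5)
Your proposal is correct and follows exactly the route the paper takes: the paper states Theorem \ref{prin27} as "a consequence of the previous facts," meaning the forward direction is read off from Theorem \ref{equiv-hh} (with (\ref{h1})--(\ref{h2}) giving the gauge transformation and (\ref{h4}), (\ref{h5}), (\ref{h3}) unfolding to $\phi_{B}=\varphi_{B}^{h}$, $\tau=\sigma^{h}$, $u=\nu^{h}$), while the converse is precisely the content of the computations in \ref{basic} combined with (iii)$\Rightarrow$(i) of Theorem \ref{Teo22}. No gaps; the matching of $\theta,\gamma$ you flag is handled by (\ref{th-gh-1}) exactly as you describe.
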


\section{Regular morphisms}

\begin{defin}
\label{reg-1}
{\rm Let $H$ be a  weak Hopf algebra and
$\varphi_{B}$ be a measuring. With $Reg_{\varphi_{B}}(H,B)$ we will denote the set of regular morphisms between $H$ and $B$, i.e., a morphism  $h:H\rightarrow B$ is a regular morphism if there exists a morphism $h^{-1}:H\rightarrow B$, called the convolution inverse of $h$,  such that the pair $(h, h^{-1})$ is a gauge transformation for $\varphi_{B}$ and  
\begin{equation}
\label{h-h-1-varphi} h\ast h^{-1}=u_{1}^{\varphi_{B}},
\end{equation}
holds. Then, $Reg_{\varphi_{B}}(H,B)$, with the convolution as a product, is a group with unit $u_{1}^{\varphi_{B}}$. 
}
\end{defin}

\begin{apart}
{\rm 
Let ${\mathcal P}_{\varphi_{B}}$ be the set of all pairs $(\phi_{B},\tau),$
where:
\begin{itemize}
\item[(i)]  The morphism $\phi_{B}:H\ot B\rightarrow B$ is a measuring satisfying $u_{1}^{\phi_{B}}=u_{1}^{\varphi_{B}}.$ 
\item[(ii)] The morphism $\tau:H\ot H\rightarrow B$ is such that 
$\tau=\tau\ast u_{2}^{\phi_{B}}$ and  the associated quadruple  $(B, H, P_{\phi_{B}}, F_{\tau})$ satisfies the twisted condition and  the cocycle condition.
\item[(iii)] The associated weak crossed product  $(B\ot H, \mu_{B\ot_{\phi_{B}}^{\tau}H})$ admits a preunit $\upsilon$. 
\end{itemize}

By the results proved in the previous section we know that $Reg_{\varphi_{B}}(H,B)$ acts on ${\mathcal P}_{\varphi_{B}}$. The action
$$R:Reg_{\varphi_{B}}(H,B)\times {\mathcal P}_{\varphi_{B}}\rightarrow {\mathcal P}_{\varphi_{B}},$$
is defined by 
\begin{equation}
\label{reg-acts}
R(h,(\phi_{B},\tau))=(\phi_{B}^{h},\tau^{h}).
\end{equation}
}
\end{apart}

\begin{prop}
\label{lemma119}
Let $(B,\varphi_{B})$,  $(B,\phi_{B})$ be a left weak $H$-module algebra and let $h:H\rightarrow B$ be a morphism such that $h\ast u_{1}^{\varphi_{B}}=h=u_{1}^{\phi_{B}}\ast h$. Then, the following assertions are equivalent:
\begin{itemize} 
\item[(i)] $h\co \eta_{H}=\eta_{B}$.
\item[(ii)] $h\co \Pi_{H}^{L}=u_{1}^{\phi_{B}}$.
\item[(iii)] $h\co \overline{\Pi}_{H}^{L}=u_{1}^{\varphi_{B}}$.
\end{itemize}

Moreover, if $h\ast \eta_{H}=\eta_{B}$, the  identity 
\begin{equation}
\label{equ-eta}
(\mu_{B}\ot H)\co (B\ot ((h\ot H)\co \delta_{H}\co \eta_{H}))=\nabla_{B\otimes H}^{\varphi_{B}}\co (B\ot \eta_{H}), 
\end{equation}
holds.

Also, if $g:H\rightarrow B$ is a morphism such that $g\ast u_{1}^{\phi_{B}}=g=u_{1}^{\varphi_{B}}\ast g$, the following assertions are equivalent:
\begin{itemize} 
\item[(iv)] $g\co \eta_{H}=\eta_{B}$.
\item[(v)] $g\co \Pi_{H}^{L}=u_{1}^{\varphi_{B}}$.
\item[(vi)] $g\co \overline{\Pi}_{H}^{L}=u_{1}^{\phi_{B}}$.
\end{itemize}

Then, if $g\ast \eta_{H}=\eta_{B}$, the  identity
\begin{equation}
\label{equ-eta-1}
(\mu_{B}\ot H)\co (B\ot ((g\ot H)\co \delta_{H}\co \eta_{H}))=\nabla_{B\otimes H}^{\phi_{B}}\co (B\ot \eta_{H})
\end{equation}
holds.

Moreover, if there exists $h^{-1}:H\rightarrow B$ such that $(h,h^{-1})$ is a gauge transformation for $\varphi_{B}$ and  $h\ast h^{-1}=u_{1}^{\phi_{B}}$ holds,  we have $h\ast \eta_{H}=\eta_{B}$ iff $h^{-1}\ast \eta_{H}=\eta_{B}.$
\end{prop}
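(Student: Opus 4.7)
The plan is to split the proof into four pieces: the three-way equivalence of (i), (ii), (iii); the auxiliary identity (\ref{equ-eta}); the analogous equivalences for $g$; and the gauge-transformation equivalence $h\circ \eta_H = \eta_B \Leftrightarrow h^{-1}\circ \eta_H = \eta_B$.

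For the easy directions (ii)$\Rightarrow$(i) and (iii)$\Rightarrow$(i), I would precompose with $\eta_H$ and use $\Pi_H^L\circ \eta_H = \eta_H = \overline{\Pi}_H^L\circ \eta_H$ (immediate from the explicit formulas defining these idempotents together with $(\varepsilon_H\ot H)\circ \delta_H\circ \eta_H = \eta_H$), combined with $u_1^{\phi_B}\circ \eta_H = \eta_B = u_1^{\varphi_B}\circ \eta_H$ from axiom (b2). For the harder direction (i)$\Rightarrow$(ii) (resp.\ (i)$\Rightarrow$(iii)), I would use $u_1^{\phi_B}\ast h = h$ (resp.\ $h\ast u_1^{\varphi_B} = h$) to rewrite $h\circ \Pi_H^L$ (resp.\ $h\circ \overline{\Pi}_H^L$). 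Using (\ref{d-pi2}) (resp.\ the formula $\delta_H\circ \overline{\Pi}_H^L = (H\ot \mu_H)\circ ((\delta_H\eta_H)\ot \overline{\Pi}_H^L)$ derived from (\ref{d-pibar}) and (\ref{d-pi31})) one can unfold the coproduct of $\Pi_H^L$ (resp.\ $\overline{\Pi}_H^L$) into expressions involving $\delta_H\circ \eta_H$; subsequent applications of the measure axioms (b1), (b3), the weak Hopf algebra axiom (a3) for the iterated coproduct of $\eta_H$, together with the hypothesis $h\circ \eta_H = \eta_B$, collapse the result to $u_1^{\phi_B}$ (resp.\ $u_1^{\varphi_B}$). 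This is the main obstacle: the key identities that must be extracted are $(H\ot h)\circ \delta_H\circ \eta_H = (H\ot u_1^{\phi_B})\circ \delta_H\circ \eta_H$ and $(h\ot H)\circ \delta_H\circ \eta_H = (u_1^{\varphi_B}\ot H)\circ \delta_H\circ \eta_H$, which by the explicit formulas for $\Pi_H^L$ and $\overline{\Pi}_H^L$ combined with (\ref{B3}), (\ref{B4}) are equivalent to (ii) and (iii) respectively.

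For the identity (\ref{equ-eta}): once (iii) has been established from (i), I write
$$(h\ot H)\circ \delta_H\circ \eta_H = (h\ot H)\circ (\overline{\Pi}_H^L\ot H)\circ \delta_H\circ \eta_H = (u_1^{\varphi_B}\ot H)\circ \delta_H\circ \eta_H$$
using (\ref{edpi}) and (iii); postcomposing with $(\mu_B\ot H)\circ (B\ot -)$ and invoking (\ref{nabla-nabla}) identifies the right-hand side with $\nabla_{B\ot H}^{\varphi_B}\circ (B\ot \eta_H)$. The equivalences for $g$ follow by the symmetric argument, exchanging the roles of $\varphi_B$ and $\phi_B$.

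For Part 4, assume $h\circ \eta_H = \eta_B$; by the previously established (i)$\Rightarrow$(iii) we have $h\circ \overline{\Pi}_H^L = u_1^{\varphi_B}$. Composing $h\ast h^{-1} = u_1^{\phi_B}$ with $\eta_H$ on the right and inserting the decomposition $\delta_H\circ \eta_H = (\overline{\Pi}_H^L\ot H)\circ \delta_H\circ \eta_H$ from (\ref{edpi}) yields
$$(u_1^{\varphi_B}\ast h^{-1})\circ \eta_H = u_1^{\phi_B}\circ \eta_H = \eta_B.$$
The gauge-transformation axioms (\ref{h1}) and (\ref{h2}) give $u_1^{\varphi_B}\ast h^{-1} = (h^{-1}\ast h)\ast h^{-1} = h^{-1}$, so $h^{-1}\circ \eta_H = \eta_B$. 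The reverse implication is symmetric: apply the analogous (iv)$\Rightarrow$(vi) to $g = h^{-1}$ (whose convolution hypotheses $h^{-1}\ast u_1^{\phi_B} = h^{-1} = u_1^{\varphi_B}\ast h^{-1}$ are derivable from (\ref{h1})--(\ref{h2})) and compose $h^{-1}\ast h = u_1^{\varphi_B}$ with $\eta_H$ in the same fashion.
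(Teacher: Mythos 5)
Your proposal is correct and follows essentially the same route as the paper: the easy implications via $\Pi_{H}^{L}\co \eta_{H}=\overline{\Pi}_{H}^{L}\co \eta_{H}=\eta_{H}$ and (b2); the hard implications by expanding $h\co \Pi_{H}^{L}$ and $h\co \overline{\Pi}_{H}^{L}$ through the convolution hypotheses and the identities (\ref{d-pi1})--(\ref{d-pi21}), (b3), (\ref{B1})--(\ref{B6}) and (\ref{edpi}) until $h\co \eta_{H}$ appears; the identity (\ref{equ-eta}) from (iii), (\ref{edpi}) and (\ref{nabla-nabla}); and the gauge-transformation equivalence by inserting $\overline{\Pi}_{H}^{L}$ into $\delta_{H}\co \eta_{H}$ and using (\ref{h1})--(\ref{h2}). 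The only (cosmetic) deviation is that you route the key reduction through the identity $(H\ot h)\co \delta_{H}\co \eta_{H}=(H\ot u_{1}^{\phi_{B}})\co \delta_{H}\co \eta_{H}$ together with (\ref{B3}), whereas the paper reduces directly to $\mu_{B}\co c_{B,B}\co ((h\co \eta_{H})\ot u_{1}^{\phi_{B}})$; both computations use the same toolbox and succeed.
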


\begin{proof}  By the properties of $\eta_{H}$ and (b2) of Definition \ref{def}, we obtain that (ii)$\Rightarrow$(i), and (iii)$\Rightarrow$(i). Also, (i)$\Rightarrow$(ii) holds because, 

\begin{itemize}
\item[ ]$\hspace{0.38cm}h\co \Pi_{H}^{L} $
\item [ ]$=(u_{1}^{\phi_{B}}\ast h)\co \Pi_{H}^{L}  $ {\scriptsize ({\blue by  $h=u_{1}^{\phi_{B}}\ast h$})}
\item [ ]$=\mu_{B}\co (u_{1}^{\phi_{B}}\ot h)\co ((((\varepsilon_{H}\co \mu_{H})\ot H)\co (H\ot c_{H,H})\co (\delta_{H}\ot H))\ot H)\co (H\ot c_{H,H})\co ((\delta_{H}\co \eta_{H})\ot H)$ 
\item[ ]$\hspace{0.38cm}${\scriptsize ({\blue by  naturality of $c$ and coassociativity of $\delta_{H}$})}
\item [ ]$=\mu_{B}\co  ((u_{1}^{\phi_{B}}\co \mu_{H}\co (H\ot \Pi_{H}^{L}))\ot h)\co (H\ot c_{H,H})\co ((\delta_{H}\co \eta_{H})\ot H)$ {\scriptsize ({\blue by (\ref{d-pi1})})}

\item [ ]$=\mu_{B}\co  ((\phi_{B}\co (H\ot u_{1}^{\phi_{B}}))\ot h)\co (H\ot c_{H,H})\co ((\delta_{H}\co \eta_{H})\ot H)$ {\scriptsize ({\blue by (b3) of Definition \ref{def} and (\ref{B6})})}

\item [ ]$=\mu_{B}\co  ((\phi_{B}\co (\overline{\Pi}_{H}^{L}\ot B))\ot h)\co (H\ot c_{H,H})\co ((\delta_{H}\co \eta_{H})\ot u_{1}^{\phi_{B}})$ {\scriptsize ({\blue by the naturality of $c$ and (\ref{edpi})})}

\item [ ]$=\mu_{B}\co  ((\mu_{B}\co c_{B,B}\co
(u_{1}^{\phi_{B}}\ot B))\ot h)\co (H\ot c_{H,H})\co ((\delta_{H}\co \eta_{H})\ot u_{1}^{\phi_{B}})$ {\scriptsize ({\blue by (\ref{B2})})}

\item [ ]$=\mu_{B}\co c_{B,B}\co (((u_{1}^{\phi_{B}}\ast h)\co \eta_{H})\ot u_{1}^{\phi_{B}})
$ {\scriptsize ({\blue by naturality of $c$ and associativity of $\mu_{B}$})}

\item [ ]$=\mu_{B}\co c_{B,B}\co (( h\co \eta_{H})\ot u_{1}^{\phi_{B}})
$ {\scriptsize ({\blue by $h=u_{1}^{\phi_{B}}\ast h$})}

\item [ ]$= u_{1}^{\phi_{B}}
$ {\scriptsize ({\blue by (i), naturality of $c$, and properties of $\eta_{B}$}).}

\end{itemize}  

On the other hand, (i)$\Rightarrow$(iii) because 

\begin{itemize}
\item[ ]$\hspace{0.38cm}h\co \overline{\Pi}_{H}^{L}$
\item [ ]$=(h\ast u_{1}^{\varphi_{B}})\co \overline{\Pi}_{H}^{L}  $ {\scriptsize ({\blue by  $h=h\ast u_{1}^{\varphi_{B}}$})}
\item [ ]$=\mu_{B}\co (h\ot u_{1}^{\varphi_{B}})\co (H\ot ((H\ot (\varepsilon_{H}\co \mu_{H}))\co (\delta_{H}\ot H)))\co ((\delta_{H}\co \eta_{H})\ot H)$ {\scriptsize ({\blue by the  coassociativity of $\delta_{H}$})}

\item [ ]$=\mu_{B}\co (h\ot (u_{1}^{\varphi_{B}}\co \mu_{H}\co (H\ot \overline{\Pi}_{H}^{L})))\co 
((\delta_{H}\co \eta_{H})\ot H)$ {\scriptsize ({\blue by (\ref{d-pi21})})}

\item [ ]$=\mu_{B}\co (h\ot \varphi_{B})\co 
((\delta_{H}\co \eta_{H})\ot u_{1}^{\varphi_{B}})$ {\scriptsize ({\blue by (b3)  of Definition  \ref{def} and (\ref{B5})})}

\item [ ]$=\mu_{B}\co (h\ot (\varphi_{B}\co (\Pi_{H}^{L}\ot B))\co 
((\delta_{H}\co \eta_{H})\ot u_{1}^{\varphi_{B}})$ {\scriptsize ({\blue by (\ref{edpi})})}

\item [ ]$=\mu_{B}\co (h\ot (\mu_{B}\co (u_{1}^{\varphi_{B}}\ot B))\co 
((\delta_{H}\co \eta_{H})\ot u_{1}^{\varphi_{B}})$ {\scriptsize ({\blue  by  (\ref{B1})})}

\item [ ]$=\mu_{B}\co  (((h\ast u_{1}^{\varphi_{B}})\co \eta_{H})\ot u_{1}^{\varphi_{B}})
$ {\scriptsize ({\blue by associativity of $\mu_{B}$})}

\item [ ]$=\mu_{B}\co (( h\co \eta_{H})\ot u_{1}^{\varphi_{B}})
$ {\scriptsize ({\blue by $h=h\ast u_{1}^{\varphi_{B}}$})}

\item [ ]$= u_{1}^{\varphi_{B}}
$ {\scriptsize ({\blue by (i), and properties of $\eta_{B}$}).}

\end{itemize}  

As a consequence of these equivalences, we obtain (\ref{equ-eta}) because 

 \begin{itemize}
\item[ ]$\hspace{0.38cm}(\mu_{B}\ot H)\co (B\ot ((h\ot H)\co \delta_{H}\co \eta_{H}))$
\item [ ]$= (\mu_{B}\ot H)\co (B\ot (((h\co \overline{\Pi}_{H}^{L})\ot H)\co \delta_{H}\co \eta_{H}))$ {\scriptsize ({\blue by (\ref{edpi})})}
\item [ ]$=\nabla_{B\otimes H}^{\varphi_{B}}\co (B\ot \eta_{H})$ {\scriptsize ({\blue by (iii) and (\ref{nabla-nabla})})}
\end{itemize}

The proof for the equivalences associated to $g$ and (\ref{equ-eta-1}) are similar and we leave the details to the reader. 

Finally, assume that there exists $h^{-1}:H\rightarrow B$ such that $(h,h^{-1})$ is a gauge transformation for $\varphi_{B}$ and  $h\ast h^{-1}=u_{1}^{\phi_{B}}$ holds. If $h\co \eta_{H}=\eta_{B}$, 
we have
\begin{itemize}
\item[ ]$\hspace{0.38cm}h^{-1}\co\eta_{H}$
\item [ ]$=(u_{1}^{\varphi_{B}}\ast h^{-1})\co \eta_{H}$ {\scriptsize ({\blue by $u_{1}^{\varphi_{B}}\ast h^{-1}=h^{-1}$})}
\item [ ]$=((h\co \overline{\Pi}_{H}^{L})\ast h^{-1})\co \eta_{H} ${\scriptsize ({\blue by (ii)})}
\item [ ]$=u_{1}^{\phi_{B}}\co \eta_{H}$ {\scriptsize ({\blue by (\ref{edpi}) and $u_{1}^{\phi_{B}}=h\ast h^{-1}$})}
\item [ ]$= \eta_{B}$ {\scriptsize ({\blue by (b2) of Definition \ref{def}}).}
\end{itemize}

Conversely, if $h^{-1}\co \eta_{H}=\eta_{B}$, by similar arguments (in this case $g=h$),
$$h\co
\eta_{H}=(h\ast u_{1}^{\varphi_{B}})\co \eta_{H}=(h\ast (h^{-1}\co
\Pi_{H}^{L}))\co \eta_{H}=u_{1}^{\phi_{B}}\co \eta_{H}=\eta_{B}.
$$

\end{proof}

As a particular instance of the previous proposition we have the following corollary.

\begin{cor}
\label{lemma219}
Let $(B,\varphi_{B})$ be a left weak $H$-module algebra and let $h:H\rightarrow B$ be a morphism such that $h\ast u_{1}^{\varphi_{B}}=h=u_{1}^{\varphi_{B}}\ast h$. Then, the following assertions are equivalent:
\begin{itemize} 
\item[(i)] $h\co \eta_{H}=\eta_{B}$.
\item[(ii)] $h\co \Pi_{H}^{L}=u_{1}^{\varphi_{B}}$.
\item[(iii)] $h\co \overline{\Pi}_{H}^{L}=u_{1}^{\varphi_{B}}$.
\end{itemize}

Moreover, if $h\in Reg_{\varphi_{B}}(H,B)$ with convolution inverse $h^{-1}:H\rightarrow B$, we have $h\ast \eta_{H}=\eta_{B}$ iff $h^{-1}\ast \eta_{H}=\eta_{B}.$ Then, under these conditions, if $h\ast \eta_{H}=\eta_{B}$,  the following assertions hold and are equivalent:
\begin{itemize} 
\item[(iv)] $h^{-1}\co \eta_{H}=\eta_{B}$.
\item[(v)] $h^{-1}\co \Pi_{H}^{L}=u_{1}^{\varphi_{B}}$.
\item[(vi)] $h^{-1}\co \overline{\Pi}_{H}^{L}=u_{1}^{\varphi_{B}}$.
\end{itemize}

\end{cor}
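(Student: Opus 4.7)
The plan is to obtain this corollary as a direct specialization of Proposition \ref{lemma119} to the diagonal case $\phi_B = \varphi_B$, supplemented by a re-application of the same proposition to the convolution inverse.

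First, I would note that, when $\phi_B = \varphi_B$, one has $u_{1}^{\phi_{B}} = u_{1}^{\varphi_{B}}$, so the hypothesis of the corollary $h\ast u_{1}^{\varphi_{B}} = h = u_{1}^{\varphi_{B}}\ast h$ is exactly the hypothesis of Proposition \ref{lemma119}. Hence the equivalence of conditions (i), (ii) and (iii) of the corollary is precisely the equivalence of the three items with the same labels in Proposition \ref{lemma119}, specialized at $\phi_B = \varphi_B$. No new computation is required.

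Next, for the claim that $h\co\eta_H = \eta_B$ if and only if $h^{-1}\co\eta_H = \eta_B$: when $h\in Reg_{\varphi_B}(H,B)$ with convolution inverse $h^{-1}$, Definition \ref{reg-1} combined with Definition \ref{g-trans} guarantees that $(h,h^{-1})$ is a gauge transformation for $\varphi_B$ (i.e.\ (\ref{h1}) and (\ref{h2}) hold) and moreover $h\ast h^{-1} = u_{1}^{\varphi_{B}}$. Specializing again $\phi_B = \varphi_B$, this is exactly the additional hypothesis required in the last part of Proposition \ref{lemma119}, so the desired equivalence follows immediately.

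Finally, for (iv), (v), (vi), the strategy is to reapply the equivalence (i)$\Leftrightarrow$(ii)$\Leftrightarrow$(iii) of Proposition \ref{lemma119} (with $\phi_B=\varphi_B$) but now to the morphism $h^{-1}$ playing the role of the unknown morphism. The only hypothesis to check is $h^{-1}\ast u_{1}^{\varphi_{B}} = h^{-1} = u_{1}^{\varphi_{B}}\ast h^{-1}$, which I would verify directly from the gauge transformation axioms: since $u_{1}^{\varphi_{B}} = h^{-1}\ast h = h\ast h^{-1}$ (both equalities holding by (\ref{h1}) and (\ref{h-h-1-varphi})), associativity of the convolution product together with (\ref{h2}) gives $h^{-1}\ast u_{1}^{\varphi_{B}} = h^{-1}\ast h \ast h^{-1} = h^{-1}$ and symmetrically $u_{1}^{\varphi_{B}}\ast h^{-1} = h^{-1}\ast h \ast h^{-1} = h^{-1}$. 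With this observation in place, Proposition \ref{lemma119} applied to $g = h^{-1}$ yields (iv)$\Leftrightarrow$(v)$\Leftrightarrow$(vi). There is no real obstacle: the corollary is essentially a bookkeeping consequence, the only mild point being the verification that $h^{-1}$ inherits the absorption property with respect to $u_{1}^{\varphi_{B}}$, which reduces to the group structure on $Reg_{\varphi_{B}}(H,B)$ with unit $u_{1}^{\varphi_{B}}$.
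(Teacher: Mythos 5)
Your proposal is correct and is exactly the route the paper takes: the corollary is stated there as "a particular instance of the previous proposition," i.e.\ Proposition \ref{lemma119} specialized at $\phi_B=\varphi_B$, with the second batch of equivalences applied to $g=h^{-1}$. Your explicit check that $h^{-1}\ast u_{1}^{\varphi_{B}}=h^{-1}=u_{1}^{\varphi_{B}}\ast h^{-1}$ (via $u_{1}^{\varphi_{B}}=h\ast h^{-1}=h^{-1}\ast h$ and (\ref{h2})) is the one small verification the paper leaves implicit, and it is done correctly.
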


\begin{defin}
\label{reg-1+}
{\rm Let $H$ be a  weak Hopf algebra and
$\varphi_{B}:H\ot B\rightarrow B$ be a measuring. With
$$Reg_{\varphi_{B}}^{t}(H,B)$$ we will denote the set of morphisms
$h:H\rightarrow B$ in $Reg_{\varphi_{B}}(H,B)$ such that 
\begin{equation}
\label{h-total}  h\co \eta_{H}=\eta_{B}.
\end{equation}
Then, if $(B, \varphi_{B})$ is a left weak $H$-module algebra, by Corollary \ref{lemma219}, $Reg_{\varphi_{B}}^{t}(H,B)$ is a subgroup of $Reg_{\varphi_{B}}(H,B)$. 
}
\end{defin}

\begin{rem}
\label{rem+}
{\rm 
The set $Reg_{\varphi_{B}}^{t}(H,B)$ also acts on ${\mathcal P}_{\varphi_{B}}$, i.e.,  we have a map 
$$R^{\prime}:Reg_{\varphi_{B}}^{t}(H,B)\times {\mathcal P}_{\varphi_{B}}\rightarrow {\mathcal P}_{\varphi_{B}},$$
 defined by 
$$R^{\prime}(h,(\phi_{B},\tau))=R(h,(\phi_{B},\tau)),$$
where $R$ is the action defined in (\ref{reg-acts}). 

Note that, if $(B, \varphi_{B})$ is a left weak $H$-module algebra, the measuring $\varphi_{B}^{h}$ defined in (\ref{vh}) satisfies (b2) of Definition \ref{def} because
we have
\begin{itemize}
\item[ ]$\hspace{0.38cm}\varphi_{B}^{h}\co (\eta_{H}\ot B)$
\item [ ]$=  \mu_{B}\co ((\mu_{B}\co ((h\co \overline{\Pi}_{H}^{L})\ot B))\ot h^{-1})\co (H\ot P_{\varphi_{B}})\co ((\delta_{H}\co \eta_{H})\ot B)$ {\scriptsize ({\blue by  (\ref{edpi})})}

\item [ ]$= \mu_{B}\co ((\mu_{B}\co (u_{1}^{\varphi_{B}}\ot B))\ot h^{-1})\co (H\ot P_{\varphi_{B}})\co ((\delta_{H}\co \eta_{H})\ot B) $ {\scriptsize ({\blue by  (i) $\Rightarrow$ (iii)  of Corollary \ref{lemma219}})}

\item [ ]$= \mu_{B}\co (\mu_{B}\ot h^{-1})\co (B\ot P_{\varphi_{B}})\co ((P_{\varphi_{B}}\co (\eta_{H}\ot \eta_{B}))\ot B) $ {\scriptsize ({\blue by (\ref{eta-psi})})}

\item [ ]$= \mu_{B}\co (B\ot h^{-1})\co P_{\varphi_{B}}\co (\eta_{H}\ot B) $ {\scriptsize ({\blue by (\ref{wmeas-wcp}) and properties of $\eta_{B}$})}

\item [ ]$=\mu_{B}\co (\varphi_{B}\ot B)\co (H\ot c_{B,B})\co (((H\ot (h^{-1}\co \Pi_{H}^{L}))\co \delta_{H}\co \eta_{H})\ot B) $ {\scriptsize ({\blue by (\ref{edpi}) and naturality of $c$})}

\item [ ]$=\mu_{B}\co (B\ot u_{1}^{\varphi_{B}}) \co P_{\varphi_{B}}\co (\eta_{H}\ot B) $ {\scriptsize ({\blue  by  (v)  of Corollary \ref{lemma219} and the naturality of $c$})}

\item [ ]$= \varphi_{B}\co (\eta_{H}\ot B)
$ {\scriptsize ({\blue by (\ref{psiHB-1}) for  $\phi_{B}$})}

\item [ ]$=id_{B} 
$ {\scriptsize ({\blue by (b2) of Definition \ref{def}}).}
\end{itemize}  
}
\end{rem}

\begin{teo}
\label{equiv-hh-al}
Let $H$ be a  weak Hopf algebra, let $(B,\varphi_{B})$, $(B,\phi_{B})$ be  left weak $H$-module algebras and let $\sigma, \tau:H\ot H\rightarrow B$  be  morphisms such that $\sigma\ast u_{2}^{\varphi_{B}}=\sigma$, $\tau\ast u_{2}^{\phi_{B}}=\tau$. Assume that $\sigma$, $\tau$ satisfy the twisted condition (\ref{t-sigma}), the 2-cocycle condition
(\ref{c-sigma}) and  the normal condition (\ref{normal-sigma}). 
The following assertions are equivalent:
\begin{itemize}
\item[(i)] The weak crossed products  $(B\ot H, \mu_{B\ot_{\varphi_{B}}^{\sigma}H})$ and $(B\ot H, \mu_{B\ot_{\phi_{B}}^{\tau}H})$ are equivalent.
\item[(ii)] There exists a  gauge transformation $(h,h^{-1})$  for $\varphi_{B}$ such that (\ref{h6}), 
\begin{equation}
\label{n1} h\co \eta_{H}=\eta_{B},
\end{equation}
\begin{equation}
\label{n2} \mu_{B}\co (B\ot  h)\co P_{\phi_{B}}=\mu_{B}\co (h\ot \varphi_{B})\co (\delta_{H}\ot B), 
\end{equation}
\begin{equation}
\label{n3} \mu_{B}\co (B\ot  h)\co F_{\tau}=(\mu_{B}\ot H)\co (\mu_{B}\ot \sigma)\co (B\ot P_{\varphi_{B}}\ot H)\co (((h\ot H)\co \delta_{H})\ot 
((h\ot H)\co \delta_{H})), 
\end{equation}
\end{itemize}
hold.
\end{teo}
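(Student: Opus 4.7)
The plan is to reduce the statement to Theorem~\ref{equiv-hh} by translating conditions (\ref{h3})--(\ref{h5}) into (\ref{h6}) and (\ref{n1})--(\ref{n3}) using the extra hypotheses (both $(B,\varphi_B)$ and $(B,\phi_B)$ are left weak $H$-module algebras and $\sigma,\tau$ are normal). By Corollary~\ref{norma-sigma-prop4} and Remark~\ref{miracle}, the (necessarily unique) preunits of the two weak crossed products are
$$\nu=\nabla_{B\ot H}^{\varphi_{B}}\co(\eta_{B}\ot\eta_{H})=(u_{1}^{\varphi_{B}}\ot H)\co\delta_{H}\co\eta_{H},\qquad u=(u_{1}^{\phi_{B}}\ot H)\co\delta_{H}\co\eta_{H}.$$

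For (i)$\Rightarrow$(ii), Theorem~\ref{equiv-hh} supplies a gauge transformation $(h,h^{-1})$ for $\varphi_B$ satisfying (\ref{h3})--(\ref{h5}), and Remark~\ref{hh-1} gives (\ref{h6}). Substituting the explicit form of $\nu$ into (\ref{h3}) and invoking coassociativity together with the gauge identity $u_{1}^{\varphi_{B}}\ast h^{-1}=h^{-1}$, the right-hand side reduces to $(h^{-1}\ot H)\co\delta_{H}\co\eta_{H}$; rewriting $\delta_H\co\eta_H$ via (\ref{edpi}) and composing with $\varepsilon_H$ in the appropriate slot yields $h^{-1}\co\eta_H=\eta_B$, whereupon the last clause of Proposition~\ref{lemma119} delivers (\ref{n1}). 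For (\ref{n2}), expand the left-hand side by inserting (\ref{h4}) into $\mu_B\co(B\ot h)\co P_{\phi_B}$; the resulting expression contains the subword $h^{-1}\ast h$, which collapses to $u_{1}^{\varphi_{B}}$, and the leftover $u_{1}^{\varphi_{B}}$ is absorbed by the identity $\mu_B\co(\varphi_B\ot u_{1}^{\varphi_B})\co(\delta_H\ot B)=\varphi_B$ obtained from (\ref{vp-n}) with $n=1$ by specialising the second tensor factor to $\eta_B$. The derivation of (\ref{n3}) from (\ref{h5}) follows the same pattern, using $F_\sigma$ in place of $P_{\varphi_B}$.

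For (ii)$\Rightarrow$(i), we recover (\ref{h4}) from (\ref{n2}) and (\ref{h6}) by starting with the $\phi_B$-analog of (\ref{psiHB-1}),
$$\phi_B=\mu_B\co(B\ot u_1^{\phi_B})\co P_{\phi_B}=\mu_B\co(B\ot(h\ast h^{-1}))\co P_{\phi_B},$$
expanding the convolution $h\ast h^{-1}$ via coassociativity, and applying (\ref{n2}) to the leading factor $\mu_B\co(B\ot h)\co P_{\phi_B}$; an entirely analogous manoeuvre using (\ref{n3}) and the $\phi_B$-version of (\ref{sigmaHB-1}) yields (\ref{h5}). To verify (\ref{h3}), note that (\ref{n1}) combined with the gauge identities forces $h^{-1}\co\eta_H=\eta_B$, so Proposition~\ref{lemma119} applied with $g=h^{-1}$ gives $h^{-1}\co\overline{\Pi}_H^L=u_{1}^{\phi_{B}}$; combining this with (\ref{edpi}) in the form $\delta_H\co\eta_H=(\overline{\Pi}_H^L\ot H)\co\delta_H\co\eta_H$ and the same collapse $u_{1}^{\varphi_{B}}\ast h^{-1}=h^{-1}$ used above, the right-hand side of (\ref{h3}) reduces to $(u_{1}^{\phi_{B}}\ot H)\co\delta_H\co\eta_H=u$. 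With (\ref{h3})--(\ref{h5}) in hand, Theorem~\ref{equiv-hh} concludes the equivalence.

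The main obstacle is the two-way passage between (\ref{n1}) and (\ref{h3}): because $\delta_H\co\eta_H\neq\eta_H\ot\eta_H$ for a genuine weak Hopf algebra, one cannot merely ``evaluate at $\eta_H$'' in (\ref{h3}), and the translation must be routed through the idempotents $\overline{\Pi}_H^L$ and $\Pi_H^L$ via Proposition~\ref{lemma119} in order to convert the punctual datum $h^{\pm 1}\co\eta_H=\eta_B$ into convolutional statements like $h^{-1}\co\overline{\Pi}_H^L=u_1^{\phi_B}$ and back. Once this bookkeeping is in place, the remaining verifications are routine manipulations with convolution products, coassociativity and the measuring axioms.
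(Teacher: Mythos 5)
Your argument is correct and follows the paper's overall strategy: both implications are obtained by translating conditions (\ref{h3})--(\ref{h5}) of Theorem \ref{equiv-hh} into (\ref{h6}) and (\ref{n1})--(\ref{n3}), using that the preunits are forced to be $\nabla_{B\otimes H}^{\varphi_{B}}\co (\eta_{B}\ot \eta_{H})$ and $\nabla_{B\otimes H}^{\phi_{B}}\co (\eta_{B}\ot \eta_{H})$, and routing the unit conditions through Proposition \ref{lemma119} and (\ref{edpi}); your (ii)$\Rightarrow$(i) direction coincides with the paper's. The one genuine difference is in (i)$\Rightarrow$(ii): the paper extracts (\ref{n1})--(\ref{n3}) by composing the multiplicativity and left $B$-linearity of the comparison morphism $S$ with suitable insertions of $\eta_{B}$ and $\eta_{H}$ (equations (\ref{Smul-1}) and (\ref{Sm2})), whereas you derive them directly from the already-established identities (\ref{h3})--(\ref{h5}) by collapsing $h^{-1}\ast h$ to $u_{1}^{\varphi_{B}}$ and absorbing it via (b1) of Definition \ref{def} in the case of (\ref{n2}) and via $\sigma\ast u_{2}^{\varphi_{B}}=\sigma$ together with (a1) in the case of (\ref{n3}). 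Both computations are valid; yours stays entirely at the level of convolution identities for $h,h^{-1}$ and avoids returning to $S$, at the cost of the slightly longer coproduct bookkeeping you describe. One cosmetic slip: the absorption identity you invoke for (\ref{n2}) should carry the symmetry, i.e.\ it is $\mu_{B}\co (B\ot u_{1}^{\varphi_{B}})\co P_{\varphi_{B}}=\varphi_{B}$, which is exactly (\ref{psiHB-1}); as written, $\mu_{B}\co(\varphi_{B}\ot u_{1}^{\varphi_{B}})\co(\delta_{H}\ot B)$ does not typecheck, but the intended identity is the correct one and does follow from (\ref{vp-n}) with the second factor specialised to $\eta_{B}$.
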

 
\begin{proof} We first prove (i)$\Rightarrow$(ii). By Corollary \ref{crossed-product1}, we know that $(B\ot H, \mu_{B\ot_{\varphi_{B}}^{\sigma}H})$ and $(B\ot H, \mu_{B\ot_{\phi_{B}}^{\tau}H})$ are weak crossed products with preunits $\nu=\nabla_{B\otimes H}^{\varphi_{B}}\co (\eta_{B}\ot \eta_{H})$, $u=\nabla_{B\otimes H}^{\phi_{B}}\co (\eta_{B}\ot \eta_{H})$, respectively.   As in the proof of Theorem \ref{equiv-hh} define $\theta, \gamma$ by (\ref{tg}) and $h$, $h^{-1}$ by (\ref{h-h-1}). Then, using that $T,S$ are morphisms of left $B$-modules and (\ref{th-gh-1}) we have the following identities:
\begin{equation}
\label{Te-Se}
(B\ot \varepsilon_{H})\co T=\mu_{B}\co (B\ot h^{-1}),\;\; (B\ot \varepsilon_{H})\co S=\mu_{B}\co (B\ot h). 
\end{equation}

By  (i)$\Rightarrow$(ii) of Theorem \ref{equiv-hh}, the pair $(h,h^{-1})$ is a gauge transformation for $\varphi_{B}$ and the identities (\ref{h4}), (\ref{h5}) and (\ref{h3}) hold. Therefore,  by Remark (\ref{hh-1}) we obtain that (\ref{h6}) holds.

On the other hand, we obtain (\ref{n1})  because 
\begin{itemize}
\item[ ]$\hspace{0.38cm}h\co \eta_{H}$
\item [ ]$=(B\ot \varepsilon_{H})\co S\co  (\eta_{B}\ot \eta_{H}) $ {\scriptsize ({\blue by (\ref{Te-Se})})}
\item [ ]$=  (B\ot \varepsilon_{H})\co \nabla_{B\otimes H}^{\phi_{B}}\co (\eta_{B}\ot \eta_{H})$ {\scriptsize ({\blue by (\ref{uT})})}
\item [ ]$=   u_{1}^{\phi_{B}}\co \eta_{H} $ {\scriptsize ({\blue by naturality of $c$ and counit properties})}
\item [ ]$= \eta_{B}$ {\scriptsize ({\blue by (b2) of Definition \ref{def}}).}
\end{itemize}

Now, by the proof of (i)$\Rightarrow$(ii) of Theorem \ref{equiv-hh} we know that $S$ is multiplicative, i.e., 
\begin{equation}
\label{Smul}
S\co \mu_{B\ot_{\phi_{B}}^{\tau}H}=\mu_{B\ot_{\varphi_{B}}^{\sigma}H}\co (S\ot S).
\end{equation}

Then composing with $\eta_{B}\ot H\ot B\ot \eta_{H}$ in the previous identity we have 
\begin{itemize}
\item[ ]$\hspace{0.38cm}  S\co \mu_{B\ot_{\phi_{B}}^{\tau}H}\co (\eta_{B}\ot H\ot B\ot \eta_{H}) $
\item [ ]$= (\mu_{B}\ot H)\co (\mu_{B}\ot \theta)\co (B\ot (F_{\tau}\co (H\ot \eta_{H})))\co P_{\phi_{B}} $ {\scriptsize ({\blue by (\ref{tg-1}) and  properties of $\eta_{B}$})}
\item [ ]$= (\mu_{B}\ot H)\co (\mu_{B}\ot \theta)\co (B\ot (\nabla_{B\ot H}^{\phi_{B}}\co (\eta_{B}\ot H)))\co P_{\phi_{B}}$ {\scriptsize ({\blue by (\ref{sigma-preunit1})})}
\item [ ]$= (\mu_{B}\ot H)\co (B\ot \theta)\co \nabla_{B\otimes H}^{\phi_{B}}\co P_{\phi_{B}}$ {\scriptsize ({\blue by  the left $B$-linearity of $\nabla_{B\otimes H}^{\phi_{B}}$ and  properties of $\eta_{B}$})}
\item [ ]$= (\mu_{B}\ot H)\co (B\ot \theta)\co P_{\phi_{B}}$ {\scriptsize ({\blue by (\ref{nabla-psi}).})}
\end{itemize}
and 
\begin{itemize}
\item[ ]$\hspace{0.38cm}  \mu_{B\ot_{\varphi_{B}}^{\sigma}H}\co (S\ot S)\co (\eta_{B}\ot H\ot B\ot \eta_{H}) $
\item [ ]$=  \mu_{B\ot_{\varphi_{B}}^{\sigma}H}\co (((h\ot H)\co \delta_{H})\ot (((\mu_{B}\ot H)\co (B\ot (( h\ot H)\co \delta_{H}\co \eta_{H}))) $ {\scriptsize ({\blue by (\ref{tg-1}),  (\ref{th-gh-1}), and  properties}}
\item[ ]$\hspace{0.38cm}${\scriptsize {\blue of $\eta_{B}$})}
\item [ ]$=  \mu_{B\ot_{\varphi_{B}}^{\sigma}H}\co (((h\ot H)\co \delta_{H})\ot (\nabla_{B\otimes H}^{\varphi_{B}}\co (B\ot \eta_{H})))   $ {\scriptsize ({\blue by (\ref{equ-eta})})}
\item [ ]$= \mu_{B\ot_{\varphi_{B}}^{\sigma}H}\co (((h\ot H)\co \delta_{H})\ot B\ot \eta_{H}))) $ {\scriptsize ({\blue by (\ref{otra-prop})})}
\item [ ]$=  (\mu_{B}\ot H)\co (\mu_{B}\ot (\nabla_{B\otimes H}^{\varphi_{B}}\co (\eta_{B}\ot H)))  \co (B\ot P_{\varphi_{B}})\co (((h\ot H)\co \delta_{H})\ot B)$ {\scriptsize ({\blue by (\ref{sigma-preunit1})})}
\item [ ]$=  (\mu_{B}\ot H)\co  (B\ot (\nabla_{B\otimes H}^{\varphi_{B}}\co P_{\varphi_{B}}))\co (((h\ot H)\co \delta_{H})\ot B)$ {\scriptsize ({\blue by  the left $B$-linearity of $\nabla_{B\otimes H}^{\varphi_{B}}$ and  properties}}
\item[ ]$\hspace{0.38cm}${\scriptsize {\blue of $\eta_{B}$})}
\item [ ]$=   (\mu_{B}\ot H)\co  (B\ot P_{\varphi_{B}})\co (((h\ot H)\co \delta_{H})\ot B)$ {\scriptsize ({\blue by the left $B$-linearity of $\nabla_{B\otimes H}^{\varphi_{B}}$ and (\ref{nabla-psi})}).}
\end{itemize}

Therefore, 
\begin{equation}
\label{Smul-1}
(\mu_{B}\ot H)\co (B\ot \theta)\co P_{\phi_{B}}= (\mu_{B}\ot H)\co  (B\ot P_{\varphi_{B}})\co (((h\ot H)\co \delta_{H})\ot B)
\end{equation}
holds and, composing with $B\ot \varepsilon_{H}$ in (\ref{Smul-1}), we obtain (\ref{n2}) by the naturality of $c$ and (\ref{e-psi}).

Finally, composing with $\eta_{B}\ot H\ot \eta_{B}\ot H$ in (\ref{Smul}) we have 
\begin{itemize}
\item[ ]$\hspace{0.38cm}  S\co \mu_{B\ot_{\phi_{B}}^{\tau}H}\co (\eta_{B}\ot H\ot \eta_{B}\ot H) $
\item [ ]$= (\mu_{B}\ot H)\co (B\ot ((h\ot H)\co \delta_{H}))\co F_{\tau}$ {\scriptsize ({\blue by (\ref{aw1}), (\ref{tg-1}), (\ref{th-gh-1}) and  the properties of $\eta_{B}$})}
\end{itemize}
and 
\begin{itemize}
\item[ ]$\hspace{0.38cm}  \mu_{B\ot_{\varphi_{B}}^{\sigma}H}\co (S\ot S)\co (\eta_{B}\ot H\ot \eta_{B}\ot H)$
\item [ ]$= \mu_{B\ot_{\varphi_{B}}^{\sigma}H}\co (((h\ot H)\co \delta_{H})\ot ((h\ot H)\co \delta_{H}))$ {\scriptsize ({\blue by (\ref{tg-1}), properties of $\eta_{B}$, and (\ref{th-gh-1})}).}
\end{itemize}

As a consequence, 
\begin{equation}
\label{Sm2}
 (\mu_{B}\ot H)\co (B\ot ((h\ot H)\co \delta_{H}))\co F_{\tau}= \mu_{B\ot_{\varphi_{B}}^{\sigma}H}\co (((h\ot H)\co \delta_{H})\ot ((h\ot H)\co \delta_{H}))
\end{equation}
holds and, composing with $B\ot \varepsilon_{H}$, we obtain (\ref{n3}) by the counit properties and (\ref{sigmaHB-3}).

Conversely, consider that (ii) holds.  To prove that  (ii)$\Rightarrow$(i), following (ii)$\Rightarrow$(i) of Theorem \ref{equiv-hh}, we only need to obtain the equalities  (\ref{h4}), (\ref{h5}) and (\ref{h3}). Indeed, note that by Proposition  \ref{lemma119}, $h^{-1}\co \eta_{H}=\eta_{B}$ because (\ref{n1}) holds. Then,  (\ref{h4}) holds because
\begin{itemize}
\item[ ]$\hspace{0.38cm} \mu_{B}\co ((\mu_{B}\co (h\ot \varphi_{B})\co (\delta_{H}\ot B))\ot h^{-1})\co (H\ot c_{H,B})\co (\delta_{H}\ot B)$
\item[ ]$\hspace{0.38cm} \mu_{B}\co (B\ot (\mu_{B}\co  (h\ot h^{-1})))\co (P_{\phi_{B}}\ot H)\co  (H\ot c_{H,B})\co (\delta_{H}\ot B)$ {\scriptsize ({\blue  by (\ref{n2}) and the associativity of}}
\item[ ]$\hspace{0.38cm}$ {\scriptsize {\blue  $\mu_{B}$})}
\item [ ]$= \mu_{B}\co (B\ot (h\ast h^{-1}))\co P_{\phi_{B}} $ {\scriptsize ({\blue by the naturality of $c$ and coassociativity of $\delta_{H}$})}
\item [ ]$= \mu_{B}\co (B\ot u_{1}^{\phi_{B}})\co  P_{\phi_{B}}  $ {\scriptsize ({\blue by (\ref{h6})})}
\item [ ]$=\phi_{B}$ {\scriptsize ({\blue by (\ref{psiHB-1}) for $\phi_{B}$}).}
\end{itemize}

On the other hand, 
\begin{itemize}
\item[ ]$\hspace{0.38cm}\mu_{B}\co (B\ot h^{-1})\co \mu_{B\ot_{\varphi_{B}}^{\sigma}H}\co (((h\ot H)\co \delta_{H})\ot  ((h\ot H)\co \delta_{H}))$
\item [ ]$= \mu_{B}\co (((\mu_{B}\ot H)\co (\mu_{B}\ot \sigma)\co (B\ot P_{\varphi_{B}}\ot H)\co (((h\ot H)\co \delta_{H})\ot  ((h\ot H)\co \delta_{H})))\ot (h^{-1}\co \mu_{H}))$
\item[ ]$\hspace{0.38cm}\co  \delta_{H^{\ot 2}}$ {\scriptsize ({\blue by naturality of $c$ and coassociativity of $\delta_{H}$})}
\item [ ]$= \mu_{B}\co (B\ot (\mu_{B}\co (h\ot h^{-1})))\co  (F_{\tau}\ot \mu_{H})\co  \delta_{H^{\ot 2}}  $ {\scriptsize ({\blue by (\ref{n3}) and associativity of $\mu_{B}$})}
\item [ ]$= \mu_{B}\co (B\ot (h\ast h^{-1}))\co  F_{\tau}$ {\scriptsize ({\blue by (\ref{delta-sigmaHB})})}
\item [ ]$= \mu_{B}\co (B\ot u_{1}^{\phi_{B}})\co  F_{\tau}$ {\scriptsize ({\blue by (\ref{h6})})}
\item [ ]$=\tau$ {\scriptsize ({\blue by (\ref{sigmaHB-1})})}
\end{itemize}
and then (\ref{h5}) holds. Finally, we obtain (\ref{h3}) because
\begin{itemize}
\item[ ]$\hspace{0.38cm}((\mu_{B}\co (B\ot h^{-1}))\ot H)\co (B\ot \delta_{H})\co \nabla_{B\otimes H}^{\varphi_{B}}\co (\eta_{B}\ot \eta_{H})$
\item [ ]$= ((u_{1}^{\varphi_{B}}\ast h^{-1})\ot H)\co \delta_{H}\co \eta_{H} $ {\scriptsize ({\blue by (\ref{nabla-eta}) and by the coassociativity of $\delta_{H}$})}
\item [ ]$= ( h^{-1}\ot H)\co \delta_{H}\co \eta_{H} $ {\scriptsize ({\blue by the gauge transformation condition})}
\item [ ]$= ( (h^{-1}\co \overline{\Pi}_{H}^{L})\ot H)\co \delta_{H}\co \eta_{H} $ {\scriptsize ({\blue by (\ref{edpi})})}
\item [ ]$= ( u_{1}^{\phi_{B}}\ot H)\co \delta_{H}\co \eta_{H} $ {\scriptsize ({\blue by (vi) of Proposition \ref{lemma119}})}
\item [ ]$=\nabla_{B\otimes H}^{\phi_{B}}\co (\eta_{B}\ot \eta_{H})$ {\scriptsize ({\blue by  (\ref{nabla-eta})}).}
\end{itemize}
\end{proof}

\begin{apart}
{\rm 
As a consequence of the previous theorem, it is possible to define a groupoid, denoted by ${\mathcal G}_{H}^{B}$ whose objects are pairs 
$$(\varphi_{B},\sigma),$$ 
where $(B,\varphi_{B})$ is a left weak $H$-module algebra, $\sigma:H\ot H\rightarrow B$ is a morphism such that 
$$
u_{2}^{\varphi_{B}}\ast \sigma=\sigma=\sigma\ast u_{2}^{\varphi_{B}} 
$$
and the associated quadruple ${\Bbb B}_{H}$ satisfies the twisted, cocycle and normal conditions. 

A morphism between two objects  $(\varphi_{B},\sigma)$, $(\phi_{B},\tau)$ of ${\mathcal G}_{H}^{B}$ is defined by a morphism $h:H\rightarrow B$ for which there exists a morphism $h^{-1}:H\rightarrow B$ such that $(h,h^{-1})$ is a gauge transformation for $\varphi_{B}$ satisfying the conditions (ii) of Theorem \ref{equiv-hh-al}.  If $h:(\varphi_{B},\sigma)\rightarrow (\phi_{B},\tau)$, $g:(\phi_{B},\tau)\rightarrow (\chi_{B},\omega)$ are morphisms in ${\mathcal G}_{H}^{B}$, the composition, denoted by $g\circledcirc h$, is defined by 
$$g\circledcirc h=g\ast h.$$

The previous composition is well defined because, if $l=g\ast h$ and $l^{-1}=h^{-1}\ast g^{-1}$, it is easy to show that 
$(l,l^{-1})$ is a gauge transformation for $\varphi_{B}$ and  $l\ast l^{-1}=u_{1}^{\chi_{B}}$. Also, 
\begin{itemize}
\item[ ]$\hspace{0.38cm}l\co \eta_{H}$
\item [ ]$=(g\ast (h\co \Pi_{H}^{L})) \co \eta_{H}$ {\scriptsize ({\blue by (\ref{edpi})})}
\item [ ]$=(g\ast u_{1}^{\phi_{B}}) \co \eta_{H} $ {\scriptsize ({\blue by (ii) of Proposition \ref{lemma119}})}
\item [ ]$= g\co \eta_{H}  $ {\scriptsize ({\blue by the condition of gauge transformation})}
\item [ ]$= \eta_{B}  $ {\scriptsize ({\blue by (\ref{n1})})}
\end{itemize}
and then (\ref{n1}) holds for $l$. The equality (\ref{n2}) for $l$ follows by 
\begin{itemize}
\item[ ]$\hspace{0.38cm} \mu_{B}\co (B\ot l)\co P_{\chi_{B}}$
\item [ ]$=\mu_{B}\co ((\mu_{B}\co (B\ot g)\co P_{\chi_{B}})\ot h)\co (H\ot c_{H,B})\co (\delta_{H}\ot B) $ {\scriptsize ({\blue by the associativity of $\mu_{B}$, the coassociativity}}
\item[ ]$\hspace{0.38cm}${\scriptsize {\blue of $\delta_{H}$ and the naturality of $c$})}
\item [ ]$=\mu_{B}\co ((\mu_{B}\co (g\ot \phi_{B})\co (\delta_{H}\ot B))\ot h)\co (H\ot c_{H,B})\co (\delta_{H}\ot B) $ {\scriptsize ({\blue by (\ref{n2})})}
\item [ ]$=\mu_{B}\co ( g\ot ((\mu_{B}\co (h\ot \varphi_{B})\co (\delta_{H}\ot B)))\co (\delta_{H}\ot B)  $ {\scriptsize ({\blue by the coassociativity of $\delta_{H}$})}
\item [ ]$= \mu_{B}\co ( g\ot (\mu_{B}\co (B\ot h)\co P_{\phi_{B}}))\co (\delta_{H}\ot B)  $ {\scriptsize ({\blue by (\ref{n2})})}
\item [ ]$=(\mu_{B}\co (l\ot \phi_{B})\co (\delta_{H}\ot B) $ {\scriptsize ({\blue by the associativity of $\mu_{B}$ and the coassociativity of $\delta_{H}$})}
\end{itemize}
and (\ref{n3}) holds because
\begin{itemize}
\item[ ]$\hspace{0.38cm} \mu_{B}\co (B\ot l)\co F_{\omega}$
\item [ ]$=\mu_{B}\co ((\mu_{B}\co (B\ot g)\co F_{\omega})\ot (h\co \mu_{H}))\co \delta_{H^{\ot 2}}$ {\scriptsize ({\blue by the associativity of $\mu_{B}$, the coassociativity $\delta_{H^{\ot 2}}$})}
\item [ ]$=\mu_{B}\co ((\mu_{B}\co (\mu_{B}\ot \tau)\co (B\ot P_{\phi_{B}}\ot H)\co (((g\ot H)\co \delta_{H})\ot ((g\ot H)\co \delta_{H})))\ot (h\co \mu_{H}))\co \delta_{H^{\ot 2}}$ 
\item[ ]$\hspace{0.38cm}${\scriptsize ({\blue by (\ref{n3})})}
\item [ ]$=\mu_{B}\co ((\mu_{B}\co (B\ot \phi_{B}))\ot  (\mu_{B}\co (B\ot h)\co F_{\tau}))\co (B\ot  H\ot c_{H,B}\ot H)\co (((g\ot \delta_{H})\co \delta_{H})\ot ((g\ot H)\co \delta_{H}))$ 
\item[ ]$\hspace{0.38cm}${\scriptsize ({\blue by the associativity of $\mu_{B}$, the coassociativity of $\delta_{H}$ and the naturality of $c$})}
\item [ ]$=\mu_{B}\co ((\mu_{B}\co (B\ot \phi_{B}))\ot  (\mu_{B}\co (\mu_{B}\ot \sigma)\co (B\ot P_{\varphi_{B}}\ot H)\co (((h\ot H)\co \delta_{H})\ot ((h\ot H)\co \delta_{H}))))$
\item[ ]$\hspace{0.38cm}\co (B\ot  H\ot c_{H,B}\ot H)\co (((g\ot \delta_{H})\co \delta_{H})\ot ((g\ot H)\co \delta_{H}))   $ {\scriptsize ({\blue by the associativity of $\mu_{B}$ and (\ref{n3})})}
\item [ ]$=\mu_{B}\co ((\mu_{B}\co (B\ot (\mu_{B}\co (B\ot h) \co P_{\phi_{B}})))\ot (\mu_{B}\co (B\ot \sigma)\co (P_{\varphi_{B}}\ot H)))$
\item[ ]$\hspace{0.38cm}\co (((g\ot \delta_{H})\co \delta_{H})\ot ((((g\ot h)\co \delta_{H})\ot H)\co \delta_{H}))  $ {\scriptsize ({\blue by the associativity of $\mu_{B}$, the coassociativity of $\delta_{H}$ and}}
\item[ ]$\hspace{0.38cm}${\scriptsize {\blue the naturality of $c$})}
\item [ ]$=\mu_{B}\co ((\mu_{B}\co (B\ot (\mu_{B}\co (h\ot \varphi_{B}) \co (\delta_{H}\ot B))))\ot (\mu_{B}\co (B\ot \sigma)\co (P_{\varphi_{B}}\ot H)))$
\item[ ]$\hspace{0.38cm}\co (((g\ot \delta_{H})\co \delta_{H})\ot ((((g\ot h)\co \delta_{H})\ot H)\co \delta_{H}))  $ {\scriptsize ({\blue by the associativity of $\mu_{B}$ and (\ref{n2})})}
\item [ ]$=\mu_{B}\co (B\ot (\mu_{B}\co ((\mu_{B}\co (\varphi_{B}\ot \varphi_{B})\co (H\ot c_{H,B}\ot B)\co (\delta_{H}\ot B\ot B))\ot \sigma)))$ 
\item[ ]$\hspace{0.38cm}\co (B\ot H\ot ( (B\ot c_{H,B})\co (c_{H,B}\ot B))\ot H)\co (((g\ot \delta_{H})\co \delta_{H})\ot ((((g\ot h)\co \delta_{H})\ot H)\co \delta_{H}))$ 
\item[ ]$\hspace{0.38cm}${\scriptsize ({\blue by the associativity of $\mu_{B}$, the coassociativity of $\delta_{H}$ and the naturality of $c$})}
\item [ ]$= \mu_{B}\co (\mu_{B}\ot \sigma)\co (B\ot P_{\varphi_{B}}\ot H)\co (((l\ot H)\co \delta_{H})\ot ((l\ot H)\co \delta_{H}))$ {\scriptsize ({\blue by the associativity of $\mu_{B}$ and}}
\item[ ]$\hspace{0.38cm}${\scriptsize {\blue (b1) of Definition \ref{def}}).}
\end{itemize}

The identity of $(\varphi_{B},\sigma)$ is $id_{(\varphi_{B},\sigma)}=u_{1}^{\varphi_{B}}$ because $h\ast u_{1}^{\varphi_{B}}=h$ and $(u_{1}^{\varphi_{B}},u_{1}^{\varphi_{B}})$ is a gauge transformation for $\varphi_{B}$ satisfying (\ref{h-h-1-varphi}). The equality (\ref{n1}) follows from (b2) of Definition \ref{def},  (\ref{n2}) is a consequence of the naturality of $c$ and (b1) of Definition \ref{def} and  (\ref{n3}) holds because:
\begin{itemize}
\item[ ]$\hspace{0.38cm}(\mu_{B}\ot H)\co (\mu_{B}\ot \sigma)\co (B\ot P_{\varphi_{B}}\ot H)\co (((u_{1}^{\varphi_{B}}\ot H)\co \delta_{H})\ot  ((u_{1}^{\varphi_{B}}\ot H)\co \delta_{H}))$
\item [ ]$=\mu_{B}\co ((\mu_{B}\co ( u_{1}^{\varphi_{B}}\ot\varphi_{B})\co (\delta_{H}\ot   u_{1}^{\varphi_{B}}))\ot \sigma)\co \delta_{H^{\ot 2}}$
{\scriptsize ({\blue by naturality of $c$ and coassociativity of $\delta_{H}$})}
\item [ ]$=\mu_{B}\co ((\varphi_{B}\co (H\ot  u_{1}^{\varphi_{B}}) )\ot \sigma)\co \delta_{H}^{\ot 2}$ {\scriptsize ({\blue by (\ref{nabla-fi})})}
\item [ ]$=u_{2}^{\varphi_{B}}\ast \sigma  $ {\scriptsize ({\blue by  (b3) of Definition \ref{def}})}
\item [ ]$= \sigma $ {\scriptsize ({\blue by (\ref{c-sigma})})}
\item [ ]$=\mu_{B}\co (B\ot   u_{1}^{\varphi_{B}})\co F_{\sigma}$ {\scriptsize ({\blue by  (\ref{sigmaHB-1})}).}
\end{itemize}

As a consequence, $h$ is an isomorphism with inverse $h^{-1}$ and $(g\circledcirc h)^{-1}=h^{-1}\circledcirc g^{-1}$. Therefore, ${\mathcal G}_{H}^{B}$ is a groupoid.
}
\end{apart}

\begin{apart}
{\rm 
Let $H$ be a  weak Hopf algebra, let $(B,\varphi_{B})$  be a  left weak $H$-module algebra and let $\sigma:H\ot H\rightarrow B$  be  a morphism such that $\sigma\ast u_{2}^{\varphi_{B}}=\sigma$. Assume that $\sigma$ satisfies the twisted condition (\ref{t-sigma}), the 2-cocycle condition (\ref{c-sigma}) and  the normal condition (\ref{normal-sigma}). Let $h$ be a morphism in $Reg_{\varphi_{B}}^{t}(H,B)$. Then $(h, h^{-1})$ is a gauge transformation for $\varphi_{B}$ such that (\ref{h-h-1-varphi}) and (\ref{n1}) hold. Define $\varphi_{B}^{h}$ and $\sigma{h}$ as in (\ref{vh}) and (\ref{sh}) respectively. Then, by  (\ref{basic}), $\varphi_{B}^{h}$ is a measuring such that (\ref{gt1}) holds. Therefore $u_{1}^{\varphi_{B}}=u_{1}^{\varphi_{B}^h}$ and then 
\begin{equation}
\label{nn2}
\nabla_{B\ot H}^{\varphi_{B}^h}=\nabla_{B\ot H}^{\varphi_{B}}.
\end{equation}

Moreover, by Remark \ref{rem+}, we know that $\varphi_{B}^{h}$ satisfies (b2) of Definition \ref{def}. On the other hand, $\sigma^{h}$ is such that $\sigma^{h}\ast u_{2}^{\varphi_{B}^{h}}=\sigma^{h}$ and satisfies  the twisted condition (\ref{t-sigma}), the cocycle condition (\ref{c-sigma}) and  $$\nu^{h}=(\mu_{B}\ot H)\co (B\ot ((h^{-1}\ot H)\co \delta_{H}))\co \nabla_{B\ot H}^{\varphi_{B}}\co (\eta_{B}\ot \eta_{H})$$ is a preunit for the associated weak crossed product $(B\ot H, \mu_{B\ot_{\varphi_{B}^{h}}^{\sigma^h}H})$. Note that 

\begin{itemize}

\item[ ]$\hspace{0.38cm} \nu^{h}$

\item [ ]$=((u_{1}^{\varphi_{B}}\ast h^{-1})\ot H)\co \delta_{H}\co \eta_{H}$ {\scriptsize ({\blue by the coassociativity of $\delta_{H}$})}

\item [ ]$=(h^{-1}\ot H)\co \delta_{H}\co \eta_{H} $ {\scriptsize ({\blue by the condition of gauge transformation})}

\item [ ]$= ((h^{-1}\co \overline{\Pi}_{H}^{L})\ot H)\co \delta_{H}\co \eta_{H}  $ {\scriptsize ({\blue by (\ref{edpi})})}

\item [ ]$= (u_{1}^{\varphi_{B}}\ot H)\co \delta_{H}\co \eta_{H}  $ {\scriptsize ({\blue by (vi) of Corollary (\ref{lemma219})})}

\item [ ]$= \nabla_{B\ot H}^{\varphi_{B}}\co (\eta_{B}\ot \eta_{H}) $ {\scriptsize ({\blue by (\ref{nabla-eta}})}

\item [ ]$= \nabla_{B\ot H}^{\varphi_{B}^h}\co (\eta_{B}\ot \eta_{H})$ {\scriptsize ({\blue by (\ref{nn2})}).}

\end{itemize}

Therefore, 
$$
\nu^{h}=\nabla_{B\ot H}^{\varphi_{B}^h}\co (\eta_{B}\ot \eta_{H})=\nabla_{B\ot H}^{\varphi_{B}}\co (\eta_{B}\ot \eta_{H})=\nu.
$$

Also, $\varphi_{B}^h$ satisfies (\ref{B2}) because 
\begin{itemize}
\item[ ]$\hspace{0.38cm}\varphi_{B}^h\co (\overline{\Pi}_{H}^{L}\ot B) $
\item [ ]$=\mu_{B}\ot (\mu_{B}\ot h^{-1})\co (B\ot P_{\varphi_{B}}))\co (((h\co \overline{\Pi}_{H}^{L})\ot H)\co \delta_{H}\co \overline{\Pi}_{H}^{L})  \ot B) $ {\scriptsize ({\blue by (\ref{d-pibar})})}
\item [ ]$=\mu_{B}\ot (\mu_{B}\ot h^{-1})\co (u_{1}^{\varphi_{B}}\ot P_{\varphi_{B}}))\co ((\delta_{H}\co \overline{\Pi}_{H}^{L})  \ot B)  $ {\scriptsize ({\blue by (ii) of Corollary \ref{lemma219}})}
\item [ ]$=\mu_{B}\ot (B\ot h^{-1})\co  P_{\varphi_{B}}\co (\overline{\Pi}_{H}^{L}  \ot B)  $ {\scriptsize ({\blue by (\ref{nabla-fiAH})})}
\item [ ]$=\mu_{B}\co ((\varphi_{B}\co  (\overline{\Pi}_{H}^{L}\ot B))\ot h^{-1})\co (H\ot c_{H,B})\co  ((\delta_{H}\co \overline{\Pi}_{H}^{L})  \ot B)   $ {\scriptsize ({\blue by  (\ref{d-pibar})})}
\item [ ]$= \mu_{B}\co ((\mu_{B}\co c_{B,B}\co (u_{1}^{\varphi_{B}}))\ot h^{-1})\co (H\ot c_{H,B})\co  ((\delta_{H}\co \overline{\Pi}_{H}^{L})  \ot B) $ {\scriptsize ({\blue by (\ref{B2}) for $\varphi_{B}$})}
\item [ ]$=\mu_{B}\co (B\ot (u_{1}^{\varphi_{B}}\ast h^{-1}))\co c_{H,B}\co   (\overline{\Pi}_{H}^{L}  \ot B) $ {\scriptsize ({\blue by the associativity of $\mu_{B}$ and the naturality of $c$})}
\item [ ]$= \mu_{B}\co c_{B,B}\co ( (h^{-1}\co \overline{\Pi}_{H}^{L})\ot B)  $ {\scriptsize ({\blue by the naturality of $c$ and the condition of gauge transformation})}
\item [ ]$= \mu_{B}\co c_{B,B}\co (u_{1}^{\varphi_{B}}\ot B)   $ {\scriptsize ({\blue by  (vi) of Corollary \ref{lemma219}})}
\item [ ]$= \mu_{B}\co c_{B,B}\co (u_{1}^{\varphi_{B}^h}\ot B)  $ {\scriptsize ({\blue by $u_{1}^{\varphi_{B}}=u_{1}^{\varphi_{B}^h}$})}
\end{itemize}

As a consequence, using the same proof than in Remark \ref{miracle}, we obtain that (\ref{nvar-eta}) holds for $\varphi_{B}^h$. Therefore, by Corollary \ref{norma-sigma-prop4}, we have that $\sigma^{h}$ satisfies the normal condition (\ref{normal-sigma}), i.e.,  $$\sigma^{h}\co (\eta_{H}\ot H)=u_{1}^{\varphi_{B}^h}=\sigma^{h}\co (H\ot \eta_{H}).$$

Finally, if $B$ is commutative  and $H$ is coconmutative, the equality  
\begin{equation}
\label{cbch}
\mu_{B}\co (B\ot h)\co P_{\varphi_{B}}=\mu_{B}\co (h\ot \varphi_{B})\co (\delta_{H}\ot B)
\end{equation}
holds and then 
\begin{itemize}

\item[ ]$\hspace{0.38cm} \varphi_{B}^h$

\item [ ]$= \mu_{B}\co ((\mu_{B}\co (h\ot \varphi_{B})\co (\delta_{H}\ot B))\ot h^{-1})\co (H\ot c_{H,B})\co (\delta_{H}\ot B)$ {\scriptsize ({\blue by the coassociativity of $\delta_{H}$})}

\item [ ]$= \mu_{B}\co ((\mu_{B}\co (B\ot h)\co P_{\varphi_{B}}))\ot h^{-1})\co (H\ot c_{H,B})\co (\delta_{H}\ot B) $ {\scriptsize ({\blue by (\ref{cbch})})}

\item [ ]$= \mu_{B}\co (B\ot (h\ast h^{-1}))\co P_{\varphi_{B}}  $ {\scriptsize ({\blue by the coassociativity of $\delta_{H}$, the associativity of $\mu_{B}$ and the naturality of $c$})}

\item [ ]$= \mu_{B}\co (B\ot u_{1}^{\varphi_{B}})\co P_{\varphi_{B}}  $ {\scriptsize ({\blue by (\ref{h-h-1-varphi})})}

\item [ ]$=\varphi_{B} $ {\scriptsize ({\blue by (\ref{psiHB-1}}).}

\end{itemize}

}
\end{apart}

\section{Hom-products, invertible morphisms and centers} 

In this subsection, for a weak Hopf algebra $H$ and an algebra $B$, we will explore a product in $ Hom_{\sf C}(H^{\ot n}\ot  
 B, B)$ that will permit us to extend some results about the factorization through the center of $B$, given in \cite{Montl} for Hopf algebras, to the weak Hopf algebra setting.

\begin{defin}
\label{hom-pro}
{\rm Let  $H$ be a weak Hopf algebra and let $\varphi_B:H\ot B\to B$ be a  measuring. Let $\varphi$ and $\psi\in Hom_{\sf C}(H^{\ot n}\ot  B, B)$. We define the product 
$$\wedge:Hom_{\sf C}(H^{\ot n}\ot  B, B)\times Hom_{\sf C}(H^{\ot n}\ot  B, B)\rightarrow Hom_{\sf C}(H^{\ot n}\ot  B, B) $$ between $\varphi$ and $\psi$ as 
$$ \varphi \wedge \psi=\varphi\co (H^{\ot n}\ot \psi)\co (\delta_{H^{\ot n}}\ot B).$$

Obviously, $\wedge$ is an associative product because $\delta_{H^{\ot n}}$ is coassociative.

We say that a morphism $\varphi\in Hom_{\sf C}(H^{\ot n}\ot  B, B)$ is $\varphi_{B}$-invertible  if there exists a morphism $\varphi^{\dagger}\in Hom_{\sf C}(H^{\ot n}\ot  B, B)$ such that 
\begin{equation}
\label{dagger}
\varphi \wedge \varphi^{\dagger}= \mu_B\circ (u_{n}^{\varphi_{B}}\ot B).
\end{equation}

}
\end{defin}

\begin{prop}
\label{prp}
Let $H$ be a weak Hopf algebra and let  $\varphi_B:H\ot B\to B$ be a  measuring. For $\omega:H^{\ot n}\rightarrow B$ define 
$$\overline{\omega}=\mu_{B}\co (\omega\ot B),\;\;\; \overline{\omega}^{op}=\mu_{B}\co c_{B,B}\co (\omega\ot B).$$

Then, if $\omega$, $\theta\in Hom_{\sf C}(H^{\ot n}, B)$ and $\gamma\in Hom_{\sf C}(H^{\ot n}\ot  B, B)$ the following equalities hold:
\begin{itemize}
\item[(i)] $\overline{\omega}\wedge \overline{\theta}=\overline{\omega\ast \theta}$.
\item[(ii)] If $H$ is cocommutative, $\overline{\omega}^{op}\wedge \gamma=\mu_{B}\co (\gamma\ot \omega)\co (H^{\ot n}\ot c_{H^{\ot n},B})\co (\delta_{H^{\ot n}}\ot B)$. 
\item[(iii)] If $H$ is cocommutative, $\overline{\omega}^{op}\wedge \overline{\theta}^{op}=\overline{\theta\ast \omega}^{op}$.
\item[(iv)]  If $H$ is cocommutative, $\overline{\omega}^{op}\wedge \overline{\theta}= \overline{\theta}\wedge \overline{\omega}^{op}$.
\item[(v)]  If $(B, \varphi_B)$ is a left weak $H$-module algebra,  $\overline{u_{n}^{\varphi_{B}}}\wedge \varphi_{B}^{\ot n}=\varphi_{B}^{\ot n}$.
\item[(vi)] If $H$ is cocommutative and  $(B, \varphi_B)$ is a left weak $H$-module algebra,   $\overline{u_{n}^{\varphi_{B}}}^{op}\wedge \varphi_{B}^{\ot n}=\varphi_{B}^{\ot n}$.
\item[(vii)] $\overline{\varphi_{B}\co (H\ot \omega)}\wedge (\varphi_{B}\co (H\ot \gamma))=\varphi_{B}\co (H\ot  (\overline{\omega}\wedge \gamma))$.
\item[(viii)] If $H$ is cocommutative, $\overline{\varphi_{B}\co (H\ot \omega)}^{op}\wedge (\varphi_{B}\co (H\ot \gamma))=\varphi_{B}\co (H\ot  (\overline{\omega}^{op}\wedge \gamma))$.
\end{itemize}
\end{prop}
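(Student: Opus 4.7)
The strategy is to treat items (i), (ii), (v), (vii) as the ``primary'' identities, from which (iii), (iv), (vi), (viii) follow by short manipulations. Throughout, the main tools are: the associativity of $\mu_B$, the naturality of $c$, the multiplicativity of $\varphi_B$ (condition (b1) in Definition \ref{def}) and its iterated version (\ref{vp-n}) for $\varphi_B^{\otimes n}$, and when cocommutativity is invoked, the identity $c_{H^{\otimes n},H^{\otimes n}}\circ \delta_{H^{\otimes n}}=\delta_{H^{\otimes n}}$.

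For (i) I unfold $\overline{\omega}\wedge\overline{\theta}=\mu_B\co(\omega\ot\mu_B\co(\theta\ot B))\co(\delta_{H^{\otimes n}}\ot B)$ and use associativity of $\mu_B$ to rewrite this as $\mu_B\co((\mu_B\co(\omega\ot\theta)\co \delta_{H^{\otimes n}})\ot B)=\overline{\omega\ast\theta}$. This requires no hypothesis on $H$. For (ii), which is the technical heart of the proposition, the argument is: starting from $\overline{\omega}^{op}\wedge\gamma=\mu_B\co c_{B,B}\co(\omega\ot\gamma)\co(\delta_{H^{\otimes n}}\ot B)$, I push the braiding $c_{B,B}$ back through $\omega\ot\gamma$ by naturality to get $\mu_B\co(\gamma\ot\omega)\co c_{H^{\otimes n},H^{\otimes n}\ot B}\co(\delta_{H^{\otimes n}}\ot B)$, expand $c_{H^{\otimes n},H^{\otimes n}\ot B}=(H^{\otimes n}\ot c_{H^{\otimes n},B})\co(c_{H^{\otimes n},H^{\otimes n}}\ot B)$, and finally absorb the $c_{H^{\otimes n},H^{\otimes n}}$ using cocommutativity of $H^{\otimes n}$. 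This is the only place where I really use $H$ cocommutative; the rest of the $(\cdot)^{op}$-items inherit this hypothesis through (ii).

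For (iii) I apply (ii) with $\gamma=\overline{\theta}^{op}$ and then use that $\mu_B^{op}:=\mu_B\co c_{B,B}$ is an associative product whose convolution agrees with $\mu_B\co(\theta\ast\omega)$ once cocommutativity collapses the braiding on $\delta_{H^{\otimes n}}$; equivalently, one can just compute both sides on a formal element $\theta(h^{(1)})\cdot\omega(h^{(2)})\cdot b$ read right-to-left and invoke cocommutativity. For (iv) I rewrite $\overline{\omega}^{op}\wedge\overline{\theta}$ via (ii) as $\mu_B\co(\overline{\theta}\ot\omega)\co(H^{\otimes n}\ot c_{H^{\otimes n},B})\co(\delta_{H^{\otimes n}}\ot B)$ and then apply associativity of $\mu_B$ together with the identity $(B\ot\omega)\co c_{H^{\otimes n},B}=c_{B,B}\co(\omega\ot B)$ (naturality) to reassemble the RHS as $\overline{\theta}\wedge\overline{\omega}^{op}$.

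Items (v) and (vi) are where the weak module algebra structure enters. For (v) I compose the measuring identity (\ref{vp-n}) for $\varphi_B^{\otimes n}$ with $H^{\otimes n}\ot\eta_B\ot B$ on the right; since $\varphi_B^{\otimes n}\co(H^{\otimes n}\ot\eta_B)=u_n^{\varphi_B}$ (a short induction using (\ref{def-un}) and (b3)), the identity reduces exactly to $\overline{u_n^{\varphi_B}}\wedge\varphi_B^{\otimes n}=\varphi_B^{\otimes n}$ after using naturality to move $\eta_B$ across $c_{H^{\otimes n},B}$. For (vi) I repeat the same idea but compose on the right with $H^{\otimes n}\ot B\ot\eta_B$; the only extra step is to apply (ii) to recognize the resulting expression as $\overline{u_n^{\varphi_B}}^{op}\wedge\varphi_B^{\otimes n}$, which is why cocommutativity is needed here. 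Finally, (vii) is a direct application of (b1) of Definition \ref{def}: after unfolding, the LHS becomes $\mu_B\co(\varphi_B\ot\varphi_B)\co(H\ot c_{H,B}\ot B)\co(\delta_H\ot B\ot B)$ applied to the appropriate arguments, which by (b1) equals $\varphi_B\co(H\ot\mu_B)$; combining this with the coassociativity of $\delta_H$ (to split $\delta_{H^{\otimes(n+1)}}$ into $\delta_H\ot\delta_{H^{\otimes n}}$) yields the RHS. For (viii) I first use (ii) to expose the $\overline{\omega}^{op}\wedge\gamma$ pattern, then carry out the same measuring manipulation as in (vii); the role of cocommutativity is purely through the appeal to (ii).

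The main obstacle I anticipate is bookkeeping: in the ``$op$'' items the $c$-insertions must be tracked carefully through tensor products of varying length, and it is easy to confuse which instance of $c$ is being absorbed by cocommutativity. So in the write-up I would present (ii) in full detail (it is a genuine computation) and then treat (iii), (iv), (vi), (viii) as consequences, isolating each invocation of cocommutativity to a single clearly marked step.
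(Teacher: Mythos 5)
Your proposal is correct and follows essentially the same route as the paper's (quite terse) proof: associativity of $\mu_{B}$ for (i), naturality of $c$ plus cocommutativity of $H^{\ot n}$ for the $(\cdot)^{op}$ items, composing (\ref{vp-n}) with the unit $\eta_{B}$ in the appropriate slot for (v) and (vi), and (b1)/(\ref{vp-n}) for (vii) and (viii). The only (cosmetic) difference is that you derive (iii), (iv), (vi), (viii) formally from (ii), whereas the paper performs the parallel direct computations; the underlying manipulations are identical.
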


\begin{proof} The proof of (i) follows directly from the associativity of $\mu_{B}$. If $H$ is cocommutative, so is $H^{\ot n}$ and, by the naturality of $c$, we obtain (ii). By similar reasoning and using the associativity of $\mu_{B}$ we obtain (iii) and (iv). On the other hand, 
$$\overline{u_{n}^{\varphi_{B}}}\wedge \varphi_{B}^{\ot n}$$
$$=\mu_{B}\co (\varphi_{B}^{\ot n}\ot \varphi_{B}^{\ot n})\co (H^{\ot n}\ot c_{H^{\ot n},B}\ot B)\co (\delta_{H^{\ot n}}\ot \eta_{B}\ot B)\stackrel{{\scriptsize \blue  (\ref{vp-n})}}{=}\varphi_{B}^{\ot n}\co (H^{\ot n}\ot (\mu_{B}\co (\eta_{B}\ot B)))=\varphi_{B}^{\ot n},$$
and then (v) holds. Similarly, using that $H^{\ot n}$ is cocommutative, the naturality of $c$ and  (\ref{vp-n}) we prove (vi). 

The identity, (vii) follows from 
\begin{itemize}

\item[ ]$\hspace{0.38cm} \overline{\varphi_{B}\co (H\ot \omega)}\wedge (\varphi_{B}\co (H\ot \gamma))$

\item [ ]$= \mu_{B}\co (\varphi_{B}\ot \varphi_{B})\co (H\ot c_{H,B}\ot B)\co (\delta_{H}\ot ((\omega\ot \theta)\co (\delta_{H^{\ot n}}\ot B)))$ {\scriptsize ({\blue by the naturality of $c$})}

\item [ ]$=\varphi_{B}\co (H\ot  (\overline{\omega}\wedge \gamma)) $ {\scriptsize ({\blue by (\ref{vp-n})})}

\end{itemize}
and, similarly, using that $H$ is cocommutative, we obtain (viii).
\end{proof}

\begin{rem}
{\rm The equivalence of measurings (or, in particular, of weak actions) in terms of gauge transformations acquires a new meaning in terms of this product. Actually, if $H$ is cocommutative, the action described in (\ref{reg-acts}) on a measuring $\phi_B$ can be seen as a conjugation by gauge transformations in the following way:
$$
\phi_B^h = \overline{h}\wedge\overline{h^{-1}}^{op}\wedge \phi_B.
$$
	
Moreover observe that for a cocommutative weak Hopf algebra $H$ and measurings $\varphi_B$ and $\phi_B$ satisfying conditions of Theorem \ref{equiv-hh}, we can re-write equality (\ref{h4}) using the Hom-product as
$$
\phi_B = \overline{h}\wedge\overline{h^{-1}}^{op}\wedge \varphi_B.
$$

Also in this way, equality (\ref{n2}) of Theorem \ref{equiv-hh-al} can be interpreted as $
\overline{h}^{op}\wedge\phi_B = \overline{h}\wedge\varphi_B,$ in coherence with the action of gauge transformations as a conjugation given above. 
}
\end{rem}

\begin{prop}
\label{w-t} 
Let $H$ be a cocommutative weak Hopf algebra and  let $(B,\varphi_{B})$ be a left weak $H$-module algebra. A morphism $\sigma:H^{\ot 2}\rightarrow B$ satisfies the twisted condition (\ref{t-sigma}) if and only if 
\begin{equation}
\label{t-sigmac}
\overline{\sigma}^{op}\wedge \varphi_{B}^{\ot 2}=\overline{\sigma}\wedge (\varphi_{B}\co (\mu_{H}\ot B))
\end{equation}
holds.

\end{prop}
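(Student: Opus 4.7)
The plan is to show that the two equations (\ref{t-sigma}) and (\ref{t-sigmac}) are in fact the same morphism equation once the Hom-product is unfolded, so that the ``iff'' is immediate. I will identify the right-hand sides and the left-hand sides separately, noting that only the latter identification will require cocommutativity.

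For the right-hand sides, I would simply expand $\overline{\sigma}\wedge (\varphi_B\co (\mu_H\ot B))$ by Definition \ref{hom-pro} as
$$\mu_B\co (\sigma\ot (\varphi_B\co (\mu_H\ot B)))\co (\delta_{H^{\ot 2}}\ot B).$$
Regrouping the central tensors (functoriality of $\ot$), and using the definition $F_{\sigma}=(\sigma\ot \mu_H)\co \delta_{H^{\ot 2}}$ from (\ref{sigmaBH}), this rewrites directly as $\mu_B\co (B\ot \varphi_B)\co (F_{\sigma}\ot B)$, which is the right-hand side of (\ref{t-sigma}). This identification is purely formal and does \emph{not} use cocommutativity.

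For the left-hand sides, I would begin with
$$\overline{\sigma}^{op}\wedge \varphi_B^{\ot 2} = \mu_B\co c_{B,B}\co (\sigma\ot \varphi_B^{\ot 2})\co (\delta_{H^{\ot 2}}\ot B),$$
and push $c_{B,B}$ past the tensor of $\sigma$ and $\varphi_B^{\ot 2}$ by naturality of the braiding, obtaining
$$\mu_B\co (\varphi_B^{\ot 2}\ot \sigma)\co (H^{\ot 2}\ot c_{H^{\ot 2}, B})\co (c_{H^{\ot 2}, H^{\ot 2}}\ot B)\co (\delta_{H^{\ot 2}}\ot B).$$
Here is where cocommutativity enters: since $H$ is cocommutative, so is $H^{\ot 2}$, so $c_{H^{\ot 2}, H^{\ot 2}}\co \delta_{H^{\ot 2}}=\delta_{H^{\ot 2}}$, and the expression collapses to $\mu_B\co (\varphi_B^{\ot 2}\ot \sigma)\co (H^{\ot 2}\ot c_{H^{\ot 2}, B})\co (\delta_{H^{\ot 2}}\ot B)$.

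It remains to identify this last composite with $\mu_B\co (B\ot \sigma)\co (P_{\varphi_B}\ot H)\co (H\ot P_{\varphi_B})$. Unfolding $\delta_{H^{\ot 2}}=(H\ot c_{H,H}\ot H)\co (\delta_H\ot \delta_H)$ together with $\varphi_B^{\ot 2}=\varphi_B\co (H\ot \varphi_B)$, and applying naturality of $c$ twice, the outer braiding $c_{H^{\ot 2}, B}$ splits into two copies of $c_{H,B}$ which combine with the halves of $\delta_H\ot \delta_H$ to produce precisely the two copies of $P_{\varphi_B}$ of (\ref{psiBH}) acting iteratively on $(H,H,B)$. Concatenating all the equalities above, (\ref{t-sigmac}) coincides with (\ref{t-sigma}) as morphism equations, and the equivalence follows. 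The main obstacle is not conceptual but rather the careful bookkeeping of braidings and of the two copies of $H$ in $\delta_{H^{\ot 2}}$ when matching $\overline{\sigma}^{op}\wedge \varphi_B^{\ot 2}$ with the iterated $P_{\varphi_B}$; provided one keeps track of which factor of $H$ is acting at each step, the verification reduces to routine applications of naturality of $c$ and coassociativity of $\delta_H$.
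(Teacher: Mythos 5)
Your proposal is correct and follows essentially the same route as the paper's own proof, which likewise consists exactly of the two identifications $\mu_B\co (B\ot \varphi_B)\co (F_{\sigma}\ot B)=\overline{\sigma}\wedge(\varphi_B\co(\mu_H\ot B))$ (purely definitional) and $\mu_B\co (B\ot \sigma)\co (P_{\varphi_B}\ot H)\co (H\ot P_{\varphi_B})=\overline{\sigma}^{op}\wedge\varphi_B^{\ot 2}$ (by naturality of $c$ and cocommutativity). Your bookkeeping of the braidings is accurate, so nothing further is needed.
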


\begin{proof} The proof  follows from the following facts. First, note that by definition of $F_{\sigma}$, we have that 
$$\mu_{B}\co (B\ot \varphi_{B})\co  (F_{\sigma}\ot B)=\overline{\sigma}\wedge (\varphi_{B}\co (\mu_{H}\ot B)).$$

On the other hand, if $H$ is cocommutative, by the naturality of $c$, we have 
$$\mu_{B}\co (B\ot \sigma)\co  (P_{\varphi_{B}}\ot H)\co (H\ot P_{\varphi_{B}}) =\overline{\sigma}^{op}\wedge \varphi_{B}^{\ot 2}.$$

\end{proof}

\begin{defin}
\label{reg-n}
{\rm  Let $H$ be a weak Hopf algebra and
$(B,\varphi_{B})$ be a left weak $H$-module algebra. For $n\geq 1$,
with $Reg_{\varphi_{B}}(H^{\ot n},B)$ we will denote the set of
morphisms $\sigma:H^{\ot n}\rightarrow B$ such that there exists a
morphism $\sigma^{-1}:H^{\ot n}\rightarrow B$ (the convolution inverse of
$\sigma$) satisfying the following equalities:
\begin{equation}
\label{s1}
\sigma\ast \sigma^{-1}= \sigma^{-1}\ast
\sigma=u_{n}^{\varphi_{B}}, 
\end{equation}
\begin{equation}
\label{s2}
\sigma\ast \sigma^{-1}\ast \sigma=\sigma,\;\;
\sigma^{-1}\ast \sigma\ast \sigma^{-1}=\sigma^{-1}.
\end{equation}

Note that, for $n=1$, we recover the  group $Reg_{\varphi_{B}}(H,B)$ introduced in Definition \ref{reg-1}. For any $n$, $Reg_{\varphi_{B}}(H^{n},B)$ is a group with unit element $u_{n}^{\varphi_{B}}$ because by (\ref{idem-n}) we know that $u_{n}^{\varphi_{B}}\ast u_{n}^{\varphi_{B}}=u_{n}^{\varphi_{B}}$. Also,
if $B$ is commutative and $H$ is cocommutative, we have that  $Reg_{\varphi_{B}}(H^{\ot n},B)$ is an abelian group.

We denote by $Reg_{\varphi_{B}}(H_{L},B)$  the set of morphisms
$g:H_{L}\rightarrow B$ such that there exists a morphism
$g^{-1}:H_{L}\rightarrow B$ (the convolution inverse of $g$)
satisfying
$$g\ast g^{-1}=g^{-1}\ast g=u_{0}^{\varphi_{B}},\;\; g\ast g^{-1}
\ast g=g,\;\; g^{-1}\ast g\ast g^{-1}=g^{-1},$$  where
$u_{0}^{\varphi_{B}}=u_{1}^{\varphi_{B}}\co i_{H}^L$. Then by (\ref{B3}) we have  $u_{1}^{\varphi_{B}}=u_{0}^{\varphi_{B}}\co p_{H}^L$.
}
\end{defin}

\begin{defin}
\label{center}
{\rm For an algebra $B$ we define the center of $B$ as a subobject
$Z(B)$  of $B$ with a monomorphism 
$z_{B}:Z(B)\rightarrow B$ satisfying the identitity 
\begin{equation}
\label{cen1}
\mu_{B}\co (B\ot z_{B})=\mu_{B}\co c_{B,B}\co (B\ot z_{B})
\end{equation}
and such that, if $f:A\rightarrow B$ is a morphism for which $\mu_{B}\co (B\ot
f)=\mu_{B}\co c_{B,B}\co (B\ot f)$ holds, there exists an unique morphism
$f^{\prime}:A\rightarrow Z(B)$ satisfying $z_{B}\co f^{\prime}=f.$ As a consequence, we obtain that $
Z(B)$ is a commutative algebra, where $\eta_{Z(B)}$ is the unique morphism such that 
\begin{equation}
\label{zbeta}
z_{B}\co \eta_{Z(B)}=\eta_{B} 
\end{equation}
and $\mu_{Z(B)}$ is the unique morphism such that
\begin{equation}
\label{zbpro}
z_{B}\co \mu_{Z(B)}=\mu_{B}\co (z_{B}\ot z_{B}).
\end{equation}

For example, if ${\sf C}$ is a
closed category with equalizers and $\alpha_{B}$ and $\beta_{B}$ are the unit and the counit,
respectively, of the ${\sf C}$-adjunction $B\otimes -\dashv
[B,-]:{\sf C}\rightarrow {\sf C}$, the center of $B$ can
be obtained by the  following equalizer diagram:
$$
\setlength{\unitlength}{3mm}
\begin{picture}(30,4)
\put(3,2){\vector(1,0){4}} \put(11,2.5){\vector(1,0){10}}
\put(11,1.5){\vector(1,0){10}} \put(0,2){\makebox(0,0){$Z(B)$}} \put(9,2){\makebox(0,0){$B$}} \put(24,2){\makebox(0,0){$[B,B]$}} \put(5.5,3){\makebox(0,0){$z_{B}$}}
\put(16,3.5){\makebox(0,0){$\vartheta_{B}$}}
\put(16,0.15){\makebox(0,0){$\theta_{B}$}}
\end{picture}
$$
where $\vartheta_{B}=[B, \mu_{B}]\circ \alpha_{B}(B)$ and
$\theta_{B}=[B, \mu_{B}\circ c_{B,B}]\circ \alpha_{B}(B)$. Then in the category of modules over a commutative ring the center is an equalizer object.

Finally, note that by (\ref{cen1}), composing with the symmetry isomorphism we obtain 
$$
\mu_{B}\co ( z_{B}\ot B)=\mu_{B}\co c_{B,B}\co ( z_{B}\ot B).
$$
}
\end{defin}

\begin{ej}
\label{u1-cen}
{\rm Let $H$ be cocommutative weak Hopf algebra and let $(B,\varphi_{B})$ be a left weak $H$-module algebra. Then, $\Pi_{H}^{L}=\overline{\Pi}_{H}^{L}$ and by (\ref{B1}) and (\ref{B2}) we have that $\mu_{B}\circ c_{B,B}\circ (u_{1}^{\varphi_{B}}\ot B)=\mu_{B}\circ (u_{1}^{\varphi_{B}}\ot B)$. Then, $u_{1}^{\varphi_{B}}$ factors through $Z(B)$. Therefore, there exists an unique morphism $v_{1}^{\varphi_{B}}:H\rightarrow Z(B)$ such that 
$$
z_{B}\co v_{1}^{\varphi_{B}}=u_{1}^{\varphi_{B}}.
$$

Then, taking into account the equality (\ref{u}), we obtain
$$\mu_{B}\circ c_{B,B}\circ (u_{n}^{\varphi_{B}}\ot B)=\mu_{B}\circ c_{B,B}\circ ((u_{1}^{\varphi_{B}}\co m_{H}^{\ot n})\ot B)=\mu_{B}\circ ((u_{1}^{\varphi_{B}}\co m_{H}^{\ot n})\ot B)=\mu_{B}\circ (u_{n}^{\varphi_{B}}\ot B)$$
and, as a consequence, $u_{n}^{\varphi_{B}}$ factors through $Z(B)$. Therefore, there exists an unique morphism $v_{n}^{\varphi_{B}}:H\ot H\rightarrow Z(B)$ such that 
$$
z_{B}\co v_{n}^{\varphi_{B}}=u_{n}^{\varphi_{B}}.
$$
}
\end{ej}

\begin{rem}
\label{rmexc}
{\rm Let $H$ be a weak Hopf algebra. Let $\omega:H^{\ot n}\rightarrow B$ be a morphism. Then, $\omega$ factors through the center of $B$ if and only if $\overline{\omega}=\overline{\omega}^{op}$. Therefore, if $H$ is cocommutative and $(B,\varphi_{B})$ is a left weak $H$-module algebra, $\overline{u_{n}^{\varphi_{B}}}=\overline{u_{n}^{\varphi_{B}}}^{op}$ for all $n\geq 1$.

Also, if $\omega$ factors through the center of $B$, $\omega\ast \tau=\tau \ast \omega$ for all morphism $\tau:H^{\ot n}\rightarrow B$.

}
\end{rem}

\begin{prop}
\label{varphi-i}
Let $H$ be a cocommutative weak Hopf algebra. Let $(B,\varphi_{B})$ be a left weak $H$-module algebra and $\sigma \in Reg_{\varphi_{B}}(H^{\ot 2},B)$ satisfying the twisted condition (\ref{t-sigma}).  Then, $\varphi_{B}$ is $\varphi_{B}$-invertible.
\end{prop}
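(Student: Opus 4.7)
The plan is to construct a candidate $\varphi_{B}^{\dagger}$ from the data $\sigma$, $\sigma^{-1}$, $\lambda_{H}$, and $\varphi_{B}$, modelled on the non-weak situation in which simply $\varphi_{B}\co (\lambda_{H}\ot B)$ would serve as the inverse, corrected by $\sigma^{-1}$ to compensate for the twisted (non-associative) behaviour of $\varphi_{B}$. A natural first candidate is
$$\varphi_{B}^{\dagger}:=\mu_{B}\co (\sigma^{-1}\ot \varphi_{B})\co (H\ot c_{H,H}\ot B)\co ((\delta_{H}\ot \lambda_{H})\ot B)\co (\delta_{H}\ot B),$$
i.e., in Sweedler-like notation $\varphi_{B}^{\dagger}(h\ot b)=\sigma^{-1}(h_{(1)},\lambda_{H}(h_{(3)}))\cdot \varphi_{B}(\lambda_{H}(h_{(2)}),b)$; the precise placement of the antipode, the symmetry, and the splitting of $\delta_{H}$ can be adjusted once the main computation is carried out.

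Then I would compute $\varphi_{B}\wedge \varphi_{B}^{\dagger}=\varphi_{B}\co (H\ot \varphi_{B}^{\dagger})\co (\delta_{H}\ot B)$ by first using the measuring axiom (b1) of Definition \ref{def} to distribute $\varphi_{B}$ over the inner $\mu_{B}$ appearing in $\varphi_{B}^{\dagger}$. Using the coassociativity of $\delta_{H}$, the cocommutativity of $H$, and the anticomultiplicativity of $\lambda_{H}$ (equation (\ref{anti})), one realigns the resulting tensor factors so that an occurrence of the iterated measuring $\varphi_{B}\co (H\ot \varphi_{B})$ appears conjugated by $\sigma$ and $\sigma^{-1}$. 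At that point the twisted condition in the form (\ref{t-sigma}), or equivalently (\ref{t-sigmac}) via Proposition \ref{w-t}, rewrites this iterated measuring as $\varphi_{B}\co (\mu_{H}\ot B)$ multiplied by a factor of $\sigma\ast \sigma^{-1}$.

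By (\ref{s1}), $\sigma\ast \sigma^{-1}=u_{2}^{\varphi_{B}}=u_{1}^{\varphi_{B}}\co \mu_{H}$ thanks to (\ref{u}); combining this $\mu_{H}$ with the remaining $\lambda_{H}$ and the surviving $\delta_{H}$ and applying the antipode axiom (\ref{id}) produces the target morphism $\Pi_{H}^{L}$ applied to $h$. The identity (\ref{B1}) for the left weak $H$-module algebra $(B,\varphi_{B})$ then gives $\varphi_{B}\co (\Pi_{H}^{L}\ot B)=\mu_{B}\co (u_{1}^{\varphi_{B}}\ot B)$, and the collapse is complete, so that $\varphi_{B}\wedge \varphi_{B}^{\dagger}=\mu_{B}\co (u_{1}^{\varphi_{B}}\ot B)$ as required by (\ref{dagger}).

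The main obstacle will be purely combinatorial: after unfolding the definitions one is juggling roughly five copies of $h$ produced by iterated applications of $\delta_{H}$ (two from the definition of $\varphi_{B}^{\dagger}$, one from the outer wedge, and at least one more generated when the measuring property is used), and the crucial rewrites rely on cocommutativity to bring these copies into positions where the twisted condition and the convolution-invertibility of $\sigma$ can be applied. Each individual algebraic identity used is only a single rewrite; the real difficulty lies in the bookkeeping, together with possibly fine-tuning the exact formula for $\varphi_{B}^{\dagger}$ so that every substitution lines up without stray factors.
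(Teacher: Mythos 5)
Your overall strategy (build $\varphi_{B}^{\dagger}$ out of $\sigma^{\pm 1}$, $\lambda_{H}$ and $\varphi_{B}$, then collapse $\varphi_{B}\wedge\varphi_{B}^{\dagger}$ using the measuring axiom, cocommutativity, the twisted condition, $\sigma\ast\sigma^{-1}=u_{2}^{\varphi_{B}}$, the antipode identities and (\ref{B1})) is the right one and is essentially what the paper does. However, your candidate $\varphi_{B}^{\dagger}(h\ot b)=\sigma^{-1}(h_{(1)},\lambda_{H}(h_{(3)}))\cdot\varphi_{B}(\lambda_{H}(h_{(2)}),b)$ is structurally wrong, and the defect is not one that "fine-tuning the placement" can repair. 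When you compute $\varphi_{B}\wedge\varphi_{B}^{\dagger}$ and apply the twisted condition to the inner term $\varphi_{B}(h_{(i)},\varphi_{B}(\lambda_{H}(h_{(j)}),b))$, what comes out (after using $id_{H}\ast\lambda_{H}=\Pi_{H}^{L}$, (\ref{B1}) and the centrality of $u_{1}^{\varphi_{B}}$) is a \emph{two-sided conjugation} of $b$, namely $h_{\sigma}(h_{(\cdot)})\,u_{1}^{\varphi_{B}}(h_{(\cdot)})\,b\,h_{\sigma^{-1}}(h_{(\cdot)})$, where $h_{\sigma}=\sigma\co(H\ot\lambda_{H})\co\delta_{H}$ and $h_{\sigma^{-1}}$ is its convolution inverse. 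Your correction factor $\sigma^{-1}(h_{(1)},\lambda_{H}(h_{(3)}))$ sits only on one side of $b$, so after all cancellations a stray factor $h_{\sigma^{-1}}(h_{(\cdot)})$ survives on the other side of $b$; since $h_{\sigma^{-1}}$ does not factor through $Z(B)$ in general, this cannot be absorbed, and no one-sided variant (putting the $\sigma^{-1}$ on the right instead) fares any better — the analogous stray factor then appears on the left.

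The paper's inverse is the two-sided object $\varphi_{B}^{\dagger}=(\varphi_{B}\co(\lambda_{H}\ot B))\wedge\varphi_{\sigma^{-1}}$, where $\varphi_{\sigma^{-1}}(h\ot b)=h_{\sigma^{-1}}(h_{(1)})\,b\,h_{\sigma}(h_{(2)})$ is conjugation by the regular morphism $h_{\sigma^{-1}}$; this supplies correction factors on \emph{both} sides of $b$ and is what actually cancels the conjugation produced by the twisted condition. The paper also sidesteps most of the bookkeeping you worry about: it first checks that $h_{\sigma}\in Reg_{\varphi_{B}}(H,B)$, so that the conjugations $\varphi_{\sigma}=\overline{h_{\sigma^{-1}}}^{op}\wedge\overline{h_{\sigma}}$ and $\varphi_{\sigma}^{\dagger}=\varphi_{\sigma^{-1}}$ are manifestly mutually inverse for $\wedge$, and then uses the twisted condition once, in the form of the identity $\overline{h_{\sigma}}^{op}\wedge(\varphi_{B}\wedge(\varphi_{B}\co(\lambda_{H}\ot B)))=\overline{h_{\sigma}}$, to conclude $\varphi_{B}\wedge(\varphi_{B}\co(\lambda_{H}\ot B))=\varphi_{\sigma}$ and hence $\varphi_{B}\wedge\varphi_{B}^{\dagger}=\varphi_{\sigma}\wedge\varphi_{\sigma}^{\dagger}=\mu_{B}\co(u_{1}^{\varphi_{B}}\ot B)$. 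So the missing idea in your proposal is precisely the conjugation morphism $\varphi_{\sigma^{-1}}$; without it the proposed formula fails, not merely resists bookkeeping.
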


\begin{proof} Let $h_{\sigma}$ and $h_{\sigma^{-1}}$ be the morphisms defined by 
$$
h_{\sigma}=\sigma\co (H\ot \lambda_{H})\co \delta_{H}, \;\;\;h_{\sigma^{-1}}=\sigma^{-1}\co (H\ot \lambda_{H})\co \delta_{H}.
$$

Then, $h_{\sigma}\in Reg_{\varphi_{B}}(H,B)$ and $h_{\sigma}^{-1}=h_{\sigma^{-1}}$. Indeed, first note that 

\begin{itemize}

\item[ ]$\hspace{0.38cm}h_{\sigma}\ast h_{\sigma^{-1}} $

\item [ ]$= \mu_{B}\co (\sigma\ot \sigma^{-1})\co (H\ot c_{H,H}\ot H) \co (\delta_{H}\ot ((\lambda_{H}\ot \lambda_{H})\co \delta_{H}))\co \delta_{H} $ {\scriptsize ({\blue by the coassociativity and}}
\item[ ]$\hspace{0.38cm}${\scriptsize {\blue the cocommutativity of $\delta_{H}$ and the naturality of $c$})}

\item [ ]$=(\sigma\ast \sigma^{-1})\co (H\ot \lambda_{H})\co \delta_{H}$ {\scriptsize ({\blue the  cocommutativity of $\delta_{H}$ and (\ref{anti})})}

\item [ ]$= u_{1}^{\varphi_{B}}\co \mu_{H}\co  (H\ot \lambda_{H})\co \delta_{H} $ {\scriptsize ({\blue by $\sigma \in Reg_{\varphi_{B}}(H^{\ot 2},B)$})}

\item [ ]$= u_{1}^{\varphi_{B}}\co \Pi_{H}^{L}  $ {\scriptsize ({\blue by the definition of $\Pi_{H}^{L}$})}

\item [ ]$= u_{1}^{\varphi_{B}}  $ {\scriptsize ({\blue by (\ref{B3})})}

\end{itemize}
and similarly, $h_{\sigma^{-1}}\ast h_{\sigma}=u_{1}^{\varphi_{B}}$.  Also, by the coassociativity and the cocommutativity of $\delta_{H}$, the naturality of $c$,  (\ref{anti}) and $\sigma \in Reg_{\varphi_{B}}(H^{\ot 2},B)$ we have that 
$$h_{\sigma}\ast h_{\sigma^{-1}}\ast h_{\sigma}=(\sigma\ast \sigma^{-1}\ast \sigma)\co  (H\ot \lambda_{H})\co \delta_{H}=h_{\sigma}$$
and $h_{\sigma^{-1}}\ast h_{\sigma}\ast h_{\sigma^{-1}}=h_{\sigma^{-1}}$ hold.

Now, let $\varphi_{\sigma}$ be the morphism defined by 
$$
\varphi_{\sigma}=\mu_{B}\co (\mu_{B}\ot B)\co (h_{\sigma}\ot B\ot h_{\sigma^{-1}})\co (H\ot c_{H,H})\co (\delta_{H}\ot H).
$$

Then, $\varphi_{\sigma}$ is $\varphi_{B}$-invertible with inverse defined by 
$$
\varphi_{\sigma}^{\dagger}=\mu_{B}\co (\mu_{B}\ot B)\co (h_{\sigma^{-1}}\ot B\ot h_{\sigma})\co (H\ot c_{H,H})\co (\delta_{H}\ot H).
$$

Indeed: 
\begin{itemize}

\item[ ]$\hspace{0.38cm}\varphi_{\sigma}\wedge \varphi_{\sigma}^{\dagger}$

\item [ ]$= \mu_{B}\co (B\ot (\mu_{B}\co c_{B,B}))\ot (((h_{\sigma}\ast h_{\sigma^{-1}})\ot (h_{\sigma}\ast h_{\sigma^{-1}})\co \delta_{H})\ot B) $ {\scriptsize ({\blue by the coassociativity and the}}
\item[ ]$\hspace{0.38cm}${\scriptsize {\blue   cocommutativity of $\delta_{H}$, the naturality of $c$ and the associativity of $\mu_{B}$})}

\item [ ]$= \mu_{B}\co (u_{1}^{\varphi_{B}}\ot (\mu_{B}\co c_{B,B}\co (u_{1}^{\varphi_{B}}\ot B)))\co (\delta_{H}\ot B)$ {\scriptsize ({\blue by $h_{\sigma} \in Reg_{\varphi_{B}}(H,B)$})}

\item [ ]$=\mu_{B}\co (( u_{1}^{\varphi_{B}}\ast u_{1}^{\varphi_{B}})\ot B)$ {\scriptsize ({\blue by the factorization of $u_{1}^{\varphi_{B}}$ through the center of $B$})}

\item [ ]$= \mu_{B}\co (u_{1}^{\varphi_{B}} \ot B)$ {\scriptsize ({\blue by (\ref{idem-n})}).}

\end{itemize}

On the other hand, let be the morphism $\varphi_{B}\wedge (\varphi_{B}\co (\lambda_{H}\ot B))$. For this morphism we have the following

\begin{itemize}

\item[ ]$\hspace{0.38cm}\overline{h_{{\sigma}}}^{op}\wedge (\varphi_{B}\wedge (\varphi_{B}\co (\lambda_{H}\ot B))) $

\item [ ]$= \mu_{B}\co (\varphi_{B}\ot \sigma)\co (H\ot c_{H,B}\ot H)\co (\delta_{H}\ot (c_{H,B}\co (H\ot \varphi_{B})\co ((\delta_{H}\co \lambda_{H})\ot B)))\co (\delta_{H}\ot B)  $ {\scriptsize ({\blue by the}}
\item[ ]$\hspace{0.38cm}${\scriptsize {\blue  coassociativity and the cocommutativity of $\delta_{H}$, the naturality of $c$ and (\ref{anti})})}

\item [ ]$= \mu_{B}\co (B\ot \sigma)\co (P_{\varphi_{B}}\ot H)\co (H\ot  P_{\varphi_{B}})\co (((H\ot \lambda_{H})\co\delta_{H})\ot B)$ {\scriptsize ({\blue by the cocommutativity of $\delta_{H}$ and}}
\item[ ]$\hspace{0.38cm}${\scriptsize {\blue the naturality of $c$})}

\item [ ]$=\mu_{B}\co (B\ot \varphi_{B})\co (F_{\sigma}\ot B)\co  (((H\ot \lambda_{H})\co\delta_{H})\ot B)$ {\scriptsize ({\blue by (\ref{t-sigma})})}

\item [ ]$=\mu_{B}\co (B\ot \varphi_{B})\co (((h_{\sigma}\ot \Pi_{H}^{L})\co \delta_{H})\ot B) $ {\scriptsize ({\blue by the cocommutativity of $\delta_{H}$, the naturality of $c$ and (\ref{anti})})}

\item [ ]$= \mu_{B}\co (B\ot \mu_{B})\co (((h_{\sigma}\ot u_{1}^{\varphi_{B}})\co \delta_{H})\ot B) $ {\scriptsize ({\blue by (\ref{B1})})}

\item [ ]$=\mu_{B}\co ((h_{\sigma}\ast u_{1}^{\varphi_{B}} )\ot B)$ {\scriptsize ({\blue by  the associativity of $\mu_{B}$})}

\item [ ]$= \overline{h_{\sigma}} $ {\scriptsize ({\blue by $h_{\sigma} \in Reg_{\varphi_{B}}(H,B)$})} 
\end{itemize}
and, as a consequence, 
\begin{equation}
\label{hsig2}
\varphi_{\sigma}=\varphi_{B}\wedge (\varphi_{B}\co (\lambda_{H}\ot B))
\end{equation}
holds because, on the one hand, 
\begin{itemize}

\item[ ]$\hspace{0.38cm}\overline{h_{{\sigma}^{-1}}}^{op}\wedge (\overline{h_{{\sigma}}}^{op}\wedge (\varphi_{B}\wedge (\varphi_{B}\co (\lambda_{H}\ot B)))) $

\item [ ]$=\overline{h_{\sigma}\ast h_{\sigma^{-1}}}^{op} \wedge (\varphi_{B}\wedge (\varphi_{B}\co (\lambda_{H}\ot B)))$ {\scriptsize ({\blue by associativity of $\wedge$ and (iii) of Proposition \ref{prp}})}

\item [ ]$=\overline{u_{1}^{\varphi{B}}}^{op} \wedge (\varphi_{B}\wedge (\varphi_{B}\co (\lambda_{H}\ot B))) $ {\scriptsize ({\blue by $h_{\sigma} \in Reg_{\varphi_{B}}(H,B)$})}

\item [ ]$= (\overline{u_{1}^{\varphi{B}}}^{op} \wedge \varphi_{B})  \wedge (\varphi_{B}\co (\lambda_{H}\ot B))$ {\scriptsize ({\blue by associativity of $\wedge$})}

\item [ ]$= \varphi_{B}\wedge (\varphi_{B}\co (\lambda_{H}\ot B))$ {\scriptsize ({\blue by (vi) of Proposition \ref{prp}})}

\end{itemize}

and, on the other hand, by the cocommutativity of $\delta_{H}$, the naturality of $c$ and the associativity of $\mu_{B}$
$$\overline{h_{{\sigma}^{-1}}}^{op}\wedge \overline{h_{\sigma}}=\varphi_{\sigma}.$$

Finally, define the morphism $\varphi_{B}^{\dagger}$ by 
$$
\varphi_{B}^{\dagger}=(\varphi_{B}\co (\lambda_{H}\ot B))\wedge \varphi_{\sigma^{-1}}.
$$

Then, by (\ref{hsig2}) we have 
$$\varphi_{B}\wedge \varphi_{B}^{\dagger}=\varphi_{\sigma}\wedge \varphi_{\sigma}^{\dagger}=\mu_{B}\co (u_{1}^{\varphi_{B}} \ot B)$$
and therefore  $\varphi_{B}$ is $\varphi_{B}$-invertible with inverse $\varphi_{B}^{\dagger}$.
\end{proof}

\begin{prop}
\label{varphi-ic}
Let $H$ be a cocommutative weak Hopf algebra. Let $(B,\varphi_{B})$ be a left weak $H$-module algebra and suppose that $\varphi_{B}$ is $\varphi_{B}$-invertible.  Then, $\varphi_{B}^{\ot n}$ is $\varphi_{B}^{\ot n}$-invertible.
\end{prop}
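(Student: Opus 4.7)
The plan is to proceed by induction on $n$, with base case $n=1$ being the hypothesis. For the inductive step, assume $\varphi_{B}^{\ot (n-1)}$ is $\varphi_{B}^{\ot (n-1)}$-invertible with inverse $(\varphi_{B}^{\ot (n-1)})^{\dagger}$, and let $\varphi_{B}^{\dagger}$ denote the $\varphi_{B}$-inverse of $\varphi_{B}$ provided by the hypothesis. I would propose as candidate $\varphi_{B}^{\ot n}$-inverse of $\varphi_{B}^{\ot n}$ the morphism
$$
(\varphi_{B}^{\ot n})^{\dagger}:=(\varphi_{B}^{\ot (n-1)})^{\dagger}\co (H^{\ot (n-1)}\ot \varphi_{B}^{\dagger})\co (c_{H,H^{\ot (n-1)}}\ot B),
$$
which first swaps the leading copy of $H$ past $H^{\ot (n-1)}$, applies $\varphi_{B}^{\dagger}$ to the swapped $H$ and $B$, and then applies $(\varphi_{B}^{\ot (n-1)})^{\dagger}$. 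Unfolding the induction, this amounts to applying $\varphi_{B}^{\dagger}$ in the reverse order of the tensor factors of $H^{\ot n}$.

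To verify $\varphi_{B}^{\ot n}\wedge (\varphi_{B}^{\ot n})^{\dagger}=\mu_{B}\co (u_{n}^{\varphi_{B}}\ot B)$, first I would expand $\varphi_{B}^{\ot n}=\varphi_{B}\co (H\ot \varphi_{B}^{\ot (n-1)})$ together with the tensor-coalgebra coproduct $\delta_{H^{\ot n}}=(H\ot c_{H,H^{\ot (n-1)}}\ot H^{\ot (n-1)})\co (\delta_{H}\ot \delta_{H^{\ot (n-1)}})$. The braiding $c_{H,H^{\ot (n-1)}}$ inside $(\varphi_{B}^{\ot n})^{\dagger}$ has been chosen precisely so that it cancels the braiding appearing in $\delta_{H^{\ot n}}$, exposing an inner subexpression of the form $\varphi_{B}^{\ot (n-1)}\wedge (\varphi_{B}^{\ot (n-1)})^{\dagger}$. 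The induction hypothesis collapses this inner piece to $\mu_{B}\co (u_{n-1}^{\varphi_{B}}\ot B)$, leaving a single outer $\varphi_{B}$ evaluated on a product of a $u_{n-1}^{\varphi_{B}}$-term and a $\varphi_{B}^{\dagger}$-term.

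Applying (b1) of Definition \ref{def} then distributes this outer $\varphi_{B}$ across the product, giving two factors. Coassociativity of $\delta_{H}$ permits a reindexing that turns the second factor into exactly $\varphi_{B}\wedge \varphi_{B}^{\dagger}$, which by hypothesis equals $\mu_{B}\co (u_{1}^{\varphi_{B}}\ot B)$. Rewriting the resulting $u_{1}^{\varphi_{B}}$ as $\varphi_{B}\co (H\ot \eta_{B})$ and invoking (b1) in reverse recombines the two factors into a single application $\varphi_{B}\co (H\ot u_{n-1}^{\varphi_{B}})=u_{n}^{\varphi_{B}}$, producing the required identity. The main obstacle will be the diagrammatic bookkeeping of the iterated coproduct $\delta_{H^{\ot n}}$ and the careful verification that the braiding embedded in the candidate inverse exactly cancels the braiding in $\delta_{H^{\ot n}}$; once this cancellation is made visible, the rest reduces to a routine use of (b1) and coassociativity, and cocommutativity of $H$ does not appear essential for this particular argument.
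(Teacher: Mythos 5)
Your candidate inverse $(\varphi_{B}^{\ot (n-1)})^{\dagger}\co (H^{\ot (n-1)}\ot \varphi_{B}^{\dagger})\co (c_{H,H^{\ot (n-1)}}\ot B)$ is exactly the one the paper uses, and your verification follows the same skeleton: naturality of $c$ to expose $\varphi_{B}^{\ot (n-1)}\wedge \varphi_{B}^{\ot (n-1)\dagger}$, the induction hypothesis, (b1) of Definition \ref{def}, and finally $\varphi_{B}\wedge \varphi_{B}^{\dagger}=\mu_{B}\co (u_{1}^{\varphi_{B}}\ot B)$. The one substantive divergence is your closing remark that cocommutativity is inessential. The paper's computation does invoke it, both directly (cocommutativity of $\delta_{H}$ when recognizing $\varphi_{B}\wedge\varphi_{B}^{\dagger}$) and indirectly (it first commutes $u_{n-1}^{\varphi_{B}}$ past the $\varphi_{B}^{\dagger}$-factor using the factorization of $u_{n-1}^{\varphi_{B}}$ through $Z(B)$, which rests on cocommutativity via Example \ref{u1-cen}); because of that reordering, the two legs of $\delta_{H}$ feeding $\varphi_{B}$ and $\varphi_{B}^{\dagger}$ end up non-adjacent and cocommutativity is needed to pair them. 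You avoid both by keeping $\mu_{B}\co(u_{n-1}^{\varphi_{B}}\ot \varphi_{B}^{\dagger}(\cdot))$ in its original order, so that after (b1) only coassociativity and naturality are required; your recombination step is also sound, since $\mu_{B}\co (\varphi_{B}\ot u_{1}^{\varphi_{B}})\co (H\ot c_{H,B})\co (\delta_{H}\ot B)=\varphi_{B}$ is (b1) evaluated with second argument $\eta_{B}$, and $\varphi_{B}\co (H\ot u_{n-1}^{\varphi_{B}})=u_{n}^{\varphi_{B}}$ by (\ref{def-un}). So your argument is correct, is essentially the paper's, and as written is marginally more economical in its use of hypotheses.
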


\begin{proof} By assumption the asserttion is true for $n=1$. We will  proceed by induction assuming that it is true for $n-1$, i.e., $\varphi_{B}^{\ot (n-1)}$ is $\varphi_{B}^{\ot (n-1)}$-invertible with inverse $\varphi_{B}^{\ot (n-1) \dagger}$, and we will prove it for $n$. Indeed: Define $\varphi_{B}^{\ot n \dagger}$ by 
$$\varphi_{B}^{\ot n \dagger}=\varphi_{B}^{\ot (n-1) \dagger}\co (H^{\ot (n-1)}\ot \varphi_{B}^{\dagger})\co (c_{H, H^{\ot (n-1)}}\ot B).$$

Then, 

\begin{itemize}

\item[ ]$\hspace{0.38cm} \varphi_{B}^{\ot n}\wedge \varphi_{B}^{\ot n \dagger}$

\item [ ]$=\varphi_{B}\co (H\ot ( \varphi_{B}^{\ot (n-1)}\wedge \varphi_{B}^{\ot (n-1) \dagger}))\co (H\ot H^{\ot (n-1)}\ot \varphi_{B}^{\dagger})\co (H\ot c_{H, H^{\ot (n-1)}}\ot B)\co (\delta_{H}\ot H^{\ot (n-1)}\ot B)$ 
\item[ ]$\hspace{0.38cm}${\scriptsize ({\blue by naturality of $c$})}

\item [ ]$= \varphi_{B}\co (H\ot (\mu_{B}\co (u_{n-1}^{\varphi_{B}}\ot B)))\co (H\ot H^{\ot (n-1)}\ot \varphi_{B}^{\dagger})\co (H\ot c_{H, H^{\ot (n-1)}}\ot B)\co (\delta_{H}\ot H^{\ot (n-1)}\ot B) $ 
\item[ ]$\hspace{0.38cm}${\scriptsize ({\blue by the induction hypothesis})}

\item [ ]$=\varphi_{B}\co (H\ot \mu_{B})\co (H\ot \varphi_{B}^{\dagger} \ot B)\co (\delta_{H}\ot (c_{B,B}\co (u_{n-1}^{\varphi_{B}}\ot B))) $ {\scriptsize ({\blue by the factorization of $u_{n-1}^{\varphi_{B}}$ through}}
\item[ ]$\hspace{0.38cm}${\scriptsize {\blue the center of $B$ and the naturality of $c$})}

\item [ ]$=\mu_{B}\co (\varphi_{B}\ot \varphi_{B})\co (H\ot c_{H,B}\ot B)\co  (\delta_{H}\ot \varphi_{B}^{\dagger} \ot B)\co (\delta_{H}\ot (c_{B,B}\co (u_{n-1}^{\varphi_{B}}\ot B))) $ {\scriptsize ({\blue by  (b1) of}}
\item[ ]$\hspace{0.38cm}${\scriptsize {\blue Definition \ref{def}})}

\item [ ]$= \mu_{B}\co ((\varphi_{B}\wedge \varphi_{B}^{\dagger})\ot \varphi_{B})\co (H\ot c_{H,B}\ot B)\co (\delta_{H}\ot (c_{B,B}\co (u_{n-1}^{\varphi_{B}}\ot B)))$ {\scriptsize ({\blue by naturality of $c$ and the}}
\item[ ]$\hspace{0.38cm}${\scriptsize {\blue  coassociativity and cocommuativity of $\delta_{H}$})} 

\item [ ]$=\mu_{B}\co ((\mu_{B}\co (u_{1}^{\varphi_{B}}\ot B))\ot \varphi_{B})\co (H\ot c_{H,B}\ot B)\co (\delta_{H}\ot (c_{B,B}\co (u_{n-1}^{\varphi_{B}}\ot B)))  $ {\scriptsize ({\blue by the $\varphi_{B}$-invertivility}}
\item[ ]$\hspace{0.38cm}${\scriptsize {\blue  of $\varphi_{B}$})}

\item [ ]$= \mu_{B}\co (B\ot (\mu_{B}\co (u_{1}^{\varphi_{B}}\ot \varphi_{B})\co (\delta_{H}\ot B)))\co (c_{H,B}\ot B)\co (H\ot  (c_{B,B}\co (u_{n-1}^{\varphi_{B}}\ot B)))$ {\scriptsize ({\blue by the}}
\item[ ]$\hspace{0.38cm}${\scriptsize {\blue  naturality of $c$, the associativity of $\mu_{B}$ and the factorization of $u_{1}^{\varphi_{B}}$ through the center of $B$})}

\item [ ]$= \mu_{B}\co (B\ot \varphi_{B})\co (c_{H,B}\ot B)\co (H\ot  (c_{B,B}\co (u_{n-1}^{\varphi_{B}}\ot B))) $ {\scriptsize ({\blue by (\ref{nabla-fi})})}

\item [ ]$= \mu_{B}\co c_{B,B}\co (u_{n}^{\varphi_{B}}\ot B) $ {\scriptsize ({\blue by the naturality of $c$})}

\item [ ]$= \mu_{B}\co (u_{n}^{\varphi_{B}}\ot B)  $ {\scriptsize ({\blue by the factorization of $u_{n}^{\varphi_{B}}$ through the center of $B$})}

\end{itemize}
and, therefore $\varphi_{B}^{\ot n}$ is $\varphi_{B}^{\ot n}$-invertible.

\end{proof}

\begin{prop}
\label{varphi-ic}
Let $H$ be a cocommutative weak Hopf algebra. Let $(B,\varphi_{B})$ be a left weak $H$-module algebra and suppose that $\varphi_{B}$ is $\varphi_{B}$-invertible.  Then, $\omega\in Reg_{\varphi_{B}}(H^{\ot n}, B)$ satisfies 
\begin{equation}
\label{sig-cen}
\overline{\omega}\wedge \varphi_{B}^{\ot n}=\overline{\omega}^{op}\wedge \varphi_{B}^{\ot n}
\end{equation}
if and only if  it factors through the center of $B$.
\end{prop}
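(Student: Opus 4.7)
The plan is to prove the two implications separately, where the reverse implication is nearly immediate from the material already developed, and the forward implication relies on the $\varphi_B^{\otimes n}$-invertibility of $\varphi_B^{\otimes n}$ (established in the immediately preceding proposition) so that we may \emph{cancel} $\varphi_B^{\otimes n}$ from both sides of the assumed equality via the wedge product.

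For the \emph{if} direction, suppose $\omega$ factors through $Z(B)$, i.e.\ there exists $\omega':H^{\otimes n}\to Z(B)$ with $z_B\circ \omega'=\omega$. By the last identity of Definition \ref{center}, $\mu_B\circ (z_B\otimes B)=\mu_B\circ c_{B,B}\circ(z_B\otimes B)$, so precomposing with $\omega'\otimes B$ gives $\overline{\omega}=\overline{\omega}^{op}$ (this is precisely Remark \ref{rmexc}). Wedging with $\varphi_B^{\otimes n}$ on the right then yields the desired equality.

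For the \emph{only if} direction, assume $\overline{\omega}\wedge \varphi_B^{\otimes n}=\overline{\omega}^{op}\wedge \varphi_B^{\otimes n}$. By the previous proposition, $\varphi_B^{\otimes n}$ is $\varphi_B^{\otimes n}$-invertible; let $(\varphi_B^{\otimes n})^{\dagger}$ denote an inverse, so that $\varphi_B^{\otimes n}\wedge (\varphi_B^{\otimes n})^{\dagger}=\mu_B\circ (u_n^{\varphi_B}\otimes B)=\overline{u_n^{\varphi_B}}$. Wedging both sides of the hypothesis on the right with $(\varphi_B^{\otimes n})^{\dagger}$ and using associativity of $\wedge$ yields
\begin{equation*}
\overline{\omega}\wedge \overline{u_n^{\varphi_B}} \;=\; \overline{\omega}^{op}\wedge \overline{u_n^{\varphi_B}}.
\end{equation*}
Now I would analyze each side separately. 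For the left-hand side, (i) of Proposition \ref{prp} gives $\overline{\omega}\wedge \overline{u_n^{\varphi_B}}=\overline{\omega\ast u_n^{\varphi_B}}=\overline{\omega}$, where the last equality holds because $u_n^{\varphi_B}$ is the unit of the group $Reg_{\varphi_B}(H^{\otimes n},B)$ and $\omega$ belongs to this group. For the right-hand side, Example \ref{u1-cen} together with Remark \ref{rmexc} tells us that $\overline{u_n^{\varphi_B}}=\overline{u_n^{\varphi_B}}^{op}$, and then (iii) of Proposition \ref{prp} gives $\overline{\omega}^{op}\wedge \overline{u_n^{\varphi_B}}^{op}=\overline{u_n^{\varphi_B}\ast \omega}^{op}=\overline{\omega}^{op}$. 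Combining, $\overline{\omega}=\overline{\omega}^{op}$, and a second application of Remark \ref{rmexc} shows $\omega$ factors through $Z(B)$.

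There is no real obstacle here once one notices the cancellation trick; the only subtlety is to keep track of the fact that the wedge product is associative (so cancellation on the right is legitimate) while Proposition \ref{prp}(iii) swaps the order of convolution when combining two $\overline{(-)}^{op}$'s --- fortunately this order swap is harmless precisely because $u_n^{\varphi_B}$ is a (two-sided) unit in the convolution.
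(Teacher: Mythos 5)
Your proposal is correct and follows essentially the same route as the paper: wedge the hypothesis on the right with $(\varphi_B^{\otimes n})^{\dagger}$ from the preceding proposition, reduce both sides to $\overline{\omega}$ and $\overline{\omega}^{op}$ respectively using the unit property of $u_n^{\varphi_B}$ in $Reg_{\varphi_B}(H^{\otimes n},B)$ and its factorization through $Z(B)$, and conclude via Remark \ref{rmexc}; the converse is the same trivial observation in both. The only difference is cosmetic: where the paper expands the right-hand side by hand (naturality of $c$, cocommutativity, associativity of $\mu_B$), you invoke the packaged identities (i) and (iii) of Proposition \ref{prp}, which is exactly what that computation establishes.
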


\begin{proof} Assume that (\ref{sig-cen}) holds. Then, by the associativity of $\mu_{B}$ and $\omega\in Reg_{\varphi_{B}}(H^{\ot n}, B)$, we have 
$$ \overline{\omega}\wedge \varphi_{B}^{\ot n}\wedge  \varphi_{B}^{\ot n \dagger}=\overline{\omega}\wedge  (\mu_{B}\co (u_{n}^{\varphi_{B}}\ot B))=\mu_{B}\co ((\omega \ast u_{n}^{\varphi_{B}})\ot B)=\overline{\omega}.$$

On the other hand, 

\begin{itemize}

\item[ ]$\hspace{0.38cm} \overline{\omega}^{op}\wedge \varphi_{B}^{\ot n} \wedge  \varphi_{B}^{\ot n \dagger}$

\item [ ]$=\overline{\omega}^{op}\wedge (\mu_{B}\co (u_{n}^{\varphi_{B}}\ot B)) $ {\scriptsize ({\blue by the $\varphi_{B}^{\ot n}$-invertivility})}

\item [ ]$=\mu_{B}\co c_{B,B}\co (\omega\ot (\mu_{B}\co c_{B,B}\co (u_{n}^{\varphi_{B}}\ot B)))\co (\delta_{H^{\ot n}}\ot B)$ {\scriptsize ({\blue by the  factorization of $u_{n}^{\varphi_{B}}$ through the center}}
\item[ ]$\hspace{0.38cm}${\scriptsize {\blue of $B$})}

\item [ ]$=\mu_{B}\co (B\ot (\mu_{B}\co c_{B,B}))\co (c_{B,B}\ot B)\co (B\ot c_{B,B})\co (((\omega\ot u_{n}^{\varphi_{B}})\co   \delta_{H^{\ot n}})\ot B)$ {\scriptsize ({\blue by the naturality of}}
\item[ ]$\hspace{0.38cm}${\scriptsize ({\blue $c$ and the associativity of $\mu_{B}$})}

\item [ ]$= \mu_{B}\co c_{B,B}\co ((u_{n}^{\varphi_{B}}\ast w)\ot B)$ {\scriptsize ({\blue by the naturality of $c$ and the cocommutativity of $\delta_{H}$})}

\item [ ]$=\overline{\omega}^{op}$ {\scriptsize ({\blue by $\omega\in Reg_{\varphi_{B}}(H^{\ot n}, B)$})}.

\end{itemize}

Therefore, $\overline{\omega}=\overline{\omega}^{op}$ and, as a consequence, $\omega$ factors through the center of $B$.

Conversely, if  $\omega$ factors through the center of $B$, by Remark \ref{rmexc}, 
 we have that $\overline{\omega}=\overline{\omega}^{op}$ and then (\ref{sig-cen}) holds trivially.

\end{proof}

\begin{prop}
\label{varphi-ic1}
Let $H$ be a cocommutative weak Hopf algebra. Let $(B,\varphi_{B})$ be a left weak $H$-module algebra and suppose that $\varphi_{B}$ is $\varphi_{B}$-invertible.  Then, if  $\omega\in Reg_{\varphi_{B}}(H^{\ot n}, B)$ satisfies (\ref{sig-cen}), $\omega^{-1}$ also satisfies (\ref{sig-cen}). Then, as a consequence, $\omega^{-1}$ factors through the center of $B$.
\end{prop}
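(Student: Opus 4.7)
The plan is to verify condition (\ref{sig-cen}) directly for $\omega^{-1}$, and then immediately invoke Proposition \ref{varphi-ic} to conclude that $\omega^{-1}$ factors through the center of $B$. Since $\omega$ satisfies (\ref{sig-cen}), the same proposition already gives us that $\omega$ factors through $Z(B)$, which is equivalent to the key identity $\overline{\omega}=\overline{\omega}^{op}$. This centrality of $\omega$, together with the centrality of $u_n^{\varphi_B}$ proved in Example \ref{u1-cen} (so that $\overline{u_n^{\varphi_B}}=\overline{u_n^{\varphi_B}}^{op}$), are the two ingredients that will drive the whole computation.

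Starting from the right-hand side $\overline{\omega^{-1}}^{op}\wedge \varphi_B^{\ot n}$, I would first use identity (v) of Proposition \ref{prp} to insert a factor of $\overline{u_n^{\varphi_B}}$, rewrite $u_n^{\varphi_B}=\omega\ast \omega^{-1}$, and split by identity (i) of the same proposition, producing $\overline{\omega^{-1}}^{op}\wedge \overline{\omega}\wedge \overline{\omega^{-1}}\wedge \varphi_B^{\ot n}$. Next I would apply identity (iv) --- this is the place where the cocommutativity of $H$ is essential --- to commute $\overline{\omega^{-1}}^{op}$ past $\overline{\omega}$, obtaining $\overline{\omega}\wedge \overline{\omega^{-1}}^{op}\wedge \overline{\omega^{-1}}\wedge \varphi_B^{\ot n}$.

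From here I would use $\overline{\omega}=\overline{\omega}^{op}$ to decorate the middle factor with ${}^{op}$, then collapse via identity (iii) using $\overline{\omega}^{op}\wedge \overline{\omega^{-1}}^{op}=\overline{\omega^{-1}\ast \omega}^{op}=\overline{u_n^{\varphi_B}}^{op}$. Centrality of $u_n^{\varphi_B}$ converts this into $\overline{u_n^{\varphi_B}}$, and a final application of (i) together with the gauge identity $u_n^{\varphi_B}\ast \omega^{-1}=\omega^{-1}$ (an immediate consequence of (\ref{s2})) delivers $\overline{\omega^{-1}}\wedge \varphi_B^{\ot n}$. Hence (\ref{sig-cen}) holds for $\omega^{-1}$, and Proposition \ref{varphi-ic} yields the desired factorization through the center.

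No step is really an obstacle: the proof is a careful chain of rewrites inside the Hom-product, each one sanctioned by Proposition \ref{prp}. The only delicate point is choreographing the single use of (iv) at exactly the right moment, so that after (iii) the two $\overline{\cdot}^{op}$ decorations fuse onto a morphism ($u_n^{\varphi_B}$) that we know to be central, allowing us to drop the ${}^{op}$ altogether.
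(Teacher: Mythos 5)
Your proposal is correct and follows essentially the same route as the paper: a chain of rewrites in the $\wedge$-product using identities (i), (iii), (iv), (v) of Proposition \ref{prp}, the centrality of $u_{n}^{\varphi_{B}}$, the centrality of $\omega$ obtained from Proposition \ref{varphi-ic}, and the regularity identities $\omega\ast\omega^{-1}=u_{n}^{\varphi_{B}}$ and $u_{n}^{\varphi_{B}}\ast\omega^{-1}=\omega^{-1}$. The paper runs the computation from $\overline{\omega^{-1}}\wedge\varphi_{B}^{\ot n}$ to $\overline{\omega^{-1}}^{op}\wedge\varphi_{B}^{\ot n}$ after first establishing $\overline{\omega}\wedge\overline{\omega^{-1}}=\overline{\omega}\wedge\overline{\omega^{-1}}^{op}$ as a preliminary step, whereas you traverse the same chain in the opposite direction; this is only a cosmetic difference (your reference to ``the middle factor'' should read ``the first factor'' $\overline{\omega}$, but the displayed identity makes the intent unambiguous).
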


\begin{proof} By the equalities of Proposition \ref{prp}, Remark \ref{rmexc} and Proposition \ref{varphi-ic} we have the following:
$$\overline{\omega}\wedge \overline{\omega^{-1}}=\overline{\omega\ast \omega^{-1}}=\overline{u_{n}^{\varphi_{B}}}=\overline{u_{n}^{\varphi_{B}}}^{op}=\overline{\omega^{-1}\ast \omega}^{op}=\overline{\omega}^{op}\wedge \overline{\omega^{-1}}^{op}=\overline{\omega}\wedge \overline{\omega^{-1}}^{op}.$$

Then, as a consequence, we have that
$$\overline{\omega^{-1}}\wedge \varphi_{B}^{\ot n}=\overline{\omega^{-1}}\wedge \overline{u_{n}^{\varphi_{B}}}\wedge \varphi_{B}^{\ot n}=\overline{\omega^{-1}}\wedge \overline{u_{n}^{\varphi_{B}}}^{op}\wedge \varphi_{B}^{\ot n}= \overline{u_{n}^{\varphi_{B}}}^{op}\wedge \overline{\omega^{-1}}\wedge \varphi_{B}^{\ot n}=\overline{u_{n}^{\varphi_{B}}}\wedge \overline{\omega^{-1}}\wedge \varphi_{B}^{\ot n}$$
$$=\overline{\omega^{-1}}\wedge \overline{\omega}\wedge \overline{\omega^{-1}}\wedge \varphi_{B}^{\ot n}=\overline{\omega^{-1}}\wedge \overline{\omega}\wedge \overline{\omega^{-1}}^{op}\wedge \varphi_{B}^{\ot n}=\overline{u_{n}^{\varphi_{B}}}\wedge \overline{\omega^{-1}}^{op}\wedge \varphi_{B}^{\ot n}=\overline{\omega^{-1}}^{op}\wedge \overline{u_{n}^{\varphi_{B}}}\wedge \varphi_{B}^{\ot n}$$
$$=\overline{\omega^{-1}}^{op}\wedge \varphi_{B}^{\ot n}.$$

Therefore, $\omega^{-1}$  satisfies (\ref{sig-cen}) and, by the previous proposition, $\omega^{-1}$ factors through the center of $B$.

\end{proof}

\begin{prop}
\label{Hm-alg}
Let $H$ be a cocommutative weak Hopf algebra. Let $(B,\varphi_{B})$ be a left weak $H$-module algebra and $\sigma \in Reg_{\varphi_{B}}(H^{\ot 2},B)$ satisfying the twisted condition (\ref{t-sigma}). Then, $\varphi_{B}$ induces a left $H$-module algebra structure on the center of $B$, where the action $\varphi_{Z(B)}:H\ot Z(B)\rightarrow Z(B)$ is the factorication of $\varphi_{B}\co (H\ot z_{B}): H\ot Z(B)\rightarrow B$ through the center of $B$.
\end{prop}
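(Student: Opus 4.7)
The plan is to prove the statement in two steps. First, I show that $\varphi_B\co(H\ot z_B):H\ot Z(B)\rightarrow B$ factors through $z_B$; the universal property of $Z(B)$ then yields a unique $\varphi_{Z(B)}:H\ot Z(B)\rightarrow Z(B)$ with $z_B\co\varphi_{Z(B)}=\varphi_B\co(H\ot z_B)$. Second, I verify that $(Z(B),\varphi_{Z(B)})$ satisfies the axioms of a left $H$-module algebra: (b1), (b2) of Definition \ref{def} and the strict associativity $\varphi_{Z(B)}\co(\mu_H\ot Z(B))=\varphi_{Z(B)}\co(H\ot \varphi_{Z(B)})$.

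The main obstacle is the first step, for which the key tool is the $\varphi_B$-invertibility of $\varphi_B$ provided by Proposition \ref{varphi-i} (valid under our hypotheses, since $\sigma\in Reg_{\varphi_B}(H^{\ot 2},B)$ is twisted and $H$ is cocommutative); this yields $\varphi_B^{\dagger}$ with $\varphi_B\wedge\varphi_B^{\dagger}=\mu_B\co(u_1^{\varphi_B}\ot B)$. The argument follows the classical Hopf-algebra pattern $(h\triangleright z)\cdot b=\sum(h_{(1)}\triangleright z)(h_{(2)} S(h_{(3)})\triangleright b)$, with the role of the antipode played by $\varphi_B^{\dagger}$: one applies (b1) of Definition \ref{def} to absorb the multiplication into the action, centrality of $z\in Z(B)$ to move $z$ past the $\varphi_B^{\dagger}$-factor, cocommutativity of $H$ to permute coproduct indices, and the identity $\varphi_B\wedge\varphi_B^{\dagger}=\mu_B\co(u_1^{\varphi_B}\ot B)$ to unfold. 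The central scalar $u_1^{\varphi_B}(h)$ that appears as a by-product cancels by its centrality (Example \ref{u1-cen}), yielding the desired equality $\mu_B\co(B\ot(\varphi_B\co(H\ot z_B)))=\mu_B\co c_{B,B}\co(B\ot(\varphi_B\co(H\ot z_B)))$.

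For the second step, property (b1) of $\varphi_{Z(B)}$ follows from (b1) of $\varphi_B$ together with (\ref{zbeta}), (\ref{zbpro}) and the fact that $z_B$ is a monomorphism; property (b2) follows from (b2) of $\varphi_B$ and (\ref{zbeta}). For strict associativity, I specialize the twisted condition (\ref{t-sigma}), rewritten via Proposition \ref{w-t} in expanded form as
\[
(h_{(2)}\triangleright(k_{(2)}\triangleright b))\cdot\sigma(h_{(1)}\ot k_{(1)})=\sigma(h_{(1)}\ot k_{(1)})\cdot((h_{(2)}k_{(2)})\triangleright b),
\]
to the central argument $b=z_B(z)$ for $z\in Z(B)$. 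By the first step, each iterated action $h'\triangleright(k'\triangleright z_B(z))$ lies in $Z(B)$ and hence commutes with $\sigma(h_{(1)}\ot k_{(1)})$; convolving both sides with $\sigma^{-1}$ and using $\sigma^{-1}\ast\sigma=u_2^{\varphi_B}=u_1^{\varphi_B}\co\mu_H$ together with (b1) and (b3) of Definition \ref{def}, the identity collapses to $\varphi_B(h\ot\varphi_B(k\ot z_B(z)))=\varphi_B(\mu_H(h\ot k)\ot z_B(z))$, which by the defining property of $\varphi_{Z(B)}$ gives the required strict associativity on $Z(B)$.
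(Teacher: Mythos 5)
Your proposal is correct and follows essentially the same route as the paper: the factorization of $\varphi_{B}\co (H\ot z_{B})$ through $Z(B)$ via the $\varphi_{B}$-invertibility of $\varphi_{B}$ from Proposition \ref{varphi-i}, and the strict associativity obtained by specializing the twisted condition (\ref{t-sigma}) to central arguments, using centrality of the iterated actions to commute the $\sigma$-factors, and convolving with $\sigma^{-1}$ so that $\sigma\ast\sigma^{-1}=u_{2}^{\varphi_{B}}$ collapses the identity. The remaining verifications of (b1) and the unit conditions via the monomorphism $z_{B}$ also match the paper's argument.
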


\begin{proof} First note that, by (\ref{cen1}), (b1) of Definition \ref{def} and the naturality of $c$, we obtain that the identity 
\begin{equation}
\label{H1}
\mu_{B}\co (\varphi_{B}\ot \varphi_{B})\co (H\ot c_{H,B}\ot B)\co (\delta_{B}\ot z_{B}\ot B)=\mu_{B}\co c_{B,B}\co (\varphi_{B}\ot \varphi_{B})\co (H\ot c_{H,B}\ot B)\co (\delta_{B}\ot z_{B}\ot B)
\end{equation}
holds. Then, on the one hand, 
\begin{itemize}
\item[ ]$\hspace{0.38cm} \mu_{B}\co (\varphi_{B}\ot (\mu_{B}\co (u_{1}^{\varphi_{B}}\ot B)))\co (H\ot c_{H,B}\ot B)\co (\delta_{B}\ot z_{B}\ot B)$

\item [ ]$= \mu_{B}\co (\mu_{B}\ot B)\co (\varphi_{B}\ot c_{B,B})\co (H\ot c_{B,B}\ot B)\co ((( H\ot u_{1}^{\varphi_{B}})\co \delta_{H})\ot z_{B}\ot B) $ {\scriptsize ({\blue by (\ref{cen1}) and the}}
\item[ ]$\hspace{0.38cm}${\scriptsize {\blue naturality of $c$})}

\item [ ]$=\mu_{B}\co (\mu_{B}\ot B)\co (\varphi_{B}\ot c_{B,B})\co (H\ot c_{B,B}\ot B)\co (((c_{B,H}\co ( u_{1}^{\varphi_{B}}\ot H)\co \delta_{H})\ot z_{B}\ot B)  $ {\scriptsize ({\blue by the}}
\item[ ]$\hspace{0.38cm}${\scriptsize {\blue  cocommutativity of $\delta_{H}$ and the naturality of $c$})}

\item [ ]$=\mu_{B}\co (B\ot (\mu_{B}\co c_{B,B}\co (u_{1}^{\varphi_{B}}\ot B)))\co ((c_{H,B}\co (H\ot \varphi_{B})\co (\delta_{H}\ot z_{B}))\ot B) $ {\scriptsize ({\blue by the naturality of $c$}}
\item[ ]$\hspace{0.38cm}${\scriptsize {\blue  and the associativity of $\mu_{B}$})}

\item [ ]$= \mu_{B}\co ((\mu_{B}\co c_{B,B}\co (u_{1}^{\varphi_{B}}\ot \varphi_{B})\co (\delta_{H}\ot z_{B}))\ot B)$ {\scriptsize ({\blue by the naturality of $c$, the associativity of $\mu_{B}$ and (\ref{cen1})})}

\item [ ]$=\mu_{B}\co ((\mu_{B}\co (u_{1}^{\varphi_{B}}\ot \varphi_{B})\co (\delta_{H}\ot z_{B}))\ot B) $ {\scriptsize ({\blue by (\ref{cen1})})}

\item [ ]$= \mu_{B}\co ((\varphi_{B}\co (H\ot z_{B}))\ot B)$ {\scriptsize ({\blue by (\ref{nabla-fi})})}

\end{itemize}

and, on the other hand, 

\begin{itemize}

\item[ ]$\hspace{0.38cm} \mu_{B}\co (\varphi_{B}\ot (\mu_{B}\co (u_{1}^{\varphi_{B}}\ot B)))\co (H\ot c_{H,B}\ot B)\co (\delta_{H}\ot z_{B}\ot B)$

\item [ ]$= \mu_{B}\co (\varphi_{B}\ot (\varphi_{B}\wedge \varphi_{B}^{\dagger}))\co (H\ot c_{H,B}\ot B)\co (\delta_{H}\ot z_{B}\ot B) $ {\scriptsize ({\blue by the $\varphi_{B}$-invertivility of $\varphi_{B}$})}

\item [ ]$= \mu_{B}\co (\varphi_{B}\ot \varphi_{B})\co (H\ot c_{H,B}\ot \varphi_{B}^{\dagger})\co (\delta_{H}\ot c_{H,B}\ot B)\co (\delta_{H}\ot z_{B}\ot B) $ {\scriptsize ({\blue by the  coassociativity of $\delta_{H}$}}
\item[ ]$\hspace{0.38cm}${\scriptsize {\blue  and the naturality of $c$})}

\item [ ]$= \mu_{B}\co c_{B,B}\co (\varphi_{B}\ot \varphi_{B})\co (H\ot c_{H,B}\ot \varphi_{B}^{\dagger})\co (\delta_{H}\ot c_{H,B}\ot B)\co (\delta_{H}\ot z_{B}\ot B) $ {\scriptsize ({\blue by (\ref{H1})})}

\item [ ]$= \mu_{B}\co c_{B,B}\co (\varphi_{B}\ot (\varphi_{B}\wedge \varphi_{B}^{\dagger}))\co (H\ot c_{H,B}\ot B)\co (\delta_{H}\ot z_{B}\ot B)$ {\scriptsize ({\blue by the naturality of $c$ and the}}
\item[ ]$\hspace{0.38cm}${\scriptsize {\blue  coassociativity of $\delta_{H}$})}

\item [ ]$= \mu_{B}\co (\mu_{B}\ot B)\co (B\ot c_{B,B})\co ((c_{B,B}\co (\varphi_{B}\ot u_{1}^{\varphi_{B}})\co (H\ot c_{H,B})\co (\delta_{H}\ot z_{B}))\ot B)$ {\scriptsize ({\blue by the}}
\item[ ]$\hspace{0.38cm}${\scriptsize {\blue $\varphi_{B}$-invertivility of $\varphi_{B}$ and the naturality of $c$})}

\item [ ]$=\mu_{B}\co (\mu_{B}\ot B)\co (B\ot c_{B,B})\co (((u_{1}^{\varphi_{B}}\ot \varphi_{B})\co (\delta_{H}\ot z_{B}))\ot B) $ {\scriptsize ({\blue by the naturality of $c$ and the}}
\item[ ]$\hspace{0.38cm}${\scriptsize {\blue  cocommutativity of $\delta_{H}$})}

\item [ ]$=\mu_{B}\co (B\ot \mu_{B})\co (c_{B,B}\ot B)\co (B\ot c_{B,B})\co (((u_{1}^{\varphi_{B}}\ot \varphi_{B})\co (\delta_{H}\ot z_{B}))\ot B) $ {\scriptsize ({\blue by (\ref{cen1}) and the}}
\item[ ]$\hspace{0.38cm}${\scriptsize {\blue  associativity of $\mu_{B}$})}

\item [ ]$= \mu_{B}\co c_{B,B}\co \co ((\mu_{B}\co (u_{1}^{\varphi_{B}}\ot \varphi_{B})\co (\delta_{H}\ot z_{B}))\ot B)$ {\scriptsize ({\blue by the naturality of $c$})}

\item [ ]$=\mu_{B}\co c_{B,B}\co ((\varphi_{B}\co (H\ot z_{B}))\ot B) $ {\scriptsize ({\blue by (\ref{nabla-fi})}).}

\end{itemize}

Therefore, as a consequence of the previous equalities, we have that 
$$\mu_{B}\co ((\varphi_{B}\co (H\ot z_{B}))\ot B)=\mu_{B}\co c_{B,B}\co ((\varphi_{B}\co (H\ot z_{B}))\ot B)$$
and this implies that there exists a unique morphism $\varphi_{Z(B)}:H\ot Z(B)\rightarrow Z(B)$ such that 
\begin{equation}
\label{zba}
z_{B}\co \varphi_{Z(B)}=\varphi_{B}\co (H\ot z_{B}).
\end{equation}

The pair $(Z(B), \varphi_{Z(B)})$ is a left $H$-module algebra. Indeed, using hat $z_{B}$ is a monomorphism we have that 
$\varphi_{Z(B)}\co (\eta_{H}\ot Z(B))=id_{Z(B)}$ because by (\ref{zba}) $z_{B}\co\varphi_{Z(B)}
\co (\eta_{H}\ot Z(B))=z_{B}$ holds. Also, we have the identity
\begin{equation}
\label{zb1}
\mu_{B}\co ((\varphi_{B}\co (H\ot \varphi_{B}))\ot B)\co 
(H\ot H\ot (c_{B,B}\co (\sigma\ot B)))\co (\delta_{H^{\ot 2}}\ot z_{B})=\mu_{B}\co (B\ot \varphi_{B})\co 
(G_{\sigma}\ot z_{B})
\end{equation}
since
\begin{itemize}
\item[ ]$\hspace{0.38cm} \mu_{B}\co ((\varphi_{B}\co (H\ot \varphi_{B}))\ot B)\co 
(H\ot H\ot (c_{B,B}\co (\sigma\ot B)))\co (\delta_{H^{\ot 2}}\ot z_{B})$

\item[ ]$= \mu_{B}\co (B\ot \sigma)\co (P_{\varphi_{B}}\ot H)\co (H\ot P_{\varphi_{B}})\co (H\ot H\ot z_{B})$ {\scriptsize ({\blue by  the naturality of $c$})}

\item[ ]$= \mu_{B}\co (B\ot \varphi_{B})\co (F_{\sigma}\ot z_{B})$ {\scriptsize ({\blue by (\ref{t-sigma})})}

\item [ ]$= \mu_{B}\co c_{B,B}\co (B\ot (z_{B}\co \varphi_{Z(B)}))\co (F_{\sigma}\ot Z(B))$ {\scriptsize ({\blue by (\ref{cen1}) and (\ref{zba})})}

\item [ ]$=\mu_{B}\co (\varphi_{B}\ot B)\co (H\ot c_{B,B})\ot (((\mu_{H}\ot \sigma)\co c_{H^{\ot 2},H^{\ot 2}}\co \delta_{H^{\ot 2}}) \ot z_{B}) $ {\scriptsize ({\blue by  the naturality of $c$})}

\item [ ]$= \mu_{B}\co (\varphi_{B}\ot B)\co (H\ot c_{B,B})\ot (G_{\sigma}\ot z_{B}) $ {\scriptsize ({\blue by  the cocommutativity of $\delta_{H^{\ot 2}}$}).}

\end{itemize}

Then, 
\begin{itemize}
\item[ ]$\hspace{0.38cm} \mu_{B}\co ((\mu_{B}\co ((\varphi_{B}\co (H\ot \varphi_{B}))\ot B)\co 
(H\ot H\ot (c_{B,B}\co (\sigma\ot B)))\co (\delta_{H^{\ot 2}}\ot B))\ot B)$
\item[ ]$\hspace{0.38cm} \co (H\ot H\ot (c_{B,B}\co (\sigma^{-1}\ot B)))\co (\delta_{H^{\ot 2}}\ot z_{B})$

\item[ ]$= \mu_{B}\co ((\varphi_{B}\co (H\ot \varphi_{B}))\ot B)\co (H\ot H\ot (c_{B,B}\co ((\sigma\ast \sigma^{-1})\ot B)))\co (\delta_{H^{\ot 2}}\ot z_{B})$ {\scriptsize ({\blue by the coassociativity}}
\item[ ]$\hspace{0.38cm}${\scriptsize {\blue of  $\delta_{H^{\ot 2}}$})}

\item[ ]$=\mu_{B}\co ((\varphi_{B}\co (H\ot \varphi_{B}))\ot B)\co (H\ot H\ot (c_{B,B}\co (u_{2}^{\varphi_{B}}\ot B)))\co (\delta_{H^{\ot 2}}\ot z_{B}) $ {\scriptsize ({\blue by $\sigma\in Reg_{\varphi_{B}}(H^{\ot 2}, B)$ })}

\item [ ]$=\mu_{B}\co (\varphi_{B}\ot \varphi_{B})\co (H\ot c_{H,B}\ot B)\co (\delta_{H}\ot ((\varphi_{B}\ot \varphi_{B})\co (H\ot c_{H,B}\ot B)\co (\delta_{H}\ot z_{B}\ot \eta_{B})))
${\scriptsize ({\blue by}}
\item[ ]$\hspace{0.38cm}${\scriptsize {\blue the naturality of $c$})}

\item [ ]$=\varphi_{B}\co (H\ot (\varphi_{B}\co (H\ot (\mu_{B}\co (z_{B}\ot \eta_{B}))))) $ {\scriptsize ({\blue by (b1) of Definition \ref{def}})}

\item [ ]$=\varphi_{B}\co (H\ot (\varphi_{B}\co (H\ot z_{B})))$ {\scriptsize ({\blue by the properties of $\eta_{B}$})}

\item [ ]$=z_{B}\co \varphi_{Z(B)}\co (H\ot \varphi_{Z(B)}) $ {\scriptsize ({\blue by (\ref{zba})})}

\end{itemize}
and, on the other hand, 
\begin{itemize}
\item[ ]$\hspace{0.38cm}  \mu_{B}\co ((\mu_{B}\co (B\ot \varphi_{B})\co 
(G_{\sigma}\ot B))\ot B)\co (H\ot H\ot (c_{B,B}\co (\sigma^{-1}\ot B)))\co (\delta_{H^{\ot 2}}\ot z_{B})$
\item[ ]$=\mu_{B}\co (\varphi_{B}\ot B)\co (\mu_{H}\ot  (c_{B,B}\co ((\sigma\ast \sigma^{-1})\ot B)))\co (\delta_{H^{\ot 2}}\ot z_{B}) $ {\scriptsize ({\blue by the coassociativity of  $\delta_{H^{\ot 2}}$})} 

\item[ ]$=\mu_{B}\co (\varphi_{B}\ot B)\co (\mu_{H}\ot  (c_{B,B}\co (u_{2}^{\varphi_{B}}\ot B)))\co (\delta_{H^{\ot 2}}\ot z_{B}) $ {\scriptsize ({\blue by $\sigma\in Reg_{\varphi_{B}}(H^{\ot 2}, B)$ })}

\item [ ]$=\mu_{B}\co (\varphi_{B}\ot u_{1}^{\varphi_{B}})\co (H\ot c_{H,B})\co (((\mu_{H}\ot \mu_{H})\co \delta_{H^{\ot 2}})\ot z_{B}) $ {\scriptsize ({\blue by the naturality of $c$})}

\item [ ]$=\mu_{B}\co (\varphi_{B}\ot \varphi_{B})\co (H\ot c_{H,B}\ot B)\co ((\delta_{H}\co \mu_{H})\ot z_{B}\ot \eta_{B}) $ {\scriptsize ({\blue by (a1) of Definition \ref{wha}})}

\item [ ]$=\varphi_{B}\co (\mu_{H}\ot (\mu_{B}\co (z_{B}\ot \eta_{B}))  $ {\scriptsize ({\blue by (b1) of Definition \ref{def}})}

\item [ ]$=\varphi_{B}\co (\mu_{H}\ot z_{B})$ {\scriptsize ({\blue by the properties of $\eta_{B}$})}

\item [ ]$= z_{B}\co \varphi_{Z(B)}\co (\mu_{H}\ot Z(B))$ {\scriptsize ({\blue by  (\ref{zba})}).}

\end{itemize}

Therefore, as a consequence of (\ref{zb1}),  we have that $\varphi_{Z(B)}\co (H\ot \varphi_{Z(B)})=\varphi_{Z(B)}\co (\mu_{H}\ot Z(B))$ and this implies that  $(Z(B), \varphi_{Z(B)})$ is a left $H$-module. Finally, it is  a left $H$-module algebra because composing with the monomorphism $z_{B}$ we have 
\begin{itemize}
\item[ ]$\hspace{0.38cm} z_{B}\co \mu_{Z(B)}\co (\varphi_{Z(B)}\ot \varphi_{Z(B)})\co (H\ot c_{H, Z(B)}\ot Z(B))\co (\delta_{H}\ot Z(B)\ot Z(B))$
\item[ ]$= \mu_{B}\co (\varphi_{B}\ot \varphi_{B})\co (H\ot c_{H,B}\ot B)\co (\delta_{H}\ot z_{B}\ot z_{B})$ {\scriptsize ({\blue by (\ref{zbpro}), (\ref{zba}) and the naturality of $c$})} 

\item [ ]$=\varphi_{B}\co (H\ot (\mu_{B}\co (z_{B}\ot \eta_{B})) $ {\scriptsize ({\blue by (b1) of Definition \ref{def}})}

\item [ ]$=\varphi_{B}\co (H\ot (z_{B}\co \mu_{Z(B)}))$ {\scriptsize ({\blue by  (\ref{zbpro})})}

\item [ ]$= z_{B}\co \varphi_{Z(B)}\co (H\ot \mu_{Z(B)})$ {\scriptsize ({\blue by  (\ref{zba})})}

\end{itemize}
and 
\begin{itemize}
\item[ ]$\hspace{0.38cm} z_{B}\co \varphi_{Z(B)}\co (H\ot u_{1}^{\varphi_{Z(B)}})$
\item[ ]$= \varphi_{B}\co (H\ot u_{1}^{\varphi_{B}})$ {\scriptsize ({\blue by (\ref{zbeta}), (\ref{zba})})} 

\item [ ]$=u_{1}^{\varphi_{B}}\co \mu_{H}$ {\scriptsize ({\blue by (b3) of Definition \ref{def}})}

\item [ ]$=z_{B}\co u_{1}^{\varphi_{Z(B)}}\co \mu_{H}$ {\scriptsize ({\blue by  (\ref{zbeta}), (\ref{zba})}).}

\end{itemize}

\end{proof}

\begin{rem}
{\rm Note that, under the conditions of the previous proposition, the equality 
\begin{equation}
\label{udd}
z_{B}\co u_{1}^{\varphi_{Z(B)}}=u_{1}^{\varphi_{B}}
\end{equation}
holds.
}
\end{rem}

\begin{apart}
{\rm Let $H$ be a cocommutative weak Hopf algebra. By  \cite[Theorem 3.1]{nmra6} we know that, if $(A, \varphi_{A})$ is a left weak $H$-module algebra and $\sigma\in Reg_{\varphi_{A}}(H^{\ot 2},A)$ satisfies the twisted condition (\ref{t-sigma}), $(A,\varphi_{A})$ is a left $H$-module algebra if and only if the morphism $\sigma$ factorizes through the center of $A$. Moreover, by \cite[Corollary 3.1]{nmra6}, $(A,\varphi_{A})$ is a left $H$-module algebra if and only if the morphism $u_{2}^{\varphi_{A}}$ satisfies the twisted condition (\ref{t-sigma}).
}
\end{apart} 

\begin{prop}
\label{talsig} 
Let $H$ be a cocommutative weak Hopf algebra and let $(B,\varphi_{B})$ be a left weak $H$-module algebra. Let $\sigma\in Reg_{\varphi_{B}}(H^{\ot 2},B)$ satisfying the twisted condition (\ref{t-sigma}). Then, $\alpha\in Reg_{\varphi_{B}}(H^{\ot 2},B)$ satisfies the twisted condition (\ref{t-sigma}) if, and only if, there exists $\tau\in Reg_{\varphi_{Z(B)}}(H^{\ot 2},Z(B))$ such that 
\begin{equation}
\label{talsig-eq}
\alpha=(z_{B}\co \tau)\ast \sigma.
\end{equation}
\end{prop}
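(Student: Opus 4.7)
\emph{Approach.} The plan is to work in the $\wedge$-product of Definition~\ref{hom-pro}, translating the twisted condition via Proposition~\ref{w-t} into the identity $\overline{\sigma}^{op}\wedge\varphi_B^{\ot 2}=\overline{\sigma}\wedge(\varphi_B\co(\mu_H\ot B))$, and exploiting the commutation of center-valued morphisms collected in Remark~\ref{rmexc} and Proposition~\ref{prp}(iii),(iv),(vi). Both implications rest on Proposition~\ref{varphi-i}, which uses the existence of the twisted regular $\sigma$ to conclude that $\varphi_B$ is $\varphi_B$-invertible, so Propositions~\ref{varphi-ic} and \ref{varphi-ic1} are available throughout.

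\emph{Direction} $(\Leftarrow)$. Given $\tau\in Reg_{\varphi_{Z(B)}}(H^{\ot 2},Z(B))$ with $\alpha=(z_B\co\tau)\ast\sigma$, I propose $\alpha^{-1}:=\sigma^{-1}\ast(z_B\co\tau^{-1})$. The identities (\ref{s1}), (\ref{s2}) for $\alpha$ reduce to the analogous identities for $\sigma$ and $\tau$ via $z_B\co u_n^{\varphi_{Z(B)}}=u_n^{\varphi_B}$ (an extension of (\ref{udd}) using (\ref{u})) and the multiplicativity (\ref{zbpro}) of $z_B$. For the twisted condition, since $z_B\co\tau$ factors through $Z(B)$, Remark~\ref{rmexc} gives $\overline{z_B\co\tau}^{op}=\overline{z_B\co\tau}$; combining Proposition~\ref{prp}(i),(iii),(iv) with (\ref{t-sigmac}) for $\sigma$ yields
\[
\overline{\alpha}^{op}\wedge\varphi_B^{\ot 2}=\overline{z_B\co\tau}\wedge\overline{\sigma}^{op}\wedge\varphi_B^{\ot 2}=\overline{z_B\co\tau}\wedge\overline{\sigma}\wedge(\varphi_B\co(\mu_H\ot B))=\overline{\alpha}\wedge(\varphi_B\co(\mu_H\ot B)),
\]
which is exactly the twisted condition for $\alpha$.

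\emph{Direction} $(\Rightarrow)$. Define $\tau_B:=\alpha\ast\sigma^{-1}$ with candidate inverse $\sigma\ast\alpha^{-1}$; a direct convolution check, using $u_2^{\varphi_B}\ast\sigma^{-1}=\sigma^{-1}$ and $\alpha\ast u_2^{\varphi_B}=\alpha$ (both from (\ref{s2})), places $\tau_B\in Reg_{\varphi_B}(H^{\ot 2},B)$. By Proposition~\ref{varphi-ic}, showing that $\tau_B$ factors through $Z(B)$ reduces to verifying $\overline{\tau_B}\wedge\varphi_B^{\ot 2}=\overline{\tau_B}^{op}\wedge\varphi_B^{\ot 2}$. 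The crux of the proof is the auxiliary identity
\[
\overline{\sigma^{-1}}^{op}\wedge(\varphi_B\co(\mu_H\ot B))=\overline{\sigma^{-1}}\wedge\varphi_B^{\ot 2},
\]
which I will derive from (\ref{t-sigmac}) for $\sigma$ by convolving on the left first with $\overline{\sigma^{-1}}$ and simplifying $\overline{\sigma^{-1}\ast\sigma}\wedge(\varphi_B\co(\mu_H\ot B))=\overline{u_2^{\varphi_B}}\wedge(\varphi_B\co(\mu_H\ot B))=\varphi_B\co(\mu_H\ot B)$ (an instance of axiom (a1) together with (\ref{nabla-fi})), then convolving on the left with $\overline{\sigma^{-1}}^{op}$ and collapsing $\overline{\sigma^{-1}}^{op}\wedge\overline{\sigma}^{op}\wedge\varphi_B^{\ot 2}=\varphi_B^{\ot 2}$ via Proposition~\ref{prp}(iii),(vi) and Remark~\ref{rmexc}. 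Chaining this auxiliary identity with Proposition~\ref{prp}(i),(iii),(iv) and the twisted condition for $\alpha$ gives
\[
\overline{\tau_B}^{op}\wedge\varphi_B^{\ot 2}=\overline{\sigma^{-1}}^{op}\wedge\overline{\alpha}\wedge(\varphi_B\co(\mu_H\ot B))=\overline{\alpha}\wedge\overline{\sigma^{-1}}\wedge\varphi_B^{\ot 2}=\overline{\tau_B}\wedge\varphi_B^{\ot 2}.
\]

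\emph{Conclusion and main obstacle.} Write $\tau_B=z_B\co\tau$ for the unique morphism $\tau:H^{\ot 2}\to Z(B)$ provided by the center property. Proposition~\ref{varphi-ic1} applied to $\tau_B$ implies that $\tau_B^{-1}=\sigma\ast\alpha^{-1}$ also factors through $Z(B)$, and the resulting factorization is readily checked to be the convolution inverse of $\tau$ in $Reg_{\varphi_{Z(B)}}(H^{\ot 2},Z(B))$, by transferring the regularity identities from $B$ to $Z(B)$ using that $z_B$ is monic together with (\ref{zbpro}). Finally, $(z_B\co\tau)\ast\sigma=\alpha\ast\sigma^{-1}\ast\sigma=\alpha\ast u_2^{\varphi_B}=\alpha$, which proves the claim. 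The principal technical obstacle is the auxiliary identity for $\sigma^{-1}$: it amounts to a twisted-type relation inherited by the convolution inverse and requires a careful combination of items (iii),(v),(vi) of Proposition~\ref{prp} with the compatibility (a1) between product and coproduct in $H$.
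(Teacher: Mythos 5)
Your proposal is correct and follows essentially the same route as the paper: both directions are carried out in the $\wedge$-calculus of Proposition \ref{prp} via the reformulation (\ref{t-sigmac}), with the center characterization of Proposition \ref{varphi-ic} (using the $\varphi_{B}$-invertibility supplied by Proposition \ref{varphi-i}) and Proposition \ref{varphi-ic1} for the inverse. The only difference is organizational: you package the key computation as a ``twisted condition for $\sigma^{-1}$'' and show directly that $\alpha\ast\sigma^{-1}$ factors through $Z(B)$, whereas the paper first establishes $\varphi_{B}\co(\mu_{H}\ot B)=\overline{\sigma^{-1}}\wedge\overline{\sigma}^{op}\wedge\varphi_{B}^{\ot 2}$ for both $\sigma$ and $\alpha$ and works with $\sigma\ast\alpha^{-1}$ — the two bookkeepings are equivalent.
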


\begin{proof} Suppose that $\alpha$ satisfies the twisted condition  (\ref{t-sigma}). We will see that $\sigma\ast \alpha^{-1}$ factors through the center of $B$. Following Proposition \ref{varphi-ic}, to prove it we will see that $\overline{\sigma\ast \alpha^{-1}}\wedge \varphi_{B}^{\ot 2}=\overline{\sigma\ast \alpha^{-1}}^{op}\wedge \varphi_{B}^{\ot 2}.$ First, note that
$$
\varphi_{B}\co (\mu_{H}\ot B)=\overline{\sigma^{-1}}\wedge \overline{\sigma}^{op}\wedge \varphi_{B}^{\ot 2}
$$
because 
\begin{itemize}
\item[ ]$\hspace{0.38cm} \varphi_{B}\co (\mu_{H}\ot B)$
\item[ ]$= \overline{u_{2}^{\varphi_{B}}}\wedge (\varphi_{B}\co (\mu_{H}\ot B))$ {\scriptsize ({\blue by (a1) of Definition \ref{wha} and (\ref{nabla-fi})})} 

\item [ ]$=\overline{\sigma^{-1}}\wedge \overline{\sigma}\wedge (\varphi_{B}\co (\mu_{H}\ot B))$ {\scriptsize ({\blue by $\sigma\in Reg_{\varphi_{B}}(H^{\ot 2},B)$ and (i) of Proposition \ref{prp}})}

\item [ ]$=\overline{\sigma}\ast \overline{\alpha^{-1}}^{op}\wedge \varphi_{B}^{\ot 2}$ {\scriptsize ({\blue by  (\ref{t-sigmac})}).}
\end{itemize}

Thus, for $\alpha$ we have the same identity and then
\begin{equation}
\label{talsig2}
\overline{\sigma^{-1}}\wedge \overline{\sigma}^{op}\wedge \varphi_{B}^{\ot 2}=\overline{\alpha^{-1}}\wedge \overline{\alpha}^{op}\wedge \varphi_{B}^{\ot 2}
\end{equation}
holds. As a consequence, 
\begin{equation}
\label{talsig3}
\overline{\sigma}^{op}\wedge \varphi_{B}^{\ot 2}= \overline{\alpha}^{op}\wedge  \overline{\sigma\ast \alpha^{-1}} \wedge  \varphi_{B}^{\ot 2}
\end{equation}
also holds since
\begin{itemize}
\item[ ]$\hspace{0.38cm}\overline{\sigma}^{op}\wedge \varphi_{B}^{\ot 2} $
\item[ ]$= \overline{\sigma}^{op}\wedge \overline{u_{2}^{\varphi_{B}}}\wedge  \varphi_{B}^{\ot 2} $ {\scriptsize ({\blue by (v) of Proposition \ref{prp}})} 

\item [ ]$=\overline{u_{2}^{\varphi_{B}}}\wedge  \overline{\sigma}^{op}\wedge  \varphi_{B}^{\ot 2}$ {\scriptsize ({\blue by  (iv) of Proposition \ref{prp}})}

\item [ ]$=\overline{\sigma}\wedge \overline{\sigma^{-1}}\wedge  \overline{\sigma}^{op}\wedge  \varphi_{B}^{\ot 2}$ {\scriptsize ({\blue by $\sigma\in Reg_{\varphi_{B}}(H^{\ot 2},B)$ and (i) of Proposition \ref{prp}})}

\item[ ]$=\overline{\sigma}\wedge \overline{\alpha^{-1}}\wedge  \overline{\alpha}^{op}\wedge  \varphi_{B}^{\ot 2} $ {\scriptsize ({\blue by (\ref{talsig2})})} 

\item [ ]$=\overline{\alpha}^{op}\wedge  \overline{\sigma\ast \alpha^{-1}} \wedge  \varphi_{B}^{\ot 2}$ {\scriptsize ({\blue by  (iv) and (i) of Proposition \ref{prp}}).}

\end{itemize}

Therefore, 
\begin{itemize}
\item[ ]$\hspace{0.38cm}\overline{\sigma\ast \alpha^{-1}}^{op} \wedge \varphi_{B}^{\ot 2} $

\item[ ]$=\overline{\alpha^{-1}}^{op} \wedge \overline{\sigma}^{op}\wedge \varphi_{B}^{\ot 2} $
{\scriptsize ({\blue by (iii) of Proposition \ref{prp}})} 

\item[ ]$=\overline{\alpha^{-1}}^{op}\wedge  \overline{\alpha}^{op}\wedge  \overline{\sigma\ast \alpha^{-1}} \wedge  \varphi_{B}^{\ot 2} $ {\scriptsize ({\blue by \ref{talsig3}})} 

\item [ ]$=\overline{u_{2}^{\varphi_{B}}}^{op}\wedge \overline{\sigma\ast \alpha^{-1}} \wedge  \varphi_{B}^{\ot 2}$ {\scriptsize ({\blue by $\alpha\in Reg_{\varphi_{B}}(H^{\ot 2},B)$ and (iii) of Proposition \ref{prp}})}

\item [ ]$=\overline{u_{2}^{\varphi_{B}}}\wedge \overline{\sigma\ast \alpha^{-1}} \wedge  \varphi_{B}^{\ot 2} $ {\scriptsize ({\blue by Remark \ref{rmexc}})}

\item[ ]$= \overline{\sigma\ast \alpha^{-1}} \wedge \overline{u_{2}^{\varphi_{B}}}\wedge \varphi_{B}^{\ot 2} $ {\scriptsize ({\blue by the factorization of $u_{2}^{\varphi_{B}}$ through the center of $B$})} 

\item [ ]$= \overline{\sigma\ast \alpha^{-1}} \wedge \varphi_{B}^{\ot 2}$ {\scriptsize ({\blue by  (v)  of Proposition \ref{prp}})}
\end{itemize}
and this implies that $\sigma\ast \alpha^{-1}$ factors through the center of $B$.  Then, by Proposition \ref{varphi-ic1}, the morphism $(\sigma\ast \alpha^{-1})^{-1}=\alpha\ast \sigma^{-1}$ also factors through the center of $B$. If $\tau$ is the factorization, we have that $z_{B}\co\tau=\alpha\ast \sigma^{-1}$. Then, (\ref{talsig-eq}) holds.

Conversely, if (\ref{talsig-eq}) holds for $\tau\in Reg_{\varphi_{Z(B)}}(H^{\ot 2},Z(B)),$  we have that 
\begin{itemize}
\item[ ]$\hspace{0.38cm} \overline{\alpha}^{op}\wedge \varphi_{B}^{\ot 2}$
\item[ ]$=\overline{(z_{B}\co \tau)\ast \sigma}^{op}\wedge \varphi_{B}^{\ot 2}  $ {\scriptsize ({\blue by (\ref{talsig-eq})})} 
\item[ ]$= \overline{ \sigma}^{op}\wedge \overline{z_{B}\co \tau}^{op}\wedge\varphi_{B}^{\ot 2} $ {\scriptsize ({\blue by (iii) of Proposition \ref{prp}})} 
\item[ ]$=\overline{ \sigma}^{op}\wedge \overline{z_{B}\co \tau}\wedge\varphi_{B}^{\ot 2}  $ {\scriptsize ({\blue by the factorization through the center of $B$})} 
\item[ ]$= \overline{z_{B}\co \tau}\wedge \overline{ \sigma}^{op}\wedge \varphi_{B}^{\ot 2}  $ {\scriptsize ({\blue by (iv) of Proposition \ref{prp}})} 
\item[ ]$= \overline{z_{B}\co \tau}\wedge \overline{ \sigma}\wedge (\varphi_{B}\co (\mu_{H}\ot B))$ {\scriptsize ({\blue by (\ref{t-sigmac})})} 
\item[ ]$=\overline{(z_{B}\co \tau)\co \sigma}\wedge (\varphi_{B}\co (\mu_{H}\ot B))  $ {\scriptsize ({\blue by (i) of Proposition \ref{prp}})} 
\end{itemize}
and, therefore, $\alpha$ satisfies the twisted condition.
\end{proof}

\section{Cohomological  Obstructions in a Weak Setting} 

In the beginning of this section we review the basic facts about the Sweedler cohomology in a weak setting. This cohomology was introduced in \cite{nmra5} as a generalization of the classical Sweedler cohomology for Hopf algebras \cite{Moss}. The groups $Reg_{\varphi_{B}}(H_{L},B)$ and $Reg_{\varphi_{B}}(H^{\ot n}, B)$, introduced in the previous section, will be  the objects of the corresponding cosimplicial complex.

\begin{apart}
\label{cohlogy}
{\rm 
Let $H$ be a cocommutative weak Hopf algebra and let $(B,\varphi_{B})$ be a left weak $H$-module algebra. Following  \cite{nmra5} we define the coface operators as the group morphisms 
$$\partial_{0,i}:Reg_{\varphi_{B}}(H_{L},B)\rightarrow Reg_{\varphi_{B}}(H,B), \;\; i\in\{0,1\}$$
$$\partial_{0,0}(g)=\varphi_{B}\co (H\ot (g\co p_{L}\co \Pi_{H}^{R}))\co \delta_{H},\;\;
\partial_{0,1}(g)=g\co p_{L},$$
$$\partial_{k,i}:Reg_{\varphi_{B}}(H^{\ot k},B)\rightarrow Reg_{\varphi_{B}}(H^{\ot (k+1)},B), \;\;k\geq 1,\;\; i\in\{0,1,\cdots,k+1\}$$
$$ \partial_{k,i}(\sigma)=\left\{ \begin{array}{l}
 \varphi_{B}\co (H\ot \sigma), \;\;i=0\\
 \\
 \sigma\co (H^{i-1}\ot\mu_{H}\ot H^{k-i}),
 \;\;i\in\{1,\cdots,k\} \\

 \\
\sigma\co (H^{\ot (k-1)}\ot (\mu_{H}\co (H\ot \Pi_{H}^{L}))), \;\;i=k+1 ,
  \end{array}\right.
$$

On the other hand, we define the codegeneracy operators by
$s_{1,0}:Reg_{\varphi_{B}}(H,B)\rightarrow Reg_
{\varphi_{B}}(H_{L},B),$
$$s_{1,0}(h)=h\co i_{L}, $$
 and
$s_{k+1,i}:Reg_{\varphi_{B}}(H^{\ot (k+1)},B)\rightarrow Reg_
{\varphi_{B}}(H^{\ot k},B), \;\;k\geq 1,\;\; i\in\{0,1,\cdots,k\},$
\vspace{0.05cm}
$$
s_{k+1,i}(\sigma)=\sigma\co (H^{\ot i}\ot \eta_{H}\ot H^{\ot (k-i)}).
$$

Taking into account the codegeneracy operators, we define the groups 
$$Reg_{\varphi_{B}}^{+}(H^{\ot (k+1)}, B)=\bigcap_{i=0}^{k}Ker(s_{k+1,i}),$$
$$Reg_{\varphi_{B}}^{+}(H_{L}, B)=\{g \in Reg_{\varphi_{B}}(H_{L}, B)\; ;
\; g\co\eta_{H_{L}}=\eta_{B}\}.$$

Note that $Reg_{\varphi_{B}}^{+}(H, B)$ is the group $Reg_{\varphi_{B}}^{t}(H, B)$ introduced in Definition \ref{reg-1+}  because 
$$Reg_{\varphi_{B}}^{+}(H, B)=Ker(s_{1,0})=\{h \in Reg_{\varphi_{B}}(H, B)\; ;
\; h\co i_{H}^{L}=u_{0}^{\varphi_{B}}\}$$
$$=\{h \in Reg_{\varphi_{A}}(H, B)\; ;
\; h\co \Pi_{H}^{L}=u_{1}^{\varphi_{B}}\}=\{h \in Reg_{\varphi_{B}}(H, B)\; ;
\; h\co\eta_{H}=\eta_{B}\}=Reg_{\varphi_{B}}^{t}(H, B).$$

Also, $Reg_{\varphi_{B}}^{+}(H^{\ot 2}, B)$ is the subgroup of $Reg_{\varphi_{B}}(H^{\ot 2}, B)$  formed by the elements satisfying the normal condition (\ref{normal-sigma}) because
$$Reg_{\varphi_{B}}^{+}(H^{\ot 2}, B)=Ker(s_{2,0})\cap Ker(s_{2,1})$$
$$=\{\sigma \in Reg_{\varphi_{B}}(H^{\ot 2}, B)\;; \; \sigma\co
(\eta_{H}\ot H)=\sigma\co (H\ot \eta_{H})=u_{1}^{\varphi_{B}}\}$$
and finally 
$$Reg_{\varphi_{B}}^{+}(H^{\ot 3}, B)=Ker(s_{3,0})\cap Ker(s_{3,1})\cap Ker(s_{3,2})$$
$$=\{\sigma \in Reg_{\varphi_{B}}(H^{\ot 3}, B)\;; \; \sigma\co
(\eta_{H}\ot H\ot H)=\sigma\co (H\ot \eta_{H}\ot H)=\sigma\co (H\ot H\ot \eta_{H})=u_{2}^{\varphi_{B}}\}.$$

}
\end{apart}

\begin{apart}
\label{cohlogy0}
{\rm 
Let $H$ be a cocommutative weak Hopf algebra. If $(A,\varphi_{A})$ is a left  $H$-module algebra, by \cite{nmra5} the groups
$Reg_{\varphi_{A}}(H_{L},A)$ and $Reg_{\varphi_{A}}(H^{\ot n}, A)$, $n \geq, 
1$ are the objects of a cosimplicial complex of groups with the previous coface and codegeneracy
operators. In this case,  $$D^{k}_{\varphi_{A}}=\partial_{k,0}\ast
\partial_{k,1}^{-1}\ast \cdots \ast
\partial_{k,k+1}^{{(-1)}^{k+1}}$$ denote the coboundary morphisms of
the cochain complex
$$Reg_{\varphi_{A}}(H_{L},A)\stackrel{D^{0}_{\varphi_{A}}}\longrightarrow
Reg_{\varphi_{A}}(H,A)\stackrel{D^{1}_{\varphi_{A}}}\longrightarrow
Reg_{\varphi_{A}}(H^{\ot 2},A)
\stackrel{D^{2}_{\varphi_{A}}}\longrightarrow \cdots $$
$$
\cdots \stackrel{D^{k-1}_{\varphi_{A}}}\longrightarrow
Reg_{\varphi_{A}}(H^{\ot k},A)\stackrel{D^{k}_{\varphi_{A}}}\longrightarrow
Reg_{\varphi_{A}}(H^{\ot (k+1)},A)
\stackrel{D^{k+1}_{\varphi_{A}}}\longrightarrow \cdots$$
associated to the cosimplicial complex
$Reg_{\varphi_{A}}(H^{\ot \bullet}, A)$. 
}
\end{apart}

\begin{apart}
\label{cohlogy1}
{\rm 
Let $H$ be a weak Hopf algebra. If $(B,\varphi_{B})$ is a left weak $H$-module algebra and if $\sigma\in Reg_{\varphi_{B}}(H^{\ot 2}, B)$, by \cite[Proposition 5.5]{Gu2-Val},  the morphism  $E(\sigma):H^{\ot 3}\rightarrow B$ defined by $E(\sigma)=\sigma\ot \varepsilon_{H}$ satisfies the following identities:
\begin{equation}
\label{G2V}
E(\sigma)\ast u_{3}^{\varphi_{B}}=u_{3}^{\varphi_{B}}\ast E(\sigma)=\partial_{2,3}(\sigma).
\end{equation}

Then, using that $\partial_{2,3}$ is a group morphism, we have 
$$u_{3}^{\varphi_{B}}=\partial_{2,3}(\sigma)^{-1}\ast u_{3}^{\varphi_{B}}\ast E(\sigma)=\partial_{2,3}(\sigma)^{-1}\ast E(\sigma).$$

Therefore, 
\begin{equation}
\label{G3V}
\partial_{2,3}(\sigma)= E(\sigma).
\end{equation}

Similarly, 
\begin{equation}
\label{G31V}
\partial_{2,3}(\sigma^{-1})=E(\sigma^{-1})\ast u_{3}^{\varphi_{B}}=u_{3}^{\varphi_B}\ast E(\sigma)= E(\sigma^{-1}),
\end{equation}
where $E(\sigma^{-1})=\sigma^{-1}\ot \varepsilon_{H}$.

Then, if $(B,\varphi_{B})$ is a left $H$-module algebra and $H$ is cocommutative,  by (\ref{G2V}), the second coboundary morphism of the cosimplicial complex $Reg_{\varphi_{B}}(H^{\ot \bullet}, B)$ admits the following form:
\begin{equation}
\label{G2V1}
D^{2}_{\varphi_{B}}(\sigma)=\partial_{2,0}(\sigma)\ast
\partial_{2,1}(\sigma^{-1})\ast \partial_{2,2}(\sigma)\ast E(\sigma^{-1}).
\end{equation}
}
\end{apart}

\begin{apart}
\label{cohlogy2}
{\rm 
Let $H$ be a cocommutative weak Hopf algebra. If $(A, \varphi_{A})$ is  a commutative left $H$-module
algebra, $(Reg_{\varphi_{A}}(H^{\ot \bullet},A),D^{\bullet}_{\varphi_{A}})$ gives
the Sweedler cohomology of $H$ in $(A,\varphi_{A})$. Therefore,  the
kth group will be defined by
$$\displaystyle {\mathcal H}_{\varphi_{A}}^k (H,A)={\frac{Ker(D^{k}_{\varphi_{A}})}{Im(D^{k-1}_{\varphi_{A}})}}$$ for
$k\geq 1$. 

The normalized cochain subcomplex  of $A$, denoted by  
$(Reg_{\varphi_{A}}^{+}(H^{\ot \bullet},B),D^{\bullet +}_{\varphi_{A}})$, is
defined by the groups $Reg_{\varphi_{A}}^{+}(H^{\ot (k+1)}, A)$, $Reg_{\varphi_{A}}^{+}(H_{L}, A)$
with $D^{k +}_{\varphi_{A}}$ the restriction of $D^{k}_{\varphi_{A}}$
to $Reg_{\varphi_{A}}^{+}(H^{\ot \bullet}, A)$.

We have that $(Reg_{\varphi_{A}}^{+}(H^{\ot \bullet}, A), D^{\bullet
+}_{\varphi_{A}})$,  is a subcomplex of
$(Reg_{\varphi_{A}}(H^{\ot \bullet},A),D^{\bullet}_{\varphi_{A}})$ and 
the injection map between $(Reg_{\varphi_{A}}^{+}(H^{\ot \bullet}, A), D^{\bullet
+}_{\varphi_{A}})$ and 
$(Reg_{\varphi_{A}}(H^{\ot \bullet},A),D^{\bullet}_{\varphi_{A}})$ induces an isomorphism of cohomology $$I_{\varphi_{A}}^{k}:\displaystyle {\mathcal H}_{\varphi_{A}}^k (H,A)\rightarrow {\mathcal H}_{\varphi_{A}}^{k+} (H,A).$$ 

}
\end{apart}

\begin{apart}
\label{cohlogy3}
{\rm  Assume that $H$ is a  weak Hopf algebra,
let $(B,\varphi_{B})$ be a left weak $H$-module algebra and let $\sigma,
\tau\in Reg_{\varphi_{B}}^{+}(H^{\ot 2},B)$ satisfying the twisted
condition (\ref{t-sigma}) and the 2-cocycle condition
(\ref{c-sigma}). Then by Theorem \ref{equiv-hh-al},   $(B\ot H, \mu_{B\ot_{\varphi_{B}}^{\sigma}H})$ and $(B\ot H, \mu_{B\ot_{\varphi_{B}}^{\tau}H})$ are equivalent if, and only if, there exists $h\in Reg_{\varphi_{B}}^{+}(H,B)$ satisfying (\ref{cbch}) and (\ref{n3}). Then, if $H$ is cocommutative,  by \cite[Corollary 4.8, Theorem 4.9]{nmra5}, $(B\ot H, \mu_{B\ot_{\varphi_{B}}^{\sigma}H})$ and $(B\ot H, \mu_{B\ot_{\varphi_{B}}^{\tau}H})$ are equivalent if, and only if, there exists $h\in Reg_{\varphi_{B}}^{+}(H,B)$ such that the equalities (\ref{cbch}) and 
\begin{equation}
\label{cohomologous-1} \sigma\ast \partial_{1,1}(h)=\partial_{1,0}(h)\ast \partial_{1,2}(h) \ast \tau,
\end{equation}
hold. Note that the equality (\ref{cbch}) is always true if $B$
is commutative and $H$ is cocommutative. Then, under these conditions, if $(B,\varphi_{B})$
is a  left $H$-module algebra, the equivalence between  two weak crossed products 
$(B\ot H, \mu_{B\ot_{\varphi_{B}}^{\sigma}H})$ and $(B\ot H, \mu_{B\ot_{\varphi_{B}}^{\tau}H})$ is determined by the existence of $h$ in $Reg_{\varphi_{B}}^{+}(H,B)$
satifying the equality (\ref{cohomologous-1}). In this case, if $\sigma$ and $\tau\in Reg_{\varphi_{B}}^{+}(H^{\ot 2},B )$,  (\ref{cohomologous-1}) is equivalent to say that 
$$\sigma\ast \tau^{-1}\in Im(D^{1+}_{\varphi_{B}}),$$ 
i.e., $[\sigma]=[\tau]$ in ${\mathcal H}_{\varphi_{B}}^{2+} (H,B)$.
}
\end{apart}

\begin{apart}
\label{cohlogy4}
{\rm Let  $H$ be a  weak Hopf algebra, let $(B,\varphi_{B})$ be a left weak $H$-module algebra and let $\sigma \in Reg_{\varphi_{B}}(H^{\ot 2},B)$. Then, using the coface operators, it is an easy exercise to prove that $\sigma$ satisfy the cocycle condition (\ref{c-sigma}) if and only if 
\begin{equation}
\label{deric-coc}
\partial_{2,0}(\sigma)\ast \partial_{2,2}(\sigma)=\partial_{2,3}(\sigma)\ast \partial_{2,1}(\sigma)
\end{equation}
holds. Then, by (\ref{G2V}), we have that $\sigma$ satisfy the cocycle condition (\ref{c-sigma}) if and only if $\sigma$ satisfies the equality
\begin{equation}
\label{deric-coc1}
\partial_{2,0}(\sigma)\ast \partial_{2,2}(\sigma)=E(\sigma)\ast \partial_{2,1}(\sigma).
\end{equation}
}
\end{apart}

\begin{defin}
{\rm Let  $H$ be a  cocommutative weak Hopf algebra, let $(B,\varphi_{B})$ be a left weak $H$-module algebra and let $\sigma \in Reg_{\varphi_{B}}(H^{\ot 2},B)$.  We define the pre-obstruction of $\sigma$ as the morphism $w_{\sigma}:H^{\ot 3}\rightarrow B$, where 
$$
w_{\sigma}=\partial_{2,0}(\sigma)\ast \partial_{2,2}(\sigma)\ast \partial_{2,1}(\sigma)^{-1}\ast \partial_{2,3}(\sigma)^{-1}.
$$

The using that $\partial_{2,1}(\sigma)^{-1}=\partial_{2,1}(\sigma^{-1})$ and $\partial_{2,3}(\sigma)^{-1}=\partial_{2,3}(\sigma^{-1})$, by the previous considerations, we have that  $\sigma$ satisfies the cocycle condition (\ref{c-sigma}) if and only if  $w_{\sigma}=u_{3}^{\varphi_{B}}$. Also, note that by (\ref{G31V}), we have 
$$
w_{\sigma}=\partial_{2,0}(\sigma)\ast \partial_{2,2}(\sigma)\ast \partial_{2,1}(\sigma^{-1})\ast E(\sigma^{-1}).
$$

Note that $\omega_{\sigma}\in Reg_{\varphi_{B}}(H^{\ot 3},B)$ and it is easy to show that 
\begin{equation}
\label{WSR}
\omega_{\sigma}=S_{\sigma}\ast R_{\sigma},
\end{equation}
where 
$$S_{\sigma}=\partial_{2,0}(\sigma)\ast \partial_{2,2}(\sigma)=\mu_{B}\co (B\ot \sigma)\co (P_{\varphi_{B}}\ot H)\co (H\ot F_{\sigma})$$
and 
$$R_{\sigma}=\partial_{2,1}(\sigma^{-1})\ast E(\sigma^{-1})=\mu_{B}\co (\sigma^{-1}\ot B)\co (H\ot c_{B,H})\co (G_{\sigma^{-1}}\ot H),$$
are morphisms in $Reg_{\varphi_{B}}(H^{\ot 3}, B)$, where 
$$S_{\sigma}^{-1}=\mu_{B}\co (\sigma^{-1}\ot \varphi_{B})\co (H\ot c_{H,H}\ot B)\co (\delta_{H}\ot G_{\sigma^{-1}})$$
and 
$$R_{\sigma}^{-1}=\mu_{B}\co (B\ot \sigma)\co (F_{\sigma}\ot H).$$
}
\end{defin}

\begin{prop}
\label{34w}
Let  $H$ be a  cocommutative weak Hopf algebra, let $(B,\varphi_{B})$ be a left weak $H$-module algebra and let $\sigma \in Reg_{\varphi_{B}}(H^{\ot 2},B)$. Let $\omega_{\sigma}$ be the pre-obstruction of $\sigma$. Then 
\begin{equation}
\label{34w1}
\partial_{3,4}(\omega_{\sigma})=\omega_{\sigma}\otimes \varepsilon_{H}.
\end{equation}
\end{prop}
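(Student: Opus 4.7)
My plan is to mimic, one degree higher, the passage from (\ref{G2V}) to (\ref{G3V}) that established $\partial_{2,3}(\sigma)=E(\sigma)$. The argument has three stages: register that $\omega_{\sigma}$ is convolution invertible, promote the identity (\ref{G2V}) one step up the cosimplicial complex, and then formally cancel.

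First I would observe that $\omega_{\sigma}\in Reg_{\varphi_{B}}(H^{\ot 3},B)$, as is stated just before Proposition \ref{34w}. This follows at once from the factorisation $\omega_{\sigma}=S_{\sigma}\ast R_{\sigma}$ together with the explicit inverses $S_{\sigma}^{-1}$ and $R_{\sigma}^{-1}$ recorded there; convolution of regular morphisms is regular, with $\omega_{\sigma}^{-1}=R_{\sigma}^{-1}\ast S_{\sigma}^{-1}$, and the four identities (\ref{s1}), (\ref{s2}) hold for $\omega_{\sigma}$.

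Second, I would establish (or invoke at one degree higher) the degree-three analog of (\ref{G2V}), namely that for an arbitrary morphism $\omega:H^{\ot 3}\rightarrow B$ one has
$$
(\omega\ot \varepsilon_{H})\ast u_{4}^{\varphi_{B}}=u_{4}^{\varphi_{B}}\ast (\omega\ot \varepsilon_{H})=\partial_{3,4}(\omega).
$$
The proof is the same direct calculation used in \cite[Proposition 5.5]{Gu2-Val}: unfold the convolution product, use $u_{4}^{\varphi_{B}}=u_{1}^{\varphi_{B}}\co m_{H}^{\ot 4}$ together with condition (b3) of Definition \ref{def}, and invoke the absorption relation (\ref{B6}) to force the last $H$-factor through $\Pi_{H}^{L}$ before meeting $\mu_{H}$; the counit on the fourth factor of $\omega\ot \varepsilon_{H}$ collapses the extra copy of $H$ so that what remains is exactly $\omega\co (H^{\ot 2}\ot (\mu_{H}\co (H\ot \Pi_{H}^{L})))=\partial_{3,4}(\omega)$.

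Third, the verbatim analog of the argument after (\ref{G2V}) closes the proof: since $\partial_{3,4}$ is a group morphism of $Reg_{\varphi_{B}}(H^{\ot \bullet},B)$ whose identity is $u_{4}^{\varphi_{B}}$, applying the previous paragraph with $\omega=\omega_{\sigma}$ gives
$$
\partial_{3,4}(\omega_{\sigma})^{-1}\ast (\omega_{\sigma}\ot \varepsilon_{H})=\partial_{3,4}(\omega_{\sigma})^{-1}\ast u_{4}^{\varphi_{B}}\ast (\omega_{\sigma}\ot \varepsilon_{H})=\partial_{3,4}(\omega_{\sigma})^{-1}\ast \partial_{3,4}(\omega_{\sigma})=u_{4}^{\varphi_{B}},
$$
so that $\omega_{\sigma}\ot \varepsilon_{H}=\partial_{3,4}(\omega_{\sigma})\ast u_{4}^{\varphi_{B}}=\partial_{3,4}(\omega_{\sigma})$, which is (\ref{34w1}). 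The one delicate point is the degree-three version of the key identity in the second stage; if \cite[Proposition 5.5]{Gu2-Val} is already stated in arbitrary degree, no further work is needed, otherwise its proof transcribes without change, the only bookkeeping being the increased number of applications of the coassociativity of $\delta_{H}$ and of (\ref{vp-n}).
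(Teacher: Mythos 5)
Your route is genuinely different from the paper's: the paper never proves a degree-three analogue of (\ref{G2V}); instead it uses the decomposition $\omega_{\sigma}=S_{\sigma}\ast R_{\sigma}$ of (\ref{WSR}), computes $\partial_{3,4}(S_{\sigma})$ and $\partial_{3,4}(R_{\sigma})$ separately by explicit manipulations of $F_{\sigma}$ and $G_{\sigma^{-1}}$ that reduce to the degree-two identity (\ref{G3V}) applied to $\sigma$ and $\sigma^{-1}$, and only then invokes the group-morphism property of $\partial_{3,4}$. Your second stage is also overstated: the identity $(\omega\ot \varepsilon_{H})\ast u_{4}^{\varphi_{B}}=u_{4}^{\varphi_{B}}\ast(\omega\ot \varepsilon_{H})=\partial_{3,4}(\omega)$ cannot hold for an \emph{arbitrary} $\omega:H^{\ot 3}\rightarrow B$; composing with $H^{\ot 3}\ot \eta_{H}$ turns the left side into $\omega\ast u_{3}^{\varphi_{B}}$ and the right side into $\omega$, so you need at least $\omega\ast u_{3}^{\varphi_{B}}=\omega$. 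That restriction is harmless for $\omega_{\sigma}$, which is regular, but it should be stated.

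The genuine gap is the final cancellation. From $\partial_{3,4}(\omega_{\sigma})^{-1}\ast(\omega_{\sigma}\ot \varepsilon_{H})=u_{4}^{\varphi_{B}}$ you multiply on the left by $\partial_{3,4}(\omega_{\sigma})$ and read off $\omega_{\sigma}\ot \varepsilon_{H}$ on the left-hand side; but what that multiplication actually produces is $u_{4}^{\varphi_{B}}\ast(\omega_{\sigma}\ot \varepsilon_{H})$, which by your own stage-two identity equals $\partial_{3,4}(\omega_{\sigma})$ --- so the manipulation returns the tautology $\partial_{3,4}(\omega_{\sigma})=\partial_{3,4}(\omega_{\sigma})$. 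To conclude $\omega_{\sigma}\ot \varepsilon_{H}=\partial_{3,4}(\omega_{\sigma})$ you would need to know in advance that $u_{4}^{\varphi_{B}}\ast(\omega_{\sigma}\ot \varepsilon_{H})=\omega_{\sigma}\ot \varepsilon_{H}$, i.e.\ that $\omega_{\sigma}\ot \varepsilon_{H}$ is absorbed by the idempotent $u_{4}^{\varphi_{B}}$; but its natural local unit in the convolution monoid is $(\omega_{\sigma}\ot \varepsilon_{H})\ast(\omega_{\sigma}^{-1}\ot \varepsilon_{H})=u_{3}^{\varphi_{B}}\ot \varepsilon_{H}$, and for a genuinely weak $H$ (a groupoid algebra with more than one object, say) $u_{3}^{\varphi_{B}}\ot \varepsilon_{H}$ differs from $u_{4}^{\varphi_{B}}=u_{1}^{\varphi_{B}}\co m_{H}^{\ot 4}$. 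This is exactly the delicate point in the paper's own passage from (\ref{G2V}) to (\ref{G3V}), so you have reproduced a step that is already fragile in the source; but as a self-contained argument it does not close, and the equality has to be attacked (as the paper does) by computing $\partial_{3,4}$ on the explicit factors $S_{\sigma}$ and $R_{\sigma}$ rather than by formal cancellation inside $Reg_{\varphi_{B}}(H^{\ot 4},B)$.
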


\begin{proof}
Note that by (\ref{WSR}) we know that 
$\omega_{\sigma}=S_{\sigma}\ast R_{\sigma}$. Then, if we prove that 
$$\partial_{3,4}(S_{\sigma})=S_{\sigma}\otimes \varepsilon_{H}, \;\;\; \partial_{3,4}(R_{\sigma})=R_{\sigma}\otimes \varepsilon_{H}$$
hold, we obtain (\ref{34w1}) since
$$\partial_{3,4}(\omega_{\sigma})=\partial_{3,4}(S_{\sigma}\ast R_{\sigma})=\partial_{3,4}(S_{\sigma})\ast \partial_{3,4}(R_{\sigma})= (S_{\sigma}\otimes \varepsilon_{H})\ast  (R_{\sigma}\otimes \varepsilon_{H})= (S_{\sigma}\ast R_{\sigma})\ot \varepsilon_{H}=\omega_{\sigma}\otimes \varepsilon_{H}.$$

First note that  
\begin{itemize}
\item[ ]$\hspace{0.38cm}\partial_{3,4}(S_{\sigma})$

\item[ ]$= \mu_{B}\co (B\ot \sigma)\co (P_{\varphi_{B}}\ot H)\co (H\ot ((\sigma\ot \mu_{H})\co (H\ot c_{H,H}\ot H)\co (\delta_{H}\ot (((\mu_{H}\co (H\ot \Pi_{H}^{L}))\ot (\mu_{H}$
\item[ ]$\hspace{0.38cm}\co  (H\ot \Pi_{H}^{L})))\co \delta_{H^{\ot 2}})))) $ {\scriptsize ({\blue by  (i) of \cite[Proposition 2.6]{nmra5}, (a1) of Definition \ref{wha} and the naturality of $c$})}

\item[ ]$= \mu_{B}\co (B\ot \partial_{3,2}(\partial_{2,3}(\sigma)))\co ((P_{\varphi_{B}}\co (H\ot \partial_{2,3}(\sigma)))\ot H\ot H\ot H)\co (H\ot 
 \delta_{H^{\ot 3}})$ {\scriptsize ({\blue by the naturality of $c$}}
\item[ ]$\hspace{0.38cm}${\scriptsize {\blue and the associativity of $\mu_{H}$})}

\item[ ]$=S_{\sigma}\otimes \varepsilon_{H} $ {\scriptsize ({\blue by (\ref{G3V}) and the naturality of $c$}).}
\end{itemize}

Finally, $\partial_{3,4}(R_{\sigma})=R_{\sigma}\otimes \varepsilon_{H}$ holds because:

\begin{itemize}
\item[ ]$\hspace{0.38cm}\partial_{3,4}(R_{\sigma})$

\item[ ]$= \mu_{B}\co (\partial_{2,3}(\sigma^{-1})\ot B)\co (H\ot H\ot c_{B,H})\co (H\ot c_{B,H}\ot H)\co (G_{\sigma^{-1}}\ot H\ot H)$ {\scriptsize ({\blue by the naturality of $c$}}

\item[ ]$= R_{\sigma}\otimes \varepsilon_{H}$  {\scriptsize ({\blue by  the naturality of $c$ and (\ref{G3V}) for $\sigma^{-1}$}).}

\end{itemize}

\end{proof}

\begin{prop}
\label{34w}
Let  $H$ be a cocommutative weak Hopf algebra, let $(B,\varphi_{B})$ be a left weak $H$-module algebra and let $\sigma \in Reg_{\varphi_{B}}(H^{\ot 2},B)$ satisfying the twisted condition (\ref{t-sigma}). Then, the pre-obstruction of $\sigma$ factors through the center of $B$.

\end{prop}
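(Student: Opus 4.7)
The plan is to apply the criterion of Proposition \ref{varphi-ic} to $\omega_\sigma$. First, observe that $\omega_\sigma$ lies in $Reg_{\varphi_B}(H^{\ot 3}, B)$: indeed, each coface operator $\partial_{2,i}$ is a group morphism into $Reg_{\varphi_B}(H^{\ot 3}, B)$, and by (\ref{G31V}) the morphism $E(\sigma^{-1})$ coincides with $\partial_{2,3}(\sigma^{-1})$, so $\omega_\sigma$ is a convolution product of four regular morphisms. Next, since $\sigma\in Reg_{\varphi_B}(H^{\ot 2}, B)$ satisfies the twisted condition, Proposition \ref{varphi-i} ensures that $\varphi_B$ is $\varphi_B$-invertible, and then by the inductive Proposition preceding \ref{varphi-ic} the morphism $\varphi_B^{\ot 3}$ is also $\varphi_B^{\ot 3}$-invertible. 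Hence, to conclude that $\omega_\sigma$ factors through the center of $B$, by Proposition \ref{varphi-ic} it will suffice to verify the identity
$$
\overline{\omega_\sigma}\wedge \varphi_B^{\ot 3} = \overline{\omega_\sigma}^{op}\wedge \varphi_B^{\ot 3}.
$$

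To establish this equality, I would decompose $\omega_\sigma = S_\sigma \ast R_\sigma$ as in (\ref{WSR}), and use (i) and (iii) of Proposition \ref{prp} to rewrite
$$
\overline{\omega_\sigma}\wedge \varphi_B^{\ot 3} = \overline{S_\sigma}\wedge \overline{R_\sigma}\wedge \varphi_B^{\ot 3},\qquad \overline{\omega_\sigma}^{op}\wedge \varphi_B^{\ot 3} = \overline{R_\sigma}^{op}\wedge \overline{S_\sigma}^{op}\wedge \varphi_B^{\ot 3}.
$$
The heart of the proof is then to propagate $\varphi_B^{\ot 3}$ successively through each of these factors using the Hom-product form of the twisted condition provided by Proposition \ref{w-t}, namely $\overline{\sigma}^{op}\wedge \varphi_B^{\ot 2} = \overline{\sigma}\wedge (\varphi_B\co (\mu_H\ot B))$, combined with items (iv)--(viii) of Proposition \ref{prp}, the cocommutativity of $H^{\ot n}$, associativity of $\wedge$, and the fact that $u_n^{\varphi_B}$ factors through the center of $B$ (so that $\overline{u_n^{\varphi_B}}=\overline{u_n^{\varphi_B}}^{op}$, cf. Remark \ref{rmexc}).

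The main obstacle will be that $R_\sigma$ is assembled from $\sigma^{-1}$, whereas the twisted condition is stated for $\sigma$. To overcome this I would first derive a dual twisted identity for $\sigma^{-1}$: starting from $\sigma\ast\sigma^{-1}=u_2^{\varphi_B}$, using that $u_2^{\varphi_B}$ factors through $Z(B)$, and applying Proposition \ref{varphi-ic1} in the style of the computation performed in the proof of Proposition \ref{talsig}, one obtains
$$
\overline{\sigma^{-1}}^{op}\wedge (\varphi_B\co(\mu_H\ot B)) \;=\; \overline{\sigma^{-1}}\wedge \varphi_B^{\ot 2}.
$$
Once both twisted-type identities are available, the passage from $\overline{\omega_\sigma}\wedge\varphi_B^{\ot 3}$ to $\overline{\omega_\sigma}^{op}\wedge\varphi_B^{\ot 3}$ becomes a sequence of symmetric rewrites on the four factors of $\omega_\sigma$, and the equality follows. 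Invoking Proposition \ref{varphi-ic} then yields that $\omega_\sigma$ factors through the center of $B$, as desired.
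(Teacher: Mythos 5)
Your proposal is correct and follows essentially the same route as the paper: reduce via the centrality criterion of Proposition \ref{varphi-ic} to the identity $\overline{\omega_{\sigma}}\wedge \varphi_{B}^{\ot 3}=\overline{\omega_{\sigma}}^{op}\wedge \varphi_{B}^{\ot 3}$, split $\omega_{\sigma}$ into the block built from $\sigma$ and the block built from $\sigma^{-1}$, and push $\varphi_{B}^{\ot 3}$ through each factor using Proposition \ref{w-t} together with the items of Proposition \ref{prp}. The only (immaterial) difference is that you derive the inverse twisted identity for $\sigma^{-1}$ first and then lift it through the cofaces, whereas the paper lifts the forward identity to $H^{\ot 3}$ (equations (\ref{wegd-1})--(\ref{wegd-2})) and then inverts by cancellation (equation (\ref{apo})); your cancellation argument via the centrality of $u_{2}^{\varphi_{B}}$ is valid.
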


\begin{proof} We will use Proposition \ref{varphi-ic} to obtain that $\omega_{\sigma}$ factors through the center of $B$. To prove that 
$$\overline{\omega_{\sigma}}\wedge \varphi_{B}^{\ot 3}=\overline{\omega_{\sigma}}^{op}\wedge \varphi_{B}^{\ot 3}$$
we first see 
\begin{equation}
\label{wegd-1}
\overline{\partial_{2,0}(\sigma)\ast \partial_{2,2}(\sigma)}^{op}\wedge \varphi_{B}^{\ot 3}=\overline{\partial_{2,0}(\sigma)\ast \partial_{2,2}(\sigma)}\wedge (\varphi_{B}\co (m_{H}^{\ot 3}\ot B))
\end{equation}
and 
\begin{equation}
\label{wegd-2}
\overline{\partial_{2,3}(\sigma)\ast \partial_{2,1}(\sigma)}^{op}\wedge \varphi_{B}^{\ot 3}=\overline{\partial_{2,3}(\sigma)\ast \partial_{2,1}(\sigma)}\wedge (\varphi_{B}\co (m_{H}^{\ot 3}\ot B)).
\end{equation}

Indeed: 
\begin{itemize}
\item[ ]$\hspace{0.38cm}\overline{\partial_{2,0}(\sigma)\ast \partial_{2,2}(\sigma)}^{op}\wedge \varphi_{B}^{\ot 3} $

\item[ ]$=\overline{\partial_{2,2}(\sigma)}^{op}\wedge  \overline{\partial_{2,0}(\sigma)}^{op}\wedge \varphi_{B}^{\ot 3}  $ {\scriptsize ({\blue by (iii) of Proposition \ref{prp}})}

\item[ ]$=\overline{\partial_{2,2}(\sigma)}^{op}\wedge (\varphi_{B}\co (H\ot (\overline{\sigma}^{op}\wedge \varphi_{B}^{\ot 2})))$  {\scriptsize ({\blue by  the naturality of $c$, (b1) of Definition \ref{def} and the cocommutativity}}
\item[ ]$\hspace{0.38cm}${\scriptsize {\blue of $\delta_{H}$})}

\item[ ]$=\overline{\partial_{2,2}(\sigma)}^{op}\wedge (\varphi_{B}\co (H\ot (\overline{\sigma}\wedge (\varphi_{B}\co (\mu_{H}\ot B)))))$ {\scriptsize ({\blue by (\ref{t-sigmac})})}

\item[ ]$=\overline{\partial_{2,2}(\sigma)}^{op}\wedge \overline{\partial_{2,0}(\sigma)}\wedge (\varphi_{B}\co (H\ot (\varphi_{B}\co (\mu_{H}\ot B))))$  {\scriptsize ({\blue by  (vii) of Proposition \ref{prp}})}

\item[ ]$=\overline{\partial_{2,0}(\sigma)}\wedge \overline{\partial_{2,2}(\sigma)}^{op}\wedge (\varphi_{B}^{\ot 2}\co (H\ot \mu_{H}\ot B))$ {\scriptsize ({\blue by  (iv) of Proposition \ref{prp}})}

\item[ ]$=\overline{\partial_{2,0}(\sigma)}\wedge \overline{\partial_{2,2}(\sigma)}\wedge (\varphi_{B}\co (m_{H}^{\ot 3}\ot B)) $  {\scriptsize ({\blue by (a1) of Definition \ref{wha}, the naturality of $c$ and (\ref{t-sigmac})})}

\item[ ]$=\overline{\partial_{2,0}(\sigma)\ast \partial_{2,2}(\sigma)}\wedge (\varphi_{B}\co (m_{H}^{\ot 3}\ot B))  $  {\scriptsize ({\blue by  the naturality of $c$})} 

\end{itemize}
and 
\begin{itemize}
\item[ ]$\hspace{0.38cm} \overline{\partial_{2,3}(\sigma)\ast \partial_{2,1}(\sigma)}^{op}\wedge \varphi_{B}^{\ot 3}$

\item[ ]$=\overline{\partial_{2,1}(\sigma)}^{op} \wedge \overline{\partial_{2,3}(\sigma)}^{op}\wedge \varphi_{B}^{\ot 3} $ {\scriptsize ({\blue by (iii) of Proposition \ref{prp}})}

\item[ ]$=\overline{\partial_{2,1}(\sigma)}^{op} \wedge ((\overline{\sigma}^{op}\wedge \varphi_{B}^{\ot 2})\co (H\ot H\ot \varphi_{B}))$  {\scriptsize ({\blue by  the naturality of $c$, the counit properties and (\ref{G3V})})}

\item[ ]$= \overline{\partial_{2,1}(\sigma)}^{op} \wedge ((\overline{\sigma}\wedge (\varphi_{B}\co (\mu_{H}\ot B)))\co (H\ot H\ot \varphi_{B})) $ {\scriptsize ({\blue by (\ref{t-sigmac})})}

\item[ ]$=\overline{\partial_{2,1}(\sigma)}^{op} \wedge  \overline{\partial_{2,3}(\sigma)}\wedge (\varphi_{B}\co (\mu_{H}\ot \varphi_{B}))$  {\scriptsize ({\blue by the naturality of $c$ and the counit properties})}

\item[ ]$= \overline{\partial_{2,3}(\sigma)}\wedge \overline{\partial_{2,1}(\sigma)}^{op} \wedge (\varphi_{B}\co (\mu_{H}\ot \varphi_{B})) $ {\scriptsize ({\blue by (iv) of Proposition \ref{prp}})}

\item[ ]$= \overline{\partial_{2,3}(\sigma)}\wedge ((\overline{\sigma}^{op}\wedge \varphi_{B}^{\ot 2})\co (\mu_{H}\ot H\ot B))$  {\scriptsize ({\blue by  the naturality of $c$ and (a1) of Definition \ref{wha}})}

\item[ ]$=\overline{\partial_{2,3}(\sigma)}\wedge ((\overline{\sigma}\wedge (\varphi_{B}\co (\mu_{H}\ot B)))\co (\mu_{H}\ot H\ot B)) $  {\scriptsize ({\blue by (\ref{t-sigmac})})} 

\item[ ]$=\overline{\partial_{2,3}(\sigma)}\wedge \overline{\partial_{2,1}(\sigma)}\wedge (\varphi_{B}\co (m_{H}^{\ot 3}\ot B)) $  {\scriptsize ({\blue by  the naturality of $c$ and (a1) of Definition \ref{wha}})} 

\item[ ]$=\overline{\partial_{2,3}(\sigma)\ast \partial_{2,1}(\sigma)}\wedge (\varphi_{B}\co (m_{H}^{\ot 3}\ot B))$  {\scriptsize ({\blue by (i) of Proposition \ref{prp}}).} 

\end{itemize}

Also, 
\begin{itemize}
\item[ ]$\hspace{0.38cm} \overline{\partial_{2,3}(\sigma)\ast \partial_{2,1}(\sigma)}\wedge \overline{\partial_{2,1}(\sigma^{-1})\ast \partial_{2,3}(\sigma^{-1})}^{op}\wedge (\varphi_{B}\co (m_{H}^{\ot 3}\ot B))$

\item[ ]$= \overline{\partial_{2,1}(\sigma^{-1})\ast \partial_{2,3}(\sigma^{-1})}^{op}\wedge \overline{\partial_{2,3}(\sigma)\ast \partial_{2,1}(\sigma)}\wedge (\varphi_{B}\co (m_{H}^{\ot 3}\ot B))$ 
{\scriptsize ({\blue by (iv) of Proposition \ref{prp}})}

\item[ ]$=\overline{\partial_{2,1}(\sigma^{-1})\ast \partial_{2,3}(\sigma^{-1})}^{op}\wedge \overline{\partial_{2,3}(\sigma)\ast \partial_{2,1}(\sigma)}^{op}\wedge \varphi_{B}^{\ot 3}$ 
{\scriptsize ({\blue by \ref{wegd-2}})}

\item[ ]$=\overline{\partial_{2,3}(\sigma)\ast \partial_{2,1}(\sigma)\ast \partial_{2,1}(\sigma^{-1})\ast \partial_{2,3}(\sigma^{-1})}^{op}\wedge \varphi_{B}^{\ot 3} $ {\scriptsize ({\blue by (iii) of Proposition \ref{prp}})}

\item[ ]$=\overline{u_{3}^{\varphi_{B}}}^{op}\wedge  \varphi_{B}^{\ot 3}$ {\scriptsize ({\blue by the property of group morphism for $\partial_{2,1}$ and $\partial_{2,3}$})}

\item[ ]$=\varphi_{B}^{\ot 3} $ {\scriptsize ({\blue by (v) of Proposition \ref{prp}})}

\end{itemize}
and, as a consequence, the following identity holds:
\begin{equation}
\label{apo}
\overline{\partial_{2,1}(\sigma^{-1})\ast \partial_{2,3}(\sigma^{-1})}^{op}\wedge (\varphi_{B}\co (m_{H}^{\ot 3}\ot B))=\overline{\partial_{2,1}(\sigma^{-1})\ast \partial_{2,3}(\sigma^{-1})}\wedge \varphi_{B}^{\ot 3}.
\end{equation}

Therefore, 
\begin{itemize}
\item[ ]$\hspace{0.38cm} \overline{\omega_{\sigma}}^{op}\wedge \varphi_{B}^{\ot 3}$
\item[ ]$=\overline{\partial_{2,1}(\sigma^{-1})\ast \partial_{2,3}(\sigma^{-1})}^{op}\wedge \overline{\partial_{2,0}(\sigma)\ast \partial_{2,2}(\sigma)}^{op}\wedge \varphi_{B}^{\ot 3}  $ {\scriptsize ({\blue by (iii) of Proposition  \ref{prp}})}
\item[ ]$=\overline{\partial_{2,1}(\sigma^{-1})\ast \partial_{2,3}(\sigma^{-1})}^{op}\wedge \overline{\partial_{2,0}(\sigma)\ast \partial_{2,2}(\sigma)}\wedge  (\varphi_{B}\co (m_{H}^{\ot 3}\ot B))$ {\scriptsize ({\blue by (\ref{wegd-1})})}
\item[ ]$=\overline{\partial_{2,0}(\sigma)\ast \partial_{2,2}(\sigma)}\wedge \overline{\partial_{2,1}(\sigma^{-1})\ast \partial_{2,3}(\sigma^{-1})}^{op}\wedge (\varphi_{B}\co (m_{H}^{\ot 3}\ot B)) $ {\scriptsize ({\blue by (iv) of Proposition  \ref{prp}})}
\item[ ]$=\overline{\partial_{2,0}(\sigma)\ast \partial_{2,2}(\sigma)}\wedge \overline{\partial_{2,1}(\sigma^{-1})\ast \partial_{2,3}(\sigma^{-1})}\wedge \varphi_{B}^{\ot 3}$ {\scriptsize ({\blue by (\ref{apo})})}
\item[ ]$=\overline{\omega_{\sigma}}\wedge \varphi_{B}^{\ot 3} $ {\scriptsize ({\blue by (i) of Proposition \ref{prp}})}
\end{itemize}
\end{proof}

\begin{defin}
{\rm Let  $H$ be a cocommutative weak Hopf algebra, let $(B,\varphi_{B})$ be a left weak $H$-module algebra and let $\sigma \in Reg_{\varphi_{B}}(H^{\ot 2},B)$ satisfying the twisted condition (\ref{t-sigma}). 
The obstruction of $\sigma$ is defined as the unique morphism $\theta_{\sigma}: H^{\ot 3}\rightarrow Z(B)$ such that $z_{B}\co \theta_{\sigma}=\omega_{\sigma}$, where $\omega_{\sigma}$ is the pre-obstruction of $\sigma$. 

Note that, by the previous proposition, we can assure that  $\theta_{\sigma}$ exists. Also, $\theta_{\sigma}\in Reg_{\varphi_{Z(B)}}(H^{\ot 3}, Z(B))$.

}
\end{defin}

\begin{teo}
\label{3cocy}
Let  $H$ be a cocommutative weak Hopf algebra, let $(B,\varphi_{B})$ be a left weak $H$-module algebra and let $\sigma \in Reg_{\varphi_{B}}(H^{\ot 2},B)$ satisfying the twisted condition (\ref{t-sigma}). Then, the pre-obstruction of $\sigma$ is a 3-cocycle, i.e., the following equality holds:
\begin{equation}
\label{3cs}
\partial_{3,0}(\omega_{\sigma})\ast \partial_{3,2}(\omega_{\sigma})\ast \partial_{3,4}(\omega_{\sigma})=
\partial_{3,1}(\omega_{\sigma})\ast \partial_{3,3}(\omega_{\sigma}).
\end{equation}

\end{teo}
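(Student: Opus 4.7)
The plan is to combine the centrality of $\omega_{\sigma}$ (established in the preceding proposition) with the group-morphism nature of the coface operators $\partial_{3,i}$ and a set of cosimplicial identities, so as to reduce the desired equality to a cancellation of matched factors. Because $\omega_{\sigma}$ factors through $Z(B)$, each $\partial_{3,i}(\omega_{\sigma})$ inherits this factorisation: for $i\in\{1,2,3,4\}$ the operators $\partial_{3,i}$ only reshuffle the arguments in $H^{\ot 4}$ via $\mu_H$, $\eta_H$ and $\Pi_H^L$, hence preserve centrality of the target; for $i=0$, the extra action of $\varphi_{B}$ from outside preserves centrality via Proposition \ref{Hm-alg}, since $\varphi_{B}\co(H\ot z_B)=z_B\co\varphi_{Z(B)}$. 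By Remark \ref{rmexc}, the five morphisms $\partial_{3,i}(\omega_{\sigma})$ therefore commute pairwise under $\ast$ with every morphism $H^{\ot 4}\to B$, and in particular with one another.

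Next, I would establish the standard cosimplicial identities $\partial_{3,i}\co \partial_{2,j} = \partial_{3,j+1}\co\partial_{2,i}$ for $0\le i\le j\le 3$. For interior indices these reduce to the associativity of $\mu_H$; the boundary case $i=0$ is handled by (b1) of Definition \ref{def} together with cocommutativity of $\delta_H$ (so that the $\varphi_B$ of $\partial_{2,0}$ absorbs correctly into the action on the next slot), and the cases where $j=3$ must absorb the factor $\mu_H\co(H\ot\Pi_H^L)$ appearing in $\partial_{2,3}$, which is dealt with via (\ref{mupi}), (\ref{d-pi1}) and (\ref{empi}).

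Having these identities in hand, I expand each $\partial_{3,i}(\omega_{\sigma})$ termwise, using that $\partial_{3,i}$ is a group morphism on $Reg_{\varphi_{B}}(H^{\ot 4},B)$:
$$\partial_{3,i}(\omega_{\sigma}) = \partial_{3,i}\partial_{2,0}(\sigma)\ast \partial_{3,i}\partial_{2,2}(\sigma)\ast \partial_{3,i}\partial_{2,1}(\sigma^{-1})\ast \partial_{3,i}\partial_{2,3}(\sigma^{-1}).$$
Substituting into both sides of (\ref{3cs}) yields two convolution products of twelve factors each of the form $\partial_{3,k}\partial_{2,l}(\sigma^{\pm 1})$. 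Using the centrality-based commutativity of the five $\partial_{3,i}(\omega_{\sigma})$ to rearrange, the cosimplicial identities identify each $\partial_{3,k}\partial_{2,l}(\sigma)$-term on one side with a matching $\partial_{3,k}\partial_{2,l}(\sigma)$-term on the other, and similarly for $\sigma^{-1}$. The resulting pairs $\sigma\ast\sigma^{-1}$ collapse via regularity to units, which by (\ref{idem-n}) and (\ref{u}) merge into $u_{4}^{\varphi_{B}}$, yielding equality.

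The main obstacle is twofold: first, verifying the cosimplicial identities in the boundary cases where $\Pi_H^L$ and $\varphi_B$ intervene, since the weak-Hopf framework forces additional factors compared to the classical Hopf situation; second, ensuring that each $\partial_{3,i}(\omega_{\sigma})$ is really central (so that the rearrangement is legitimate), which requires invoking Proposition \ref{Hm-alg} specifically for $\partial_{3,0}$. Once these compatibilities are checked, the cancellation itself is long but mechanical, and the identity (\ref{3cs}) follows.
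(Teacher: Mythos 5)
Your overall architecture (expand both sides of (\ref{3cs}) using that the $\partial_{3,i}$ are group morphisms, exploit centrality of $\omega_{\sigma}$, and cancel) is the paper's, and several of the auxiliary identities you list are exactly the ones used there ((\ref{3cs3}), (\ref{3cs4}), (\ref{3cs6}), (\ref{3cs8}), (\ref{3cs121}), (\ref{3cs122}), together with the treatment of $\partial_{2,3}$ via $E(\sigma)=\sigma\ot\varepsilon_{H}$ and (\ref{empi})). But there is a genuine gap at the heart of the argument: the ``standard cosimplicial identity'' for $i=j=0$, namely $\partial_{3,0}\co\partial_{2,0}=\partial_{3,1}\co\partial_{2,0}$, reads $\varphi_{B}\co(H\ot(\varphi_{B}\co(H\ot\sigma)))=\varphi_{B}\co(\mu_{H}\ot\sigma)$, i.e., associativity of the action on the image of $\sigma$. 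For a left weak $H$-module algebra this fails: (b3) of Definition \ref{def} gives associativity only against $u_{1}^{\varphi_{B}}$, and the whole content of the twisted condition is that $\varphi_{B}\co(H\ot\varphi_{B})$ and $\varphi_{B}\co(\mu_{H}\ot B)$ differ by conjugation by $\sigma$. Neither (b1) nor cocommutativity repairs this. The correct substitute is the commutation relation $\partial_{3,0}(\partial_{2,0}(\sigma))\ast\partial_{3,4}(\partial_{2,3}(\sigma))=\partial_{3,4}(\partial_{2,3}(\sigma))\ast\partial_{3,1}(\partial_{2,0}(\sigma))$ (equation (\ref{3cs12}) in the paper), whose proof is precisely an application of (\ref{t-sigma}); this is the one place the twisted hypothesis enters the cocycle verification, and your proposal never invokes it there.

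A second, related problem: the individual factors $\partial_{3,k}(\partial_{2,l}(\sigma^{\pm1}))$ are not central --- only the assembled blocks $\partial_{3,i}(\omega_{\sigma})$ with $i\geq 1$ are immediately so (for $i=0$ one would indeed need Proposition \ref{Hm-alg}, though the paper's ordering of factors avoids needing centrality of $\partial_{3,0}(\omega_{\sigma})$ at all). Hence ``rearranging and matching'' the twelve factors on each side is not legitimate in a noncommutative $B$. After pulling the central $\omega_{\sigma}$-blocks to the left, the remaining non-central $\sigma$-factors must be kept in their given order and disposed of by cancelling a common right-hand factor in the group $Reg_{\varphi_{B}}(H^{\ot 4},B)$, which is how the paper concludes; one also needs the non-formal identity (\ref{3cs10}), verified directly via $G_{\sigma^{-1}}$ and (\ref{empi}). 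With (\ref{3cs12}) and (\ref{3cs10}) added and the cancellation organized this way, your plan becomes the paper's proof; as written, it does not go through.
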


\begin{proof} In order to prove the theorem we will see some equalities. First of all observe that by the definition of the pre-obstruction $\omega_{\sigma}$ we have: 
\begin{equation}
\label{3cs1}
\partial_{2,0}(\sigma)\ast \partial_{2,2}(\sigma)=
\omega_{\sigma}\ast \partial_{2,3}(\sigma)\ast \partial_{2,1}(\sigma).
\end{equation}

Now using that $\partial_{3,2}$ is group morphism we have that 
\begin{equation}
\label{3cs2}
\partial_{3,2}(\partial_{2,0}(\sigma))\ast \partial_{3,2}(\partial_{2,2}(\sigma))=
\partial_{3,2}(\omega_{\sigma})\ast \partial_{3,2}(\partial_{2,3}(\sigma))\ast \partial_{3,2}(\partial_{2,1}(\sigma)).
\end{equation}

But observe that, by the associativity of $\mu_{H}$ and (\ref{G3V}), we have 
\begin{equation}
\label{3cs3}
\partial_{3,2}(\partial_{2,3}(\sigma))= \partial_{3,4}(\partial_{2,2}(\sigma)),
\end{equation}
\begin{equation}
\label{3cs31}
\partial_{3,3}(\partial_{2,2}(\sigma))= \partial_{3,2}(\partial_{2,2}(\sigma)),
\end{equation}
and, trivially, 
\begin{equation}
\label{3cs4}
\partial_{3,2}(\partial_{2,0}(\sigma))= \partial_{3,0}(\partial_{2,1}(\sigma)).
\end{equation}

Then, as a consequence of (\ref{3cs2}), (\ref{3cs3}) and (\ref{3cs4}) we obtain 
\begin{equation}
\label{3cs5}
\partial_{3,0}(\partial_{2,1}(\sigma))\ast \partial_{3,2}(\partial_{2,2}(\sigma))=
\partial_{3,2}(\omega_{\sigma})\ast \partial_{3,4}(\partial_{2,2}(\sigma))\ast \partial_{3,2}(\partial_{2,1}(\sigma)).
\end{equation}

On the other hand, by (\ref{G3V}), we have 
\begin{equation}
\label{3cs6}
\partial_{3,0}(\partial_{2,3}(\sigma))= \partial_{3,4}(\partial_{2,0}(\sigma)),
\end{equation}
and, trivially, 
\begin{equation}
\label{3cs8}
\partial_{3,3}(\partial_{2,0}(\sigma))= \partial_{3,0}(\partial_{2,2}(\sigma))
\end{equation}
holds.

Also, 
\begin{equation}
\label{3cs10}
\partial_{3,3}(\partial_{2,1}(\sigma^{-1}))\ast \partial_{3,3}(\partial_{2,3}(\sigma^{-1}))= \partial_{3,1}(\partial_{2,2}(\sigma^{-1}))\ast \partial_{3,4}(\partial_{2,3}(\sigma^{-1}))
\end{equation}
holds, because 
\begin{itemize}
\item[ ]$\hspace{0.38cm}\partial_{3,3}(\partial_{2,1}(\sigma^{-1}))\ast \partial_{3,3}(\partial_{2,3}(\sigma^{-1})) $
\item[ ]$=\partial_{3,3}(\partial_{2,1}(\sigma^{-1})\ast \partial_{2,3}(\sigma^{-1})) $ {\scriptsize ({\blue by the condition of group morphism for $\partial_{3,3}$})}
\item[ ]$=\mu_{B}\co (\sigma^{-1}\ot B)\co (H\ot c_{B,H})\co  (G_{\sigma^{-1}}\ot \mu_{H}) $ {\scriptsize ({\blue by (\ref{G31V}), the counit properties and the naturality of $c$})}
\item[ ]$=\partial_{3,1}(\partial_{2,2}(\sigma^{-1}))\ast \partial_{3,4}(\partial_{2,3}(\sigma^{-1}))  $ {\scriptsize ({\blue by (a1) of Definition \ref{wha}, naturality of $c$ and (\ref{empi})}).}
\end{itemize}

Then, as a consequence of (\ref{3cs10}), we have the identity
\begin{equation}
\label{3cs11}
\partial_{3,3}(\partial_{2,3}(\sigma))\ast \partial_{3,3}(\partial_{2,1}(\sigma))= \partial_{3,4}(\partial_{2,3}(\sigma))\ast \partial_{3,1}(\partial_{2,2}(\sigma))
\end{equation}
and using that $\partial_{3,3}$ is a group morphism, (\ref{3cs8}) and (\ref{3cs31}) we can assure that 
\begin{equation}
\label{3cs11i}
\partial_{3,0}(\partial_{2,2}(\sigma))\ast \partial_{3,2}(\partial_{2,2}(\sigma))=
\partial_{3,3}(\omega_{\sigma})\ast \partial_{3,4}(\partial_{2,3}(\sigma))\ast \partial_{3,1}(\partial_{2,2}(\sigma))
\end{equation}
holds.

Moreover, 
\begin{equation}
\label{3cs12}
\partial_{3,0}(\partial_{2,0}(\sigma))\ast \partial_{3,4}(\partial_{2,3}(\sigma))= \partial_{3,4}(\partial_{2,3}(\sigma))\ast \partial_{3,1}(\partial_{2,0}(\sigma))
\end{equation}
holds because 
\begin{itemize}
\item[ ]$\hspace{0.38cm}\partial_{3,0}(\partial_{2,0}(\sigma))\ast \partial_{3,4}(\partial_{2,3}(\sigma))$
\item[ ]$=\mu_{B}\co ((\varphi_{B}\co (H\ot \varphi_{B}))\ot \sigma)\co (H\ot H\ot c_{H,B}\ot H)\co 
 (H\ot H\ot H\ot c_{H,B})\co (\delta_{H^{\ot 2}}\ot ((B\ot \varepsilon_{H})\co F_{\sigma}))$ 
 \item[ ]$\hspace{0.38cm}${\scriptsize ({\blue by (\ref{empi}) and the naturality of $c$})}
\item[ ]$=\mu_{B}\co (B\ot \sigma)\ot (P_{\varphi_{B}}\ot H)\co (H\ot (P_{\varphi_{B}}\co (H\ot \sigma)))$ {\scriptsize ({\blue by the naturality of $c$ and (\ref{sigmaHB-3})})}
\item[ ]$=\mu_{B}\co (B\ot \varphi_{B})\co (F_{\sigma}\ot \sigma)$ {\scriptsize ({\blue by (\ref{t-sigma})})}
\item[ ]$=\mu_{B}\co (B\ot \varphi_{B})\co (F_{\sigma}\ot ((\varepsilon_{H}\ot B)\co G_{\sigma}))$ {\scriptsize ({\blue by (\ref{sigmaHB-31})})}
\item[ ]$=\partial_{3,4}(\partial_{2,3}(\sigma))\ast \partial_{3,1}(\partial_{2,0}(\sigma))$ {\scriptsize ({\blue by (\ref{G3V}), the naturality of $c$ and (\ref{empi})})}
\end{itemize}
and by (\ref{G3V}) and the associativity of $\mu_{H}$ we obtain the equalities 
\begin{equation}
\label{3cs121}
\partial_{3,1}(\partial_{2,3}(\sigma))= \partial_{3,4}(\partial_{2,1}(\sigma)), 
\end{equation}
\begin{equation}
\label{3cs122}
\partial_{3,1}(\partial_{2,1}(\sigma))= \partial_{3,2}(\partial_{2,1}(\sigma)).
\end{equation}

Finally, observe that, as $\omega_{\sigma}$ factors through the center of $B$, for all $i\in \{1,2,3,4\}$ and $\tau\in Reg_{\varphi_{B}}(H^{\ot 4},B)$, we have 
\begin{equation}
\label{com3cs}
\tau\ast \partial_{3,i}(\omega_{\sigma})=\partial_{3,i}(\omega_{\sigma})\ast \tau.
\end{equation}

Therefore, we conclude the proof by cancellation because in one hand
\begin{itemize}
\item[ ]$\hspace{0.38cm}  \partial_{3,0}(\partial_{2,0}(\sigma)\ast \partial_{2,2}(\sigma))\ast \partial_{3,2}(\partial_{2,2}(\sigma))$
\item[ ]$=\partial_{3,0}(\partial_{2,0}(\sigma))\ast \partial_{3,0}(\partial_{2,2}(\sigma))\ast \partial_{3,2}(\partial_{2,2}(\sigma)) $ {\scriptsize ({\blue by the condition of group morphism for $\partial_{3,0}$})}
\item[ ]$=\partial_{3,0}(\omega_{\sigma})\ast \partial_{3,0}(\partial_{2,3}(\sigma))\ast \partial_{3,0}(\partial_{2,1}(\sigma))\ast  \partial_{3,2}(\partial_{2,2}(\sigma))${\scriptsize ({\blue by the condition of group morphism for $\partial_{3,0}$ and}}
\item[ ]$\hspace{0.38cm}${\scriptsize {\blue (\ref{3cs1})})}
\item[ ]$=\partial_{3,0}(\omega_{\sigma})\ast \partial_{3,0}(\partial_{2,3}(\sigma))\ast \partial_{3,2}(\omega_{\sigma})\ast \partial_{3,4}(\partial_{2,2}(\sigma))\ast \partial_{3,2}(\partial_{2,1}(\sigma)) $ {\scriptsize ({\blue by (\ref{3cs5})})}
\item[ ]$=\partial_{3,0}(\omega_{\sigma})\ast \partial_{3,2}(\omega_{\sigma})\ast \partial_{3,0}(\partial_{2,3}(\sigma))\ast \partial_{3,4}(\partial_{2,2}(\sigma))\ast \partial_{3,2}(\partial_{2,1}(\sigma))  $ {\scriptsize ({\blue by (\ref{com3cs})})}
\item[ ]$=\partial_{3,0}(\omega_{\sigma})\ast \partial_{3,2}(\omega_{\sigma})\ast \partial_{3,4}(\partial_{2,0}(\sigma))\ast \partial_{3,4}(\partial_{2,2}(\sigma))\ast \partial_{3,2}(\partial_{2,1}(\sigma))  $ {\scriptsize ({\blue by (\ref{3cs6})})}
\item[ ]$=\partial_{3,0}(\omega_{\sigma})\ast \partial_{3,2}(\omega_{\sigma})\ast \partial_{3,4}(\omega_{\sigma})\ast \partial_{3,4}(\partial_{2,3}(\sigma))\ast \partial_{3,4}(\partial_{2,1}(\sigma))\ast \partial_{3,2}(\partial_{2,1}(\sigma))$ {\scriptsize ({\blue by the condition of group}}
\item[ ]$\hspace{0.38cm}${\scriptsize {\blue morphism for $\partial_{3,4}$ and (\ref{3cs1})}),}
\end{itemize}
and on the other hand
\begin{itemize}
\item[ ]$\hspace{0.38cm} \partial_{3,0}(\partial_{2,0}(\sigma)\ast \partial_{2,2}(\sigma))\ast \partial_{3,2}(\partial_{2,2}(\sigma)) $
\item[ ]$=\partial_{3,0}(\partial_{2,0}(\sigma))\ast \partial_{3,0}(\partial_{2,2}(\sigma))\ast \partial_{3,2}(\partial_{2,2}(\sigma)) $ {\scriptsize ({\blue by the condition of group morphism for $\partial_{3,0}$})}
\item[ ]$= \partial_{3,0}(\partial_{2,0}(\sigma))\ast\partial_{3,3}(\omega_{\sigma})\ast \partial_{3,4}(\partial_{2,3}(\sigma))\ast \partial_{3,1}(\partial_{2,2}(\sigma))$ {\scriptsize ({\blue by (\ref{3cs11i})})}
\item[ ]$=\partial_{3,3}(\omega_{\sigma})\ast\partial_{3,0}(\partial_{2,0}(\sigma))\ast \partial_{3,4}(\partial_{2,3}(\sigma))\ast \partial_{3,1}(\partial_{2,2}(\sigma)) $ {\scriptsize ({\blue by (\ref{com3cs})})}
\item[ ]$= \partial_{3,3}(\omega_{\sigma})\ast\partial_{3,4}(\partial_{2,3}(\sigma))\ast \partial_{3,1}(\partial_{2,0}(\sigma))\ast \partial_{3,1}(\partial_{2,2}(\sigma))$ {\scriptsize ({\blue by (\ref{3cs12})})}
\item[ ]$=\partial_{3,3}(\omega_{\sigma})\ast\partial_{3,4}(\partial_{2,3}(\sigma))\ast  \partial_{3,1}(\omega_{\sigma})\ast \partial_{3,1}(\partial_{2,3}(\sigma))\ast \partial_{3,1}(\partial_{2,1}(\sigma))$ {\scriptsize ({\blue by the condition of group morphism}}
\item[ ]$\hspace{0.38cm}${\scriptsize {\blue for $\partial_{3,1}$ and (\ref{3cs1})})}
\item[ ]$=\partial_{3,1}(\omega_{\sigma})\ast\partial_{3,3}(\omega_{\sigma})\ast  \partial_{3,4}(\partial_{2,3}(\sigma))\ast  \partial_{3,1}(\partial_{2,3}(\sigma))\ast \partial_{3,1}(\partial_{2,1}(\sigma)) $ {\scriptsize ({\blue by (\ref{com3cs})}).}
\item[ ]$=\partial_{3,1}(\omega_{\sigma})\ast\partial_{3,3}(\omega_{\sigma})\ast  \partial_{3,4}(\partial_{2,3}(\sigma))\ast  \partial_{3,4}(\partial_{2,1}(\sigma))\ast \partial_{3,2}(\partial_{2,1}(\sigma)) $ {\scriptsize ({\blue by (\ref{3cs121}) and (\ref{3cs122})}).}
\end{itemize}
\end{proof}

\begin{teo}
\label{3cocy}
Let  $H$ be a cocommutative weak Hopf algebra, let $(B,\varphi_{B})$ be a left weak $H$-module algebra and let $\sigma \in Reg_{\varphi_{B}}(H^{\ot 2},B)$ satisfying the twisted condition (\ref{t-sigma}). Let $\theta_{\sigma}$ the obstruction of $\sigma$. Then, $\theta_{\sigma}\in Im(D^{2}_{\varphi_{Z(B)}})$ if, and only if, there exists $\alpha\in Reg_{\varphi_{B}}(H^{\ot 2}, B)$ that satisfies the twisted condition (\ref{t-sigma}) and the cocycle condition (\ref{c-sigma}).
\end{teo}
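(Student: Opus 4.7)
My plan is to use Proposition \ref{talsig} to parametrize the twisted morphisms in $Reg_{\varphi_{B}}(H^{\ot 2},B)$ as $\alpha=(z_{B}\co \tau)\ast \sigma$ with $\tau\in Reg_{\varphi_{Z(B)}}(H^{\ot 2},Z(B))$, and then to relate the pre-obstruction $\omega_{\alpha}$ to $\omega_{\sigma}$ through a coboundary term built from $\tau$. The key identity I would first establish is
$$
\omega_{\alpha}=\left(z_{B}\co D^{2}_{\varphi_{Z(B)}}(\tau)\right)\ast \omega_{\sigma}
$$
for any such $\alpha$. Writing $\beta=z_{B}\co \tau$ and expanding $\omega_{\alpha}$ via the fact that each $\partial_{2,i}$ is a group morphism, one obtains eight convolution factors alternating between $\beta^{\pm 1}$-terms and $\sigma^{\pm 1}$-terms. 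Proposition \ref{varphi-ic1} ensures that $\beta^{-1}$ also factors through $Z(B)$, and postcomposition with $z_{B}$ shows that each of $\partial_{2,i}(\beta^{\pm 1})$ and $E(\beta^{\pm 1})$ factors through the center; by Remark \ref{rmexc} those four $\beta$-factors convolution-commute with everything. Collecting them and identifying their product with $D^{2}_{\varphi_{B}}(\beta)$ via formula (\ref{G2V1})---the orderings agree because central factors commute---gives $\omega_{\alpha}=D^{2}_{\varphi_{B}}(\beta)\ast \omega_{\sigma}$. To pass from $D^{2}_{\varphi_{B}}(z_{B}\co \tau)$ to $z_{B}\co D^{2}_{\varphi_{Z(B)}}(\tau)$, one uses that $z_{B}$ is an algebra morphism (hence convolution-multiplicative) and that each coface operator commutes with postcomposition by $z_{B}$---for $i=3$ this follows from $\partial_{2,3}=E$, identity (\ref{G3V}).

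Granted the key identity, both implications are short. For ($\Leftarrow$), suppose $\alpha\in Reg_{\varphi_{B}}(H^{\ot 2},B)$ is a twisted 2-cocycle. Proposition \ref{talsig} supplies $\tau$ with $\alpha=(z_{B}\co \tau)\ast \sigma$, and the cocycle condition $\omega_{\alpha}=u_{3}^{\varphi_{B}}$ together with the key identity yields
$$
z_{B}\co \left(D^{2}_{\varphi_{Z(B)}}(\tau)\ast \theta_{\sigma}\right)=u_{3}^{\varphi_{B}}=z_{B}\co u_{3}^{\varphi_{Z(B)}}.
$$
Since $z_{B}$ is a monomorphism, $\theta_{\sigma}=D^{2}_{\varphi_{Z(B)}}(\tau^{-1})\in Im(D^{2}_{\varphi_{Z(B)}})$. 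For ($\Rightarrow$), if $\theta_{\sigma}=D^{2}_{\varphi_{Z(B)}}(\eta)$, set $\tau=\eta^{-1}$ and define $\alpha=(z_{B}\co \tau)\ast \sigma$. Then $\alpha$ is regular (the convolution of regular morphisms is regular), it satisfies the twisted condition by Proposition \ref{talsig}, and by the key identity
$$
\omega_{\alpha}=\left(z_{B}\co D^{2}_{\varphi_{Z(B)}}(\eta^{-1})\right)\ast \omega_{\sigma}=\left(z_{B}\co \theta_{\sigma}^{-1}\right)\ast \left(z_{B}\co \theta_{\sigma}\right)=u_{3}^{\varphi_{B}},
$$
so $\alpha$ is a 2-cocycle.

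The principal obstacle is the bookkeeping for the key identity $\omega_{\alpha}=D^{2}_{\varphi_{B}}(\beta)\ast \omega_{\sigma}$. Although the mechanism---central factors commuting past noncentral ones---is conceptually simple, one must check that every $\beta$-factor that appears really does factor through $Z(B)$ (Proposition \ref{varphi-ic1} is needed for $\beta^{-1}$, and the analogue of Example \ref{u1-cen} for the unit $u_{3}^{\varphi_{B}}$), and then reorder the eight-factor convolution product using Remark \ref{rmexc} without tripping over the inversion $(\beta\ast \sigma)^{-1}=\sigma^{-1}\ast \beta^{-1}$. Once these checks are in place, the equivalence falls out from the monomorphism property of $z_{B}$ together with the fact that $D^{2}_{\varphi_{Z(B)}}$ is a group morphism on the abelian group $Reg_{\varphi_{Z(B)}}(H^{\ot 2},Z(B))$.
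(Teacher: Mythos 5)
Your proposal is correct and follows essentially the same route as the paper: both arguments hinge on Proposition \ref{talsig} to write $\alpha=(z_{B}\co\tau^{\pm1})\ast\sigma$ and on the identity $\omega_{\alpha}=\bigl(z_{B}\co D^{2}_{\varphi_{Z(B)}}(\tau^{\pm1})\bigr)\ast\omega_{\sigma}$, obtained by splitting the coface images of $\alpha$ via the group-morphism property of the $\partial_{2,i}$ and commuting the central $\tau$-factors past the $\sigma$-factors. The only cosmetic difference is that in the converse direction the paper computes $\omega_{\sigma}$ directly from $\sigma=(z_{B}\co\tau)^{-1}\ast\alpha$ rather than solving the key identity for $\theta_{\sigma}$, which is an equivalent manipulation.
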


\begin{proof} If $\theta_{\sigma}\in Im(D^{2}_{\varphi_{Z(B)}})$, there exists $\tau\in Reg_{\varphi_{Z(B)}}(H^{\ot 2}, Z(B))$ such that $D^{2}_{\varphi_{Z(B)}}(\tau)=\theta_{\sigma}$. Then, 
\begin{equation}
\label{hh1}
z_{B}\co D^{2}_{\varphi_{Z(B)}}(\tau)=\omega_{\sigma}.
\end{equation}

By Proposition \ref{talsig}, the morphism $\alpha=(z_{B}\co \tau^{-1})\ast \sigma$ satisfies the twisted condition (\ref{t-sigma}) and belongs to $Reg_{\varphi_{B}}(H^{\ot 2}, B)$. On the other hand, 
\begin{itemize}
\item[ ]$\hspace{0.38cm} \omega_{\alpha}$
\item[ ]$=D^{2}_{\varphi_{B}}(z_{B}\co \tau^{-1})\ast \omega_{\sigma}$ {\scriptsize ({\blue by the properties of $\partial_{i,j}$ and $\tau$})}
\item[ ]$=z_{B}\co D^{2}_{\varphi_{Z(B)}}(\tau^{-1})\ast \omega_{\sigma}$ {\scriptsize ({\blue by the properties of $D^{2}_{\varphi_{B}}$})}
\item[ ]$=\omega_{\sigma}^{-1}\ast \omega_{\sigma}$ {\scriptsize ({\blue by (\ref{hh1})})}
\item[ ]$=u_{3}^{\varphi_{B}}$ {\scriptsize ({\blue by (\ref{s1}) for $\omega_{\sigma}$})}
\end{itemize}
and, as a consequence, $\alpha$ satisfies the cocycle condition (\ref{c-sigma}).

Conversely, assume that there exists $\alpha\in Reg_{\varphi_{B}}(H^{\ot 2}, B)$ that satisfies the twisted condition (\ref{t-sigma}) and the cocycle condition (\ref{c-sigma}). Then, by Proposition \ref{talsig}, there there exists $\tau\in Reg_{\varphi_{Z(B)}}(H^{\ot 2}, Z(B))$ such that (\ref{talsig-eq}) holds, i.e., $\alpha=(z_{B}\co \tau)\ast \sigma$. As a consequence, $\sigma=(z_{B}\co \tau)^{-1}\ast \alpha$ and $\theta_{\sigma}\in Im(D^{2}_{\varphi_{Z(B)}})$ since
$$\omega_{\sigma}=D^{2}_{\varphi_{B}}(z_{B}\co \tau^{-1})\ast \partial_{2,0}(\alpha)\ast \partial_{2,2}(\alpha)\ast \partial_{2,1}(\alpha^{-1})\ast \partial_{2,3}(\alpha^{-1})\stackrel{{\scriptsize \blue  (\ref{deric-coc})}}{=}D^{2}_{\varphi_{B}}(z_{B}\co \tau^{-1})=z_{B}\co D^{2}_{\varphi_{Z(B)}}(\tau^{-1}).$$

\end{proof}

\begin{prop}
\label{cor-talsig} 
Let $H$ be a cocommutative weak Hopf algebra and let $(B,\varphi_{B})$ be a left weak $H$-module algebra. Let $\sigma\in Reg_{\varphi_{B}}^{+}(H^{\ot 2},B)$ satisfying the twisted condition (\ref{t-sigma}). Then, $\alpha\in Reg_{\varphi_{B}}^{+}(H^{\ot 2},B)$ satisfies the twisted condition (\ref{t-sigma}) if, and only if, there exists $\tau\in Reg_{\varphi_{Z(B)}}^{+}(H^{\ot 2},Z(B))$ satisfying  (\ref{talsig-eq}).
\end{prop}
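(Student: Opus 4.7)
The plan is to reduce this corollary to Proposition \ref{talsig} by showing that, under the given hypotheses, the morphism $\tau$ produced there automatically inherits normality from $\alpha$ and $\sigma$, and conversely. The argument rests on two elementary facts: first, that $Reg^{+}_{\varphi_B}(H^{\ot 2},B)$ and $Reg^{+}_{\varphi_{Z(B)}}(H^{\ot 2}, Z(B))$ are subgroups of the corresponding regular groups (as recorded in \ref{cohlogy} and \cite{nmra5}); and second, that by \eqref{udd} the morphism $z_B$ intertwines the relevant units, so $z_B\co u_{1}^{\varphi_{Z(B)}}=u_{1}^{\varphi_B}$ and, via \eqref{u}, $z_B\co u_{2}^{\varphi_{Z(B)}}=u_{2}^{\varphi_B}$.

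For the ``if'' direction, starting from $\tau\in Reg^{+}_{\varphi_{Z(B)}}(H^{\ot 2}, Z(B))$, I would first verify that $z_B\co\tau$ lies in $Reg^{+}_{\varphi_B}(H^{\ot 2},B)$: convolution invertibility with inverse $z_B\co \tau^{-1}$ is preserved via \eqref{zbpro}, and normality follows immediately from $z_B\co u_{1}^{\varphi_{Z(B)}}=u_{1}^{\varphi_B}$. Since $\sigma\in Reg^{+}_{\varphi_B}(H^{\ot 2},B)$ and $Reg^{+}_{\varphi_B}(H^{\ot 2},B)$ is a subgroup under $\ast$, the product $\alpha=(z_B\co\tau)\ast\sigma$ belongs to $Reg^{+}_{\varphi_B}(H^{\ot 2},B)$. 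That $\alpha$ satisfies the twisted condition is then delivered by Proposition \ref{talsig}.

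For the converse, I would assume $\alpha\in Reg^{+}_{\varphi_B}(H^{\ot 2},B)$ satisfies the twisted condition. Proposition \ref{talsig} provides a unique $\tau\in Reg_{\varphi_{Z(B)}}(H^{\ot 2}, Z(B))$ with $z_B\co\tau=\alpha\ast\sigma^{-1}$. Since $\alpha$ and $\sigma^{-1}$ both lie in the subgroup $Reg^{+}_{\varphi_B}(H^{\ot 2},B)$, so does $z_B\co\tau$; evaluating on $\eta_H\ot H$ and on $H\ot\eta_H$ yields $z_B\co\tau\co(\eta_H\ot H)=u_{1}^{\varphi_B}=z_B\co u_{1}^{\varphi_{Z(B)}}$ and analogously for $H\ot\eta_H$. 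Since $z_B$ is a monomorphism, this forces $\tau\co(\eta_H\ot H)=\tau\co(H\ot\eta_H)=u_{1}^{\varphi_{Z(B)}}$, i.e. $\tau\in Reg^{+}_{\varphi_{Z(B)}}(H^{\ot 2}, Z(B))$.

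The delicate point, and the main obstacle, is the stability of $Reg^{+}$ under convolution and inversion, which is not entirely transparent in the weak setting because $\delta_H\co\eta_H\neq\eta_H\ot\eta_H$; this is the place where the identities \eqref{edpi}, together with the normality of the relevant morphisms, must be carefully invoked. However, this is precisely the content of the cosimplicial framework recalled in \ref{cohlogy} (from \cite{nmra5}), and once that is taken as given, the present corollary follows by the direct combination of Proposition \ref{talsig} with the monomorphism property of $z_B$ outlined above.
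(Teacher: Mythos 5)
Your overall strategy coincides with the paper's: both directions are reduced to Proposition \ref{talsig}, and the only remaining task is to transport normality back and forth along $z_{B}$ using $z_{B}\co u_{1}^{\varphi_{Z(B)}}=u_{1}^{\varphi_{B}}$ (equality (\ref{udd})) and the fact that $z_{B}$ is a monomorphism. The difference is that you funnel the entire argument through the assertion that $Reg_{\varphi_{B}}^{+}(H^{\ot 2},B)$ is closed under convolution and inversion, and then declare this to be ``precisely the content of the cosimplicial framework recalled in \ref{cohlogy}.'' That is a genuine gap. In \ref{cohlogy} the sets $Reg^{+}$ are merely \emph{defined} as intersections of kernels of the codegeneracies; the statement that one has a cosimplicial complex of \emph{groups} (which would make the $s_{k+1,i}$ group homomorphisms and hence $Reg^{+}$ a subgroup) is recalled in \ref{cohlogy0} only for a genuine left $H$-module algebra $(A,\varphi_{A})$. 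Here $B$ is only a \emph{weak} $H$-module algebra, and the codegeneracies are not group homomorphisms for free: $(f\ast g)\co (\eta_{H}\ot H)\neq (f\co (\eta_{H}\ot H))\ast (g\co (\eta_{H}\ot H))$ in general, precisely because $\delta_{H}\co \eta_{H}\neq \eta_{H}\ot \eta_{H}$ --- the very obstacle you name. This is why the paper proves the $n=1$ case separately (Proposition \ref{lemma119}, Corollary \ref{lemma219}) rather than citing it, and why the $n=2$ closure cannot simply be ``taken as given'': it is the actual content of the proof you are asked to supply.

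The missing step is exactly what the paper's argument provides. Using Proposition \ref{tech-lem} (equalities (\ref{sigma-eta-1}) and (\ref{sigma-eta-2})) together with cocommutativity ($\overline{\Pi}_{H}^{L}=\Pi_{H}^{L}$), the normal condition (\ref{normal-sigma}) for a morphism $\beta$ with $\beta\ast u_{2}^{\varphi_{B}}=\beta$ is rewritten as (\ref{cor-a})--(\ref{cor-b}), i.e.\ as the statement that the composites with $(\Pi_{H}^{L}\ot H)\co \delta_{H}$ and with $(H\ot \Pi_{H}^{R})\co \delta_{H}$ equal $u_{1}$. Unlike precomposition with $\eta_{H}\ot H$, precomposition with these morphisms \emph{does} distribute over the convolution product (they are compatible with $\delta_{H^{\ot 2}}$ by cocommutativity and \cite[Proposition 2.6]{nmra5}), which is what makes $\alpha\ast\sigma^{-1}$, $\sigma^{-1}$ itself, and $(z_{B}\co \tau)\ast\sigma$ normal whenever their factors are. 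If you replace the appeal to the cosimplicial framework by this reformulation-and-distribution argument, your proof becomes the paper's; as written, it assumes at the decisive point exactly what has to be proved.
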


\begin{proof}
First note that, if $H$ is cocommutative, $(D, \varphi_{D})$ is a left weak $H$-module algebra and $\beta\in Reg_{\varphi_{D}}(H^{\ot 2}, D)$, using that $\overline{\Pi}_{H}^{L}=\Pi_{H}^{L}$ and (\ref{sigma-eta-1}) we obtain that 
\begin{equation}
\label{cor-a}
\beta\co (\eta_{H}\ot H)=\beta\co (\Pi_{H}^{L}\ot H)\co \delta_{H}
\end{equation}
holds. Also,  (\ref{sigma-eta-2}), holds for $\beta$  and therefore $\beta$ satisfies the normal condition (\ref{normal-sigma}), i.e., $\beta\in Reg_{\varphi_{D}}^{+}(H^{\ot 2}, D)$ if and only if 
\begin{equation}
\label{cor-b}
\beta\co (\Pi_{H}^{L}\ot H)\co \delta_{H}=\beta\co (H\ot \Pi_{H}^{R})\co \delta_{H}=u_{1}^{\varphi_{D}}.
\end{equation}

Let $\alpha\in Reg_{\varphi_{B}}^{+}(H^{\ot 2},B)$ satisfying the twisted condition (\ref{t-sigma}). By Proposition \ref{talsig} there exists $\tau\in Reg_{\varphi_{Z(B)}}(H^{\ot 2},Z(B))$ satisfying  (\ref{talsig-eq}). Then, $z_{B}\co \tau=\alpha\ast \sigma^{-1}$  and $\tau $ satisfies the normal condition (\ref{normal-sigma}) because, in one hand, 
\begin{itemize}
\item[ ]$\hspace{0.38cm} z_{B}\co \tau\co (\Pi_{H}^{L}\ot H)\co \delta_{H}$
\item[ ]$=\mu_{B}\co ((\alpha\co  (\Pi_{H}^{L}\ot H))\ot (\sigma{-1}\co  (\Pi_{H}^{L}\ot H)))\co \delta_{H^{\ot 2}}\co \delta_{H}$ {\scriptsize ({\blue by the naturality of $c$ and (i) of}}
\item[ ]$\hspace{0.38cm}${\scriptsize {\blue \cite[Proposition 2.6]{nmra5}})}
\item[ ]$=(\alpha\co  (\Pi_{H}^{L}\ot H)\co \delta_{H})\ast (\sigma^{-1}\co  (\Pi_{H}^{L}\ot H)\co \delta_{H})$ {\scriptsize ({\blue by the cocommutativity of $\delta_{H}$})}
\item[ ]$=u_{2}^{\varphi_{B}}\ast u_{2}^{\varphi_{B}}$ {\scriptsize ({\blue by (\ref{cor-b})})}
\item[ ]$=z_{B}\co u_{2}^{\varphi_{Z(B)}}$ {\scriptsize ({\blue by (\ref{udd})})}
\end{itemize}
and, on the other hand, using the same arguments we have $z_{B}\co \tau\co (H\ot \Pi_{H}^{R})\co \delta_{H}=z_{B}\co u_{2}^{\varphi_{Z(B)}}$.

Conversely,  if there exists $\tau\in Reg_{\varphi_{Z(B)}}^{+}(H^{\ot 2},Z(B))$ satisfying  (\ref{talsig-eq}), by the previous arguments,  we obtain that 
$$\alpha\co (\Pi_{H}^{L}\ot H)\co \delta_{H}=(z_{B}\co \tau\co  (\Pi_{H}^{L}\ot H)\co \delta_{H})\ast (\sigma^{-1}\co  (\Pi_{H}^{L}\ot H)\co \delta_{H})=(z_{B}\co u_{2}^{\varphi_{Z(B)}})\ast u_{2}^{\varphi_{B}}=u_{2}^{\varphi_{B}}$$
and similarly $\alpha\co (H\ot \Pi_{H}^{R})\co \delta_{H}=u_{2}^{\varphi_{B}}$. Therefore, $\alpha\in Reg_{\varphi_{B}}^{+}(H^{\ot 2},B)$ .
\end{proof}

\begin{rem}
	\label{cohomo}
	{\rm Let  $H$ be a cocommutative weak Hopf algebra, let $(B,\varphi_{B})$ be a left weak $H$-module algebra and let $\sigma, \beta\in Reg_{\varphi_{B}}^{+}(H^{\ot 2},B)$ satisfying the twisted condition (\ref{t-sigma}). Let $\theta_{\sigma}$, $\theta_{\beta}$ the corresponding obstructions of $\sigma$ and $\beta$. Then, by the previous proposition, it is easy to show that $[\theta_{\sigma}]=[\theta_{\beta}]$ in ${\mathcal H}_{\varphi_{Z(B)}}^{3+} (H,Z(B))$, i.e., $\theta_{\sigma}$ and $\theta_{\beta}$ are cohomologous.
	}
\end{rem}

\begin{cor}
\label{cor-3cocy}
Let  $H$ be a cocommutative weak Hopf algebra, let $(B,\varphi_{B})$ be a left weak $H$-module algebra and let $\sigma \in Reg_{\varphi_{B}}^{+}(H^{\ot 2},B)$ satisfying the twisted condition (\ref{t-sigma}). Let $\theta_{\sigma}$ the obstruction of $\sigma$. Then, $\theta_{\sigma}\in Im(D^{2+}_{\varphi_{Z(B)}})$ if, and only if, there exists $\alpha\in Reg_{\varphi_{B}}^{+}(H^{\ot 2}, B)$ that satisfies the twisted condition (\ref{t-sigma}) and the cocycle condition (\ref{c-sigma}).
\end{cor}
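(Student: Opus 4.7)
The plan is to deduce the corollary from Theorem \ref{3cocy} by adding the normalization bookkeeping supplied by Proposition \ref{cor-talsig}. The whole argument is a ``refinement'' of the proof already carried out: the operator $D^{2+}_{\varphi_{Z(B)}}$ is just the restriction of $D^{2}_{\varphi_{Z(B)}}$ to the normalized subcomplex, and the bijection between twisted $2$-cochains over $\sigma$ and cochains $\tau\in Reg_{\varphi_{Z(B)}}(H^{\ot 2},Z(B))$ established by Proposition \ref{talsig} restricts to the normalized subcomplex by virtue of Proposition \ref{cor-talsig}.

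First, I would prove the implication $\theta_{\sigma}\in Im(D^{2+}_{\varphi_{Z(B)}})\Rightarrow \exists\,\alpha$. If $\tau\in Reg_{\varphi_{Z(B)}}^{+}(H^{\ot 2},Z(B))$ satisfies $D^{2+}_{\varphi_{Z(B)}}(\tau)=\theta_{\sigma}$, then in particular $D^{2}_{\varphi_{Z(B)}}(\tau)=\theta_{\sigma}$, so Theorem \ref{3cocy} supplies $\alpha=(z_{B}\co \tau^{-1})\ast \sigma\in Reg_{\varphi_{B}}(H^{\ot 2},B)$ satisfying the twisted condition (\ref{t-sigma}) and the cocycle condition (\ref{c-sigma}). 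Since $\sigma\in Reg_{\varphi_{B}}^{+}(H^{\ot 2},B)$ and, by Corollary \ref{lemma219} applied to $\tau$, the morphism $\tau^{-1}$ lies in $Reg_{\varphi_{Z(B)}}^{+}(H^{\ot 2},Z(B))$, Proposition \ref{cor-talsig} forces $\alpha\in Reg_{\varphi_{B}}^{+}(H^{\ot 2},B)$, as required.

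For the converse, assume $\alpha\in Reg_{\varphi_{B}}^{+}(H^{\ot 2},B)$ satisfies (\ref{t-sigma}) and (\ref{c-sigma}). By Proposition \ref{cor-talsig} there exists $\tau\in Reg_{\varphi_{Z(B)}}^{+}(H^{\ot 2},Z(B))$ with $\alpha=(z_{B}\co \tau)\ast \sigma$; equivalently, $\sigma=(z_{B}\co \tau^{-1})\ast \alpha$. Repeating the final computation of the proof of Theorem \ref{3cocy}, the cocycle condition for $\alpha$ (translated via (\ref{deric-coc})) yields
$$
\omega_{\sigma}=D^{2}_{\varphi_{B}}(z_{B}\co \tau^{-1})=z_{B}\co D^{2}_{\varphi_{Z(B)}}(\tau^{-1}),
$$
so by uniqueness of the factorization through $z_{B}$ we obtain $\theta_{\sigma}=D^{2}_{\varphi_{Z(B)}}(\tau^{-1})$. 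Because $\tau^{-1}\in Reg_{\varphi_{Z(B)}}^{+}(H^{\ot 2},Z(B))$ and $D^{2+}_{\varphi_{Z(B)}}$ is the restriction of $D^{2}_{\varphi_{Z(B)}}$ to the normalized subcomplex, this equality reads $\theta_{\sigma}=D^{2+}_{\varphi_{Z(B)}}(\tau^{-1})\in Im(D^{2+}_{\varphi_{Z(B)}})$.

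The only non-routine step is the verification that Proposition \ref{cor-talsig} is strong enough to detect the normalization simultaneously on $\alpha$ and on $\tau$; everything else is a transcription of the non-normalized argument. In particular, one must check that the coboundary $D^{2}_{\varphi_{B}}(z_{B}\co \tau^{-1})$ really coincides with $z_{B}\co D^{2}_{\varphi_{Z(B)}}(\tau^{-1})$, which follows from the fact that $z_{B}$ is an algebra morphism intertwining $\varphi_{Z(B)}$ and $\varphi_{B}$ (equality (\ref{zba})) and hence commutes with all the coface operators $\partial_{2,i}$.
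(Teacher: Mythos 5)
Your argument is correct and follows the paper's own route exactly: the corollary is deduced from Theorem \ref{3cocy} together with Proposition \ref{cor-talsig}, with the normalization of $\alpha$ and of $\tau$ transferred back and forth through the latter. The only slip is the appeal to Corollary \ref{lemma219} (which concerns morphisms $H\rightarrow B$) to justify $\tau^{-1}\in Reg_{\varphi_{Z(B)}}^{+}(H^{\ot 2},Z(B))$; the correct justification is that $Reg_{\varphi_{Z(B)}}^{+}(H^{\ot 2},Z(B))$ is by definition an intersection of kernels of the group morphisms $s_{2,i}$, hence a subgroup closed under convolution inverses.
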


\begin{proof} The result is a direct consequence of Theorem \ref{3cocy} and Proposition \ref{cor-talsig}.
\end{proof}

\begin{cor}
\label{1cor-3cocy}
Let  $H$ be a cocommutative weak Hopf algebra, let $(B,\varphi_{B})$ be a left weak $H$-module algebra and let $\sigma \in Reg_{\varphi_{B}}^{+}(H^{\ot 2},B)$ satisfying the twisted condition (\ref{t-sigma}). Let $\theta_{\sigma}$ the obstruction of $\sigma$. Then, $[\theta_{\sigma}]=0$ in ${\mathcal H}_{\varphi_{Z(B)}}^{3+} (H,Z(B))$ if and only if there exists a morphism $\alpha\in Reg_{\varphi_{B}}^{+}(H^{\ot 2},B)$ 
that satisfies the twisted condition (\ref{t-sigma}),  the cocycle condition (\ref{c-sigma}) and the normal condition (\ref{normal-sigma}).
\end{cor}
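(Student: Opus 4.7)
The plan is to reduce the statement to Corollary \ref{cor-3cocy} by verifying two normalization facts: that the obstruction $\theta_{\sigma}$ lies in the normalized cochain group $Reg_{\varphi_{Z(B)}}^{+}(H^{\ot 3}, Z(B))$, and that being a normalized $3$-coboundary is exactly the assertion $[\theta_{\sigma}]=0$ in ${\mathcal H}_{\varphi_{Z(B)}}^{3+}(H,Z(B))$. Note that, by the definition of $Reg_{\varphi_{B}}^{+}(H^{\ot 2},B)$ recalled in \ref{cohlogy}, asking $\alpha\in Reg_{\varphi_{B}}^{+}(H^{\ot 2},B)$ is the same as requiring $\alpha\in Reg_{\varphi_{B}}(H^{\ot 2},B)$ together with the normal condition (\ref{normal-sigma}); therefore the statement of the corollary is formally the analogue of Corollary \ref{cor-3cocy} for the normalized complex.

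First, I would check that $\theta_{\sigma}\in Reg_{\varphi_{Z(B)}}^{+}(H^{\ot 3}, Z(B))$, i.e., that $\theta_{\sigma}\co(\eta_{H}\ot H^{\ot 2})=\theta_{\sigma}\co(H\ot \eta_{H}\ot H)=\theta_{\sigma}\co(H^{\ot 2}\ot \eta_{H})=u_{2}^{\varphi_{Z(B)}}$. Because $z_{B}$ is a monomorphism and $z_{B}\co \theta_{\sigma}=\omega_{\sigma}$, it suffices to prove the corresponding identities for the pre-obstruction $\omega_{\sigma}$ with $u_{2}^{\varphi_{B}}$ in place of $u_{2}^{\varphi_{Z(B)}}$, in view of (\ref{udd}). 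Since $\partial_{2,0},\partial_{2,1},\partial_{2,2}$ are group morphisms and the codegeneracies $s_{3,i}$ for $i\in\{0,1,2\}$ commute appropriately with the cofaces (using the cosimplicial identities and the fact that $\sigma,\sigma^{-1}\in Reg_{\varphi_{B}}^{+}(H^{\ot 2},B)$), each of the factors $\partial_{2,0}(\sigma)$, $\partial_{2,2}(\sigma)$, $\partial_{2,1}(\sigma^{-1})$, $\partial_{2,3}(\sigma^{-1})=E(\sigma^{-1})$ (by (\ref{G31V})) is degenerated to $u_{2}^{\varphi_{B}}$ (or to itself in the trivial case) upon composition with the relevant $H\ot\cdots\ot\eta_{H}\ot\cdots\ot H$. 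This gives $\omega_{\sigma}\in Reg_{\varphi_{B}}^{+}(H^{\ot 3},B)$ and hence $\theta_{\sigma}$ is normalized.

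Second, by Theorem \ref{3cocy} we have $\partial_{3,0}(\omega_{\sigma})\ast \partial_{3,2}(\omega_{\sigma})\ast \partial_{3,4}(\omega_{\sigma})=\partial_{3,1}(\omega_{\sigma})\ast \partial_{3,3}(\omega_{\sigma})$, which, passing through the monomorphism $z_{B}$ and using that $D^{3}_{\varphi_{Z(B)}}(\theta_{\sigma})=\partial_{3,0}(\theta_{\sigma})\ast\partial_{3,1}(\theta_{\sigma})^{-1}\ast\partial_{3,2}(\theta_{\sigma})\ast\partial_{3,3}(\theta_{\sigma})^{-1}\ast\partial_{3,4}(\theta_{\sigma})$, yields $\theta_{\sigma}\in Ker(D^{3}_{\varphi_{Z(B)}})$. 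Combined with the previous step, $\theta_{\sigma}\in Ker(D^{3+}_{\varphi_{Z(B)}})$, and the class $[\theta_{\sigma}]\in {\mathcal H}_{\varphi_{Z(B)}}^{3+}(H,Z(B))$ is well defined. Hence $[\theta_{\sigma}]=0$ in ${\mathcal H}_{\varphi_{Z(B)}}^{3+}(H,Z(B))$ is equivalent to $\theta_{\sigma}\in Im(D^{2+}_{\varphi_{Z(B)}})$.

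Finally, invoking Corollary \ref{cor-3cocy}, the last condition is equivalent to the existence of $\alpha\in Reg_{\varphi_{B}}^{+}(H^{\ot 2}, B)$ satisfying the twisted condition (\ref{t-sigma}) and the cocycle condition (\ref{c-sigma}). Since membership in $Reg_{\varphi_{B}}^{+}(H^{\ot 2}, B)$ is exactly the normal condition (\ref{normal-sigma}), this produces an $\alpha$ satisfying all three conditions, as required. The main technical point I expect to spend care on is the first step — checking cleanly that each coface applied to $\sigma$ or $\sigma^{-1}$ behaves well under the codegeneracies so that $\omega_{\sigma}$ actually sits in the normalized subcomplex; once that is in place, the rest is a direct appeal to Corollary \ref{cor-3cocy}.
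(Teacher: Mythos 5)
Your proposal is correct and follows essentially the same route as the paper, which simply reduces the statement to Corollary \ref{cor-3cocy} after identifying $[\theta_{\sigma}]=0$ in ${\mathcal H}_{\varphi_{Z(B)}}^{3+}(H,Z(B))$ with $\theta_{\sigma}\in Im(D^{2+}_{\varphi_{Z(B)}})$ and the normal condition with membership in $Reg_{\varphi_{B}}^{+}(H^{\ot 2},B)$. The only difference is that you explicitly verify that $\theta_{\sigma}$ lies in the normalized subcomplex and in $Ker(D^{3+}_{\varphi_{Z(B)}})$ (via Theorem \ref{3cocy}), a point the paper leaves implicit; your sketch of that verification is sound since the codegeneracies are group morphisms and $\sigma,\sigma^{-1}\in Reg_{\varphi_{B}}^{+}(H^{\ot 2},B)$.
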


\begin{proof}
The proof follows by the previous corollary and Corollary \ref{crossed-product1} 
\end{proof}

As a consequence of this  corollary  we can assure that the obstruction vanishes if and only if there exists a weak crossed product with preunit $\nabla_{B\ot H}^{\varphi_{B}}\co (\eta_{B}\ot \eta_{H})$ and normalized with respect to $\nabla_{B\ot H}^{\varphi_{B}}$. Equivalently, by \cite[Theorem 6.17, Corollary 6.18]{Gu2-Val}, this is equivalent to say that $B$ admits a $H$-cleft extension (see also 
\cite[Proposition 3.5]{nmra6}.

\section*{Funding}
The  authors were supported by  Ministerio de Ciencia e Innovaci\'on of Spain. Agencia Estatal de Investigación. Uni\'on Europea - Fondo Europeo de Desarrollo Regional. Grant: Homolog\'{\i}a, homotop\'{\i}a e invariantes categ\'oricos en grupos y \'algebras no asociativas.

\end{document}